\newtheorem{theorem}{Theorem}[section]
\newtheorem{lemma}[theorem]{Lemma}
\newtheorem{proposition}[theorem]{Proposition}
\newtheorem{corollary}[theorem]{Corollary}
\theoremstyle{definition}
\newtheorem{definition}[theorem]{Definition}
\theoremstyle{remark}
\newtheorem{remark}[theorem]{Remark}
\newcommand{\alphas}{\boldsymbol{\alpha}}
\newcommand{\betas}{\boldsymbol{\beta}}
\def\F{\mathbb{F}}
\def\N{\mathbb{N}}
\def\Q{\mathbb{Q}}
\def\Z{\mathbb{Z}}
\def\A{\mathbb{A}}
\def\B{\mathbb{B}}
\def\del{\partial}
\def\cC{\mathcal{C}}
\def\dD{\mathcal {D}}
\def\H{\mathcal{H}}
\def\cF{\mathcal{F}}
\def\cP{\mathcal{P}}
\def\cT{\mathcal{T}}
\def\rR{\mathcal R}
\def\Sym{\mathrm{Sym}}
\def\CFKi{\CFK^{\infty}}
\def\coker{\operatorname{coker}}
\def \gr {\operatorname{gr}}
\def\Cone{\operatorname{Cone}}
\def\ccdot {\! \cdot \!}
\def \wt{\widetilde}
\def\d{\partial}
\def\co{\colon}
\def\spinc{\textrm{Spin}^c}
\def\id{\textup{id}}
\def\Inv{\mathfrak{I}}
\def\im{\operatorname{im}}
\def\red{\textup{red}}
\def\conn{\textup{conn}}
\def\Cone{\operatorname{Cone}}
\newcommand{\bunderline}[1]{\underline{#1\mkern-2mu}\mkern2mu }
\def\du {\bar{d}}
\def\dl {\bunderline{d}}
\def\CF {\mathit{CF}}
\def\HF {\mathit{HF}}
\newcommand\HFp {\HF^+}
\newcommand \CFm {\CF^-}
\newcommand \HFm {\HF^-}
\newcommand \CFo {\CF^{\circ}}
\newcommand \HFo {\HF^{\circ}}
\def\HFred{\HF^{\operatorname{red}}}
\def\HFb{\mathbf{HF}^-}
\def\CFK{\mathit{CFK}}
\def\Ab{\mathbf{A}^-}
\def\Bb{\mathbf{B}^-}
\def\vb{\mathbf{v}}
\def\hb{\mathbf{h}}
\def\Fb{\mathbf{F}^-}
\def\Cb{\bm{\mathfrak{C}}}
\def\spinc {{\operatorname{spin^c}}}
\def\Spinc {{\operatorname{Spin^c}}}
\def\s{\mathfrak s}
\def\t{\mathfrak t}
\def\CFI {\mathit{CFI}}
\def\HFI {\mathit{HFI}}
\newcommand \CFIp {\CFI^+}
\newcommand \CFIm {\CFI^-}
\newcommand \HFIm {\HFI^-}
\newcommand \HFIinf {\HFI^{\infty}}
\newcommand \CFIo {\CFI^{\circ}}
\newcommand \HFIo {\HFI^{\circ}}
\def\HFred{\HF_{\operatorname{red}}}
\def\inv{\iota}
\def\iotaperp{\iota^\perp_f}
\def\Hom{\mathrm{Hom}}
\def\Tor{\mathrm{Tor}}
\author[Kristen Hendricks]{Kristen Hendricks}
\thanks{The first author was partially supported by NSF grant DMS-1663778.}
\address{Department of Mathematics, Michigan State University, East Lansing, MI, 48824}
\email{hendricks@math.msu.edu}
\author[Jennifer Hom]{Jennifer Hom}
\thanks{The second author was partially supported by NSF grant DMS-1552285 and a Sloan Research Fellowship.}
\address {School of Mathematics, Georgia Institute of Technology, Atlanta, GA 30332}
\email{hom@math.gatech.edu}
\author[Tye Lidman]{Tye Lidman}
\thanks{The third author was partially supported by NSF grant DMS-1709702.}
\address{Department of Mathematics, North Carolina State University, Raleigh, NC, 27607}
\email{tlid@math.ncsu.edu}
\numberwithin{equation}{section}
\title{Applications of involutive Heegaard Floer homology}
\begin{document}

\begin{abstract} We use Heegaard Floer homology to define an invariant of homology cobordism.  This invariant is isomorphic to a summand of the reduced Heegaard Floer homology of a rational homology sphere equipped with a spin structure and is analogous to Stoffregen's connected Seiberg-Witten Floer homology.  We use this invariant to study the structure of the homology cobordism group and, along the way, compute the involutive correction terms $\du$ and $\dl$ for certain families of three-manifolds.    
\end{abstract}

\maketitle

\section{Introduction}\label{sec:intro}

The study of homology cobordism, or when two manifolds cobound a homology cylinder, has been a motivating structure in geometric topology for several decades.  Most recently, Manolescu used an invariant of homology cobordism to disprove the high-dimensional triangulation conjecture \cite{ManolescuTriangulation}.  While the result applies to triangulating manifolds of dimensions at least five, the invariant is for spin rational homology three-spheres and smooth homology cylinders between them.   

The key idea in constructing Manolescu's invariant is defining a $Pin(2)$-equivariant Seiberg-Witten Floer homology for a three-manifold equipped with a self-conjugate $\spinc$ structure.  Building on this, Stoffregen \cite{Stoffregen} constructed a more refined invariant, the connected Seiberg-Witten Floer homology, which takes the form of a graded module over $\F[U]$.  Very roughly, this is defined as the part of the $S^1$-equivariant Seiberg-Witten Floer homology which consists of solutions to the Seiberg-Witten equations that interact with the reducible in an essential way that respects the $Pin(2)$-symmetry.  One disadvantage of Stoffregen's construction is that it passes through Manolescu's Seiberg-Witten Floer homotopy type \cite{ManolescuSWF}, which can be rather difficult to compute explicitly.  

In the current article, we define an analog of Stoffregen's connected Seiberg-Witten Floer homology in the setting of Heegaard Floer homology and use this to further study the structure of the homology cobordism group.  

\begin{theorem}\label{thm:HFconn}
Let $Y$ be a rational homology three-sphere and $\s$ a spin structure on $Y$. There is a spin rational homology cobordism invariant, $\HF_\conn(Y, \s)$, called the {\em connected Heegaard Floer homology}, taking values in isomorphism classes of absolutely-graded $\F[U]$-modules. Moreover, $\HF_\conn(Y, \s)$ is isomorphic to a summand of $\HF_\red(Y, \s)$.
\end{theorem}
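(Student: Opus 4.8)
The plan is to build $\HF_\conn(Y,\s)$ out of the algebra of $\iota$-complexes and local equivalence, working at the level of $\CFm(Y,\s)$ with its involution rather than a Floer spectrum; this is the direct Heegaard Floer analog of Stoffregen's connected Seiberg--Witten Floer homology. Since $\s$ is spin, hence self-conjugate, the construction of Hendricks--Manolescu equips $\CFm(Y,\s)$ --- a finitely generated, free, absolutely $\Q$-graded $\F[U]$-complex with $U^{-1}H_*(\CFm(Y,\s))\cong\F[U,U^{-1}]$ --- with a grading-preserving chain map $\iota$ satisfying $\iota^2\simeq\id$, and the pair $(\CFm(Y,\s),\iota)$ is well-defined up to homotopy equivalence of such pairs ($\iota$-complexes). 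I would work in the associated category, in which a morphism is a grading-preserving chain map commuting with $\iota$ up to homotopy, a homotopy equivalence has a homotopy inverse in this sense, and a \emph{local equivalence} is a morphism inducing an isomorphism on $U^{-1}H_*$. Two inputs from Hendricks--Manolescu are needed: $(\CFm(Y,\s),\iota)$ is an invariant of $(Y,\s)$ up to homotopy equivalence, and a spin rational homology cobordism from $(Y,\s)$ to $(Y',\s')$ induces local equivalences, in both directions, between $(\CFm(Y,\s),\iota)$ and $(\CFm(Y',\s'),\iota')$.

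Every $\iota$-complex is homotopy equivalent to a \emph{reduced} one --- one whose differential is divisible by $U$ --- and a reduced representative attains the minimal $\F[U]$-rank in its homotopy equivalence class. Since $\F[U]$-rank is a nonnegative integer, among all reduced $\iota$-complexes locally equivalent to $(\CFm(Y,\s),\iota)$ one may choose one of minimal rank, $(C_\conn,\iota_\conn)$, the \emph{connected complex}; I then set $\HF_\conn(Y,\s):=\HF_\red(C_\conn)$, regarded as an absolutely graded $\F[U]$-module (the grading and module structure are inherited, all maps in sight being $\F[U]$-linear and grading-preserving). The well-definedness of $\HF_\conn(Y,\s)$ rests on the statement I expect to be the main obstacle: \emph{every local self-morphism of a minimal reduced $\iota$-complex is a homotopy equivalence}. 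Granting it, if $(C_\conn,\iota_\conn)$ and $(C_\conn',\iota_\conn')$ are two minimal reduced representatives with local morphisms $f$ and $g$ between them, then $fg$ and $gf$ are local self-morphisms of minimal complexes, hence homotopy equivalences, so $f$ itself is a homotopy equivalence and $\HF_\red(C_\conn)\cong\HF_\red(C_\conn')$; and since a spin rational homology cobordism forces the local equivalence class of $(Y,\s)$ to coincide with that of $(Y',\s')$, it follows that $\HF_\conn$ is a spin rational homology cobordism invariant.

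For the remaining assertion I would produce a splitting of $\CFm(Y,\s)$. Choose local morphisms $f\colon\CFm(Y,\s)\to C_\conn$ and $g\colon C_\conn\to\CFm(Y,\s)$. Then $fg$ is a local self-morphism of the minimal complex $C_\conn$, hence a homotopy equivalence; after replacing $g$ by $g\circ\phi$, where $\phi$ is a homotopy inverse of $fg$, we may assume $fg\simeq\id_{C_\conn}$, so that $g$ realizes $C_\conn$ as a homotopy retract of $\CFm(Y,\s)$. A homotopy retract splits off, giving a homotopy equivalence $\CFm(Y,\s)\simeq C_\conn\oplus A$ of absolutely graded $\F[U]$-complexes with $A\simeq\Cone(f)[-1]$; since $f$ induces an isomorphism on $U^{-1}H_*$, the complex $A$ is $U$-acyclic, so $H_*(A)$ is $U$-torsion and equals its own reduced part. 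Passing to homology and then to reduced parts yields $\HF_\red(Y,\s)\cong\HF_\red(C_\conn)\oplus H_*(A)=\HF_\conn(Y,\s)\oplus H_*(A)$ as absolutely graded $\F[U]$-modules, exhibiting $\HF_\conn(Y,\s)$ as a summand of $\HF_\red(Y,\s)$.

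The crux is the algebraic lemma that a local self-morphism $h$ of a minimal reduced $\iota$-complex $C$ must be a homotopy equivalence. The strategy there is contrapositive: if $h$ were not a homotopy equivalence --- equivalently, if $\Cone(h)$ had nonvanishing (necessarily $U$-torsion) homology --- then one should be able to perform an $\iota$-equivariant change of basis and cancellation to produce a reduced $\iota$-complex of strictly smaller rank in the local equivalence class, contradicting minimality. This mirrors Stoffregen's proof that connected Seiberg--Witten Floer homology is well-defined, and the delicate point is carrying out this cancellation while only assuming $\iota^2\simeq\id$ rather than an honest involution.
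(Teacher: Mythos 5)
Your route differs from the paper's: you pick a reduced representative of minimal $\F[U]$-rank in the local equivalence class of $(\CFm(Y,\s),\iota)$, whereas the paper works intrinsically on $(\CFm(Y,\s),\iota)$ itself. It places a preorder on self-local equivalences $f\colon C\to C$ by $f\lesssim g$ iff $\ker f\subseteq\ker g$, takes a maximal $f$, and sets $\cC_\conn=(\im f,\,f\circ\iota\circ(f|_{\im f})^{-1})$. The uniqueness lemma there is clean: for maximal $f$ and $g$, the composite $g\circ f$ is again a self-local equivalence with $\ker(g\circ f)\supseteq\ker f$, so maximality forces equality; hence $g|_{\im f}$ (and symmetrically $f|_{\im g}$) is an injective, grading-preserving, $\F[U]$-equivariant map between finitely generated modules and therefore an isomorphism. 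One also gets an honest chain-level splitting $C\cong\im f\oplus\ker f$, with an explicit homotopy making the decomposition $\iota$-equivariant, so that $\HF_\red(Y,\s)\cong H_\red(\im f)\oplus H_\red(\ker f)$ and the summand claim is immediate, with no need to upgrade a homotopy retract to a genuine splitting.

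The gap is exactly where you flagged it: your key lemma (every local self-morphism of a minimal reduced $\iota$-complex is a homotopy equivalence) is asserted with only a cancellation strategy, and that strategy does not work as stated. For an arbitrary non-invertible local self-morphism $h$, the restriction $h|_{\im h}$ need not be injective, so $h$ by itself does not furnish a splitting of $C$ or a smaller locally equivalent reduced model; one cannot simply ``cancel along $h$.'' The kernel-preorder maximality is the missing idea. With it, your lemma does follow: any maximal self-local equivalence $f$ of a minimal reduced $C$ must have $\ker f=0$, since otherwise the splitting $C\cong\im f\oplus\ker f$ produces (after reducing $\im f$) a strictly smaller locally equivalent reduced complex; and then for any local self-morphism $h$, choosing a maximal $f\gtrsim h$ gives $\ker h\subseteq\ker f=0$, so $h$ is injective, hence an isomorphism of finitely generated graded $\F[U]$-modules, hence a homotopy equivalence. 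So the proposal is salvageable, but the unproved lemma is the substance of the theorem, and the preorder-by-kernel device is precisely what it needs.
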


In order to define this invariant, we make use of work of the first author and Manolescu \cite{HM:involutive}, in which they use the $\spinc$-conjugation symmetry in Heegaard Floer homology to produce {\em involutive Heegaard Floer homology} for a pair $(Y,\s)$.  (Recall that on a rational homology sphere, self-conjugate $\spinc$ structures correspond precisely with spin structures.)  In this case, the involutive package contains two numerical homology cobordism invariants: $\du(Y,\s)$ and $\dl(Y,\s)$.  Using $\HF_\conn$, we describe the asymptotic behavior of $\dl$ and $\du$ under connected sums; this result is the Heegaard Floer-analog of \cite[Theorem 1.3]{Stoffregenconnectedsum}, which concerns the Manolescu invariants $\alpha$, $\beta$, and $\gamma$.

\begin{theorem}\label{thm:asymptotics}
Let $Y$ be a rational homology three-sphere and $\s$ a spin structure on $Y$.  Then
\[ \lim_{n\rightarrow \infty} \frac{\dl(\#_n(Y, \s))}{n} =  \lim_{n\rightarrow \infty} \frac{\du(\#_n(Y, \s))}{n} = d(Y,\s). \]
\end{theorem}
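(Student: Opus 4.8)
The plan is to play the pinching $\dl \le d \le \du$ of \cite{HM:involutive} against additivity of the Ozsv\'ath--Szab\'o correction term $d$, the essential input being that $\dl$ and $\du$ depend only on the local equivalence class of the underlying $\iota$-complex, hence only on the connected complex $\mathcal C$ that computes $\HF_\conn$ in Theorem~\ref{thm:HFconn}. First normalize so that $d(Y,\s)=0$; this is harmless, since an overall grading shift moves $\dl$, $d$, and $\du$ by the same amount and since $d(\#_n(Y,\s)) = n\,d(Y,\s)$. Thus it suffices to show $\dl(\#_n(Y,\s))/n \to 0$ and $\du(\#_n(Y,\s))/n \to 0$. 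Applying $\dl \le d \le \du$ to $\#_n(Y,\s)$ already gives $\dl(\#_n(Y,\s)) \le 0 \le \du(\#_n(Y,\s))$, which is half of each limit; and since $\du(-Y,\s) = -\dl(Y,\s)$, so that $\du(\#_n(-Y,\s)) = -\dl(\#_n(Y,\s))$, it remains only to prove the lower bound $\dl(\#_n(Y,\s)) \ge -o(n)$ for all $(Y,\s)$ with $d(Y,\s)=0$.

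For this I would pass to the connected complex. By the connected sum formula for involutive Heegaard Floer homology (Hendricks--Manolescu--Zemke), $\CF^-(\#_n(Y,\s))$ is $\iota$-homotopy equivalent to the $n$-fold $\F[U]$-tensor power $(\CF^-(Y,\s),\iota)^{\otimes n}$; since tensor product respects local equivalence, since $\CF^-(Y,\s)$ is locally equivalent to its connected complex $\mathcal C = \mathcal C(Y,\s)$, and since $\dl$ is a local equivalence invariant, we get $\dl(\#_n(Y,\s)) = \dl(\mathcal C^{\otimes n})$. The problem is thereby reduced to showing $\dl(\mathcal C^{\otimes n}) = o(n)$; in fact one should expect, and it would suffice, that $\dl(\mathcal C^{\otimes n})$ is bounded below by a constant depending only on $\mathcal C$.

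This last estimate is the crux, and it is the Heegaard Floer counterpart of the analogous step in \cite[Theorem 1.3]{Stoffregenconnectedsum}. It should be stressed that the naive subadditivity bound $\dl(\mathcal C^{\otimes n}) \ge n\,\dl(\mathcal C)$ is insufficient -- it is negative and linear when $\dl(\mathcal C) < 0 = d(\mathcal C)$ -- and that what genuinely enters is the minimality of the connected complex. Concretely, I would fix a cycle generating the tower of $\mathcal C$ in grading $0$ and follow the action of $Q(1+\iota^{\otimes n})$ on its $n$-th tensor power inside the mapping cone computing $\HFI^-(\mathcal C^{\otimes n})$: the value $-\dl(\mathcal C^{\otimes n})$ records the ``$U$-depth'' to which $1+\iota^{\otimes n}$ can keep the corresponding $Q$-tower from becoming $U$-nontorsion, and the point is that, because $\mathcal C$ admits no nontrivial self-local-equivalence, this depth is controlled by the internal structure of $\mathcal C$ alone and does not grow with $n$. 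Making this interaction between the connected complex and iterated tensor powers precise is where I expect the real work to be. Granting it, $\dl(\#_n(Y,\s))/n \to 0$, and undoing the normalization together with the conjugation symmetry of the first paragraph yields both limits in Theorem~\ref{thm:asymptotics}.
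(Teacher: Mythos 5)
Your reduction is clean and correct as far as it goes: invariance of $d$, $\dl$, $\du$ under local equivalence together with $\dl \le d \le \du$, additivity $d(\#_n(Y,\s)) = n\,d(Y,\s)$, and the duality $\dl(Y,\s) = -\du(-Y,\s)$ do reduce the theorem to showing that $d(\cC^{\otimes n}) - \dl(\cC^{\otimes n})$ stays bounded as $n \to \infty$, where $\cC$ is the connected $\iota$-complex of $(Y,\s)$. You correctly identify this as the crux and correctly guess the form the needed bound should take. But you do not prove it: the appeal to ``$\mathcal C$ admits no nontrivial self-local-equivalence, hence the $U$-depth does not grow with $n$'' is an intuition for why the bound ought to hold, not an argument, and you say as much. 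That bound is essentially the entire content of the theorem.

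The paper fills this gap with the invariant $\omega(\cC) = \min\{m : U^m H_\conn(\cC) = 0\}$ and two facts about it. Proposition~\ref{prop:formalomegabounds}: after replacing $\cC$ by its connected complex, so that $H_\conn(\cC) = H_\red(C)$, the exact triangle $H_*(C) \xrightarrow{(1+\iota)_*} H_*(C) \to H_*(\Cone(1+\iota))$ forces $H_*(C)$ to contain a $U$-torsion tower of length at least $\tfrac{1}{2}(d(\cC)-\dl(\cC))$, and likewise of length at least $\tfrac{1}{2}(\du(\cC)-d(\cC))$, so $\omega(\cC) \ge \tfrac{1}{2}(d(\cC)-\dl(\cC))$ and $\omega(\cC) \ge \tfrac{1}{2}(\du(\cC)-d(\cC))$. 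Proposition~\ref{prop:omegasum}: $\omega(\cC\otimes\cC') \le \max\{\omega(\cC),\omega(\cC')\}$, because by the K\"unneth formula over $\F[U]$ every $U$-torsion summand of $H_*(C\otimes_{\F[U]}C')$ is a tensor or $\Tor$ term built from torsion towers in $H_*(C)$ and $H_*(C')$, hence is annihilated by $U^{\max\{\omega(\cC),\omega(\cC')\}}$, and $H_\conn$ of the tensor product is (up to shift) a summand of this torsion. Combining the two, $\omega(n\cC) \le \omega(\cC)$ for all $n$, hence $d(n\cC)-\dl(n\cC) \le 2\omega(\cC)$ and $\du(n\cC)-d(n\cC) \le 2\omega(\cC)$ uniformly in $n$, which is precisely the bound you needed. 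Note in particular that the argument never tracks $1+\iota^{\otimes n}$ on the chain level; the K\"unneth formula does that bookkeeping at the level of homology, which is what keeps the estimate manageable.
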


Let $\Theta^3_\Z$ be the three-dimensional homology cobordism group, i.e., 
\[ \Theta^3_\Z = \{ \Z \textup{-homology spheres} \} / \Z \textup{-homology cobordism}. \]
This group is also often called $\Theta^3_H$. We next use $\HF_\conn$ to define a filtration on $\Theta^3_\Z$. For notation, let $\mathcal T_a(n) = \F_{(a)}[U]/U^{n}$, where $\gr(1) = a$.  Let $\cP$ denote the set of subsets of $\N$. Note that $\cP$ is a partially ordered set, with the order induced by inclusion, i.e., given $P_1, P_2 \in \cP$, we have that $P_1 \leq P_2$ if $P_1 \subseteq P_2$. For $P \in \cP$, define
\[ \cF_P = \{ [Y] \mid \HF_\conn(Y) \cong \bigoplus_{i=1}^N \cT_{a_i}(n_i) , \ n_i \in P \}, \]
Here, $(n_i)_{i=1}^N$ is any finite sequence of elements in $P$.

\begin{restatable}{proposition}{filtered}
\label{prop:filtered}
The group $\Theta^3_\Z$ is filtered by $\cP$, i.e., 
\begin{enumerate}
	\item $\cF_P$ is a subgroup for each $P \in \cP$,
	\item if $P_1 \leq P_2$, then $\cF_{P_1} \subseteq \cF_{P_2}$.
\end{enumerate}
\end{restatable}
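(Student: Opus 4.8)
The plan is to reduce both parts of the statement to the structure theory of $\iota$-complexes together with the behavior of the connected complex under the two operations that generate the group law on $\Theta^3_\Z$, namely connected sum and orientation reversal. Part~(2) requires no work: if $\HF_\conn(Y)\cong\bigoplus_{i=1}^N\cT_{a_i}(n_i)$ with every $n_i\in P_1$, the same decomposition certifies $[Y]\in\cF_{P_2}$ once $P_1\subseteq P_2$. For part~(1) I will use throughout that $\HF_\conn(Y)$ is a finitely generated torsion $\F[U]$-module, since by Theorem~\ref{thm:HFconn} it is a direct summand of $\HF_\red(Y)$ and $Y$ is a rational homology sphere; by the structure theorem over the PID $\F[U]$ it therefore decomposes as $\bigoplus_i\cT_{a_i}(n_i)$, and the multiset of $U$-torsion orders $\{n_i\}$ is an honest invariant of $\HF_\conn(Y)$. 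In this language, $[Y]\in\cF_P$ says precisely that this multiset lies in $P$. The identity element presents no difficulty, as $\HF_\conn(S^3)=0$ is the empty direct sum, so $[S^3]\in\cF_P$ for every $P\in\cP$ (including $P=\emptyset$); it remains to establish closure under connected sum and under orientation reversal.

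For closure under connected sum I would invoke the connected sum formula for involutive Heegaard Floer homology, which identifies the $\iota$-complex of $Y_1\#Y_2$, up to homotopy equivalence of $\iota$-complexes, with the tensor product over $\F[U]$ of the $\iota$-complexes of $Y_1$ and $Y_2$. Writing the $\iota$-complex of each $Y_i$ as $\CF_\conn(Y_i)$ plus an acyclic summand, and using that the tensor product of an acyclic $\iota$-complex with anything is again acyclic, the $\iota$-complex of $Y_1\#Y_2$ becomes homotopy equivalent to $\CF_\conn(Y_1)\otimes_{\F[U]}\CF_\conn(Y_2)$ plus an acyclic summand. Since the connected complex is insensitive to acyclic summands and depends only on the local equivalence class, $\CF_\conn(Y_1\#Y_2)$ is then a direct summand of the chain complex $\CF_\conn(Y_1)\otimes_{\F[U]}\CF_\conn(Y_2)$, and hence $\HF_\conn(Y_1\#Y_2)$ is a direct summand of the homology of that tensor product. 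A K\"unneth computation over $\F[U]$ shows that the torsion part of this homology is built from $\HF_\conn(Y_1)$, $\HF_\conn(Y_2)$, and tower summands of $U$-torsion order $\min(a_i,b_j)$ arising from $\otimes$ and $\Tor_1$ of the torsion parts, where $\{a_i\}$ and $\{b_j\}$ are the $U$-torsion orders of $\HF_\conn(Y_1)$ and $\HF_\conn(Y_2)$. The decisive point is elementary: $\min(a_i,b_j)\in\{a_i,b_j\}\subseteq P$ whenever $a_i,b_j\in P$, so every $U$-torsion order occurring lies in $P$, and therefore so does every one occurring in the direct summand $\HF_\conn(Y_1\#Y_2)$. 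Thus $[Y_1]+[Y_2]=[Y_1\#Y_2]\in\cF_P$.

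For closure under orientation reversal I would use the duality relating $(Y,\s)$ and $(-Y,\s)$ in the involutive setting: the $\iota$-complex of $-Y$ is the $\F[U]$-dual of that of $Y$ up to a grading reversal, and dualizing carries reduced $\iota$-complexes to reduced $\iota$-complexes, so $\CF_\conn(-Y)$ is the dual of $\CF_\conn(Y)$. Dualizing a finitely generated torsion $\F[U]$-module preserves its multiset of elementary divisors, so $\HF_\conn(-Y)$ has the same $U$-torsion orders as $\HF_\conn(Y)$; in particular $[Y]\in\cF_P$ implies $[-Y]\in\cF_P$. Together with $[S^3]\in\cF_P$ and closure under connected sum, this proves that $\cF_P$ is a subgroup of $\Theta^3_\Z$.

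I expect the only genuinely nontrivial step to be the first one in the connected-sum argument, which relies both on the connected sum formula for involutive Heegaard Floer homology and on the structure theory of $\iota$-complexes: the existence and uniqueness of the reduced representative, its realization as an actual direct summand of the ambient $\iota$-complex, and the compatibility of these facts with tensor products. Granting those inputs, everything else is the routine K\"unneth computation together with the trivial observation that $\min(a,b)\in\{a,b\}$ --- which is exactly what allows the filtration to be indexed by all of $\cP$, rather than only by downward-closed families.
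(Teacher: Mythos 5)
Your argument is correct and follows essentially the same route as the paper: the paper phrases it as Proposition~\ref{prop:I-filtration} at the level of $\Inv_\Q$ and then pushes forward under $[Y] \mapsto [(\CF^-(Y), \iota)]$, but the substance --- the K\"unneth computation, the elementary observation $\min(a,b) \in \{a,b\}$, and duality for inverses --- coincides. Two imprecisions are worth flagging, although neither invalidates the conclusion.

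First, the complement of the connected complex inside a representative $\iota$-complex is \emph{not} acyclic in general. By Lemma~\ref{lem:iotasplit} it is $(\ker f, \iotaperp)$ for a maximal self-local equivalence $f$, and $H_*(\ker f)$ is $U$-torsion but need not vanish (consider an $\iota$-complex with $H_* \cong \F[U] \oplus \cT(n)^2$ and $\iota_*$ swapping the two $\cT(n)$ summands). What you actually need --- and in fact invoke a sentence later --- is that $\CF^-(Y_1 \# Y_2)$ is \emph{locally equivalent} to the tensor product of the connected complexes of $Y_1$ and $Y_2$, combined with the local-equivalence invariance of $\HF_\conn$; the paper makes the equivalent move of choosing a representative $\cC$ with $H_\conn(\cC) = H_\red(C)$. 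Second, $\Hom_{\F[U]}(T, \F[U]) = 0$ for any $U$-torsion module $T$, so ``dualizing a finitely generated torsion $\F[U]$-module preserves its elementary divisors'' is not literally correct. The right mechanism is that one dualizes the \emph{free} chain complex, and the torsion in $H_*(C^*)$ arises from $\mathrm{Ext}^1_{\F[U]}(H_*(C),\F[U])$, which does preserve torsion orders; this is precisely the content of Proposition~\ref{prop:conn-duality}, which you could cite directly.
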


From the filtration $\cP$, we are able to reprove Furuta's theorem that $\Theta^3_\Z$ contains a subgroup of infinite rank \cite{Furuta}.  
\begin{restatable}{theorem}{infinitelygenerated}
\label{thm:-1surgerytorus}
The manifolds $S^3_{-1}(T_{2, 4n+1})$ are linearly independent in $\Theta^3_\Z$.
\end{restatable}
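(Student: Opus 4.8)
The plan is to use the filtration from Proposition~\ref{prop:filtered} together with a computation of $\HF_\conn$ for the relevant surgeries to separate the classes $[S^3_{-1}(T_{2,4n+1})]$ into different subgroups $\cF_P$. First I would compute, or invoke a known computation of, the knot Floer complex $\CFK^\infty(T_{2,4n+1})$ and hence $\CFK^\infty$ of its mirror, and then use the surgery formula to identify $\HF^-$ and the involution $\iota$ on $S^3_{-1}(T_{2,4n+1})$. Since $T_{2,4n+1}$ is an L-space knot, its complex is a "staircase," and $-1$-surgery should produce a large simplification; I expect $\HF_\conn(S^3_{-1}(T_{2,4n+1}))$ to be isomorphic to a single tower $\cT_{a_n}(n)$ (or $\cT_{a_n}(k_n)$ for some $k_n$ growing with $n$, most plausibly $k_n = n$), so that this manifold lies in $\cF_P$ precisely when $P$ contains that value.

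The key steps, in order, are: (1) recall that $T_{2,4n+1}$ is an L-space knot with $g = 2n$ and write down its staircase complex; (2) apply the mapping cone/large surgery formula of Ozsv\'ath--Szab\'o to compute $\HF^-(S^3_{-1}(T_{2,4n+1}))$, noting this is a negative surgery so one works with the mirror knot $T_{2,-(4n+1)}$ whose complex is the "dual" staircase; (3) track the conjugation involution $\iota$ through the surgery formula, using the model for $\iota_K$ on staircase complexes, to get the involutive complex and thence $\HFI^-$ and $\du, \dl$; (4) extract $\HF_\conn$ from this data via the definition given in proving Theorem~\ref{thm:HFconn} (as the connected summand), obtaining $\HF_\conn(S^3_{-1}(T_{2,4n+1})) \cong \cT_{a_n}(n)$; (5) invoke Proposition~\ref{prop:filtered}: if $\sum_i [S^3_{-1}(T_{2,4n_i+1})]$ with distinct $n_i$ were trivial in $\Theta^3_\Z$, then by additivity of $\HF_\conn$ on the connected summands and the definition of the filtration, all the $n_i$ would have to agree, a contradiction. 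Concretely, take $P = \{n\}$: then $[S^3_{-1}(T_{2,4n+1})] \in \cF_{\{n\}}$ but, I claim, $[S^3_{-1}(T_{2,4m+1})] \notin \cF_{\{n\}}$ for $m \neq n$; since each $\cF_P$ is a subgroup, no nontrivial relation among finitely many of these classes can hold, because a relation would place a class with the "wrong" $\HF_\conn$ into a subgroup not containing it.

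The main obstacle will be step (3)--(4): correctly computing the involution $\iota$ on the negative surgery and verifying that $\HF_\conn$ is the single tower $\cT_{a_n}(n)$ of the predicted length, since the behavior of $\iota_K$ under the surgery formula and under taking mirrors requires care, and the length of the connected-homology tower is exactly the invariant that does the separating. It is also possible that the relevant invariant distinguishing the classes is not the $U$-torsion order $n$ itself but some other feature of $\HF_\conn$ (e.g. which subset $P$ minimally contains the data); in that case I would instead argue that the sets $P_n = \{\, k : [S^3_{-1}(T_{2,4n+1})] \in \cF_{\{k\}}\,\}$ are pairwise distinct and use that, combined with the subgroup property, to rule out linear dependence. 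In either formulation, once the $\HF_\conn$ computation is in hand, the linear independence is a formal consequence of Proposition~\ref{prop:filtered} and additivity of $\HF_\conn$ under connected sum.
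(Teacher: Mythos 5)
Your overall strategy---compute $\HF_\conn$ of $S^3_{-1}(T_{2,4n+1})$ via the surgery formula (yielding $\cT_{-1}(n)$ since $V_0(T_{2,4n+1}) = n$), then use the structure of the connected homology under connected sum to rule out linear relations---is in spirit the paper's approach. However, there is a genuine gap in step (5), and it sits exactly where the paper has to do real work.

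The phrase ``additivity of $\HF_\conn$ on the connected summands'' is false, and the filtration $\cF_P$ by itself does not get you to linear independence. Knowing $[Y_n] \in \cF_{\{n\}}$ and $[Y_m] \notin \cF_{\{n\}}$ rules out relations of the form $[Y_n] = [Y_m]$, but it does not rule out, say, $2[Y_n] = 3[Y_m]$. By the subgroup property $2[Y_n] \in \cF_{\{n\}}$ and $3[Y_m] \in \cF_{\{m\}}$, so if there were such a relation you would conclude $2[Y_n] \in \cF_{\{n\}} \cap \cF_{\{m\}} = \cF_\emptyset$, i.e.\ $\HF_\conn(\#_2 Y_n) = 0$. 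This is \emph{not} obstructed by $\HF_\conn(Y_n) = \cT_{-1}(n)$: under the K\"unneth formula $H_\red$ of a tensor product contains many Tor terms, and the connected complex is then only a \emph{summand} of this tensor product, which a priori could drop everything. So the crux of the argument is precisely to show that $\HF_\conn$ of multiples of $Y_n$ (including negative multiples) stays nontrivial with a $\cT(n)$-summand.

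The paper handles this with its Theorem~\ref{thm:connect-sums} (an explicit computation of $\HF_\conn$ of connected sums of $-1$-surgeries on L-space knots via the local-equivalence model $\cC_{V_0}$), combined with Proposition~\ref{prop:conn-duality} for negative multiples. Then, instead of the full filtration, it packages the argument through the single numerical invariant $\omega$ (the maximal $U$-torsion order of $\HF_\conn$): from Theorem~\ref{thm:connect-sums}, $\omega(kY_n) = n$ for every $k \neq 0$, and $\omega$ is subadditive under connected sum (Proposition~\ref{prop:omegasum}). So if $k_1 Y_{1} \# \cdots \# k_N Y_{N} = 0$ with $k_N \neq 0$, then $N = \omega(-k_N Y_N) = \omega(k_1 Y_1 \# \cdots \# k_{N-1}Y_{N-1}) \leq N-1$, a contradiction. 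Your outline would be correct if you replaced the ``additivity'' claim with this connected-sum computation (or the equivalent statement via $\omega$); as written, that step does not follow.
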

This was also recently done with involutive Heegaard Floer homology by \cite{DaiManolescu} (see also \cite{Stoffregenconnectedsum} for a proof using Seiberg-Witten theory).  Note that the knots in the above theorem have arbitrarily large genus. In fact, if we restrict to surgery on knots with bounded genus, the subgroup generated by these manifolds will never be all of $\Theta^3_\Z$, even if we allow fractional surgeries.

\begin{restatable}{theorem}{genus}
\label{thm:g(K)<N}
Fix $N > 0$.  Let $\Theta^3_N$ denote the subgroup of $\Theta^3_\Z$ generated by $\{ S^3_{1/n}(K) \mid g(K) <N, n \in \Z \}$. Then $\Theta^3_N$ is a proper subgroup of $\Theta^3_\Z$.
\end{restatable}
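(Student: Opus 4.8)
The plan is to deduce Theorem~\ref{thm:g(K)<N} from the filtration of $\Theta^3_\Z$ by $\cP$ furnished by Proposition~\ref{prop:filtered}. Concretely, I will establish two facts: (i) if $g(K) < N$, then $[S^3_{1/n}(K)]$ lies in $\cF_{P_N}$ for a single \emph{finite} set $P_N \subseteq \N$ depending only on $N$ (and not on $n$ or $K$); and (ii) there is a homology sphere $Y$ with $[Y] \notin \cF_{P_N}$. Given these, note that the orientation reverse of $S^3_{1/n}(K)$ is $S^3_{1/(-n)}(\overline{K})$ with $g(\overline{K}) = g(K) < N$, so the generating set of $\Theta^3_N$ is closed under inversion; since $\cF_{P_N}$ is a subgroup of $\Theta^3_\Z$ by Proposition~\ref{prop:filtered}(1), (i) forces $\Theta^3_N \subseteq \cF_{P_N}$, while (ii) gives $\cF_{P_N} \subsetneq \Theta^3_\Z$, and the theorem follows.

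For (i), recall that $\HFp(S^3_{1/n}(K))$, and with it the involutive package and hence $\HF_\conn(S^3_{1/n}(K))$, is computed from $\CFKi(K)$ by the (involutive) mapping cone formula; by Theorem~\ref{thm:HFconn} the module $\HF_\conn(S^3_{1/n}(K))$ is a summand of $\HF_\red(S^3_{1/n}(K))$, which is in turn a summand of the reduced homology of a truncated mapping cone built from copies of the knot complexes $A^+_s(K)$ and $B^+_s = \mathcal T^+$. The key point is that $\CFKi(K)$ is supported in a box of size $O(g(K))$, so all but $O(g(K))$ of the $A^+_s(K)$ are isomorphic to $\mathcal T^+$ with $v_s$ or $h_s$ an isomorphism, and each of the finitely many nontrivial ones has $U$-torsion order at most $O(g(K))$; the ``trivial'' copies only contribute a collapsing zig-zag of towers and no torsion of their own. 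Consequently, writing $\HF_\conn(S^3_{1/n}(K)) \cong \bigoplus_i \cT_{a_i}(n_i)$, one obtains $\max_i n_i \le f(N)$ for a function $f$ of $N$ alone, \emph{uniform in} $n$ (this is the content of the surgery formula together with the standard truncation, transported to the involutive setting). Taking $P_N = \{1, 2, \dots, f(N)\}$ then yields $[S^3_{1/n}(K)] \in \cF_{P_N}$ whenever $g(K) < N$.

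For (ii), it suffices to produce, for each $m$, a homology sphere $Y_m$ with $\cT_a(m)$ a summand of $\HF_\conn(Y_m)$; applying this with $m > f(N)$ gives $[Y_m] \notin \cF_{P_N} \supseteq \Theta^3_N$. The family $S^3_{-1}(T_{2,4k+1})$ serves this purpose: the computation underlying Theorem~\ref{thm:-1surgerytorus} shows that the connected Heegaard Floer homology of $S^3_{-1}(T_{2,4k+1})$ is a single tower whose length grows with $k$, so for $k$ large it exceeds $f(N)$ and $[S^3_{-1}(T_{2,4k+1})] \notin \Theta^3_N$, proving properness.

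The main obstacle is ingredient (i): although $\HF_\red(S^3_{1/n}(K))$ typically has unbounded total rank as $n \to \infty$, I must show its $U$-torsion \emph{orders} — and hence those of its summand $\HF_\conn$ — stay bounded in terms of $g(K)$ only. This comes down to tracking how the connecting maps $v_s$ and $h_s$ of the involutive mapping cone interact with the tower parts $\mathcal T^+ \subseteq A^+_s(K)$: the torsion contributions are confined to the $O(g(K))$ copies with $A^+_s(K) \not\cong \mathcal T^+$ (together with the bounded shifts $V_s, H_s$), while the long alternating chain of isomorphisms among the remaining copies cannot amplify torsion order. I expect to carry this out using the truncation arguments standard for the surgery formula, adapted to the involutive mapping cone.
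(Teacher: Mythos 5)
Your overall strategy is the same as the paper's: show that for fixed $N$ the classes $[S^3_{1/n}(K)]$ with $g(K) < N$ all lie in a single $\cF_{[p]}$ with $p$ depending only on $N$, then exhibit homology spheres outside every fixed $\cF_{[p]}$ using $-1$-surgeries on torus knots. Your step (ii) is exactly what the paper does, invoking Theorem~\ref{thm:-nLspace} and the fact that $V_0(T_{2,4k+1}) = k$ is unbounded.

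The gap is precisely the point you flag as ``the main obstacle.'' You propose to prove, by a hand-built analysis of the mapping cone, that the $U$-torsion orders of $\HF_\red(S^3_{1/n}(K))$ are bounded by a function of $g(K)$ alone, \emph{uniformly} in $n$. This is the content of a theorem of Gainullin, which the paper simply cites: $U^{g(K)+\lceil g_4(K)/2\rceil}\HF_\red(S^3_{1/n}(K)) = 0$ for every $n$, so with $g(K) < N$ one gets $n_i < 3N/2$ and hence $\Theta^3_N \subseteq \cF_{[3N/2-1]}$. Your sketch --- ``the long alternating chain of isomorphisms among the remaining copies cannot amplify torsion order'' --- is not a bookkeeping observation; it is essentially a restatement of what Gainullin's theorem asserts, and establishing it requires a careful argument about how the kernels and cokernels of the maps $v_s$, $h_s$ assemble in the truncated cone for $1/n$-surgery (in particular, why the growing number of copies of $A^+_s$ increases the \emph{rank} of $\HF_\red$ without increasing its torsion \emph{order}). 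As written your (i) is a promissory note rather than a proof. If you instead cite \cite[Theorem~3]{Gainullin} at that point --- as the paper does --- the rest of your argument is complete and correct. One small remark: you do not actually need to observe that your generating set is closed under orientation reversal; since $\cF_{[p]}$ is a subgroup (Proposition~\ref{prop:filtered}), containment of the generators already gives containment of the subgroup they generate, inverses included.
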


As seen from the construction of $\cP$, the $U$-action on $\HF_\conn$ can be used to obtain significant information.  We therefore define an invariant that measures the nilpotence of this action.
\begin{definition}\label{def:omega}
Let $(Y,\s)$ be a spin rational homology sphere. Define
\[ \omega(Y,\s) = \min \{ n \mid U^n \HF_\conn(Y,\s)=0 \}. \]
\end{definition}

\begin{proposition}\label{prop:omegabounds} 
Let $(Y,\s)$ be a spin rational homology sphere. Then
\begin{enumerate}
	\item $\frac{1}{2}(d(Y,\s)-\dl(Y,\s)) \leq \omega(Y,\s)$,
	\item $\frac{1}{2}(\du(Y,\s)-d(Y,\s)) \leq \omega(Y,\s)$.
\end{enumerate}
\end{proposition}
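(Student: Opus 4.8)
The plan is to pass to the connected $\iota$-complex and then read off $d$, $\du$, and $\dl$ from the mapping cone computing involutive Heegaard Floer homology, exploiting that $1+\iota$ is pinned down on the nose by $\HF_\conn$. First I would reduce to the connected complex: the correction terms $d(Y,\s)$, $\du(Y,\s)$, $\dl(Y,\s)$ are invariants of the $\iota$-local equivalence class of $\CFm(Y,\s)$ (classical for $d$; for $\du,\dl$ see \cite{HM:involutive}), so they agree with the corresponding invariants of the connected representative $\mathcal C_\conn$ produced in the proof of Theorem~\ref{thm:HFconn}, for which $H_*(\mathcal C_\conn)\cong\F[U]\langle v\rangle\oplus\HF_\conn(Y,\s)$ with $\gr(v)=d(Y,\s)-2$ and the torsion summand annihilated by $U^{\omega}$, $\omega:=\omega(Y,\s)$. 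The key algebraic input is that $\im(1+\iota_*)\subseteq\HF_\conn(Y,\s)$: since $\iota$ acts as the identity on $\HFinf(Y,\s)$, localization annihilates $\im(1+\iota_*)$ (over $\F_2$, $\mathrm{loc}\circ(1+\iota_*)=\mathrm{loc}+\mathrm{loc}=0$), so $\im(1+\iota_*)$ lies in the $U$-torsion and is killed by $U^\omega$; in particular $(1+\iota_*)(U^\omega v)=0$.

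For part (1) I would use $\CFIm=\Cone\!\big(Q(1+\iota)\colon\CFm\to Q\!\cdot\!\CFm[-1]\big)$, whose long exact sequence has connecting map $Q(1+\iota)_*$, so the image of $\HFIm(Y,\s)\to H_*(\mathcal C_\conn)$ is exactly $\ker(1+\iota_*)$. By \cite{HM:involutive}, $\dl(Y,\s)$ equals $2$ plus the top grading of the image of this kernel under localization to $\HFinf(Y,\s)$; this image is an $\F[U]$-submodule of $\im(H_*(\mathcal C_\conn)\to\HFinf(Y,\s))$, whose top lies in grading $d(Y,\s)-2$, and by the previous paragraph $U^\omega v$ already lies in $\ker(1+\iota_*)$ and localizes to a nonzero class of grading $d(Y,\s)-2-2\omega$. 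Hence the top grading is at least $d(Y,\s)-2-2\omega$, which gives $\dl(Y,\s)\ge d(Y,\s)-2\omega$.

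Part (2) is the dual statement. Up to a universal shift fixed by $\du(S^3)=0$, the quantity $\du(Y,\s)$ is the top grading $\gamma$ of the ``$Q$-tower'' of $\im(\HFIm(Y,\s)\to\HFIinf(Y,\s))$, i.e.\ of the part landing in $Q\!\cdot\!\HFIinf(Y,\s)$; since this $Q$-tower always contains the image of $Q\!\cdot\!v$, which sits in grading $\gr(v)-1=d(Y,\s)-3$, the normalization yields $\du(Y,\s)-d(Y,\s)=\gamma-(\gr(v)-1)$. Now take $y\in\HFIm(Y,\s)$ realizing $\gamma$; its image in $H_*(\mathcal C_\conn)$ is a torsion class $t\in\HF_\conn(Y,\s)$, so $U^\omega t=0$, and therefore $U^\omega y$ lies in the image of the subcomplex $Q\!\cdot\!\CFm[-1]$ of $\CFIm$, whose non-torsion part tops out in grading $\gr(v)-1$. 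But $y$ detects an infinite $U$-tower, so $U^\omega y\ne 0$, forcing $\gamma-2\omega=\gr(U^\omega y)\le\gr(v)-1$; hence $\du(Y,\s)-d(Y,\s)\le 2\omega$. (Alternatively, (2) follows from (1) applied to $-Y$, using $\du(Y,\s)=-\dl(-Y,\s)$, $d(Y,\s)=-d(-Y,\s)$, and $\omega(-Y,\s)=\omega(Y,\s)$, the last of which is itself a short argument since the connected complex of $-Y$ is dual to that of $Y$.)

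The main obstacle is matching these arguments to the precise definitions of $\du$ and $\dl$ in \cite{HM:involutive} --- in particular pinning down the universal grading shifts, and in part (2) checking carefully that a high-grading $Q$-tower class remains tower-detecting after being pushed into the pure-$Q$ summand by $U^\omega$. The hypothesis that is genuinely used is that we have passed to the \emph{connected} complex, so that $\HF_\red(\mathcal C_\conn)=\HF_\conn(Y,\s)$ is exactly the module whose $U$-nilpotence order is $\omega$; on $\CFm(Y,\s)$ itself, $\HF_\red(Y,\s)$ may carry locally trivial summands of arbitrarily large $U$-order that affect none of $\du$, $\dl$, or $\omega$.
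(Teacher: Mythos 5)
Your proof is correct and follows essentially the same strategy as the paper: pass to the connected complex so that $H_\red(\cC_\conn) = \HF_\conn(Y,\s)$, then use the mapping-cone exact triangle to show that the $U$-tower generator $v$ produces (via $1+\iota_*$) a torsion element whose $U$-order is controlled by $\omega$; the paper records this by saying that $H_*(C)$ contains a summand $\cT_{d(\cC)-2}(n)$ (resp.\ $\cT_{d(\cC)+2m-3}(m)$), which is exactly the dual bookkeeping to your lift-and-push argument. The grading off-by-one you flag in part (2) (the bottom of the $Q$-tower in $\HFI^-$ sits at $\gr(v)$, not $\gr(v)-1$, under the paper's convention that $\gr(Qx)=r$ when $\gr(x)=r$ in $\CF^-$) cancels against a compensating shift in your formula $\du - d = \gamma-(\gr(v)-1)$, so the final inequality is unaffected, and your duality fallback for (2) is also valid.
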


The proof of Theorem \ref{thm:-1surgerytorus} relies on the following calculation. Throughout, we will use the standard identification between $\spinc$ structures on $S^3_{\frac{p}{q}}(K)$ and classes $[i]$ in $\Z/p\Z$ given in \cite[Section 2.4]{OSinteger}.  We also will use the concordance invariant $V_0$ from \cite{RasmussenThesis} (see \cite{NiWu} for the current notation).  Recall that an \emph{L-space knot} $K \subset S^3$ is a knot that admits a positive L-space surgery.

\begin{theorem}\label{thm:-nLspace}
Let $Y=S^3_{-n}(K)$ where $K$ is an L-space knot and $n$ is a positive integer. Then
\[ \HF_\conn(Y, [0]) = \cT_{-1}(V_0), \]
and $d(Y, [0]) = \du (Y, [0]) = -d(L(n,1), [0])$ and $\dl(Y, [0]) = -2V_0-d(L(n,1), [0])$.
\end{theorem}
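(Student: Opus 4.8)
The plan is to compute the full involutive knot Floer package for $-n$-surgery on an L-space knot directly, and then extract $\HF_\conn$ together with $d$, $\du$, $\dl$ from it. The starting point is the large surgery formula together with its involutive refinement from \cite{HM:involutive}: for $p$ sufficiently large, $\HFm(S^3_p(K),[0])$ is computed by the subquotient complex $\Ab_0 = A_0$ of $\CFKi(K)$, and the involution $\iota$ on $\Ab_0$ is understood. The key simplification for an L-space knot is that $\CFKi(K)$ is a "staircase" complex, so $A_0$ has the homotopy type of a "box-plus-tower" complex whose reduced part is $\cT_{s}(V_0)$ for an appropriate grading shift $s$; moreover for a staircase the involution $\iota_K$ is (homotopic to) the obvious "reflection" of the staircase, which acts on $A_0$ in a way one can write down explicitly. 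From this one reads off that $\HFIm$ of large surgery is $\cT \oplus \cT \oplus \cT_{s'}(V_0)$ with the two towers the usual involutive pair, and the reduced summand a single $\cT_{s'}(V_0)$ on which $Q$ (the involutive variable) acts trivially. The first step of the proof, then, is to assemble these facts and record the precise graded isomorphism type of $\HFIm(S^3_p(K),[0])$ for large $p$, together with the behavior of the $\iota$-action.

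Next I would pass from large surgery to $S^3_{-n}(K)$. Because $K$ is an L-space knot, one expects the surgery to behave as cleanly as possible: there is a mapping-cone / surgery-exact-triangle argument (the integer surgery formula of \cite{OSinteger}, and its involutive analog) relating $\CFm(S^3_{-n}(K),[0])$ to the complex $A_0$ and the "$V_0$" data, with the negative lens space $L(n,1)$ contributing the grading shifts. In fact, for $-n$-surgery on an L-space knot the relevant mapping cone collapses so that $\HFm(S^3_{-n}(K),[0])$, and its reduced part, agree with those of large surgery up to the grading shift by $-d(L(n,1),[0])$; this gives $d(Y,[0]) = -d(L(n,1),[0])$ and $\HF_\red(Y,[0]) = \cT(V_0)$ with top grading $-1$. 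The second step is to run this same collapse at the involutive level — using that $\iota$ is compatible with the surgery formula — to conclude $\HFIm(Y,[0])$ has the corresponding form, and in particular that $\HF_\conn(Y,[0])$, being the reduced summand of $\HF_\red$ detected by the involutive structure (as in Theorem \ref{thm:HFconn}), is exactly $\cT_{-1}(V_0)$.

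Finally, with $\HFIm(Y,[0])$ in hand I would compute $\du$ and $\dl$ as the two grading invariants extracted from the two towers of $\HFIm$, exactly as in \cite{HM:involutive}: one tower is unshifted relative to the ordinary $d$-invariant, giving $\du(Y,[0]) = d(Y,[0]) = -d(L(n,1),[0])$, and the other is shifted down by $2V_0$ because of the length-$V_0$ reduced tower sitting between the two, giving $\dl(Y,[0]) = -2V_0 - d(L(n,1),[0])$. The main obstacle I anticipate is the bookkeeping of absolute gradings: matching the grading shift coming from the surgery/mapping-cone formula with $-d(L(n,1),[0])$, and verifying that the single reduced tower $\cT(V_0)$ really does sit in gradings $[-2V_0+1,\, -1]$ relative to the towers (equivalently, that $Q$ acts trivially on it so that nothing in the involutive complex cancels it) — this is where one must be careful that the staircase involution does not introduce extra differentials or grading corrections beyond those of ordinary Heegaard Floer homology, and it is essentially the computation that also yields Proposition \ref{prop:omegabounds} as an equality in this family.
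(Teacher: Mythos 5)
Your proposal correctly identifies the need to pin down $\iota_*$ on $\HFm(S^3_{-n}(K),[0])$ and then extract $\HF_\conn$, $d$, $\du$, $\dl$, but a central step rests on an ingredient that does not exist: an ``involutive analog'' of the integer surgery formula with which to transport $\iota$ through the mapping cone. What is available from \cite{HM:involutive} is only the involutive \emph{large} surgery formula, and there is no general involutive mapping-cone theorem that lets you conclude how $\iota$ acts on $\CFm(S^3_{-n}(K),[0])$. The paper's proof circumvents this by working entirely on the level of homology: it identifies $\HFm(S^3_{-n}(K),[0])$ with the graded root $M(\vec{V},n)$ using the mapping cone, identifies the generators $x_s,x_s'$ with the images of the cobordism maps $F^-_{W_{-n}(K),\t}(1)$ and $F^-_{W_{-n}(K),\bar\t}(1)$ via Proposition \ref{prop:mappingconecobordism}, and then uses the conjugation symmetry $F^-_{W,\t}=\iota_*\circ F^-_{W,\bar\t}$ (Ozsv\'ath--Szab\'o, Zemke) to show that $\iota_*$ swaps $x_s$ and $x_s'$. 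Determining the local equivalence class of the $\iota$-complex from this homology-level information is then done via the Dai--Manolescu structure theorem for graded roots, which is the crucial ingredient your proposal omits.

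There is also a concrete computational error: you claim that the mapping cone for $-n$-surgery on an L-space knot ``collapses'' so that $\HF_\red(S^3_{-n}(K),[0])\cong\cT(V_0)$ up to a grading shift, but this is false in general. For an L-space knot, $\HFm(S^3_{-n}(K),[0])\cong M(\vec V,n)$ is a ``doubled'' graded root with a pair of generators at every level $-ns(s+1)-2$ with $V_{ns}\neq 0$, so $\HF_\red$ is typically much larger than a single tower. Only the \emph{connected} homology collapses to $\cT_{-1}(V_0)$, and this is precisely the content of identifying the monotone subroot (Dai--Manolescu's Theorem 6.1) and is not a formal consequence of the surgery formula. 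The absolute-grading bookkeeping you flag as the ``main obstacle'' is in fact the more routine part of the argument; the real work is in determining $\iota_*$ without an involutive surgery formula, and in passing from $\iota_*$ on homology to the local equivalence class.
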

\noindent For a complete description of the involution $\iota_*$ on $\HFm(Y)$, see Proposition \ref{prop:surgeryinvolution}.

Here our assignment $\Z/n\Z \simeq \Spinc(L(n,1))$ is via viewing $L(n,1)$ as $n$ surgery on the unknot. While Theorem~\ref{thm:-nLspace} does not hold for arbitrary knots, in general we have the following inequality for the invariant $\dl$.
\begin{restatable}{theorem}{surgerygeneral}\label{thm:-ngeneral}
Let $K$ be a knot in $S^3$ and $n$ a positive integer. Then 
\[ \dl(S^3_{-n}(K), [0]) \geq - 2V_0(K) - d(L(n,1), [0]). \]
\end{restatable}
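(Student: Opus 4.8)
The plan is to realize $S^3_{-n}(K)$ via the mapping cone formula and compare its involutive package with that of the L-space model computed in Theorem \ref{thm:-nLspace}. Concretely, I would use the large surgery / mapping cone description of $\CFm(S^3_{-n}(K),[0])$ in terms of the knot complex $\CFK^\infty(K)$, together with the description of the involution $\iota$ on the surgery complex from the first author--Manolescu's involutive surgery formula (as recorded in Proposition \ref{prop:surgeryinvolution}). The key structural point is that for $n>0$ the relevant portion of the mapping cone in the spin$^c$ structure $[0]$ is governed by the maps $v_0, h_0 \colon A_0^- \to B^-$ and that $V_0 = V_0(K)$ controls the $U$-exponent drop of $v_0$ on homology. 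The inequality we want, $\dl(S^3_{-n}(K),[0]) \geq -2V_0(K) - d(L(n,1),[0])$, should come from exhibiting a cycle in the involutive mapping cone complex whose grading is at least the right-hand side and which is nontrivial in homology after the $\iota$-symmetrization.

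The main steps, in order, are as follows. First, set up the iterated mapping cone for $S^3_{-n}(K)$ in $[0]$ and identify the absolute grading shift, so that the ``baseline'' is $-d(L(n,1),[0])$ exactly as in the unknot case; this is a bookkeeping step using the grading conventions of \cite{OSinteger,NiWu}. Second, recall that $d(S^3_{-n}(K),[0]) = -2V_0(K) - d(L(n,1),[0])$ (a standard consequence of the work of Ni--Wu / Rasmussen on $V_0$), so the statement is equivalent to $\dl(S^3_{-n}(K),[0]) \geq d(S^3_{-n}(K),[0]) - \text{(nothing)}$; but since always $\dl \le d$, the content is really that $\dl$ cannot drop below the $V_0$-corrected value — i.e., we need a lower bound, not the exact value that holds in the L-space case. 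Third, use the involutive mapping cone complex $\CFI^-$, which is the cone of $Q(1+\iota)$ on the surgery complex, and produce an explicit homology class: take the generator realizing the tower in $\HFm$ at grading $-2V_0 - d(L(n,1),[0])$, push it into the $\CFI^-$ complex, and check, using the model computation from Theorem \ref{thm:-nLspace} as a comparison, that its image survives — equivalently, that no element of sufficiently high grading in the $Q\cdot$ summand bounds it. The cleanest way is probably a grading/rank argument: the map $Q(1+\iota)$ raises grading by $1$ (up to the usual shift), so any class killing our candidate would have to come from grading exactly one higher in $\CFm(S^3_{-n}(K),[0]) \oplus (\text{its }\iota\text{-double})$, and one bounds the $V$-invariants of the pieces $A_s^-$ entering the cone to rule this out; alternatively, appeal directly to the inequality $\dl(Y) \ge d(Y) - 2\omega(Y)$-type reasoning combined with Proposition \ref{prop:omegabounds}, but that seems to go the wrong direction, so the explicit-cycle route is safer.

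The hard part will be the third step: controlling the involution $\iota$ on the full iterated mapping cone for a \emph{general} knot $K$, where (unlike the L-space case) $\CFK^\infty(K)$ need not be a staircase and $\iota_K$ need not be the obvious symmetry. The point is that we do not need to compute $\dl$ exactly, only to bound it below, so it should suffice to bound below the grading of \emph{some} $\iota$-invariant-producing cycle without identifying the homology precisely. I expect this to reduce to the inequality $v_0$ (and its $\iota$-conjugate $h_0$) each drop the $U$-tower by exactly $V_0(K)$, together with the fact that the ``$Q$'' direction of the involutive cone contributes at most one extra grading unit — so the candidate cycle sits at grading $\ge -2V_0 - d(L(n,1),[0])$ and is not a boundary because any potential bounding element would force $v_0$ or $h_0$ to be ``more surjective'' than $V_0$ allows. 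Making this last rank comparison precise, in the presence of the possibly complicated $\iota_K$, is the crux; I would handle it by working with the mapping cone only through the single piece $A_0^-$ and the two maps to $B^-$ (legitimate for $[0]$ and $n>0$ after truncation), where the involutive formula is explicit, and deferring the general $\CFK$ structure into a black-box bound $V_0(K) = V_0(K)$ on those two maps.
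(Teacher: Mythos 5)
Your general strategy (mapping cone for $-n$-surgery, conjugation invariance of cobordism maps controlling $\iota$, and $V_0$ controlling the relevant $U$-power) is the right one, but the proposal has a factual error and a structural detour that together leave the crux unresolved.

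The factual error: you assert $d(S^3_{-n}(K),[0]) = -2V_0(K) - d(L(n,1),[0])$ and then reason that the theorem amounts to $\dl \ge d$. This is false; Ni--Wu gives $d(S^3_{n}(K),[0]) = d(L(n,1),[0]) - 2V_0(K)$ for \emph{positive} surgery, and for negative surgery the correct correction term involves the mirror. Indeed for $K=T_{2,3}$ one has $S^3_{-1}(K)=\Sigma(2,3,7)$ with $d = 0$, whereas $-2V_0 - d(L(1,1),[0]) = -2$. So the right-hand side of the theorem lies strictly below $d$ in general, and the theorem is a genuine content-ful lower bound on $\dl$, not a reformulation of $\dl \le d$.

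The structural detour: you propose to work inside the involutive mapping cone $\CFI^-$ and argue by a rank/grading comparison that a candidate cycle is not a boundary. You yourself flag this as ``the crux'' and leave it unresolved, and restricting to ``$A_0^-$ and the two maps to $B^-$'' is not legitimate when $n=1$ (many $A_s^-$ contribute after truncation). The paper avoids all of this by never entering $\CFI^-$: it suffices to exhibit a $U$-nontorsion class in $\HFm_r(S^3_{-n}(K),[0])$ fixed by $\iota_*$, since such a class automatically gives $\dl \ge r+2$. Concretely, let $y_0$ generate the $\F[U]$-free part of $H_*(A^-_0)$ and let $x_0, x'_0$ be the images of the generators of $H_*(B^-_{-n})$, $H_*(B^-_0)$ in $H_*(\mathfrak{C}^N)$. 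Since $H_0 = V_0$, one has $D^N_{n,*}(y_0) = U^{V_0}x_0 + U^{V_0}x'_0$, so $U^{V_0}x_0 = U^{V_0}x'_0$ already in $\coker D^N_{n,*}$, which injects into $H_*(\mathfrak{C}^N) \cong \HFm(S^3_{-n}(K),[0])$. By Proposition~\ref{prop:surgeryinvolution} (i.e.\ conjugation invariance of cobordism maps), $\iota_*$ interchanges $x_0$ and $x'_0$; hence $U^{V_0}x_0$ is $\iota_*$-fixed, $U$-nontorsion, and lies in grading $-2V_0 - d(L(n,1),[0]) - 2$. The inequality follows. Adopting this observation would both correct your step two and eliminate the unresolved rank comparison.
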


Using Theorem~\ref{thm:-nLspace}, we give a complete computation of $\HF_{\conn}$ for connected sums of $-1$ surgeries on L-space knots. 

\begin{theorem} \label{thm:connect-sums} Let $K_1, \cdots, K_m$ be concordant to L-space knots, ordered so that $V_0(K_1)\geq V_0(K_2)\geq \cdots \geq V_0(K_m))$. Let $a_i = \sum_{j=1}^{i} V_0(K_i)$, with $a_0=0$. Then
\[
\HF_\conn (\#_{i=1}^m S^3_{-1}(K_i)) = \bigoplus_{i=1}^{m} \mathcal T_{i-2-2a_{i-1}}(V_0(K_i)).
\]
\end{theorem}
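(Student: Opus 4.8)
\emph{Reductions.}
The plan is to reduce to the single-surgery computation of Theorem~\ref{thm:-nLspace} and then run a tensor-product calculation. Since $\HF_\conn$ is a homology cobordism invariant, concordant knots have homology cobordant surgeries, and $V_0$ is a concordance invariant, we may replace each $K_i$ by the L-space knot to which it is concordant without changing either side of the claimed identity; the ordering hypothesis is preserved. Discarding any unknotted $K_i$ (which contributes $S^3$), we may assume every $K_i$ is a nontrivial L-space knot, so $V_0(K_i)\ge 1$.

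\emph{Reduction to a tensor product of model complexes.}
By the connected-sum formula for involutive Heegaard Floer homology, the $\iota$-complex of $\#_{i=1}^m S^3_{-1}(K_i)$ is chain homotopy equivalent, as an $\iota$-complex, to $\bigotimes_{i=1}^m\bigl(\CFm(S^3_{-1}(K_i),[0]),\iota\bigr)$ over $\F[U]$. Passage to the connected complex, and hence $\HF_\conn$, depends only on the local equivalence class, and local equivalence is compatible with $\otimes$; so it is enough to compute the connected complex of $\bigotimes_{i=1}^m\cC_i$, where $\cC_i$ is a minimal model for the local equivalence class of $(S^3_{-1}(K_i),[0])$. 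By Theorem~\ref{thm:-nLspace} (with $n=1$) together with the description of $\iota_*$ in Proposition~\ref{prop:surgeryinvolution}, each $\cC_i$ is an explicit $\iota$-complex depending only on the integer $V_0(K_i)$, with reduced homology $\cT_{-1}(V_0(K_i))$ and correction terms $\du=0$, $\dl=-2V_0(K_i)$.

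\emph{The inductive computation.}
Induct on $m$, inserting the factors in the given order $K_1,K_2,\dots$ of non-increasing $V_0$. The case $m=1$ is Theorem~\ref{thm:-nLspace}, since $a_0=0$ gives $\cT_{-1}(V_0(K_1))=\cT_{1-2-2a_0}(V_0(K_1))$. For the inductive step, assume the connected complex of $\#_{i=1}^{m-1}S^3_{-1}(K_i)$ is locally equivalent to a model $\iota$-complex $\dD_{m-1}$ realizing $\bigoplus_{i=1}^{m-1}\cT_{i-2-2a_{i-1}}(V_0(K_i))$. It then suffices to compute the connected complex of $\dD_{m-1}\otimes\cC_m$: one performs an explicit chain homotopy reduction, splits off and discards each locally trivial summand, and exhibits mutually inverse local maps between what remains and a model $\dD_m$ realizing $\bigoplus_{i=1}^{m}\cT_{i-2-2a_{i-1}}(V_0(K_i))$. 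The new tensor factor $\cC_m$ produces exactly one additional summand, $\cT_{m-2-2a_{m-1}}(V_0(K_m))$: its length is $V_0(K_m)$ precisely because $V_0(K_m)\le V_0(K_i)$ for all $i<m$, and its top grading $m-2-2a_{m-1}$ is read off from the reduction, with the $-2a_{m-1}=-2\sum_{j<m}V_0(K_j)$ recording that the new staircase sits below the accumulated reduced homology already carried by $\dD_{m-1}$; the summands already present tensor with $\cC_m$ only to locally trivial pieces and so persist unchanged, giving the claimed formula for $m$.

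\emph{The main obstacle.}
The crux is this last step: actually carrying out the tensor-product simplification and verifying the bookkeeping, i.e., that tensoring with $\cC_m$ introduces a single staircase of the stated length and grading while leaving the other summands intact --- equivalently, pinning down exactly which summands of $\dD_{m-1}\otimes\cC_m$ are locally trivial. This is also the only place where the non-increasing ordering of the $V_0(K_i)$ enters, and it is essential: the stated right-hand side depends on the ordering while $\HF_\conn$ of the fixed manifold does not, so the ordering is the normalization that makes the inductive accounting close up. (In place of the induction one may instead identify the $\cC_i$ with standard $\iota$-complexes and invoke a general algorithm for the connected complex of a tensor product; the essential content is unchanged.)
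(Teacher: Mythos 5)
Your reductions are correct and match the paper's: replacing each $K_i$ by an L-space knot via concordance invariance and the concordance invariance of $V_0$, and passing to the tensor product of local-equivalence representatives $\cC_{V_0(K_i)}$ (the paper gets these representatives from Proposition~\ref{prop:surgeryinvolution} together with \cite[Theorem~6.1]{DaiManolescu}). You also correctly identify that the content of the theorem is the computation of the connected complex of $\bigotimes_i \cC_{V_0(K_i)}$, which is exactly the paper's Proposition~\ref{prop:connected-comp}.

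The gap is precisely what you flag as ``the main obstacle,'' and the sketch you give of how it should go is not quite right. You describe the inductive step as if the model $\dD_{m-1}$ were a direct sum of tower-shaped $\iota$-complexes, so that one could say the old summands ``tensor with $\cC_m$ only to locally trivial pieces and so persist unchanged.'' But the canonical representative is not a direct sum of $\iota$-complexes: in the paper's notation it is the single indecomposable complex $\cC_{n_1,\dots,n_m}$ with a unique bottom generator $y$ whose differential chains all the $x_i^j$ together. Only its \emph{reduced homology} decomposes into towers; the $\iota$-complex itself does not. So the accounting cannot proceed summand-by-summand in the way you describe, and the phrase ``persist unchanged'' is not attached to an actual argument.

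Two steps that carry the real weight in the paper are absent from your proposal. First, the paper's Lemma~\ref{lemma:reduced} is an explicit inductive change of basis producing the model $\cC_{n_1,\dots,n_m}$; there the induction tensors with the \emph{largest} $n$ last, so the substitution involves nonnegative powers $U^{n_1-n_j}$ — your induction tensors the smallest last, which is a different calculation that is not written down. Second, and more importantly, once the model is in hand one must still prove that \emph{every} self-local equivalence of $\cC_{n_1,\dots,n_m}$ is surjective, so that the connected homology is its entire reduced homology; this is the bulk of the paper's proof of Proposition~\ref{prop:connected-comp} and it is nowhere in your sketch. Without both of these, the ``bookkeeping'' you defer to is the proof.
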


Theorem~\ref{thm:connect-sums} allows us to easily construct homology spheres which are not homology cobordant to Seifert fibered spaces, first appearing in \cite[Corollary 1.11]{Stoffregen}.   

\begin{corollary} Let $K_1, \cdots, K_n$ and $K'_1, \cdots, K'_m$ be knots which are concordant to non-trivial L-space knots such that $n\neq m$. Then $\#_{i=1}^n S^3_{-1}(K_i)$ is not homology cobordant to $\#_{i=1}^m S^3_{-1}(K'_i)$. Moreover, for $n\geq 2$, $\#_{i=1}^n S^3_{-1}(K_i)$ is not homology cobordant to any Seifert fibered space or any surgery on an L-space knot.
\end{corollary}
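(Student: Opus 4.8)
The plan is to deduce the corollary from Theorem~\ref{thm:connect-sums}, the homology cobordism invariance of $\HF_\conn$ (Theorem~\ref{thm:HFconn}), and the elementary observation that $\HF_\conn$ is a finitely generated $U$-torsion $\F[U]$-module, hence decomposes uniquely as a finite direct sum of cyclic modules $\mathcal T_a(k)$. In particular the number of cyclic summands, which equals $\dim_\F\bigl(\HF_\conn/U\HF_\conn\bigr)$, is an isomorphism invariant, so in each case the argument reduces to counting summands.

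For the first statement, recall that $V_0$ is a concordance invariant and that every nontrivial L-space knot $J$ satisfies $V_0(J)\ge 1$ (for an L-space knot $V_0$ is read off the staircase complex and vanishes only for the unknot). Hence $V_0(K_i)\ge 1$ and $V_0(K'_j)\ge 1$ for all $i,j$, so in the decomposition
\[
\HF_\conn\bigl(\#_{i=1}^{n} S^3_{-1}(K_i)\bigr) = \bigoplus_{i=1}^{n} \mathcal T_{i-2-2a_{i-1}}(V_0(K_i))
\]
provided by Theorem~\ref{thm:connect-sums} every summand is nonzero. Thus $\HF_\conn(\#_{i=1}^{n} S^3_{-1}(K_i))$ has exactly $n$ cyclic summands and, likewise, $\HF_\conn(\#_{j=1}^{m} S^3_{-1}(K'_j))$ has exactly $m$; when $n\ne m$ these modules are not isomorphic, so the two connected sums are not homology cobordant.

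For the ``moreover'' statement the key input is that both Seifert fibered rational homology spheres and surgeries on L-space knots have connected Heegaard Floer homology with \emph{at most one} cyclic summand, i.e.\ of the form $0$ or $\mathcal T_a(k)$. For surgeries on an L-space knot this combines Theorem~\ref{thm:-nLspace} and Proposition~\ref{prop:surgeryinvolution} (a negative integer surgery in the $\spinc$ structure $[0]$ gives $\mathcal T_{-1}(V_0)$ or $0$), the L-space property (sufficiently positive surgeries are L-spaces, where $\HF_\red=0$), and the same surgery-formula analysis for the remaining slopes and the appropriate spin structure. For a Seifert fibered rational homology sphere one uses that such a manifold is, up to orientation reversal, the boundary of an almost-rational negative-definite plumbing, for which the involution on $\HFm$ is as simple as possible, so the connected homology is a single tower or zero; compare the computations of involutive and $Pin(2)$-monopole Floer homology of plumbed three-manifolds, cf.\ \cite{DaiManolescu}. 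Since $n\ge 2$, $\HF_\conn(\#_{i=1}^{n} S^3_{-1}(K_i))$ has at least two cyclic summands, hence is not isomorphic to the connected Heegaard Floer homology of any Seifert fibered space or of any surgery on an L-space knot, and therefore $\#_{i=1}^{n} S^3_{-1}(K_i)$ is not homology cobordant to any such manifold. This recovers \cite[Corollary 1.11]{Stoffregen}.

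The counting in the first part is routine once Theorem~\ref{thm:connect-sums} is granted; the main obstacle is making the ``at most one summand'' claims airtight. For surgeries on L-space knots one must treat \emph{all} surgery slopes and carefully isolate the $\spinc$ structure $[0]$, not merely the negative integer slopes covered by Theorem~\ref{thm:-nLspace}. For Seifert fibered spaces one must match orientation and spin-structure conventions with the plumbed-manifold computations and confirm that their monotone graded-root structure really does force $\HF_\conn$ to have a single summand. Everything else is bookkeeping.
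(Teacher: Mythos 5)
Your argument for the first statement ($n\neq m$) is correct and coincides with the paper's: each $V_0(K_i)>0$ forces every summand $\mathcal T_{i-2-2a_{i-1}}(V_0(K_i))$ in Theorem~\ref{thm:connect-sums} to be nonzero, so $\HF_\conn$ has exactly $n$ (respectively $m$) cyclic summands, and these counts are isomorphism invariants.

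The ``moreover'' part has a genuine gap. You assert that the connected Floer homology of any Seifert fibered rational homology sphere (or of any surgery on an L-space knot) has \emph{at most one} cyclic summand, and you even flag this as the part needing verification. It is in fact false. Theorem~\ref{thm:gradedroots} identifies $\HF_\conn$ with the $U$-torsion submodule of $\mathbb H^-(M')$ for the monotone graded subroot $M'$, and a monotone graded root $M(h_1,r_1;\dots;h_n,r_n)$ with $n\geq 2$ levels has a torsion submodule with $n$ cyclic summands (one for each $J_0$-interchanged pair of tips); Corollary~\ref{cor:gradedroot} likewise allows arbitrarily many summands $k'_i$. So ``single tower or zero'' is simply not what the graded-root computation gives, and the summand-counting strategy collapses here.

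The paper instead exploits a parity constraint, which is more robust than a summand count. By \cite[Theorem 3.3]{OSseifert}, $\HF_\red$ of a Seifert fibered rational homology sphere is supported entirely in gradings of one parity, and the same holds for nonzero rational surgery on an L-space knot; since $\HF_\conn$ is a summand of $\HF_\red$, it inherits this single-parity property. On the other hand, in Theorem~\ref{thm:connect-sums} the summand $\mathcal T_{i-2-2a_{i-1}}(V_0(K_i))$ sits in grading $\equiv i\pmod 2$, so for $n\geq 2$ the connected homology of $\#_{i=1}^n S^3_{-1}(K_i)$ occupies \emph{both} parities. This is the correct obstruction, and it applies uniformly without having to isolate spin structures or sweep through surgery slopes. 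I recommend replacing your ``at most one summand'' claim with this parity argument.
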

\begin{proof}
The reduced Floer homology of any Seifert fibered rational homology sphere is only supported in a fixed parity of gradings \cite[Theorem 3.3]{OSseifert}.  The same is true for non-zero rational surgery on an L-space knot. The result now follows from Theorem~\ref{thm:connect-sums}, since $V_0(K) > 0$ for any knot concordant to a non-trivial L-space knot \cite[Theorem 1.2]{OSlens}.  
\end{proof}

\begin{remark}
The above corollary should be compared to \cite[Theorem 1.2]{DaiStoffregen}; indeed, up to a grading shift, for $K$ an L-space knot, $S^3_{-1}(K)$ is locally equivalent to Dai-Stoffregen's $Y_{V_0}$.
\end{remark}

Finally, we use the connected Floer homology to show that certain elements of the homology cobordism group are infinite order, analogous to work of Lin-Ruberman-Saviliev \cite[Theorems C and D]{LRS} for certain manifolds with non-vanishing Rokhlin invariant.  Recall that an integer homology sphere $Y$ is said to be \emph{$d$-negative} if $\gr(x)< d(Y)$ for every non-trivial grading homogeneous element $x \in \HF_{\red}(Y)$.

\begin{restatable}{theorem}{infiniteorder} \label{thm:infiniteorder} Let $Y$ be an integer homology sphere such that $Y$ is $d$-negative and $\dl(Y) < d(Y)$. Then $\dim_{\F}(\HF_\conn(\#_nY))\geq 1$ for all $n \neq 0$. In particular, $[Y]$ has infinite order in $\Theta^3_\Z$.  \end{restatable}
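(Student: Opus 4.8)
The plan is to leverage the "connected" nature of $\HF_\conn$ together with the hypotheses to show that $\HF_\conn(\#_n Y)$ is nonzero for all $n \neq 0$. The key structural input is that $\HF_\conn$ behaves well under connected sums: the involutive complex $\CFI^-(\#_n Y)$ is the tensor product of the $n$ copies, and taking the "connected" summand should be compatible with this tensor product in the sense that $\HF_\conn(\#_n Y)$ sits inside $\HF_\conn(Y)^{\otimes n}$ (or the relevant connected tensor summand), analogously to Stoffregen's work in Seiberg-Witten theory. First I would set up the local equivalence / connected-complex framework: the hypothesis $\dl(Y) < d(Y)$ guarantees that $\HF_\conn(Y) \neq 0$, since if $\HF_\conn(Y) = 0$ then $(Y,\s)$ is locally equivalent to $S^3$, forcing $\dl(Y) = d(Y)$. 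So $\HF_\conn(Y)$ contains at least one tower summand $\cT_a(m)$ with $m \geq 1$.

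Next I would bring in $d$-negativity. The condition that $\gr(x) < d(Y)$ for all nontrivial homogeneous $x \in \HF_\red(Y)$ is a one-sided constraint that should interact cleanly with the tensor product: when building $\CFI^-(\#_n Y)$ from $n$ copies, $d$-negativity prevents cancellation of the relevant generators in the connected summand, because all the "extra" classes live strictly below the tower in grading and cannot interfere with the part of the complex detecting nonvanishing of $\HF_\conn$. Concretely, I expect one shows that the connected complex of $\#_n Y$ retains a distinguished generator coming from the lowest $U$-power in the tower of $\HF_\conn(Y)$, so that $\dim_\F \HF_\conn(\#_n Y) \geq 1$. For $n < 0$ one uses $\overline{\#_{|n|} Y} = \#_{|n|}(-Y)$ together with the behavior of $\HF_\conn$ under orientation reversal (or the duality pairing on the involutive complexes), reducing to the $n > 0$ case after checking that $-Y$ also satisfies a suitable one-sided hypothesis, or more robustly by arguing directly that local triviality of $\#_n Y$ would force local triviality of $Y$.

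Once $\HF_\conn(\#_n Y) \neq 0$ for all $n \neq 0$, infinite order in $\Theta^3_\Z$ is immediate: if $\#_n Y$ bounded a homology cobordism to $S^3$ for some $n \neq 0$, then by homology cobordism invariance (Theorem~\ref{thm:HFconn}) we would have $\HF_\conn(\#_n Y) \cong \HF_\conn(S^3) = 0$, a contradiction.

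The main obstacle I anticipate is making precise the interaction between the connected summand and the tensor product — i.e., proving that $d$-negativity of $Y$ is enough to certify that the distinguished tower generator of $\HF_\conn(Y)$ survives (does not get killed or absorbed) in the connected complex of the $n$-fold connected sum. This requires a careful analysis of the local-equivalence representative of $\CFI^-(Y)$ and its $n$-fold tensor power, tracking gradings to ensure the $d$-negative "tail" cannot cancel against the tower contribution; this is the analogue of the grading bookkeeping in Stoffregen's and Dai--Stoffregen's arguments, and is where the hypothesis is genuinely used rather than just invoked.
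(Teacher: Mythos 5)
There is a genuine gap. You correctly observe that $\dl(Y)<d(Y)$ gives $\HF_\conn(Y)\neq 0$ via Proposition~\ref{prop:connzero}, and that the conclusion follows from nonvanishing of $\HF_\conn(\#_n Y)$ by homology cobordism invariance. But your strategy of passing to $n$ copies — tracking a distinguished generator of the connected complex through the $n$-fold tensor product, and claiming $\HF_\conn(\#_n Y)$ sits inside $\HF_\conn(Y)^{\otimes n}$ — is not established in the paper and is not literally true: $H_\conn$ of a tensor product does not relate so simply to $H_\conn$ of the factors (Proposition~\ref{prop:I-filtration} only controls tower \emph{lengths}, and Proposition~\ref{prop:omegasum} only gives a bound on $\omega$). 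You concede this as ``the main obstacle,'' and indeed that is where the argument is missing; you also never pin down how $d$-negativity is actually used.

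The paper's argument sidesteps the connected complexes entirely and works with the numerical invariants. The key observation is that $d$-negativity forces $\du(Y)=d(Y)$: if $\du(Y)=d(Y)+2m$ with $m>0$, then the exact triangle for $\CFI^-$ (as in the proof of Proposition~\ref{prop:formalomegabounds}) produces a tower $\cT_{d+2m-3}(m)$ in the $U$-torsion of $\HFm(Y)$, whose top element sits in grading $\geq d(Y)$ in $\HF_\red(Y)$ — contradicting $d$-negativity. With $\du(Y)=d(Y)$ established, iterating the inequality $\dl(Y_1\#Y_2)\leq \dl(Y_1)+\du(Y_2)$ from Proposition~\ref{prop:inequalities} gives
\[
\dl(\#_nY)\ \leq\ \dl(Y)+(n-1)\du(Y)\ =\ \dl(Y)+(n-1)d(Y)\ <\ nd(Y)\ =\ d(\#_nY),
\]
and Proposition~\ref{prop:connzero} then yields $\HF_\conn(\#_nY)\neq 0$. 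For $n<0$ one dualizes, using $\du(-Y)=-\dl(Y)$. This purely numerical route (packaged as Proposition~\ref{prop:dlowernotd}) is what makes the statement ``immediate,'' and it is precisely the step your sketch does not supply.
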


Without any additional assumptions, in the special case that $\dim_{\F}\HF_\conn(Y) = 1$, we are able to prove a similar statement.
\begin{restatable}{theorem}{infiniterankone}\label{thm:infiniterankone}
Let $Y$ be an integer homology sphere.  If $\dim_{\F} \HF_\conn(Y) = 1$, then $Y$ is of infinite order in $\Theta^3_\Z$.
\end{restatable}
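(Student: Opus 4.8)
The plan is to show that $\dim_{\F} \HF_\conn(\#_n Y) \geq 1$ for all $n \neq 0$, which by the homology cobordism invariance of $\HF_\conn$ (Theorem~\ref{thm:HFconn}) forces $[Y]$ to have infinite order. Since $\dim_\F \HF_\conn(Y) = 1$, the module $\HF_\conn(Y)$ must be $\mathcal T_a(1) = \F_{(a)}[U]/U$ for some grading $a$; in particular $\omega(Y) = 1$. My first step would be to feed this into Proposition~\ref{prop:omegabounds}: from part (1) we get $\tfrac12(d(Y) - \dl(Y)) \leq 1$ and from part (2) we get $\tfrac12(\du(Y) - d(Y)) \leq 1$. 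So both $d - \dl$ and $\du - d$ lie in $\{0, 2\}$ (they are even and nonnegative). The interesting case is when at least one of these is $2$ — if both were $0$, then $d = \dl = \du$, and I would argue separately (using that $\dim_\F\HF_\conn = 1$ is nonetheless nontrivial, so something about the involution must still be nonstandard; more likely this degenerate case is incompatible with $\dim_\F \HF_\conn = 1$ and can be ruled out directly, or handled by observing $Y$ or $-Y$ then satisfies the hypotheses of Theorem~\ref{thm:infiniteorder}).

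The heart of the argument is to understand $\HF_\conn$ under connected sums when the summand $\HF_\conn(Y)$ is a single tower of length one. I would want an analogue, at the level of the connected complex / local equivalence class, of the behavior exploited in Theorem~\ref{thm:connect-sums}: the connected complex of $\#_n Y$ is the connected complex of the $n$-fold tensor power $\Cb(Y)^{\otimes n}$ (with its involution $\iota^{\otimes n}$), and I want to show this retains a nonzero reduced part. When $\dl(Y) < d(Y)$ (which, combined with the bound above, means $d - \dl = 2$), the manifold $Y$ — after possibly replacing $Y$ by $-Y$, using $\dl(-Y) = -\du(Y)$ and $\du(-Y) = -\dl(Y)$ so that orientation reversal swaps the two deficiencies — can be arranged to be $d$-negative or to otherwise fit the hypotheses of Theorem~\ref{thm:infiniteorder}. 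Concretely: if $d(Y) - \dl(Y) = 2$ I would try to show $Y$ is automatically $d$-negative given that its reduced connected homology is one-dimensional (the single surviving generator sits below $d(Y)$ because it is killed by $U$ and the tower truncates), so Theorem~\ref{thm:infiniteorder} applies directly and gives $\dim_\F \HF_\conn(\#_n Y) \geq 1$ for all $n \neq 0$; symmetrically, if $\du(Y) - d(Y) = 2$, apply the same reasoning to $-Y$.

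The main obstacle I anticipate is the degenerate case $d(Y) = \dl(Y) = \du(Y)$, i.e. when the one-dimensional reduced group $\HF_\conn(Y)$ does not move the involutive correction terms away from $d(Y)$. Here Theorem~\ref{thm:infiniteorder} does not apply, and I would need to examine the local equivalence class of $(\Cb(Y), \iota)$ directly: the point is that even though the $d$-invariants are unchanged, $\HF_\conn(Y) \neq 0$ means $\Cb(Y)$ is \emph{not} locally equivalent to the trivial complex, and I would argue — perhaps by a direct analysis of the finitely many possibilities for a local-equivalence representative whose connected homology is $\mathcal T_a(1)$, together with tracking a distinguished nonzero class in $\HF_\conn$ of the tensor power under the inclusion-induced maps — that the reduced connected homology of $\Cb(Y)^{\otimes n}$ (with $\iota^{\otimes n}$) stays nonzero. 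An alternative route that might sidestep the case analysis: show abstractly that for any $Y$ with $\Cb(Y)$ not locally trivial but $\dim_\F \HF_\conn(Y) = 1$, the self-local-equivalence monoid structure forces $n[Y] \neq 0$ for all $n$, essentially because a length-one tower is "indecomposable" and cannot cancel against copies of itself. I expect the bookkeeping in this last step — controlling gradings and the $\iota$-action precisely enough on $\Cb(Y)^{\otimes n}$ — to be the technical crux, even though the structural idea is clean.
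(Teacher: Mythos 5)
Your overall strategy --- show that $\dl(\#_nY) < d(\#_nY)$ (or the dual statement) for all $n \neq 0$, then invoke Proposition~\ref{prop:connzero} to conclude $\HF_\conn(\#_nY) \neq 0$ --- is the right one, and your (tentative) observation that the degenerate case $d(Y)=\dl(Y)=\du(Y)$ is incompatible with $\dim_\F \HF_\conn(Y) = 1$ is correct: that is exactly Proposition~\ref{prop:connzero}. However, there is a genuine gap in the step where you claim that $\dim_\F \HF_\conn(Y)=1$ and $\dl(Y)<d(Y)$ force $Y$ to be $d$-negative so that Theorem~\ref{thm:infiniteorder} applies. This is false. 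The module $\HF_\conn(Y)$ is only a \emph{summand} of $\HF_\red(Y)$ --- the part remembered by local equivalence --- and the complementary ``locally trivial'' summand of $\HF_\red(Y)$ can be arbitrarily large and can live in gradings above $d(Y)$. So a one-dimensional connected homology says nothing about $d$-negativity of $Y$ itself, and your route through Theorem~\ref{thm:infiniteorder} does not close.

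What you are missing is the structural result Proposition~\ref{prop:HFconn1} (equivalently Corollary~\ref{cor:(2,3,7)}): if $\dim_\F H_\conn(\cC)=1$, then either $d(\cC)=\du(\cC)=\dl(\cC)+2$, or $d(\cC)=\dl(\cC)=\du(\cC)-2$; indeed $\cC$ is locally equivalent to the $\iota$-complex of $\pm\Sigma(2,3,7)$ up to a grading shift. This is the key point, because it gives you $\du(Y)=d(Y)$ (after possibly replacing $Y$ by $-Y$), and that equality, \emph{not} $d$-negativity, is what the proof of Proposition~\ref{prop:dlowernotd} actually uses. Once you have $\dl(Y)<d(Y)=\du(Y)$, the superadditivity inequalities of Proposition~\ref{prop:inequalities} propagate: $\du(\#_nY)\leq n\,\du(Y)=n\,d(Y)=d(\#_nY)\leq\du(\#_nY)$ forces $\du(\#_nY)=d(\#_nY)$, and then $\dl(\#_nY)\leq\dl(Y)+\du(\#_{n-1}Y)<n\,d(Y)=d(\#_nY)$. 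No $d$-negativity hypothesis, no analysis of $\Cb(Y)^{\otimes n}$, and no ad hoc treatment of a ``degenerate case'' is required: the paper's proof is a two-line reduction to Corollary~\ref{cor:(2,3,7)} and Proposition~\ref{prop:dlowernotd}, and the rigidity in Proposition~\ref{prop:HFconn1} is the lemma your proposal is groping for but does not supply.
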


\begin{remark} Because $\HF_\conn$ is a spin homology cobordism invariant, Theorems \ref{thm:infiniteorder} and \ref{thm:infiniterankone} are equally true if $(Y,\s)$ is a $\Z_2$-homology sphere with its unique spin structure, and we consider its order in the $\Z_2$-homology cobordism group $\Theta^3_{\Z_2}$, or in the quotient $\Theta^3_{\Z_2}/\Theta^3_L$ of the group $\Theta^3_{\Z_2}$ by the group $\Theta^3_L$ generated by the Heegaard Floer L-spaces which are also $\Z_2$-homology spheres. (In this latter case, modding out by $\Theta^3_L$ has the effect of replacing the absolute $\Q$-grading on Heegaard Floer homology by a relative $\Z$-grading.)\end{remark}

We conclude with a general computation of the connected homology of homology spheres $Y$ whose homology $\HFm(Y)$ is represented by the $\F[U]$ module $\mathbb H^-(M)$ associated to a symmetric graded root $M$ (including the Seifert fibred spaces) up to an appropriate grading shift. Our computation proceeds quickly from recent computations of Dai and Manolescu \cite{DaiManolescu}. In this paper, the authors associate to any symmetric graded root $M$ a preferred monotone subroot $M'$. (We review this construction in Section \ref{subsec:gradedroots}.) Our result is the following.

\begin{theorem} \label{thm:gradedroots} Let $Y$ be an integer homology sphere with the property that $\HFm(Y) \simeq \mathbb H^-(M)$ for some graded root $M$. Then the connected homology of $Y$ is the $U$-torsion submodule of $\mathbb H^-(M')$, shifted upward in degree by $1$.\end{theorem}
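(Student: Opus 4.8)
The plan is to deduce this from the computation of $\du$ and $\dl$ for graded roots in \cite{DaiManolescu} together with the fact that $\HF_\conn$ is an invariant of the $\iota$-local equivalence class. Recall that $\HF_\conn$ is defined via the connected complex $\cC_\conn(Y)$ --- the image of a maximal self-local equivalence of the $\iota$-complex $\cC(Y)=(\CFm(Y),\iota)$ --- that $\cC_\conn(Y)$ depends only on the $\iota$-local equivalence class of $\cC(Y)$, and that $\HF_\conn(Y)$ is the $U$-torsion submodule of $H_*(\cC_\conn(Y))$ (consistent with Theorem~\ref{thm:HFconn}). Since $\HFm(Y)\cong\mathbb H^-(M)$ and $\HFm(Y)$ carries the $\spinc$-conjugation symmetry, $M$ is necessarily symmetric, so the preferred monotone subroot $M'$ of \cite{DaiManolescu} is defined.

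First I would invoke the main structural result of \cite{DaiManolescu}: for $Y$ with $\HFm(Y)\cong\mathbb H^-(M)$, the $\iota$-complex $\cC(Y)$ is $\iota$-locally equivalent to an explicit model complex $X(M')$ attached to the monotone subroot $M'$ --- this is exactly what allows Dai and Manolescu to read off $\du$ and $\dl$. Local equivalence invariance of the connected complex then gives $\cC_\conn(Y)\simeq\cC_\conn(X(M'))$, so it suffices to compute the connected complex of the monotone model $X(M')$.

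The key step is to show that $X(M')$ is already its own connected complex, i.e.\ it admits no nontrivial locally trivial $\iota$-complex summand, equivalently that every self-local equivalence of $X(M')$ is surjective. This is where the monotone hypothesis enters: the model complexes of monotone roots are the minimal representatives of their local equivalence classes, which I would verify from the explicit combinatorial description of monotone roots in \cite{DaiManolescu} by a direct analysis of grading-preserving, $U$-equivariant self-maps, ruling out non-surjective ones. Granting this, $\HF_\conn(Y)$ is the $U$-torsion submodule of $H_*(X(M'))\cong\mathbb H^-(M')$; the last point is then to match the absolute grading with the normalization of $\mathbb H^-(M')$ used in \cite{DaiManolescu}, which is responsible for the upward shift by $1$. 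The additive constant can be pinned down, for instance, by comparison with Theorem~\ref{thm:-nLspace}, where $S^3_{-1}(K)$ with $K$ an L-space knot gives $\HF_\conn=\cT_{-1}(V_0)$.

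The main obstacle is the key step above: a priori $X(M')$ is only a convenient representative of its local equivalence class, and one must show it genuinely coincides with the connected complex, which uses the fine structure of monotone roots and not merely the derived invariants $\du,\dl$. The remaining steps --- local equivalence invariance and the grading bookkeeping --- should be routine given \cite{DaiManolescu} and the construction of $\HF_\conn$, which is why the argument "proceeds quickly" once the monotone reduction is in hand.
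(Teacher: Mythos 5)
Your outline follows essentially the same route as the paper's proof: invoke Dai--Manolescu to replace $(\CFm(Y),\iota)$ by the model complex of the monotone subroot $M'$, then argue that for a monotone root every self-local equivalence of $C_*(M')$ induces a surjection on homology (so the connected complex equals $C_*(M')$ itself), which identifies $\HF_\conn(Y)$ with the $U$-torsion submodule of $\mathbb H^-(M')$ up to the degree shift built into the definition of $H_\conn$. The paper carries out the surjectivity step you flag as the "main obstacle" by an explicit induction on the branch generators $v_1,\dots,v_n$, using the monotonicity constraints $h_1 > \cdots > h_n$ and $r_1 < \cdots < r_n$ together with $J_0$-equivariance of $f_*$ to force $f_*(v_i)$ to contain $v_i$ or $J_0 v_i$; you correctly diagnose that this uses the fine combinatorics of monotone roots rather than just $\dl,\du$, but you leave the induction unwritten. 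One small point: you do not need the comparison with Theorem~\ref{thm:-nLspace} to fix the grading shift --- the upward shift by $1$ is immediate from the definition of $H_\conn$ as $H_\red(\cC_\conn) = (U\text{-torsion of }H_*(\cC_\conn))[-1]$.
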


We will give a more precise technical statement of the above theorem in Section \ref{subsec:gradedroots}. We have the following corollary of Theorem \ref{thm:gradedroots}, which shows that the connected Heegaard Floer homology of Seifert fibered homology spheres agrees with Stoffregen's computation of the connected Seibeg-Witten Floer homology \cite[Corollary 1.7]{Stoffregen}.

\begin{corollary}\label{cor:gradedroot}
Let $Y$ be an integer homology sphere such that $\HFm(Y) \simeq \mathbb H^-(M)$ for some graded root $M$ with
$ \HFred (Y) \cong \bigoplus_{i=1}^N \big(\cT_{a_i} (n_i) \big)^{k_i}$, where $(a_i, n_i) \neq (a_j, n_j)$ if $i \neq j$.
Then
\[ \HF_\conn(Y) \cong \bigoplus_{i=1}^N \big(\cT_{a_i} (n_i) \big)^{k'_i},\]
where
\[
k'_i =
\begin{cases}
0 & \text{ if } k_i \text{ even} \\
1 & \text{ if } k_i \text{ odd}.
\end{cases}
\]
\end{corollary}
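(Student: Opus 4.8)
The plan is to deduce the corollary directly from Theorem~\ref{thm:gradedroots} together with an explicit understanding of the $U$-torsion submodule of $\mathbb H^-(M')$. Since $Y$ is an integer homology sphere, the conjugation symmetry forces $M$ to be a symmetric graded root, so Dai--Manolescu's monotone subroot $M'$ is defined. By Theorem~\ref{thm:gradedroots}, $\HF_\conn(Y)$ is the $U$-torsion submodule of $\mathbb H^-(M')$, shifted upward in degree by one; and because $\HFred(Y)$ is the $U$-torsion submodule of $\HFm(Y)\simeq\mathbb H^-(M)$, the hypothesis identifies the $U$-torsion submodule of $\mathbb H^-(M)$ with $\bigoplus_i(\cT_{a_i}(n_i))^{k_i}$, up to the grading conventions in the precise statement of Theorem~\ref{thm:gradedroots} (Section~\ref{subsec:gradedroots}). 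Thus everything comes down to computing how the $U$-torsion submodule changes on passing from $M$ to the monotone subroot $M'$.

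The key input, which I would establish as part of the review of Dai--Manolescu's construction in Section~\ref{subsec:gradedroots}, is that this passage reduces every multiplicity modulo $2$: if $\cT_a(n)$ occurs with multiplicity $k$ in the $U$-torsion submodule of $\mathbb H^-(M)$, it occurs with multiplicity $k\bmod 2$ in that of $\mathbb H^-(M')$, with no other change. The mechanism is that a pair of isomorphic cyclic summands $\cT_a(n)\oplus\cT_a(n)$ corresponds to a symmetric pair of branches of the graded root that is locally trivial, so deleting such a pair is a local equivalence removing exactly those two summands; iterating these deletions terminates in a root in the local equivalence class of $M$ carrying no repeated summand type, which, by the uniqueness of the monotone representative of a local equivalence class, must be the monotone subroot $M'$. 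Combining this with the grading shift of Theorem~\ref{thm:gradedroots}, and noting that the shift is calibrated so that the surviving summands retain their original labels $a_i$, gives $\HF_\conn(Y)\cong\bigoplus_i(\cT_{a_i}(n_i))^{k'_i}$ with $k'_i = k_i\bmod 2$, as claimed.

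The main obstacle lies entirely in the middle step, in Section~\ref{subsec:gradedroots}: one must check that Dai--Manolescu's combinatorially defined monotone subroot really is the result of this duplicate-branch cancellation, verifying in particular that a pair of isomorphic cyclic summands can always be cancelled by a local equivalence even when the graded root is intricate, with branches of many different heights interleaved, and that these cancellations leave the isomorphism types and gradings of the surviving summands untouched. A secondary but indispensable point is the careful bookkeeping of the grading shifts relating $\mathbb H^-(M)$, $\HFm(Y)$, $\HFred(Y)$, and $\HF_\conn(Y)$ --- this is where the ``$+1$'' in Theorem~\ref{thm:gradedroots} enters, and it must be tracked so that the answer is phrased in terms of the gradings $a_i$ of $\HFred(Y)$ and not a shift of them.
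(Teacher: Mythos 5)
Your outline reaches the corollary from Theorem~\ref{thm:gradedroots} just as the paper does, but the bridge you propose in the middle --- iteratively cancelling $J_0$-symmetric branch pairs and then invoking uniqueness of monotone representatives --- is a genuinely different elaboration from the paper's. The paper's one-line proof intends a direct combinatorial reading of the Dai--Manolescu algorithm: one first observes that a branching vertex of grading $\rho$ with upward branches whose subtrees have maximal heights $\eta_1\geq\cdots\geq\eta_d$ contributes $\bigoplus_{j\geq 2}\cT_{\eta_j+1}\bigl(\tfrac{\eta_j-\rho}{2}\bigr)$ to $\HFred$, so that all copies of a fixed $\cT_a(n)$ arise at the single vertex of grading $a-1-2n$; off-axis branchings then occur in $J_0$-conjugate pairs and contribute only even multiplicities, while at each on-axis vertex the passage to $M'$ retains exactly one tip pair together with the axis, which is checked to leave behind precisely the odd-multiplicity summands. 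Your deletion argument can be made to work, but as written it elides the fact that the correspondence between torsion summands and branches is not a bijection (the top pair at a cluster contributes a \emph{single} summand, lower pairs two), so you would still need to show that a multiplicity $\geq 2$ always produces a $J_0$-pair of \emph{non-maximal} branches at that vertex, that deleting such a pair is a local equivalence, and that the terminal multiplicity-free root is automatically monotone and hence equal to $M'$ by Dai--Manolescu's uniqueness of monotone representatives in a local equivalence class. Those ingredients are available (the last two are essentially in \cite{DaiManolescu}), so the approach is sound; the direct reading of the algorithm is shorter and sidesteps the local-equivalence detour entirely.
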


%Furthermore, regarding $Y$ as a $\Z_2$-homology sphere, $[Y]$ has infinite order in $\Theta^3_{\Z_2}/\Theta^3_L$, where $\Theta^3_L$ is the subgroup of $\Theta^3_{\Z_2}$ generated by L-spaces.

\subsection{Organization} This paper is organized as follows.  In Section~\ref{sec:background} we give the necessary background on involutive Heegaard Floer homology.  Section~\ref{sec:connected} is where we define the connected Heegaard Floer homology.  Several properties of this invariant are given in Section~\ref{sec:properties}.  Applications of connected Floer homology to homology cobordism are given in Section \ref{sec:applications}. The involutive structure on the Floer homology of certain surgeries is then computed in Section~\ref{sec:surgeries}, which leads to the promised computations of the connected Heegaard Floer homologies of connected sums in Section~\ref{subsec:connectsums}. In Section \ref{subsec:gradedroots}, we compute the connected homology of graded roots.  

\section*{Acknowledgements}
We thank John Etnyre, Ciprian Manolescu, and Matt Stoffregen for helpful conversations.

\section{Background on involutive Heegaard Floer homology}\label{sec:background}

In this section we briefly review involutive Heegaard Floer homology and the group of $\iota$-complexes, following \cite{HM:involutive} and \cite{HMZ:involutive-sum}. We assume that the reader is familiar with ordinary Heegaard Floer homology, as in \cite{OS3manifolds1, OS3manifolds2, OS:four, OSabsgr}. Throughout, we work over coefficients in the field $\F = \Z/2\Z$, and restrict ourself to the case of a rational homology sphere $Y$ with a spin structure $\s$. In this case Ozsv{\'a}th and Szab{\'o}'s Heegaard Floer groups $\HFo(Y,\s)$, with $\circ \in \{+,-,\widehat{\ }, \infty\}$, are modules over $\F[U]$ with an absolute $\Q$-grading.

Before going further, it will be helpful to state our grading conventions. We let $\HFp(S^3) = \F[U, U^{-1}]/U\F[U]$ with $\gr(1) = 0$, whereas $\HFm(S^3) = \F[U]$ with $\gr(1) = -2$. The module $\HFred(Y, \s)$ takes gradings as a quotient of $\HFp(Y, \s)$, or equivalently, as a non-canonical submodule of $\HFp(Y, \s)$. The $U$-torsion submodule $R$ of $\HFm(Y)$ is also isomorphic to $\HFred(Y)$. In particular, with this choice of conventions, $\HFred(Y) \cong R[-1]$. (Here and throughout, given a graded module $M$, we write $M[-n]$ to denote the module $M$ with all gradings shifted upward by $n$.) When we introduce $\HF_{\conn}(Y)$ in Section \ref{sec:connected}, this module will take gradings as a submodule of $\HFred(Y) \subset \HFp(Y)$. 

\subsection{Involutive Heegaard Floer homology} We now consider involutive Heegaard Floer homology. Let $\H = (H, J)$ be a Heegaard pair for $Y$ consisting of a pointed Heegaard diagram $H = (\Sigma, \alphas, \betas, z)$ and a family of almost complex structures $J$ on the symmetric product $\Sym^g(\Sigma)$. In \cite{HM:involutive}, the first author and Manolescu construct a grading preserving chain map $\iota$, called the conjugation involution, on the Heegaard Floer chain complexes, and prove that $\iota^2$ is chain homotopic to the identity map. This involution is constructed as follows. There is a conjugate Heegaard pair to $\H$ given by $\bar{\H} = (\bar{H}, \bar{J})$, with $\bar{H} = (-\Sigma, \betas, \alphas, z)$. This is also a Heegaard pair for $Y$, and there is a chain isomorphism $\eta \co \CFo(\H,\s) \rightarrow \CFo(\bar{\H}, \s)$. Using work of Juh\'asz and Thurston \cite{JuhaszThurston}, one can then choose a sequence of Heegaard moves and changes to the family of almost complex structures from $\bar{\H}$ to $\H$, giving a chain homotopy equivalence $\Phi(\bar{\H}, \H) \co \CFo(\bar{\H},\s) \rightarrow \CFo(\H, \s)$. Then the map $\iota$ is the composition of these maps. That is, we have
\[
\iota = \Phi(\bar{\H}, \H) \circ \eta \co \CFo(\H, \s) \rightarrow \CFo(\H, \s).
\]
For each of the four flavors of Heegaard Floer homology, the involutive Heegaard Floer chain complex $\CFIo(\H,\s)$ is then the mapping cone
\[\CFIo(\H,\s) = \Cone(\CFo(\H,\s) \xrightarrow{Q(1+\iota)} Q\cdot\CFo(\H,\s)[-1]).
\]
Here $Q$ is a formal variable with $\deg(Q)=-1$, so that if $x \in \CFo(\H,\s)$ has $\gr(x)=r$, in the complex $\CFIo(\H,\s)$ we have $\gr(x) = r+1$ and $\gr(Qx) = r$. The resulting chain complex is a module over $\rR = \F[U,Q]/(Q^2)= H^*(B\Z_4; \Z_2)$. The \emph{involutive Heegaard Floer homology} $\HFIo(Y,\s)$ is the homology of this chain complex; its isomorphism type is an invariant of $(Y,\s)$. For this paper, we will sometimes abuse notation slightly and refer to the chain complex $\CFm(Y, \s)$ as having an involution $\iota$. This denotes an appropriate representative of the chain homotopy class of the complex with its map $\iota$, which is only an involution up to homotopy.

It is immediate from the construction that involutive Heegaard Floer homology fits into an exact triangle
\begin{equation}
\label{pic:exact}
\begin{tikzpicture}[baseline=(current  bounding  box.center)]
\node(1)at(0,0){$\HFIo(Y,\s)$};
\node(2)at (-2,1){$\HFo(Y, \s)$};
\node(3)at (2,1){$Q \ccdot \HFo(Y,\s)[-1]$};
\path[->](2)edge node[above]{$Q(1+ \inv_*)$}(3);
\path[->](3)edge (1);
\path[->](1)edge(2);
\end{tikzpicture}
\end{equation}
of $U$-equivariant maps \cite[Proposition 4.6]{HM:involutive}. 

Involutive Heegaard Floer homology also behaves well under orientation reversal, as follows.

\begin{proposition}[{\cite[Proposition 4.4]{HM:involutive}}]\label{prop:orientation}
 If $\s$ is a spin structure on $Y$, there is an isomorphism
\[
\CFIp_r(Y, \s) \rightarrow \CFI_{-}^{-r-1}(-Y,\s),
\]
where $\CFI_-$ denotes the cochain complex dual to $\CFIm$ over $\F$.
\end{proposition}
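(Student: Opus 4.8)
The plan is to deduce this from the orientation-reversal duality in ordinary Heegaard Floer homology together with the behavior of mapping cones under $\F$-duality; the one genuinely new ingredient will be the compatibility of the conjugation involution $\iota$ with this duality. I would first recall the chain-level orientation-reversal duality of \cite{OS3manifolds2}: if $\H = (\Sigma, \alphas, \betas, z)$ is a Heegaard pair for $Y$, then $\H' = (-\Sigma, \alphas, \betas, z)$ is a Heegaard pair for $-Y$ with the same set of intersection points, and there is a natural chain isomorphism
\[ D \co \CFo(\H, \s) \longrightarrow \Hom_\F\!\big(\CF^{\bar{\circ}}(\H', \s), \F\big), \]
where the flavor superscript is replaced by $\bar{\circ}$, with $+$ and $-$ exchanged (and $\infty$ unchanged); on homology this recovers $\HFp_r(Y, \s) \cong \HF_-^{-r-2}(-Y, \s)$. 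I would record this together with its $U$-equivariance and its compatibility with the exact sequences relating the four flavors.

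The heart of the argument is the following compatibility: under $D$, the conjugation involution $\iota$ on $\CFo(Y, \s)$ corresponds, up to chain homotopy, to the $\F$-dual of the conjugation involution $\iota$ on $\CF^{\bar{\circ}}(-Y, \s)$. To see this, note that the conjugate diagram $\bar{\H} = (-\Sigma, \betas, \alphas, z)$ of $\H$ has orientation reversal equal to the conjugate diagram $(\Sigma, \betas, \alphas, z)$ of $\H'$, so the isomorphism $\eta$ entering the definition of $\iota$ for $Y$ is intertwined by $D$ with the $\F$-dual of the corresponding isomorphism $\eta'$ for $-Y$; likewise, a sequence of Heegaard moves and changes of almost complex structure from $\bar{\H}$ to $\H$ realizing $\Phi(\bar{\H}, \H)$ admits an orientation-reversed counterpart from $(\Sigma, \betas, \alphas, z)$ to $\H'$ realizing the corresponding map for $-Y$, and $D$ intertwines these chain homotopy equivalences with their $\F$-duals move by move. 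Since by \cite{HM:involutive, JuhaszThurston} the involution is independent of all such auxiliary choices up to chain homotopy, it suffices to verify the statement for one compatible system of choices, which then produces a homotopy-commuting square relating $\iota$ on $(Y, \s)$ to $\iota^\vee$ on $(-Y, \s)$. I expect this step to be the main obstacle: it requires carefully matching the intermediate data on the $Y$ side with its orientation reversal on the $-Y$ side, and verifying naturality of $D$ under each elementary Heegaard move and each change of almost complex structure --- delicate, though morally straightforward.

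Finally I would dualize the mapping cone. Over the field $\F$, for any chain map $f \co A \to B$ there is a natural identification of $\Hom_\F(\Cone(f), \F)$ with the mapping cone of $\Hom_\F(f, \F) \co \Hom_\F(B, \F) \to \Hom_\F(A, \F)$, up to a grading shift. Applying this to
\[ \CFIm(-Y, \s) = \Cone\!\big( \CFm(-Y, \s) \xrightarrow{Q(1 + \iota)} Q \ccdot \CFm(-Y, \s)[-1] \big) \]
and using the previous two steps to identify $\Hom_\F(\CFm(-Y, \s), \F) \cong \CFp(Y, \s)$ and $\Hom_\F(Q(1+\iota), \F) \simeq Q(1 + \iota)$ (the two copies of $Q$ pairing off, and $U$-equivariance being automatic for the dual of a $U$-module map), the dual cone becomes chain homotopy equivalent to
\[ \Cone\!\big( \CFp(Y, \s) \xrightarrow{Q(1+\iota)} Q \ccdot \CFp(Y, \s)[-1] \big) = \CFIp(Y, \s), \]
up to an overall grading shift. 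Collecting the shifts --- the $-2$ from the ordinary duality, the degree $-1$ of $Q$, the internal upward shift by $1$ of the non-$Q$ summand built into the definition of $\CFIo$, and the shift produced by dualizing a cone --- one checks that they combine to give precisely $\CFIp_r(Y, \s) \cong \CFI_-^{-r-1}(-Y, \s)$; this last part is routine grading bookkeeping. The $U$-equivariance of the resulting isomorphism follows from that of $D$ together with naturality of $\F$-duality, and choosing compatible models on the two sides upgrades the chain homotopy equivalence to the stated isomorphism.
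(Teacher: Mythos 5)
This proposition is quoted in the paper as a background result with a citation to \cite[Proposition 4.4]{HM:involutive}; the paper itself gives no proof. Your proposal correctly reconstructs the argument of the cited source: the three ingredients --- the chain-level orientation-reversal duality $D$ of Ozsv\'ath--Szab\'o, the compatibility of the conjugation involution $\iota$ with $D$ up to chain homotopy (the genuinely new point, which you rightly reduce to checking that $\bar{\H}$ and the orientation reversal of $\H'$ have the same conjugate, together with naturality of $D$ under Heegaard moves and changes of almost complex structure), and the standard identification of $\Hom_{\F}(\Cone(f),\F)$ with a shifted cone of $f^{\vee}$ --- are exactly what is used, and your grading bookkeeping correctly produces the $-r-1$ shift from the $-r-2$ shift of ordinary duality. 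This is essentially the same approach as the original proof.
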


The first author and Manolescu extract two numerical invariants of homology cobordism from $\HFI$. Phrased in terms of $\HFIm$, these invariants are as follows.

\begin{definition} Let $(Y,\s)$ be a rational homology sphere with a spin structure. Then the involutive correction terms are
\[ \dl(Y,\s) = \max \{r \mid \exists \ x \in \HFIm_r(Y, \s), \forall \ n, \ U^nx\neq 0 \ \text{and} \ U^nx \notin \operatorname{Im}(Q)\} + 1 \]
and
\[ \du(Y,\s) = \max \{r \mid \exists \ x \in \HFIm_r(Y,\s),\forall \ n, U^nx\neq 0; \exists \ m\geq 0 \ \operatorname{s.t.} \ U^m x \in \operatorname{Im}(Q)\} +2. \]

\noindent Equivalently, $\du(Y,\s)=r+2$ is two more than the maximal grading $r$ such that $r \equiv d(Y,\s)$ modulo $2\Z$ and the map $i \co \HFIm_r(Y,\s) \rightarrow \HFIinf_r(Y,\s)$ is nonzero, and $\dl(Y,\s)=q+1$ is one more than the maximal grading $q$ such that $q \equiv d(Y,\s)+1$ modulo $2\Z$ and $i \co \HFIm_q(Y,\s) \rightarrow \HFIinf_q(Y,\s)$ is nonzero.
\end{definition}

It follows from the long exact sequence (\ref{pic:exact}) that the correction terms satisfy
\[\dl(Y,\s) \leq d(Y,\s) \leq \du(Y,\s)\]
and from Proposition \ref{prop:orientation} that
\[\dl(Y, \s) = -\du(-Y,\s).\]
Furthermore, the correction terms are invariants of $\Z_2$ homology cobordism, and therefore descend to set maps
\[
\dl, \du \co \Theta^3_{\Z_2} \rightarrow \Q.
\]

In \cite{HMZ:involutive-sum}, Manolescu, Zemke, and the first author show that involutive Heegaard Floer homology obeys a convenient connected sum formula, as follows. Recall from \cite[Theorem 1.5]{OS3manifolds2} that Ozsv{\'a}th and Szab{\'o} give a chain homotopy equivalence
\begin{equation}
\label{eq:KunnethCF}
\CFm(Y_1 \# Y_2, \s_1 \# \s_2) \simeq \CFm(Y_1,\s_1) \otimes_{\F[U]} \CFm(Y_2, \s_2)[-2].
\end{equation}

The grading shift is necessary since we take $\HFm(S^3) = \F[U]$ with $\gr(1)=-2$. With respect to this chain homotopy equivalence, we have the following.

\begin{proposition}[{\cite[Theorem 1.1]{HMZ:involutive-sum}}]\label{prop:ConnSum}
Suppose $Y_1$ and $Y_2$ are three-manifolds equipped with spin structures $\s_1$ and $\s_2$. Let $\iota_1$ and $\iota_2$ denote the conjugation involutions on the Floer complexes $\CFm(Y_1, \s_1)$ and $\CFm(Y_2,\s_2)$. Then, under the equivalence \eqref{eq:KunnethCF}, the conjugation involution $\iota$ on $\CFm(Y_1 \# Y_2,$ $ \s_1 \# \s_2)$ is chain homotopy equivalent, over the ring $\F[U]$, to $\iota_1 \otimes \iota_2.$ \end{proposition}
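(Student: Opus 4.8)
The plan is to carry out the comparison on an explicit connected-sum Heegaard diagram, on which the K\"unneth equivalence \eqref{eq:KunnethCF} is the tautological identification of generators, and then to track each of the two pieces of the conjugation involution, $\eta$ and $\Phi$, through the induced tensor-product decomposition of the chain complex. Concretely, choose Heegaard pairs $\H_i = (H_i, J_i)$ with $H_i = (\Sigma_i, \alphas_i, \betas_i, z_i)$ representing $(Y_i, \s_i)$, and let $\H_1 \# \H_2$ be the connected-sum pair obtained by joining small disks around $z_1$ and $z_2$ by a tube, placing the new basepoint $z$ in the tube, setting $\alphas = \alphas_1 \cup \alphas_2$ and $\betas = \betas_1 \cup \betas_2$, and inserting a long neck into the almost complex structure on $\Sym^{g_1+g_2}(\Sigma_1 \# \Sigma_2)$. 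By the connected-sum formula of Ozsv\'ath and Szab\'o \cite[Theorem 1.5]{OS3manifolds2}, for a sufficiently long neck the holomorphic disks counted by the differential degenerate into pairs of disks, one in each $\Sym^{g_i}(\Sigma_i)$, so that $\CFm(\H_1 \# \H_2, \s_1 \# \s_2)$ is identified, generators and differential, with $\CFm(\H_1,\s_1)\otimes_{\F[U]} \CFm(\H_2,\s_2)$, up to the grading shift $[-2]$. On this diagram the K\"unneth map of \eqref{eq:KunnethCF} is the identity, so it suffices to show that the conjugation involution $\iota$ on $\CFm(\H_1 \# \H_2)$ is $\F[U]$-equivariantly chain homotopic to $\iota_1 \otimes \iota_2$.

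Next I would treat $\eta$ and $\Phi$ separately. Conjugation commutes literally with the connected-sum construction: $\overline{\H_1 \# \H_2} = \big((-\Sigma_1) \# (-\Sigma_2),\, \betas_1 \cup \betas_2,\, \alphas_1 \cup \alphas_2,\, z\big)$ is exactly the pair $\bar\H_1 \# \bar\H_2$, with the tube attached at the same two points, and $\bar\H_1 \# \bar\H_2$ again computes $\CFm(\bar\H_1)\otimes\CFm(\bar\H_2)$ by the same degeneration argument. Since $\eta$ merely reinterprets an intersection point on the conjugate diagram, it sends $x_1 \otimes x_2$ to $\eta_1(x_1) \otimes \eta_2(x_2)$; that is, $\eta_{\#} = \eta_1 \otimes \eta_2$ under the two tensor decompositions. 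For $\Phi$ I would use the naturality theorem of Juh\'asz and Thurston \cite{JuhaszThurston}: the chain homotopy class of $\Phi(\bar\H_1 \# \bar\H_2, \H_1 \# \H_2)$ depends only on its endpoints, so I am free to choose the path of Heegaard moves realizing $\iota_i$ from $\bar\H_i$ to $\H_i$, arrange every such move to be supported in the complement of a disk around $z_i$, and run the two sequences in succession on the connected-sum diagram --- first the moves of the $\H_1$-side, which take place away from the tube, then those of the $\H_2$-side. The crucial input is the lemma that a Heegaard move on $\H_1 \# \H_2$ supported in $\Sigma_1$, disjoint from the connected-sum region, induces on $\CFm(\H_1)\otimes\CFm(\H_2)$ the tensor product of the corresponding move map on the $\H_1$-factor with the identity on the $\H_2$-factor, together with the analogous statement on the $\H_2$-side and for the chain homotopies that realize composition of moves. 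Granting this, $\Phi(\bar\H_1 \# \bar\H_2, \H_1 \# \H_2) \simeq (\Phi(\bar\H_1,\H_1)\otimes\id)\circ(\id\otimes\Phi(\bar\H_2,\H_2)) = \Phi(\bar\H_1,\H_1)\otimes\Phi(\bar\H_2,\H_2)$, which combined with $\eta_{\#} = \eta_1 \otimes \eta_2$ gives $\iota \simeq \iota_1 \otimes \iota_2$; the grading shift $[-2]$ is the same on both sides and $\iota$ preserves gradings, so that bookkeeping is routine.

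I expect the main obstacle to be the localization lemma invoked in the last step: that an elementary Heegaard move --- an isotopy or change of almost complex structure, a handleslide, or a (de)stabilization --- supported away from the connected-sum tube induces a map of the form (move map) $\otimes\, \id$ after neck-stretching. Proving this requires analyzing the relevant polygon moduli spaces on $\Sym^{g_1+g_2}(\Sigma_1 \# \Sigma_2)$ and showing that, for a long enough neck, the surviving curves degenerate into a product of a curve in $\Sym^{g_1}(\Sigma_1)$ realizing the move and a constant (or ``small triangle'') curve in $\Sym^{g_2}(\Sigma_2)$ --- the polygon-count refinement of the disk-count argument of \cite[Theorem 1.5]{OS3manifolds2} used above, in the same spirit as the gluing arguments in the Heegaard Floer naturality and functoriality literature. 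One must also check that the chosen sequences of moves (including any required stabilizations) really can be taken disjoint from the tube and that the naturality bookkeeping is consistent throughout. Alternatively, the steps above can be repackaged by realizing the connected sum as a cobordism map from $Y_1 \sqcup Y_2$ to $Y_1 \# Y_2$ and invoking its compatibility with the diffeomorphism-induced and basepoint-moving maps out of which $\iota$ is built, which reduces the analytic content to already-established functoriality statements.
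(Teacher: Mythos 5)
The paper does not prove this proposition; it cites it as \cite[Theorem 1.1]{HMZ:involutive-sum}, so the comparison must be with the proof in that source. That proof proceeds quite differently from your primary plan: rather than stretching a neck on a connected-sum Heegaard diagram and tracking $\eta$ and $\Phi$ through it, HMZ work within Zemke's graph-cobordism TQFT, expressing the K\"unneth equivalence functorially via cobordism maps and proving compatibility of $\iota$ with those maps. Your closing sentence, which offers the cobordism-map reformulation as an ``alternative,'' is in fact much closer to the route actually taken there; your main plan is a genuinely different, more hands-on strategy.

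As for that main plan, you have correctly identified where the missing content lies, and it is a genuine gap rather than routine bookkeeping. The localization lemma---that a Heegaard move supported away from the connected-sum tube induces $(\text{move map})\otimes\mathrm{id}$ on the tensor decomposition, with compatible chain homotopies for compositions---is not available off the shelf. The Ozsv\'ath--Szab\'o degeneration argument behind \eqref{eq:KunnethCF} concerns bigons only; upgrading it to the triangle and quadrilateral counts underlying isotopy, handleslide, and almost-complex-structure-change maps, together with the chain homotopies that Juh\'asz--Thurston naturality demands for changing the order of moves, is substantial new analysis (including, e.g., handling the ``small triangle'' contribution on the inactive $\Sigma_2$ side, which requires a Hamiltonian perturbation). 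You must also confirm that a sequence of moves from $\bar{\H}_1\#\bar{\H}_2$ to $\H_1\#\H_2$ can be arranged with every move supported on a single side of the tube: passing from $\bar{\H}_i$ to $\H_i$ interchanges the $\alpha$- and $\beta$-curves, so the sequence is long and typically requires stabilizations, after each of which the tensor decomposition you are tracking must be re-established. None of this is wrong in spirit, but carrying it out amounts to reproving a naturality-level K\"unneth theorem, which is precisely the work the TQFT framework in the cited source is designed to absorb.
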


As a corollary, one obtains the following behavior of the correction terms under connected sum.

\begin{proposition}[{\cite[Proposition 1.3]{HMZ:involutive-sum}}] \label{prop:inequalities} Let $(Y_1,\s_1)$ and $(Y_2,\s_2)$ be rational homology spheres equipped with spin structures. Then we have
\[
\dl(Y_1,\s_1) + \dl(Y_2,\s_2) \leq \dl(Y_1\#Y_2, \s_1\#\s_2) \leq \dl(Y_1,\s_1) + \du(Y_2,\s_2) \leq \du(Y_1 \# Y_2, \s_1\#\s_2) \leq \du(Y_1,\s_1) + \du(Y_2,\s_2). \]
\end{proposition}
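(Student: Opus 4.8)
The plan is to reduce the whole chain of inequalities to the connected sum formula of Proposition~\ref{prop:ConnSum}, which --- combined with \eqref{eq:KunnethCF} --- presents $\CFIm(Y_1\#Y_2,\s_1\#\s_2)$ as the mapping cone of $Q(1+\iota_1\otimes\iota_2)$ on $\big(\CFm(Y_1,\s_1)\otimes_{\F[U]}\CFm(Y_2,\s_2)\big)[-2]$. Using in addition the two facts already recorded above, namely $\dl(Y,\s)=-\du(-Y,\s)$ and the homology cobordism invariance of $\dl$ and $\du$, I claim all four inequalities follow formally from the single lower bound
\[
\dl(Y_1,\s_1)+\dl(Y_2,\s_2)\ \le\ \dl(Y_1\#Y_2,\s_1\#\s_2),
\]
which I will call $(\ast)$. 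Indeed, applying $(\ast)$ to $(-Y_1,\s_1)$ and $(-Y_2,\s_2)$ and using $\dl(-Y,\s)=-\du(Y,\s)$ together with $-(Y_1\#Y_2)=(-Y_1)\#(-Y_2)$ yields the fourth inequality $\du(Y_1\#Y_2)\le\du(Y_1)+\du(Y_2)$. For the second inequality, apply $(\ast)$ to the pair $(Y_1\#Y_2,\s_1\#\s_2)$ and $(-Y_2,\s_2)$; since $(Y_2\#-Y_2,\s_2\#\s_2)$ is homology cobordant to $S^3$ --- it bounds the rational homology ball $(Y_2\setminus B^3)\times I$ --- the manifold $(Y_1\#Y_2)\#(-Y_2)$ is homology cobordant to $Y_1$, and $(\ast)$ becomes $\dl(Y_1\#Y_2)-\du(Y_2)\le\dl(Y_1)$. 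The third inequality is obtained the same way from the fourth (apply it to $Y_1\#Y_2$ and $-Y_1$). So it remains only to prove $(\ast)$.

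For $(\ast)$ I would work at the chain level in the mapping cone. For $i=1,2$, choose a homogeneous cycle $(a_i,Qb_i)$ in $\CFIm(Y_i,\s_i)$, so that $\partial a_i=0$ and $\partial b_i=(1+\iota_i)a_i$, whose class realizes the maximum in the definition of $\dl(Y_i,\s_i)$; thus $[(a_i,Qb_i)]$ has grading $\dl(Y_i,\s_i)-1$. A short argument shows that $[a_i]$ must be a non-$U$-torsion class of $\HFm(Y_i,\s_i)$ of grading $\dl(Y_i,\s_i)-2$: if $U^m[a_i]=0$, write $U^m a_i=\partial e_i$, and then $U^m(a_i,Qb_i)$ is homologous to $\big(0,Q(U^m b_i+(1+\iota_i)e_i)\big)\in\Im(Q)$, contradicting the choice of the class. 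Now put
\[
\alpha=a_1\otimes a_2,\qquad \beta=b_1\otimes a_2+\iota_1 a_1\otimes b_2 .
\]
A direct computation using $\partial b_i=(1+\iota_i)a_i$ gives $\partial\beta=(1+\iota_1\otimes\iota_2)\alpha$ (the two terms $\iota_1 a_1\otimes a_2$ cancel over $\F$), so $(\alpha,Q\beta)$ is a cycle in $\CFIm(Y_1\#Y_2,\s_1\#\s_2)$; tracking the $[-2]$ of \eqref{eq:KunnethCF} and the $+1$ of the cone, it is homogeneous of grading $(\dl(Y_1)-2)+(\dl(Y_2)-2)+2+1=\dl(Y_1)+\dl(Y_2)-1$. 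Under the chain map $\CFIm\to\CFm$ projecting to the first summand of the cone, $(\alpha,Q\beta)\mapsto a_1\otimes a_2$, which under \eqref{eq:KunnethCF} represents $[a_1]\otimes[a_2]$ in the $\HFm(Y_1)\otimes_{\F[U]}\HFm(Y_2)$ summand of $\HFm(Y_1\#Y_2)$. Because $[a_1]$ and $[a_2]$ are non-$U$-torsion, so is $[a_1]\otimes[a_2]$ --- project $\HFm(Y_2)$ onto its free quotient $\HFm(Y_2)/(U\text{-torsion})\cong\F[U]$ to see $U^n([a_1]\otimes[a_2])\neq0$ for all $n$. Hence $U^n[(\alpha,Q\beta)]$ has nonzero image in $\HFm(Y_1\#Y_2)$ for all $n$, so $U^n[(\alpha,Q\beta)]\neq0$; and since $\Im(Q)\subset\HFIm$ is killed by the projection to $\HFm$, also $U^n[(\alpha,Q\beta)]\notin\Im(Q)$. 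Thus $[(\alpha,Q\beta)]$ meets the defining conditions for $\dl$ in grading $\dl(Y_1)+\dl(Y_2)-1$, giving $(\ast)$.

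I do not expect any conceptual obstacle; the step that needs care is purely bookkeeping --- keeping consistent the several grading conventions involved (the $+1$ shift and the differential of the mapping cone $\CFIm$, the degree $-1$ of $Q$, and the $[-2]$ shift in the K\"unneth isomorphism \eqref{eq:KunnethCF}) --- together with checking that the $\Tor_{\F[U]}$-summand in the K\"unneth formula for $\HFm$ does not interfere with the non-$U$-torsion of $[a_1]\otimes[a_2]$. One could instead prove $(\ast)$ by producing a nonzero element in the image of $i\colon\HFIm_r(Y_1\#Y_2)\to\HFIinf_r(Y_1\#Y_2)$ for $r=\dl(Y_1)+\dl(Y_2)-1$, using the alternative characterization of $\dl$ given in the definition above, but the cycle-level argument above seems cleaner.
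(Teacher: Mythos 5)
The paper does not actually prove this statement; Proposition~\ref{prop:inequalities} is quoted verbatim from \cite[Proposition 1.3]{HMZ:involutive-sum}, so there is nothing in the present source to compare your argument against. What I can do is evaluate your proposal on its own terms, and it checks out.

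Your reduction of the four inequalities to the single superadditivity statement $(\ast)$ is clean and correct: the fourth inequality is $(\ast)$ applied to the reversed manifolds together with $\dl(-Y,\s)=-\du(Y,\s)$; the second follows from $(\ast)$ applied to $Y_1\#Y_2$ and $-Y_2$ once one invokes the fact that $(Y_2\#(-Y_2),\s_2\#\s_2)$ bounds a spin rational homology ball and that $\dl$, $\du$ are spin rational homology cobordism invariants (this is exactly the content of Lemma~\ref{lem:homology-cobordism} together with the fact that these invariants are determined by the local equivalence class); and the third follows from the fourth by the same trick. The chain-level construction of a cycle in the mapping cone of $Q(1+\iota_1\otimes\iota_2)$ is also right: your $\beta=b_1\otimes a_2+\iota_1a_1\otimes b_2$ does satisfy $\partial\beta=(1+\iota_1\otimes\iota_2)\alpha$ over $\F$, and the grading bookkeeping through the $[-2]$ K\"unneth shift and the $+1$ cone shift produces the class $[(\alpha,Q\beta)]$ in degree $\dl(Y_1)+\dl(Y_2)-1$, which is non-$U$-torsion with image never in $\operatorname{Im}(Q)$ because it maps to the non-$U$-torsion class $[a_1]\otimes[a_2]$ under the projection $\HFIm\to\HFm$. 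Two small points worth verifying in a fully written version: first, Proposition~\ref{prop:ConnSum} gives only a chain homotopy between $\iota$ and $\iota_1\otimes\iota_2$, so strictly speaking you are computing $\dl$ of a homotopy-equivalent $\iota$-complex, but that is harmless since $\dl$ depends only on the homotopy (indeed local equivalence) type; second, the spin structure on $(Y_2\setminus B^3)\times I$ does restrict to $\s_2\#\s_2$ on the boundary, and the cobordism one needs is rational, not $\Z_2$ (over $\F_2$ the product $(Y_2\setminus B^3)\times I$ need not be a $\Z_2$-homology ball if $H_1(Y_2)$ has two-torsion), which is fine because Lemma~\ref{lem:homology-cobordism} is stated for rational homology cobordisms.
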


Explicit computations of involutive Heegaard Floer homology have been done for large surgeries on L-space and thin knots by the first author and Manolescu \cite{HM:involutive}, for Seifert fibered spaces by Dai and Manolescu \cite{DaiManolescu}, and for certain connected sums of these examples \cite{HMZ:involutive-sum, DaiManolescu, DaiStoffregen}.

\subsection{The group of $\iota$-complexes} In light of the connected sum formula of Proposition \ref{prop:ConnSum}, one can define a group out of abstract $\F[U]$-complexes with involutions satisfying the same structural properties as $(\CFm(Y,\s), \iota)$ mentioned above. Throughout this section, $\simeq$ will denote a chain homotopy of maps over $\F[U]$.  We recall the following definition.

\begin{definition}[{\cite[Definition 8.5]{HMZ:involutive-sum}}] An {\em $\iota$-complex} $\cC = (C, \iota)$ consists of the following data:

\begin{itemize}
\item A finitely-generated, $\mathbb Q$-graded, free chain complex $C$ over $\F[U]$ such that there is some $\tau \in \mathbb Q$ such that $C$ is supported in gradings differing from $\tau$ by integers (i.e., so that $C$ is relatively $\Z$-graded and absolutely $\Q$-graded) and furthermore there is a relatively graded isomorphism 
\[
U^{-1}H_*(C) \cong \F[U,U^{-1}].
\]
\item A grading preserving chain map $\iota \co C \rightarrow C$ such that $\iota^2 \simeq \id$.  
\end{itemize}

\end{definition}

\noindent Here by $U^{-1}H_*$ we mean the result of localizing $H_*$ at $U$. The definition above slightly extends the original, which was only for absolutely $\Z$-graded complexes; this formulation first appears in \cite[Definition 2.1]{DaiManolescu}.

There are two natural notions of equivalence of $\iota$-complexes. Firstly, one can consider chain homotopy equivalence.

\begin{definition}
Two $\iota$-complexes $\cC = (C, \iota)$ and $\cC'=(C', \iota')$ are \emph{homotopy equivalent}, denoted $\cC \simeq \cC'$, if there exist grading-preserving chain maps
\begin{align*}
	f &\co C \rightarrow C' \\
	g &\co C' \rightarrow C
\end{align*}
such that
\begin{enumerate}
	\item $f \circ \iota \simeq \iota' \circ f$ and $g \circ \iota' \simeq \iota \circ g$,
	\item $g \circ f \simeq \id_C$ and $f \circ g \simeq \id_{C'}$.
\end{enumerate}
%where $\simeq$ denotes $\F[U]$-equivariant chain homotopy equivalence.
\end{definition}

However, it is slightly more useful to consider a weaker relation, called local equivalence.

\begin{definition} Two $\iota$-complexes $\cC = (C, \iota)$ and $\cC'=(C', \iota')$ are said to be \emph{locally equivalent} if there exist grading-preserving chain maps
\begin{align*}
f &\co C \rightarrow C' \\
g &\co C' \rightarrow C
\end{align*}
such that
\begin{enumerate}
\item $f \circ \iota \simeq \iota' \circ f$ and  $g \circ \iota' \simeq \iota \circ g$,
\item $f$ and $g$ induce isomorphisms on $U^{-1}H_*$.
\end{enumerate}
\end{definition}

The importance of this second notion of equivalence lies in the following lemma.

\begin{lemma}[{\cite[Proof of Theorem 1.8]{HMZ:involutive-sum}}]\label{lem:homology-cobordism} Suppose that $(Y_1, \s_1)$ and $(Y_2, \s_2)$ are related by a spin rational homology cobordism. Then $(\CFm(Y_1,\s_1), \iota_1)$ is locally equivalent to $(\CFm(Y_2, \s_2), \iota_2)$ as $\iota$-complexes.
\end{lemma}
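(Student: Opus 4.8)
The plan is to produce the local equivalence directly from a four-manifold cobordism: a spin rational homology cobordism gives cobordism maps between the two Floer complexes which are mutually inverse on $\HFinf$, and one checks that these maps commute with the conjugation involution up to $\F[U]$-chain homotopy. Let $W$ be a spin rational homology cobordism from $Y_1$ to $Y_2$, with spin structure $\t$ restricting to $\s_1$ and $\s_2$, and let $\mathfrak{w}$ be a self-conjugate $\spinc$ structure on $W$ restricting to $\s_1$ and $\s_2$ (for instance $\mathfrak{w}=\mathfrak{s}(\t)$). Fixing auxiliary Heegaard data and a path through the basepoint regions, $W$ induces a grading-preserving, $\F[U]$-equivariant chain map
\[
f \co \CFm(Y_1, \s_1) \longrightarrow \CFm(Y_2, \s_2),
\]
well-defined up to $\F[U]$-chain homotopy, and the reversed cobordism $\overline{W} \co Y_2 \to Y_1$ (with the same spin structure) induces similarly a map $g \co \CFm(Y_2, \s_2) \to \CFm(Y_1, \s_1)$. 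Here I would observe that the usual cobordism grading shift $\tfrac{1}{4}\left(c_1(\mathfrak{w})^2 - 2\chi(W) - 3\sigma(W)\right)$ vanishes, since $\chi(W)=\sigma(W)=0$ and $c_1(\mathfrak{w})$ is torsion, so $f$ and $g$ are genuinely grading preserving. These will be the maps witnessing the local equivalence.

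The heart of the argument is condition (1): that $\iota_2 \circ f \simeq f \circ \iota_1$ and $\iota_1 \circ g \simeq g \circ \iota_2$ over $\F[U]$. For this I would appeal to the naturality of the conjugation symmetry with respect to cobordism maps: conjugating the Heegaard data, Heegaard moves, and almost complex structures used to build $f$ exhibits $\iota_2 \circ f$ as chain homotopic to the composition of $\iota_1$ with the cobordism map of $W$ for the \emph{conjugate} $\spinc$ structure $\overline{\mathfrak{w}}$; since $\mathfrak{w}$ was chosen self-conjugate, $\overline{\mathfrak{w}}=\mathfrak{w}$ and the two cobordism maps coincide up to homotopy, giving $\iota_2 \circ f \simeq f \circ \iota_1$, and likewise for $g$. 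I expect this to be the main obstacle, as it requires carefully tracking the effect of conjugation on the chosen sequence of Heegaard moves and almost complex structures together with the diffeomorphism-naturality of the cobordism maps; this is precisely the mechanism used in the proof of \cite[Theorem 1.8]{HMZ:involutive-sum}, building on \cite{HM:involutive}.

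It remains to verify condition (2), that $f$ and $g$ induce isomorphisms on $U^{-1}H_*$. After localizing at $U$ we have $U^{-1}H_*(\CFm(Y_i,\s_i)) \cong \HFinf(Y_i,\s_i) \cong \F[U,U^{-1}]$, and the induced maps are the $\HFinf$-cobordism maps $F^{\infty}_{W,\mathfrak{w}}$ and $F^{\infty}_{\overline W,\mathfrak{w}}$. Because $W$ and $\overline W$ are rational homology cobordisms (so $b_1=b_2^+=0$), these maps are isomorphisms, by the standard computation of the $\HFinf$-functoriality of such cobordisms used by Ozsv{\'a}th--Szab{\'o} to prove the homology-cobordism invariance of the $d$-invariant \cite{OSabsgr}; alternatively, the composition law identifies $F^\infty_{\overline W,\mathfrak{w}}\circ F^\infty_{W,\mathfrak{w}}$ and $F^\infty_{W,\mathfrak{w}}\circ F^\infty_{\overline W,\mathfrak{w}}$ with the $\HFinf$-maps of the rational homology cobordisms $W\cup_{Y_2}\overline W$ and $\overline W\cup_{Y_1}W$, which forces both $F^\infty_{W,\mathfrak{w}}$ and $F^\infty_{\overline W,\mathfrak{w}}$ to be isomorphisms. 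With (1) and (2) in hand, $f$ and $g$ exhibit $(\CFm(Y_1,\s_1),\iota_1)$ and $(\CFm(Y_2,\s_2),\iota_2)$ as locally equivalent $\iota$-complexes.
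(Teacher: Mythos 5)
Your proposal is correct and follows essentially the same route as the cited proof from \cite{HMZ:involutive-sum}: build the two cobordism maps from $W$ and $\overline{W}$ for the self-conjugate $\spinc$ structure coming from the spin structure, invoke the conjugation-naturality of cobordism maps (so that self-conjugacy of $\mathfrak{w}$ yields $\iota_2\circ f\simeq f\circ\iota_1$), and use $b_1(W)=b_2^{\pm}(W)=0$ to see that the localized maps are isomorphisms on $\HFinf\cong\F[U,U^{-1}]$. The grading-shift check and the $U^{-1}H_*$ argument are both correctly handled, so nothing is missing.
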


With this in mind, one can consider the group $\mathfrak I_{\Q}$ consisting of $\iota$-complexes $\cC=(C,\iota)$ modulo local equivalence, with multiplication given by
\[
\cC \otimes \cC' = (C \otimes_{\F[U]} C'[-2], \iota \otimes \iota').
\]
The identity element is $[(\F[U], \id)]$, with $\gr(1)=-2$. The inverse of $[(C,\iota)]$ is given by $ \cC^* = [(C^*, \iota^*)]$, where $C^* = \Hom_{\F[U]}(C, \F[U])$ and $\iota^*$ is the dual map to $\iota$. Gradings in the complex $C^*$ are handled as follows: if $\bf{S}$ is a set of generators for $C$ over $\F[U]$, for each $ x \in \bf S$ the dual generator $x^*$ has grading $-\gr(x) - 4$.

\begin{proposition}[{\cite[Proposition 8.8]{HMZ:involutive-sum}}] With the definitions above, $\mathfrak I_{\Q}$ is a well-defined abelian group.
\end{proposition}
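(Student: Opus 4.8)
\emph{Proof proposal.} The plan is to verify the group axioms in order of increasing difficulty: first that the tensor product $\cC\otimes\cC'=(C\otimes_{\F[U]}C'[-2],\iota\otimes\iota')$ of two $\iota$-complexes is again an $\iota$-complex and that it descends to local equivalence classes; then associativity, commutativity, and the identity, all of which hold on the nose up to canonical isomorphism; and finally the existence of inverses, which is the only step with real content. Every computation takes place over the principal ideal domain $\F[U]$, so the K\"unneth and universal coefficient theorems are available in their usual short-exact-sequence form, and the only work beyond the classical statement that a finite free complex over a PID is dualizable is the bookkeeping for the homotopy-coherent involutions.

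For the first point: given $\iota$-complexes $\cC=(C,\iota)$ and $\cC'=(C',\iota')$, the complex $C\otimes_{\F[U]}C'[-2]$ is again finitely generated, free, and relatively $\Z$-graded, and the K\"unneth theorem gives $H_*(C\otimes_{\F[U]}C')\cong\big(H_*(C)\otimes_{\F[U]}H_*(C')\big)\oplus\Tor_{\F[U]}(H_*(C),H_*(C'))$ up to a degree shift. The $\Tor$ summand is finitely generated $U$-torsion, so it vanishes after inverting $U$, whence $U^{-1}H_*(C\otimes_{\F[U]}C')\cong U^{-1}H_*(C)\otimes_{\F[U,U^{-1}]}U^{-1}H_*(C')\cong\F[U,U^{-1}]$. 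That $(\iota\otimes\iota')^2\simeq\id$ follows by writing $\iota^2\otimes(\iota')^2-\id\otimes\id=(\iota^2-\id)\otimes(\iota')^2+\id\otimes((\iota')^2-\id)$ and tensoring the given homotopies with the chain maps $(\iota')^2$ and $\id$. Local equivalence is visibly an equivalence relation (reflexive, symmetric, transitive by composition, using that compositions of homotopy-commuting maps homotopy-commute); and if $f,g$ witness $\cC\sim\cC_1$ and $f',g'$ witness $\cC'\sim\cC_1'$, then $f\otimes f'$ and $g\otimes g'$ witness $\cC\otimes\cC'\sim\cC_1\otimes\cC_1'$, since they intertwine the involutions up to homotopy by the same tensoring-of-homotopies argument and are isomorphisms on $U^{-1}H_*$ because under K\"unneth $U^{-1}H_*(f\otimes f')=U^{-1}H_*(f)\otimes U^{-1}H_*(f')$. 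Associativity and commutativity hold because the canonical associativity and swap isomorphisms of $\otimes_{\F[U]}$ are grading-preserving chain isomorphisms that strictly intertwine $(\iota\otimes\iota')\otimes\iota''$ with $\iota\otimes(\iota'\otimes\iota'')$ and $\iota\otimes\iota'$ with $\iota'\otimes\iota$ (there are no Koszul signs over $\F$), and the $[-2]$ shifts behave symmetrically. For the identity, the canonical isomorphism $C\otimes_{\F[U]}\F[U]\cong C$ becomes grading-preserving once one uses the $[-2]$ shift together with the convention $\gr(1)=-2$ on the identity factor, and it carries $\iota\otimes\id$ to $\iota$.

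The main step is inverses: one must show $\cC\otimes\cC^*$ is locally equivalent to $(\F[U],\id)$ for every $\iota$-complex $\cC$. Choosing a homogeneous $\F[U]$-basis $\{e_i\}$ of $C$ with dual basis $\{e_i^*\}$ of $C^*$, I would introduce the evaluation map $\mathrm{ev}\co C^*\otimes_{\F[U]}C\to\F[U]$, $\phi\otimes x\mapsto\phi(x)$, and the coevaluation map $\mathrm{coev}\co\F[U]\to C\otimes_{\F[U]}C^*$, $1\mapsto\sum_i e_i\otimes e_i^*$. Under the canonical isomorphism $C\otimes_{\F[U]}C^*\cong\mathrm{End}_{\F[U]}(C)$ --- which carries the differential to the commutator differential and $\iota\otimes\iota^*$ to $f\mapsto\iota\circ f\circ\iota$ --- the element $\sum_i e_i\otimes e_i^*$ is $\id_C$, so it is a cycle and is basis-independent. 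Then one checks: (i) both maps are grading-preserving --- here the shift $\gr(x^*)=-\gr(x)-4$ in the definition of $C^*$ and the shift $[-2]$ in the definition of $\otimes$ conspire exactly to make this so; (ii) both are chain maps; (iii) they intertwine the involutions up to homotopy, which is where $\iota^2\simeq\id$ enters: for instance $\mathrm{ev}\circ(\iota^*\otimes\iota)(\phi\otimes x)=\phi(\iota^2 x)$, and if $\iota^2=\id+\partial H+H\partial$ then $\phi\otimes x\mapsto\phi(Hx)$ is the required homotopy (using that the target $\F[U]$ has zero differential), and dually for $\mathrm{coev}$ the homotopy sends $1$ to the class of $H$ regarded as an element of $C\otimes_{\F[U]}C^*$; (iv) both induce isomorphisms on $U^{-1}H_*$: K\"unneth and universal coefficients over $\F[U]$ give $U^{-1}H_*(C\otimes_{\F[U]}C^*)\cong\F[U,U^{-1}]$, and $\mathrm{coev}$ sends its generator to $\id_C$, which localizes to the identity on the rank-one module $U^{-1}H_*(C)$, hence to a unit; similarly $\mathrm{ev}$ localizes to the trace on $\mathrm{End}_{\F[U,U^{-1}]}(\F[U,U^{-1}])$, an isomorphism. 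Precomposing $\mathrm{ev}$ with the swap isomorphism $C\otimes_{\F[U]}C^*\xrightarrow{\sim}C^*\otimes_{\F[U]}C$ gives a map in the remaining direction, so $\mathrm{coev}$ and this composite exhibit $\cC\otimes\cC^*$ and $(\F[U],\id)$ as locally equivalent, which, together with the preceding paragraph, establishes that $\mathfrak{I}_\Q$ is a well-defined abelian group.

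The hard part will not be any one deep point but the bookkeeping in step (iii): tracking how $\iota^*$ interacts with $\mathrm{ev}$ and with the identification $C\otimes_{\F[U]}C^*\cong\mathrm{End}_{\F[U]}(C)$, and confirming that the homotopies realizing $\iota^2\simeq\id$ (and $\iota^{*2}\simeq\id$) genuinely produce the needed homotopies after tensoring and dualizing. A secondary point needing care is the localized-homology computation in (iv): one must check that the $\Tor$ and $\mathrm{Ext}$ error terms in K\"unneth and universal coefficients are finitely generated $U$-torsion and hence die after inverting $U$, and that it is the rank-one-ness of $U^{-1}H_*(C)$ --- not of $C$ itself, which may well have even $\F[U]$-rank --- that makes the evaluation/trace map a localized isomorphism over $\F=\Z/2$. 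None of this is difficult, but it is precisely where the involutive refinement adds work beyond the classical dualizability statement.
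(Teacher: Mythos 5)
The paper does not prove this proposition; it cites it as \cite[Proposition~8.8]{HMZ:involutive-sum} (extended to $\Q$-gradings as in \cite{DaiManolescu}). Your argument is correct and follows the same strategy as the cited source: well-definedness of $\otimes$ on local-equivalence classes via K\"unneth plus the tensoring-of-homotopies device, the monoidal axioms on the nose, and inverses via the evaluation and coevaluation maps under the identification $C\otimes_{\F[U]}C^*\cong\operatorname{End}_{\F[U]}(C)$, with the homotopy realizing $\iota^2\simeq\id$ supplying the required intertwining homotopies for both $\mathrm{ev}$ and $\mathrm{coev}$.

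One point worth making explicit, since it is the only place where something could conceivably go wrong over $\F=\Z/2$: you should note that $\operatorname{rank}_{\F[U]}(C)$ is necessarily odd. The Euler characteristic identity $\sum_i(-1)^i\operatorname{rank}(C_i)=\sum_i(-1)^i\operatorname{rank}\bigl(H_i(C)\bigr)$ together with $U^{-1}H_*(C)\cong\F[U,U^{-1}]$ (a single free summand) forces $\operatorname{rank}(C)\equiv 1\pmod 2$, which is what guarantees that the trace $(\mathrm{ev}\circ\mathrm{swap})\circ\mathrm{coev}$ sends $1\mapsto 1$ rather than to $0$. Your claim that each of $\mathrm{ev}$ and $\mathrm{coev}$ is individually a local isomorphism is correct as stated (via the pairing argument and the fact that $[\id_C]$ maps to $\id_{H_*(C)}$), so this is not a gap, but recording the parity observation makes the consistency of the two localized-isomorphism claims transparent. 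You also tacitly use, and could state, that $(C^*,\iota^*)$ is itself an $\iota$-complex (dualize the homotopy for $\iota^2\simeq\id$ and apply UCT), though uniqueness of inverses in a monoid means you do not separately need that dualization is well defined on local equivalence classes.
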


Once again, this is a slight extension of the original to allow for $\Q$-gradings; if one restricts to absolutely $\Z$-graded complexes, one obtains the group $\mathfrak I$ of \cite{HMZ:involutive-sum}. The extension first appeared in \cite{DaiManolescu}.

Given an $\iota$-complex, one can define the correction terms analogously to the case of $\CF^-(Y,\s)$ and $\CFIm(\H, \s)$. In this paper, we will call these correction terms $d(\cC)$, $\dl(\cC)$, and $\du(\cC)$. (Of course, the first correction term only depends on $C$ and not $\iota$.) In particular, we have
\[d(\cC) = \max \{r \mid \exists \ x \in H_r(C), \forall \ n, \ U^nx \neq 0 \} +2, \]
\[ \dl(\cC) = \max \{r \mid \exists \ x \in \HFIm_r(\cC), \forall \ n, \ U^nx\neq 0 \ \text{and} \ U^nx \notin \operatorname{Im}(Q)\} + 1, \]
and
\[ \du(\cC) = \max \{r \mid \exists \ x \in \HFIm_r(\cC),\forall \ n, U^nx\neq 0; \exists \ m\geq 0 \ \operatorname{s.t.} \ U^m x \in \operatorname{Im}(Q)\} +2. \]
These correction terms descend to give functions
\[
\dl, d, \du \co \mathfrak I_{\Q} \rightarrow \Q.
\]
\noindent So in total one has functions
\[
\Theta^3_{\Z_2} \rightarrow \mathfrak I_{\Q} \xrightarrow{ \dl, d, \du}  \Q
\]
\noindent where the first step is via $(Y,\s) \mapsto [(\CF^-(Y, \s), \iota)]$. If we consider only integer homology spheres, this becomes
\[
\Theta^3_{\Z} \rightarrow \mathfrak I \xrightarrow{ \dl, d, \du} 2\Z.
\]

The group $\mathfrak I$ has been subsequently studied by Dai and Manolescu, who computed the local equivalence classes of $(\CFm(Y,\s), \iota)$ for $Y$ an almost-rational plumbed manifold \cite{DaiManolescu}, and by Dai and Stoffregen, who studied linear relationships between such manifolds \cite{DaiStoffregen}.  

\section{Connected Heegaard Floer homology}\label{sec:connected}

In this section, we define connected Heegaard Floer homology and prove Theorem \ref{thm:HFconn}. Our approach is an adaptation of Section 2.5 of \cite{Stoffregen}.

\begin{definition}\label{def:good}
Let $\cC= (C, \iota)$ be an $\iota$-complex. A grading preserving chain map 
\[ f \co C \rightarrow C \]
is a \emph{self-local equivalence} if
\begin{enumerate}
	\item $f \circ \iota \simeq \iota \circ f$,
	\item $f$ induces an isomorphism on $U^{-1}H_*$.
\end{enumerate}
\end{definition}

Recall that a \emph{preorder} on a set $S$ is binary relation $\lesssim$ on $S$ that is reflexive and transitive. We may define a preorder on the set of self-local equivalences of $\cC$ by $f \lesssim g$ if $\ker f \subseteq \ker g$. Note that we have a preorder rather than a partial order because $f \co C \rightarrow C$ is not uniquely determined by its kernel. We say that $f$ is \emph{maximal} if for $g$ a self-local equivalence, $g \gtrsim f$ implies $g \lesssim f$.  Roughly, the connected homology will be the torsion submodule of the homology of the image of a maximal self-local equivalence. The rest of this section is dedicated to showing that this is well-defined.  

We begin with the following technical lemma.

\begin{lemma}\label{lem:locally-equiv-isom}
Let $\cC=(C, \iota)$ and $\cC'=(C', \iota')$ be $\iota$-complexes. Suppose
\begin{align*}
	F &\co C \rightarrow C'
\end{align*}
is a chain complex isomorphism such that $F \circ \iota \simeq \iota' \circ F$. Then $\cC$ and $\cC'$ are homotopy equivalent as $\iota$-complexes.
\end{lemma}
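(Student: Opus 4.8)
The plan is to take $f = F$ and $g = F^{-1}$ as the candidate homotopy equivalence data. Since $F$ is an honest chain complex isomorphism, $g = F^{-1}$ is automatically a chain map, and conditions (2) of the definition of homotopy equivalence are immediate: $g \circ f = \id_C$ and $f \circ g = \id_{C'}$ literally (not just up to homotopy). Moreover $g$ is grading-preserving because $f$ is. So the only thing to check is condition (1): that $f \circ \iota \simeq \iota' \circ f$ and $g \circ \iota' \simeq \iota \circ g$. The first of these is exactly the hypothesis, so the entire content of the proof is to deduce the second from the first.

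The key step is a formal manipulation: given a chain homotopy $H \co C \to C'$ with $F \circ \iota - \iota' \circ F = \partial' H + H \partial$, I would conjugate by $F$ to produce a homotopy between $\iota \circ F^{-1}$ and $F^{-1} \circ \iota'$. Concretely, compose the homotopy identity on the left by $F^{-1}$ and on the right by $F^{-1}$, obtaining
\[
\iota \circ F^{-1} - F^{-1} \circ \iota' = F^{-1}(F\iota - \iota' F)F^{-1} = F^{-1}(\partial' H + H\partial)F^{-1}.
\]
Now use that $F$ (hence $F^{-1}$) is a chain map, i.e. $F^{-1}\partial' = \partial F^{-1}$ and $\partial' F^{-1} = F^{-1}\partial$, to move the differentials to the outside: $F^{-1}\partial' H F^{-1} = \partial (F^{-1} H F^{-1})$ and $F^{-1} H \partial F^{-1} = (F^{-1} H F^{-1})\partial$ (here one must be slightly careful about which differential is which, but since $F$ intertwines $\partial$ and $\partial'$ on both sides this works out). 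Thus $F^{-1} H F^{-1} \co C' \to C$ is the desired chain homotopy from $g \circ \iota'$ to $\iota \circ g$.

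There is really no main obstacle here — the lemma is essentially the observation that ``chain homotopy equivalence of $\iota$-complexes'' is insensitive to replacing $\iota$-commutativity of $F$ up to homotopy versus on the nose, once $F$ is already an honest isomorphism of the underlying complexes. The only point requiring any care is bookkeeping of the differentials when conjugating the homotopy, making sure the chain map property of $F$ is invoked with the correct source and target differentials so that $F^{-1} H F^{-1}$ genuinely witnesses the homotopy; I would write this out explicitly rather than leave it to the reader. One should also note in passing that $\iota^2 \simeq \id_C$ and $(\iota')^2 \simeq \id_{C'}$ are part of the data of the $\iota$-complexes and are not affected, so no further verification is needed to conclude that $\cC$ and $\cC'$ are homotopy equivalent as $\iota$-complexes.
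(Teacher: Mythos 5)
Your proof is correct and takes essentially the same approach as the paper, which also uses $F$ and $F^{-1}$ as the homotopy equivalence data and $F^{-1}\circ H\circ F^{-1}$ as the homotopy witnessing $F^{-1}\circ\iota'\simeq\iota\circ F^{-1}$. One small typo to fix when writing this out: the displayed identity ``$\partial' F^{-1} = F^{-1}\partial$'' does not typecheck (both sides should be $\partial F^{-1} = F^{-1}\partial'$, maps $C'\to C$), but you already flag this bookkeeping point and the conclusion is unaffected.
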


\begin{proof}
By assumption, $F \circ \iota \simeq \iota' \circ F$ via some homotopy $H \co C \rightarrow C'$. Then it is straightforward to verify that $F^{-1} \circ \iota' \simeq \iota \circ F^{-1}$ via $F^{-1} \circ H \circ F^{-1} \co C' \rightarrow C$. Now $F$ and $F^{-1}$ provide the desired homotopy equivalence.
\end{proof}

We now prove several lemmas regarding maximal self-local equivalences.

\begin{lemma}
Maximal self-local equivalence always exist.
\end{lemma}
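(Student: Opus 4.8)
The plan is to argue that the preorder $\lesssim$ on self-local equivalences of $\cC = (C,\iota)$, while not a priori well-founded (one must descend through smaller and smaller kernels), can only descend finitely often because of a rank constraint. First I would observe that for any self-local equivalence $f \co C \to C$, the induced map on $U^{-1}H_*(C) \cong \F[U,U^{-1}]$ is an isomorphism; consequently $\im f$ contains a copy of the "tower," i.e., the quotient $\im(f)/(U\text{-torsion})$ is free of rank one over $\F[U]$ after localization, and in particular $\im f$ is never a proper submodule "smaller" in that direction. The relevant finite quantity to track is therefore the rank over $\F[U]$ of the kernel, $\ker f$, which is a finitely generated $\F[U]$-module sitting inside the finitely generated free module $C$. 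In fact, since $f$ is an isomorphism on $U^{-1}H_*$, composing $f$ with itself gives $f^2$ with $\ker f \subseteq \ker f^2$, and the key point is that the $\F[U]$-module $C/\ker f \cong \im f$ has the same localized homology as $C$, so the "size" of $\ker f$ is bounded by the total rank of $C$.

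Next I would make the descent precise. Suppose, toward a contradiction, that no maximal self-local equivalence exists. Starting from $f_0 = \id_C$, one builds an infinite strictly ascending-in-kernel chain of self-local equivalences $f_0, f_1, f_2, \dots$ with $\ker f_0 \subsetneq \ker f_1 \subsetneq \cdots$ (strict because $f_{i+1} \gtrsim f_i$ but not $\lesssim f_i$). Here I would want to compose: given $f_i$ and a witness $g$ with $\ker f_i \subsetneq \ker g$, replace $f_{i+1}$ by $g \circ f_i$ (which is again a self-local equivalence: it commutes with $\iota$ up to homotopy by combining the two homotopies, and it is an iso on $U^{-1}H_*$ as a composite of such), noting $\ker(g\circ f_i) \supseteq \ker f_i$ and in fact strictly larger. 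Since each $\ker f_i$ is an $\F[U]$-submodule of the Noetherian module $C$, an infinite strictly ascending chain of submodules is impossible — this is the contradiction. I would phrase this via Noetherianity of $\F[U]$ (it is a PID, hence Noetherian, and $C$ is finitely generated), so submodules of $C$ satisfy the ascending chain condition.

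The one subtlety I would be careful about — and I expect this to be the main obstacle to a fully clean writeup — is ensuring that the composite $g \circ f$ of two self-local equivalences is again one, specifically that it still satisfies $(g\circ f)\circ\iota \simeq \iota\circ(g\circ f)$ over $\F[U]$; this is a routine diagram chase combining $f\iota \simeq \iota f$ and $g\iota\simeq\iota g$ into $gf\iota \simeq g\iota f \simeq \iota g f$, but it needs to be stated. A second, smaller point: one must check that the chain one builds genuinely has strictly increasing kernels and does not stabilize — this is exactly what the failure of maximality provides at each stage, so the contradiction with the ascending chain condition is immediate. I would therefore structure the proof as: (1) composites of self-local equivalences are self-local equivalences; (2) if $f$ is not maximal, there is $g$ with $\ker(g\circ f) \supsetneq \ker f$; (3) iterate to get an infinite strictly ascending chain of $\F[U]$-submodules of $C$, contradicting Noetherianity; hence a maximal self-local equivalence exists.
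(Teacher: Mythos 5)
Your proof takes a genuinely different route from the paper's. The paper's argument is that the set of self-local equivalences is nonempty (it contains the identity) and finite: since $C$ is finitely generated over $\F[U]$ and relatively $\Z$-graded, each graded piece of $C$ is a finite-dimensional $\F$-vector space, so a grading-preserving $\F[U]$-endomorphism of $C$ is determined by sending each of the finitely many $\F[U]$-generators into a finite-dimensional space, and hence there are only finitely many such maps; a nonempty finite preordered set has a maximal element. Your argument instead invokes Noetherianity of $C$ as a finitely generated module over the PID $\F[U]$, together with the ascending chain condition on the submodules $\ker f_i$. Both approaches are valid; the ACC argument is heavier machinery for this particular lemma, where outright finiteness is available, but it is the argument one would reach for in a setting where the set of self-local equivalences could be infinite.

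There is one wart in your writeup, though it does not break the argument. The composition step is both unnecessary and left unjustified: from non-maximality of $f_i$ you directly obtain a self-local equivalence $g$ with $\ker f_i \subsetneq \ker g$, and you should simply set $f_{i+1} := g$. Instead you set $f_{i+1} := g \circ f_i$ and assert $\ker(g \circ f_i) \supsetneq \ker f_i$. That assertion happens to be true here -- $\ker(g \circ f_i) = \ker f_i$ would force $\im f_i \cap \ker g = 0$, and a rank count over the PID $\F[U]$, using that kernels of maps between finitely generated free modules are saturated (hence direct summands), shows $\rank(\im f_i) + \rank(\ker g) > \rank C$ whenever $\ker f_i \subsetneq \ker g$, so the intersection is nonzero -- but this is a nontrivial claim you state without argument. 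Dropping the composition and taking $f_{i+1} = g$ makes the strictly ascending chain, and hence the ACC contradiction, immediate.
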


\begin{proof}
The set of self-local equivalences is nonempty (the identity is a self-local equivalence) and finite (since $C$ is finitely generated over $\F[U]$ hence finite dimensional as an $\F$-vector space in each grading). Therefore, maximal self-local equivalence always exist.
\end{proof}

\begin{lemma}\label{lem:isomim}
If $f, g \co C \rightarrow C$ are maximal self-local equivalences of $\cC = (C, \iota)$, then $f|_{\im g} \co \im g \rightarrow \im f$ is an isomorphism of chain complexes. %In particular, $f\circ \iota|_{\im g} \simeq \iota|_{\im f}\circ f|_{\im g}$.??
\end{lemma}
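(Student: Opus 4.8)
The plan is to show that the composite $f|_{\im g} \colon \im g \to \im f$ is injective, and then to deduce surjectivity by a symmetric argument combined with a dimension count in each grading. First I would observe that $g$ itself, viewed as a map $C \to \im g$, is a self-local equivalence onto its image in the sense that it induces an isomorphism on $U^{-1}H_*$ (since $g$ does, and localization is exact), so $\im g$ inherits the structure of an $\iota$-complex once we check $\iota(\im g) \subseteq \im g$ up to homotopy — actually the cleaner route is to work with the composite $f \circ g \colon C \to C$ directly. Note $f \circ g$ is a self-local equivalence: it commutes with $\iota$ up to homotopy (compose the two homotopies) and induces an isomorphism on $U^{-1}H_*$. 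Moreover $\ker g \subseteq \ker(f\circ g)$, so $f \circ g \gtrsim g$, and by maximality of $g$ this forces $f\circ g \lesssim g$, i.e.\ $\ker(f \circ g) \subseteq \ker g$. Combined with the trivial inclusion this gives $\ker(f\circ g) = \ker g$.

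The key consequence is that $f$ does not kill anything in $\im g$: if $x \in \im g$, write $x = g(y)$; then $f(x) = 0$ means $y \in \ker(f \circ g) = \ker g$, so $x = g(y) = 0$. Hence $f|_{\im g}$ is injective, and by symmetry (swapping the roles of $f$ and $g$) $g|_{\im f}$ is injective. Next I would check that $f|_{\im g}$ actually lands in $\im f$ — this is immediate since $f(\im g) \subseteq f(C) = \im f$. It remains to see that $f|_{\im g} \colon \im g \to \im f$ is surjective. For this I would use that both $\im f$ and $\im g$ are finitely generated free graded $\F[U]$-modules (submodules of the free module $C$ over the PID $\F[U]$), and that in each $\Q$-grading they are finite-dimensional $\F$-vector spaces. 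Since $f|_{\im g}$ is injective in each grading, $\dim_\F (\im g)_r \le \dim_\F (\im f)_r$ for all $r$; by the symmetric statement for $g|_{\im f}$ we get the reverse inequality, so the dimensions agree in every grading and the injection $f|_{\im g}$ is an isomorphism of $\F$-vector spaces in each grading, hence an isomorphism of graded $\F[U]$-modules. Finally, since $f$ is a chain map, $f|_{\im g}$ commutes with the differentials, so it is an isomorphism of chain complexes, as claimed.

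The main obstacle I anticipate is the surjectivity step: injectivity follows cleanly from the maximality argument, but to conclude surjectivity one genuinely needs the finiteness built into the definition of an $\iota$-complex (finitely generated over $\F[U]$, finite-dimensional in each grading), and one must be a little careful that the grading-by-grading dimension comparison is legitimate — i.e.\ that $f$ and $g$ are grading preserving (which is part of the definition of self-local equivalence) so that $(\im f)_r$ and $(\im g)_r$ are the relevant finite-dimensional pieces. One subtlety worth flagging: a priori $\im g$ need not be a subcomplex-with-compatible-$\iota$, but we never need $\iota$ to restrict to $\im g$; the only homotopies used are the ones asserting $f \circ \iota \simeq \iota \circ f$ and $g \circ \iota \simeq \iota \circ g$ on all of $C$, which are given.
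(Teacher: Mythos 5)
Your proof is essentially the same as the paper's: both use the composite $f\circ g$ (the paper uses $g\circ f$, a symmetric choice) together with the maximality hypothesis to conclude $\ker(f\circ g)=\ker g$, hence $f|_{\im g}$ is injective, then obtain injectivity of $g|_{\im f}$ by symmetry, and finally conclude isomorphism from the two injections via the finiteness of $C$. The only difference is that you spell out the last step with an explicit grading-by-grading dimension count, which the paper leaves implicit.
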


\begin{proof}
The composition $g \circ f \co C \rightarrow C$ is a self-local equivalence and $\ker (g \circ f) \supseteq \ker f$. Since $f$ is maximal, $\ker (g \circ f) = \ker f$ and $g|_{\im f}$ is injective. Similarly, $f|_{\im g}$ is injective. Then we have injective grading preserving $\F[U]$-equivariant chain maps between finitely generated chain complexes over $\F[U]$, thus the maps are isomorphisms.
\end{proof}

\begin{lemma}\label{lem:splitcomplex}
If $f$ is a maximal self-local equivalence of $\cC=(C, \iota)$, then $C$ is isomorphic to the sum $\im f \oplus \ker f$.
\end{lemma}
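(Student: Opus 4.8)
The plan is to deduce this from Lemma~\ref{lem:isomim} together with the elementary fact that an $\F[U]$-module endomorphism which restricts to an isomorphism on its own image induces a Fitting-type splitting. Concretely, I would apply Lemma~\ref{lem:isomim} with $g = f$ (which is legitimate since $f$ is maximal): this shows that $f|_{\im f}\co \im f \to \im f$ is an isomorphism of chain complexes, and in particular it is both surjective and injective as an $\F[U]$-module map.

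From injectivity of $f|_{\im f}$ one gets $\ker f \cap \im f = 0$: any element of this intersection lies in the kernel of $f|_{\im f}$, hence is zero. From surjectivity one gets $\ker f + \im f = C$: given $z \in C$, the element $f(z)$ lies in $\im f = f(\im f)$, so $f(z) = f(y)$ for some $y \in \im f$; then $z - y \in \ker f$, so $z = (z-y) + y \in \ker f + \im f$. Combining the two statements yields $C = \ker f \oplus \im f$ as $\F[U]$-modules.

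It then remains only to upgrade this to a splitting of chain complexes. Since $f$ is a chain map, $\im f$ is a subcomplex (closed under $\partial$ because $\partial f(x) = f(\partial x)$), and $\ker f$ is a subcomplex (if $f(x) = 0$ then $f(\partial x) = \partial f(x) = 0$). Hence the module-theoretic direct sum decomposition above is automatically a decomposition of chain complexes over $\F[U]$, which is exactly the assertion of Lemma~\ref{lem:splitcomplex}.

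I do not expect a real obstacle here: all of the heavy lifting — finiteness of the set of self-local equivalences, and the fact that an injective grading-preserving $\F[U]$-equivariant chain map between finitely generated complexes is an isomorphism — has already been carried out in the lemmas preceding the statement, so the only care needed is the cosmetic one of noting that the splitting respects the differential. If anything, the mildly delicate point is remembering to invoke Lemma~\ref{lem:isomim} in the degenerate case $g = f$, rather than attempting to argue directly with the stabilization of the chains $\im f \supseteq \im f^2 \supseteq \cdots$ and $\ker f \subseteq \ker f^2 \subseteq \cdots$.
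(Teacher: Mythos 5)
Your proof is correct and matches the paper's approach: both hinge on Lemma~\ref{lem:isomim} applied with $g=f$ to obtain that $f|_{\im f}$ is an isomorphism, and both then deduce the internal direct sum $C = \im f \oplus \ker f$. The paper simply writes the resulting isomorphism explicitly as $z \mapsto \bigl(f(z),\, z + (f|_{\im f})^{-1} f(z)\bigr)$, whereas you verify $\ker f \cap \im f = 0$ and $\ker f + \im f = C$ separately and then note the summands are subcomplexes; these are the same argument presented in different packaging.
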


\begin{proof}
By Lemma \ref{lem:isomim}, the map $f|_{\im f}$ is injective. Then a standard algebra argument shows that $C \cong \im f \oplus \ker f$. Namely, given $(x,y) \in \im f \oplus \ker f$ and $z \in C$, the maps
\begin{align*}
(x, y) &\mapsto (f|_{\im f})^{-1}(x)+y \\
z &\mapsto (f(z), z+(f|_{\im f})^{-1} \circ f(z))
\end{align*}
provide the desired isomorphism.
\end{proof}

For $f$ a maximal self-local equivalence, define $\iota_f\co \im f \to \im f$ by $f\circ \iota \circ (f|_{\im f})^{-1}$ and $\iotaperp \co \ker f \to \ker f$ by $(1 + (f|_{\im f})^{-1}\circ f ) \circ \iota$.
\begin{lemma}\label{lem:iotasplit}
If $f$ is a maximal self-local equivalence of $\cC=(C, \iota)$, then $(\im f, \iota_f) \oplus (\ker f, \iotaperp)$ is an $\iota$-complex which is homotopy equivalent to $\cC$.   
\end{lemma}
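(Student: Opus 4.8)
The plan is to realize $(\im f,\iota_f)\oplus(\ker f,\iotaperp)$ as the image of $\cC$ under the splitting isomorphism of Lemma~\ref{lem:splitcomplex}, up to a homotopy accounting for the fact that $\iota$ does not literally respect that splitting. Write $\pi := f|_{\im f}$; by Lemma~\ref{lem:isomim} (with $g=f$) this is a chain \emph{isomorphism} of $\im f$, so $\pi^{-1}$ is also a chain map and $\iota_f$, $\iotaperp$ are honestly defined. Let $G\co \im f\oplus\ker f\to C$ be the chain isomorphism from the proof of Lemma~\ref{lem:splitcomplex}, $G(x,y)=\pi^{-1}(x)+y$, and set $\iota':=G^{-1}\circ\iota\circ G$. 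Conjugating the homotopy $\iota^2\simeq\id$ by $G$ gives $(\iota')^2\simeq\id$, so $(\im f\oplus\ker f,\iota')$ is an $\iota$-complex, isomorphic to $\cC$ as an $\iota$-complex since $G\circ\iota'=\iota\circ G$ (an instance of Lemma~\ref{lem:locally-equiv-isom}). Thus it suffices to show $(\im f\oplus\ker f,\iota')$ is homotopy equivalent, as an $\iota$-complex, to $(\im f\oplus\ker f,\iota_f\oplus\iotaperp)$.

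Writing $\iota'$ in block form with respect to $\im f\oplus\ker f$, a direct computation from the explicit formulas for $G$ and $G^{-1}$ and the definitions of $\iota_f$ and $\iotaperp$ shows that the two diagonal blocks of $\iota'$ are \emph{exactly} $\iota_f$ and $\iotaperp$; hence $\iota'=(\iota_f\oplus\iotaperp)+N$, where $N$ is purely off-diagonal with components $\nu_1\co\im f\to\ker f$ and $\nu_2\co\ker f\to\im f$. The crux is to prove $N\simeq 0$, and for this I transport $f$ itself: a short calculation gives that $f':=G^{-1}\circ f\circ G$ is block-diagonal, equal to $\pi$ on $\im f$ and to $0$ on $\ker f$. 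Since $f$ is a self-local equivalence, $f\circ\iota\simeq\iota\circ f$, and conjugating by $G$ gives $f'\circ\iota'\simeq\iota'\circ f'$. Comparing block forms, and using that the differential on $\im f\oplus\ker f$ is block-diagonal (being a direct sum of subcomplexes of $C$), the off-diagonal blocks of this relation read $\pi\circ\nu_2\simeq 0$ and $\nu_1\circ\pi\simeq 0$; composing appropriately with the chain isomorphism $\pi^{-1}$ yields $\nu_2\simeq 0$ and $\nu_1\simeq 0$, so $\iota'\simeq\iota_f\oplus\iotaperp$.

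To conclude: $(\iota_f\oplus\iotaperp)^2\simeq(\iota')^2\simeq\id$, so $(\im f\oplus\ker f,\iota_f\oplus\iotaperp)$ is an $\iota$-complex, and Lemma~\ref{lem:locally-equiv-isom} with $F=\id$ (which satisfies $\id\circ\iota'\simeq(\iota_f\oplus\iotaperp)\circ\id$) exhibits it as homotopy equivalent, as an $\iota$-complex, to $(\im f\oplus\ker f,\iota')$ and hence to $\cC$. The main obstacle is precisely the claim $N\simeq 0$: the naive hope that $\iota$ splits as $\iota_f\oplus\iotaperp$ on the nose is false, and it is not even a priori clear that $\iota_f\oplus\iotaperp$ squares to the identity up to homotopy---note $(\ker f,\iotaperp)$ is not an $\iota$-complex on its own, its homology being $U$-torsion. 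Both facts emerge only after one conjugates $f$ through $G$ and uses that a self-local equivalence commutes with $\iota$ up to homotopy, after which everything reduces to bookkeeping with $2\times2$ block matrices and the invertibility of $\pi$.
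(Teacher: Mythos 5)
Your proof is correct and, at bottom, uses the same ingredients as the paper's: the splitting isomorphism $\varphi = G$ from Lemma~\ref{lem:splitcomplex} and the homotopy $H$ witnessing $f\circ\iota\simeq\iota\circ f$. Where the paper simply writes down the homotopy $J$ and verifies it directly, you derive it by transporting $\iota$ and $f$ through $G$, reading off that the diagonal blocks of $\iota'=G^{-1}\iota G$ are exactly $\iota_f$ and $\iotaperp$, and then nullhomotoping the off-diagonal part $N$ using the transported relation $f'\iota'\simeq\iota'f'$ together with invertibility of $\pi$; unwinding your $K$ through $G$ recovers the paper's $J=\varphi K$ on the nose, so this is the same argument organized into block form, which makes the provenance of $J$ more transparent.
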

\begin{proof}
Since $f$ is a maximal self-local equivalence, we have from Lemma~\ref{lem:splitcomplex} a chain complex isomorphism 
\begin{align*}
	\varphi \co \im f \oplus \ker f &\rightarrow C \\
				(x, y) &\mapsto (f|_{\im f})^{-1}(x)+y.
\end{align*}
We will show that $\varphi$ satisfies $\varphi \circ \iota' \simeq \iota \circ \varphi$, where $\iota'= \iota_f \oplus \iotaperp$.  Once this is established, it is then easy to verify that $(\im f, \iota_f) \oplus (\ker f, \iotaperp)$ is an $\iota$-complex. 

Let $H \co C \rightarrow C$ be a chain homotopy between $f \circ \iota$ and $\iota \circ f$, i.e., $f \circ \iota + \iota \circ f = H \circ \d + \d \circ H$. Then for $(x,y) \in \im f \oplus \ker f$, we have
\begin{align*}
	\iota \varphi(x,y) + \varphi \iota'(x,y) &= \iota (f|_{\im f})^{-1}(x) + \iota(y) + (f|_{\im f})^{-1} f \iota (f|_{\im f})^{-1}(x) + \iota(y) + (f|_{\im f})^{-1} f \iota(y) \\
		%&=  \iota (f|_{\im f})^{-1}(x) + (f|_{\im f})^{-1} f \iota (f|_{\im f})^{-1}(x) + (f|_{\im f})^{-1} f \iota(y) \\
		&= (1+ (f|_{\im f})^{-1} f)(f \iota + \iota f) (f|_{\im f})^{-1} (f|_{\im f})^{-1}(x) +  (f|_{\im f})^{-1} (f|_{\im f})^{-1} f (f \iota + \iota f)(y) \\
		&= \d J (x, y) + J \d (x,y),
\end{align*}
where 
\[ J(x,y) = (1+ (f|_{\im f})^{-1} f) H (f|_{\im f})^{-1} (f|_{\im f})^{-1}(x) + (f|_{\im f})^{-1} (f|_{\im f})^{-1} f H(y).\]
%\[ J(x,y) = H (f|_{\im f})^{-1} (f|_{\im f})^{-1}(x) + (f|_{\im f})^{-1} f H (f|_{\im f})^{-1} (f|_{\im f})^{-1}(x) + (f|_{\im f})^{-1} (f|_{\im f})^{-1} f H(y),\]
Since $\varphi$ is a chain complex isomorphism, this shows that $(\im f, \iota_f) \oplus (\ker f, \iotaperp)$ is an $\iota$-complex and Lemma~\ref{lem:locally-equiv-isom} implies that $\varphi$ induces an equivalence of $\iota$-complexes.  
\end{proof}

We are now able to show that $(\im f, \iota_f)$ carries the local equivalence type of $\cC$.
\begin{lemma}\label{lem:maxlocal}
If $f$ is a maximal self-local equivalence of $\cC =(C, \iota)$, then $ f\co (C,\iota)  \to (\im f, \iota_f)$ and $(f|_{\im f})^{-1} \co (\im f, \iota_f) \to (C,\iota)$ are local equivalences.
\end{lemma}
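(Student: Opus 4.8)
The claim is that, for a maximal self-local equivalence $f$, the maps $f \co (C,\iota) \to (\im f, \iota_f)$ and $(f|_{\im f})^{-1} \co (\im f, \iota_f) \to (C, \iota)$ are local equivalences. The key observation is that, by Lemma~\ref{lem:iotasplit}, we already have an $\iota$-complex homotopy equivalence $\varphi \co (\im f, \iota_f) \oplus (\ker f, \iotaperp) \to (C,\iota)$, where $\varphi(x,y) = (f|_{\im f})^{-1}(x) + y$. So the strategy is: (1) verify the required chain-map and $U^{-1}H_*$-isomorphism conditions directly from the definitions of $\iota_f$ and $\varphi$, and (2) handle the kernel summand by showing it localizes to zero, so it does not obstruct the $U^{-1}H_*$ statement.

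**Step 1: $f$ and $(f|_{\im f})^{-1}$ are chain maps commuting with the involutions up to homotopy.** That $f\co C \to C$ restricted to its image, and the inverse $(f|_{\im f})^{-1}\co \im f \to C$, are chain maps is immediate ($f$ is a chain map and $\im f$ is a subcomplex). For the involution condition on $(f|_{\im f})^{-1}$: by definition $\iota_f = f \circ \iota \circ (f|_{\im f})^{-1}$, so $(f|_{\im f})^{-1} \circ \iota_f = (f|_{\im f})^{-1} \circ f \circ \iota \circ (f|_{\im f})^{-1}$. On $\im f$ the operator $(f|_{\im f})^{-1} \circ f$ agrees with the identity, hence $(f|_{\im f})^{-1} \circ \iota_f$ and $\iota \circ (f|_{\im f})^{-1}$ agree \emph{on the nose} when restricted to $\im f$ — actually I should be slightly careful, since $(f|_{\im f})^{-1} \circ f$ is a projection onto $\im f$ along $\ker f$ only after the splitting of Lemma~\ref{lem:splitcomplex}; but $(f|_{\im f})^{-1}\circ f\circ\iota\circ(f|_{\im f})^{-1}(x)$ differs from $\iota\circ(f|_{\im f})^{-1}(x)$ by an element of $\ker f$, and one should just cite that $\varphi$ is a chain-homotopy equivalence of $\iota$-complexes and extract the needed homotopy by restriction/composition with the inclusion $\im f \hookrightarrow \im f \oplus \ker f$. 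For $f$ itself: $f \circ \iota \simeq \iota \circ f$ by the definition of self-local equivalence, and $\iota_f \circ f = f \circ \iota \circ (f|_{\im f})^{-1} \circ f$, which differs from $f \circ \iota$ only by precomposition with $(f|_{\im f})^{-1}\circ f$, again controlled by the splitting.

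**Step 2: both maps induce isomorphisms on $U^{-1}H_*$.** By Lemma~\ref{lem:splitcomplex}, $C \cong \im f \oplus \ker f$ as chain complexes, so $U^{-1}H_*(C) \cong U^{-1}H_*(\im f) \oplus U^{-1}H_*(\ker f)$. Since $f$ is a self-local equivalence it induces an isomorphism on $U^{-1}H_*$; but $f$ kills $\ker f$ and restricts to the isomorphism $f|_{\im f}\co \im f \to \im f$, so the composite $U^{-1}H_*(C) \to U^{-1}H_*(C)$ factors as projection to the $\im f$ summand followed by $(f|_{\im f})_*$. For this to be an isomorphism, the projection must be an isomorphism, forcing $U^{-1}H_*(\ker f) = 0$. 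Hence $U^{-1}H_*(C) \cong U^{-1}H_*(\im f)$ via $f$ (equivalently via $(f|_{\im f})^{-1}$), which is exactly condition (2) of local equivalence for both maps. This is the crux, and it is short once Lemma~\ref{lem:splitcomplex} is in hand.

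**Expected main obstacle.** There is no deep obstacle; the only delicate point is bookkeeping with the homotopies commuting the involutions, since $\iota_f$ is defined via $(f|_{\im f})^{-1}$ which is only defined on the subcomplex $\im f$, and one must be careful about where composites land (in $C$ versus in $\im f$). The cleanest route is to not reprove anything: invoke Lemma~\ref{lem:iotasplit}, which already gives that $\varphi$ (and hence $\varphi^{-1}$) is an $\iota$-complex homotopy equivalence between $(\im f,\iota_f)\oplus(\ker f,\iotaperp)$ and $(C,\iota)$; then observe $f = \pi_{\im f}\circ \varphi^{-1}$ where $\pi_{\im f}$ is the (chain-map) projection, and $(f|_{\im f})^{-1} = \varphi\circ\iota_{\im f}$ where $\iota_{\im f}\co \im f\hookrightarrow \im f\oplus\ker f$ is the inclusion; both projection and inclusion trivially commute with the split involution $\iota_f\oplus\iotaperp$, and both induce isomorphisms on $U^{-1}H_*$ once we know $U^{-1}H_*(\ker f)=0$ from Step 2. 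Assembling these gives the two local equivalences.
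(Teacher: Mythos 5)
Your proposal is correct and, in its final ``cleanest route'' paragraph, coincides with the paper's argument: invoke Lemma~\ref{lem:iotasplit} to get the $\iota$-complex homotopy equivalence $\varphi$, write $f = \pi_{\im f}\circ\varphi^{-1}$ and $(f|_{\im f})^{-1} = \varphi\circ\iota_{\im f}$, note that the inclusion and projection commute on the nose with the split involution $\iota_f\oplus\iotaperp$, and use $U^{-1}H_*(\ker f)=0$ to see they are local equivalences. Your Steps~1 and~2 are just a (somewhat bumpier) direct unpacking of the same facts, so no genuinely different route is being taken.
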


\begin{proof}
By Lemma~\ref{lem:iotasplit}, we have that $(\im f, \iota_f) \oplus (\ker f, \iotaperp)$ is an $\iota$-complex which is locally equivalent to $(C,\iota)$.  Note that $U^{-1} H_*(\im f) \cong \F[U,U^{-1}]$.  From this, it is straightforward to verify that $(\im f, \iota_f)$ is an $\iota$-complex and the inclusion $(\im f, \iota_f) \to (\im f, \iota_f) \oplus (\ker f, \iotaperp)$ is a local equivalence.   Therefore, the composition of this inclusion with the local equivalence $\varphi$ from Lemma~\ref{lem:iotasplit} is a local equivalence as well.  However, by definition of $\varphi$, this local equivalence is just given by $(f|_{\im f})^{-1}$.  

Since $\varphi$ is a chain complex isomorphism, $\varphi^{-1}$ is also a local equivalence from $(C,\iota)$ to $(\im f, \iota_f) \oplus (\ker f, \iotaperp)$, by the proof of Lemma~\ref{lem:locally-equiv-isom}.  Composing with the projection to $(\im f, \iota_f)$, we obtain a local equivalence from $(C,\iota)$ to $(\im f, \iota_f)$.  This map is exactly $f$.  
\end{proof}

Moreover, the following lemma shows that the homotopy type of $(\im f, \iota_f)$ is independent of the choice of maximal self-local equivalence.

\begin{lemma}\label{lem:maxhomotopy}
If $f$ and $g$ are maximal self-local equivalences of $\cC =(C, \iota)$, then $(\im f, \iota_f)$ and $(\im g, \iota_g)$ are homotopy equivalent as $\iota$-complexes.
\end{lemma}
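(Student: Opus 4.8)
The plan is to exhibit explicit maps between $(\im f, \iota_f)$ and $(\im g, \iota_g)$ and verify they are mutually inverse homotopy equivalences compatible with the involutions. The natural candidates come from Lemma~\ref{lem:isomim}: since $f$ and $g$ are both maximal, the restriction $g|_{\im f} \co \im f \to \im g$ is a chain complex isomorphism, with inverse $f|_{\im g} \co \im g \to \im f$. So the first step is to set $\psi = g|_{\im f}$ and $\psi^{-1} = f|_{\im g}$ and observe that condition (2) in the definition of homotopy equivalence of $\iota$-complexes is immediate, as is the fact that both are grading preserving. By Lemma~\ref{lem:locally-equiv-isom}, it then suffices to check the single compatibility relation $\psi \circ \iota_f \simeq \iota_g \circ \psi$ over $\F[U]$; the other relation will follow formally.

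The heart of the argument is therefore the chain homotopy computation $g|_{\im f} \circ \iota_f \simeq \iota_g \circ g|_{\im f}$. Unwinding the definitions, $\iota_f = f \circ \iota \circ (f|_{\im f})^{-1}$ and $\iota_g = g \circ \iota \circ (g|_{\im f})^{-1}$, so on $\im f$ one must compare
\[
g \circ f \circ \iota \circ (f|_{\im f})^{-1} \quad \text{and} \quad g \circ \iota \circ (g|_{\im g})^{-1} \circ g.
\]
The key inputs are: first, $\iota$ commutes with both $f$ and $g$ up to homotopy (since $f,g$ are self-local equivalences), with chain homotopies $H_f, H_g \co C \to C$; second, the maximality of $f$ and $g$, which by the argument in Lemma~\ref{lem:isomim} forces $\ker(g\circ f) = \ker f = \ker g$ and hence that $(f|_{\im f})^{-1}\circ f$ and $(g|_{\im g})^{-1}\circ g$ act as the identity on $\im f$ (respectively $\im g$), i.e. these are the idempotent projections onto the image summands from Lemma~\ref{lem:splitcomplex}. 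Using these, one assembles a homotopy built out of $H_f$, $H_g$, and the projection idempotents, much as in the explicit $J$ constructed in the proof of Lemma~\ref{lem:iotasplit}; the bookkeeping is routine $\F[U]$-linear manipulation once one commits to carrying $f$ and $g$ past $\iota$ one at a time.

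An alternative, and perhaps cleaner, route that I would actually favor: invoke Lemma~\ref{lem:maxlocal} to get local equivalences $(C,\iota) \to (\im f, \iota_f)$ and $(C,\iota) \to (\im g, \iota_g)$, so that $(\im f, \iota_f)$ and $(\im g, \iota_g)$ are locally equivalent to each other via maps whose underlying chain maps are the isomorphisms $g|_{\im f}$ and $f|_{\im g}$ of Lemma~\ref{lem:isomim}. Since those underlying chain maps are honest isomorphisms (not merely $U^{-1}H_*$-isomorphisms), Lemma~\ref{lem:locally-equiv-isom} upgrades the local equivalence to a genuine homotopy equivalence of $\iota$-complexes. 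This reduces everything already proved to a one-line citation, with the only new content being the identification of the local equivalence maps with the restrictions $g|_{\im f}$, $f|_{\im g}$.

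The main obstacle is the verification of the $\iota$-compatibility homotopy $g|_{\im f}\circ \iota_f \simeq \iota_g \circ g|_{\im f}$: one must be careful that the homotopies $H_f$, $H_g$ witnessing $f\iota \simeq \iota f$ and $g\iota \simeq \iota g$ interact correctly with the projection idempotents onto $\im f$ and $\im g$, and in particular that passing from $C$ to the summand $\im f$ does not destroy the homotopy relation. Once one recognizes (via maximality) that restriction to the image is the same as post-composing with an idempotent that is itself a chain map, this becomes a formal manipulation identical in flavor to the computation already carried out in Lemma~\ref{lem:iotasplit}, so there is no genuinely new difficulty — just careful transcription.
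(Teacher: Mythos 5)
Your proposal is correct, and it is interesting in that you offer two routes, one of which coincides with the paper's argument and one of which is a valid and somewhat cleaner alternative.

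Your first route is exactly the paper's proof: take the chain isomorphism between $\im f$ and $\im g$ from Lemma~\ref{lem:isomim}, show it intertwines $\iota_f$ and $\iota_g$ up to chain homotopy, and then invoke Lemma~\ref{lem:locally-equiv-isom} to upgrade a chain isomorphism with $\iota$-compatibility to a homotopy equivalence of $\iota$-complexes. The paper carries out the homotopy verification by an explicit chain of nine manipulations using the homotopies $H_f$, $H_g$ and the identity $(f|_{\im f})^{-1}\circ f \circ f = f$ on $\im f$, whereas you only sketch the ingredients and defer the bookkeeping, so on this route alone there would be a gap (the verification is the whole content of the lemma). Your second route closes that gap: composing the local equivalences $(\im f, \iota_f) \xrightarrow{(f|_{\im f})^{-1}} (C,\iota) \xrightarrow{g} (\im g, \iota_g)$ from Lemma~\ref{lem:maxlocal} gives a local equivalence whose underlying chain map is an isomorphism (since $g|_{\im f}$ and $(f|_{\im f})^{-1}$ are), and Lemma~\ref{lem:locally-equiv-isom} finishes. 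This is a legitimate alternative proof of the lemma, and it is essentially the argument the paper later uses for the more general Proposition~\ref{prop:locally-equiv-iota}; the paper's choice to verify the homotopy directly in Lemma~\ref{lem:maxhomotopy} is just a matter of exposition.

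Two small inaccuracies worth flagging, neither of which breaks the argument. First, you assert that maximality forces $\ker(g\circ f)=\ker f=\ker g$; Lemma~\ref{lem:isomim} only gives $\ker(g\circ f)=\ker f$ and $\ker(f\circ g)=\ker g$, and the equality $\ker f = \ker g$ is neither proved there nor needed. Second, in the cleaner route the composite local equivalence $\im f\to\im g$ is $g\circ(f|_{\im f})^{-1} = g|_{\im f}\circ(f|_{\im f})^{-1}$, not simply $g|_{\im f}$; likewise the reverse map is $f\circ(g|_{\im g})^{-1}$, not $f|_{\im g}$, and $f|_{\im g}$ is not literally the inverse of $g|_{\im f}$ in general. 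What matters for Lemma~\ref{lem:locally-equiv-isom} is only that the map is a chain isomorphism and a local equivalence, so the conclusion survives the imprecision.
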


\begin{proof}
By Lemma \ref{lem:isomim}, $f|_{\im g}\co \im g \rightarrow \im f$ is an isomorphism of chain complexes. We will show that $f|_{\im g} \circ \iota_g$ and $\iota_f \circ f|_{\im g}$ are chain homotopic. In what follows, the domain is $\im g$ and the codomain $\im f$:
\begin{align*}
	f \circ \iota_g &= f \circ g \circ \iota \circ (g|_{\im g})^{-1} \\
		&\simeq f \circ \iota \circ g \circ (g|_{\im g})^{-1} \\
		&= f \circ \iota \\
		&= (f|_{\im f})^{-1} \circ f \circ f \circ \iota \\
		&\simeq (f|_{\im f})^{-1} \circ f \circ \iota \circ f \\
		&= (f|_{\im f})^{-1} \circ f \circ \iota \circ f \circ (f|_{\im f})^{-1} \circ f \\
		&\simeq (f|_{\im f})^{-1} \circ f \circ f \circ \iota \circ (f|_{\im f})^{-1} \circ f \\ 
		&= f \circ \iota \circ (f|_{\im f})^{-1} \circ f \\ 
		&= \iota_f \circ f,
\end{align*}
as desired. Now by Lemma \ref{lem:locally-equiv-isom}, the map $f|_{\im g}$ induces a homotopy equivalence between the $\iota$-complexes $(\im f, \iota_f)$ and $(\im g, \iota_g)$.
\end{proof}

The above lemma tells us that we can use a maximal self-local equivalence to define an invariant of an $\iota$-complex.  Note that the homotopy type of $(\im f, \iota_f)$ as an $\iota$-complex is independent of the choice of $f$ by Lemma \ref{lem:maxhomotopy}. Moreover, the isomorphism type of $\im f$ as a chain complex over $\F[U]$ is independent of the choice of $f$ by Lemma \ref{lem:isomim}.  This leads to the following definition.   
\begin{definition}
Let $\cC$ be an $\iota$-complex and $f$ a maximal self-local equivalence. The \emph{connected complex} $(\cC_\conn, \iota_\conn)$ is $(\im f, \iota_f)$. 
\end{definition}

\begin{proposition}\label{prop:locally-equiv-iota}
Let $\cC$ and $\cC'$ be locally equivalent $\iota$-complexes. Then $\cC_\conn$ and $\cC'_\conn$ are isomorphic as chain complexes; in particular, $\cC_\conn$ is an invariant of the local equivalence type of $\cC$. Furthermore, the homotopy type of $(\cC_\conn, \iota_\conn)$ as an $\iota$-complex is an invariant of the local equivalence type of $\cC$.
\end{proposition}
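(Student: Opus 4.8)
The plan is to extract one structural fact about connected complexes and deduce the proposition from it: \emph{if $\cD$ is a connected complex, then every self-local equivalence of $\cD$ is an isomorphism of chain complexes}; equivalently, $\id_\cD$ is a maximal self-local equivalence of $\cD$. Granting this, the proposition follows quickly. By Lemma~\ref{lem:maxlocal} the maps $f$ and $(f|_{\im f})^{-1}$ exhibit a local equivalence $\cC \sim \cC_\conn$, and likewise $\cC' \sim \cC'_\conn$ (using a maximal self-local equivalence $f'$ of $\cC'$); composing these with the given local equivalence $\cC \sim \cC'$ (local equivalences are evidently closed under composition) we obtain grading-preserving chain maps $F \co \cC_\conn \to \cC'_\conn$ and $G \co \cC'_\conn \to \cC_\conn$ with $F \circ \iota_\conn \simeq \iota'_\conn \circ F$, $G \circ \iota'_\conn \simeq \iota_\conn \circ G$, and with $F, G$ inducing isomorphisms on $U^{-1}H_*$. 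Then $G \circ F$ and $F \circ G$ are self-local equivalences of $\cC_\conn$ and $\cC'_\conn$, hence chain isomorphisms by the structural fact; therefore $F$ is both injective and surjective, so it is an isomorphism of chain complexes. This gives the first assertion. Since $F$ also intertwines the involutions up to homotopy, Lemma~\ref{lem:locally-equiv-isom} upgrades it to a homotopy equivalence of $\iota$-complexes $(\cC_\conn, \iota_\conn) \simeq (\cC'_\conn, \iota'_\conn)$, which gives the second assertion.

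The substance is the structural fact, which I would prove as follows. Write $\cD = \cC_\conn = (\im f, \iota_f)$ for a maximal self-local equivalence $f$ of $\cC = (C, \iota)$, and let $h \co \im f \to \im f$ be a self-local equivalence of $\cD$. The idea is to transport $h$ back to $C$ using the splitting of Lemma~\ref{lem:iotasplit}: with $\varphi \co \im f \oplus \ker f \to C$ the chain isomorphism of that lemma, set $\bar h = \varphi \circ (h \oplus 0) \circ \varphi^{-1} \co C \to C$, where $0$ is the zero endomorphism of $\ker f$. I claim $\bar h$ is a self-local equivalence of $\cC$. It is a grading-preserving chain map. On localized homology it is an isomorphism: the splitting gives $U^{-1}H_*(C) \cong U^{-1}H_*(\im f) \oplus U^{-1}H_*(\ker f)$, and since $U^{-1}H_*(\im f) \cong \F[U,U^{-1}]$ (as in the proof of Lemma~\ref{lem:maxlocal}) the torsion summand satisfies $U^{-1}H_*(\ker f) = 0$, so $h \oplus 0$ agrees with $h$ on $U^{-1}H_*$ and is thus an isomorphism, and conjugating by $\varphi$ preserves this. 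Finally $\bar h \circ \iota \simeq \iota \circ \bar h$: by Lemma~\ref{lem:iotasplit} (and the argument of Lemma~\ref{lem:locally-equiv-isom} applied to $\varphi^{-1}$) the map $\varphi$ intertwines $\iota$ with $\iota_f \oplus \iotaperp$ up to homotopy, while $h \oplus 0$ commutes with $\iota_f \oplus \iotaperp$ up to homotopy because $h \circ \iota_f \simeq \iota_f \circ h$ by hypothesis.

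Now I invoke the maximality of $f$. Since $\varphi$ is an isomorphism, $\ker \bar h = \varphi(\ker h \oplus \ker f) \supseteq \varphi(\{0\} \oplus \ker f) = \ker f$, so $\bar h \gtrsim f$ in the preorder on self-local equivalences; maximality of $f$ forces $\bar h \lesssim f$, that is $\ker \bar h = \ker f$, and applying $\varphi^{-1}$ gives $\ker h \oplus \ker f = \{0\} \oplus \ker f$, hence $\ker h = 0$. Thus $h$ is an injective, grading-preserving $\F[U]$-endomorphism of the finitely generated free complex $\im f$, which is finite dimensional over $\F$ in each grading, so $h$ is an isomorphism of chain complexes, as claimed.

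The main obstacle is the structural fact, and inside its proof the two delicate points are: verifying that $\bar h = \varphi \circ (h \oplus 0) \circ \varphi^{-1}$ is genuinely a self-local equivalence of $\cC$ (i.e.\ that collapsing $h$ to $0$ on the $U$-torsion summand $\ker f$ harms neither the isomorphism on $U^{-1}H_*$ nor the homotopy-commutation with $\iota$), and arranging that $\ker \bar h \supseteq \ker f$, which is precisely what lets the maximality of $f$ be applied. The remaining steps are formal manipulations with Lemmas~\ref{lem:locally-equiv-isom}, \ref{lem:isomim}, \ref{lem:maxlocal}, and \ref{lem:iotasplit}.
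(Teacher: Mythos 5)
Your proof is correct, and it takes a genuinely different route from the paper's. You isolate and prove a clean structural lemma---every self-local equivalence of a connected complex $\cC_\conn = (\im f, \iota_f)$ is a chain isomorphism, equivalently $\id$ is maximal on $\cC_\conn$---by transporting a self-local equivalence $h$ of $\im f$ to $\bar h = \varphi \circ (h \oplus 0) \circ \varphi^{-1}$ on $C$ via the splitting of Lemma~\ref{lem:iotasplit} and invoking maximality of $f$. The paper instead works directly with the two connected complexes at once: given maximal self-local equivalences $f$ of $C$ and $g$ of $C'$, and local equivalence maps $F\co C \to C'$, $G\co C' \to C$, it applies maximality of $f$ to the composite $G \circ g \circ F \circ f$ to conclude that $g \circ F|_{\im f}\co \im f \to \im g$ is injective, and symmetrically that $f \circ G|_{\im g}$ is injective, then finishes with a dimension count in each grading. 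Both approaches ultimately apply maximality and the ``two injective grading-preserving maps between finitely-generated free $\F[U]$-complexes must be isomorphisms'' argument; the difference is whether maximality is applied to a composite living on $C$ directly (the paper) or is first packaged into the intrinsic statement that $\id_{\cC_\conn}$ is maximal (you). Your route is a bit longer because it requires the splitting $\varphi$ and verifying that $h \oplus 0$ commutes with $\iota_f \oplus \iotaperp$ up to homotopy, but the lemma you extract is reusable and is in fact invoked without detailed proof later in the paper (in the opening reduction of the proof of Proposition~\ref{prop:formalomegabounds}, where it is asserted that any maximal self-local equivalence of $(\im f, \iota_f)$ must be surjective). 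So your argument supplies a clean justification for that step as a bonus.
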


\begin{proof}
Since $\cC=(C, \iota)$ and $\cC'=(C', \iota')$ are locally equivalent, there exist
\begin{align*}
	F &\co C \rightarrow C' \\
	G &\co C' \rightarrow C
\end{align*}
such that $F \circ \iota \simeq \iota' \circ F$ and $G \circ \iota' \simeq \iota \circ G$, and $F$ and $G$ induce isomorphisms on $U^{-1}H_*$. 

Let $f \co C \rightarrow C$ and $g \co C' \rightarrow C'$ be maximal self-local equivalences. Then $G \circ g \circ F \circ f$ is a self-local equivalence of $C$. Since $\ker (G \circ g \circ F \circ f) \supseteq \ker f$ and $f$ is maximal, we have that $\ker (G \circ g \circ F \circ f) \subseteq \ker f$, i.e., $\ker (G \circ g \circ F \circ f) = \ker f$. Thus, $g \circ F |_{\im f} \co \im f \rightarrow \im g$ is injective. Similarly, $f \circ G|_{\im g} \co \im g \rightarrow \im f$ is injective. Since we have injective grading-preserving $\F[U]$-equivariant chain maps between the finitely generated $\F[U]$ chain complexes $\im f$ and $\im g$, the maps are isomorphisms.

We would like to show that $(\im f, \iota_f)$ and $(\im g, \iota_g)$ are homotopy equivalent $\iota$-complexes. By Lemma \ref{lem:maxlocal}, it follows that $g \circ F \circ (f|_{\im f})^{-1}$ is a composition of local equivalences, and hence induces a local equivalence between $(\im f, \iota_f)$ and $(\im g, \iota_g)$. (The map in the other direction is given by $f \circ G \circ (g|_{\im g})^{-1}$.)  Since this is also a chain complex isomorphism, Lemma~\ref{lem:locally-equiv-isom} implies that we have in fact constructed a homotopy equivalence of $\iota$-complexes.  
%
\begin{comment}
Let $f \co C \rightarrow C$ and $g \co C' \rightarrow C'$ be maximal self-local equivalences. Then $G \circ g \circ F \circ f$ is a self-local equivalence of $C$. Since $\ker (G \circ g \circ F \circ f) \supseteq \ker f$ and $f$ is maximal, we have that $\ker (G \circ g \circ F \circ f) \subseteq \ker f$, i.e., $\ker (G \circ g \circ F \circ f) = \ker f$. Thus, $g \circ F |_{\im f} \co \im f \rightarrow \im g$ is injective. Similarly, $f \circ G|_{\im g} \co \im g \rightarrow \im f$ is injective. Since we have injective grading preserving $\F[U]$-equivariant chain maps between the finitely generated $\F[U]$ chain complexes $\im f$ and $\im g$, the maps are isomorphisms.

By Lemma \ref{lem:iotasplit}, we may assume that $\cC$ is of the form $(\im f, \iota_f) \oplus (\ker f, \iota+(f|_{\im f})^{-1}\circ f \circ \iota)$, and similarly that $\cC'$ is of the form $(\im g, \iota'_g) \oplus (\ker g, \iota'+(g|_{\im g})^{-1}\circ g \circ \iota')$.
Since $g \circ F|_{\im f}$ induces a local equivalence between $(\im f, \iota_f)$ and $(\im g, \iota'_g)$ and a chain complex isomorphism between $\im f$ and $\im g$, Lemma \ref{lem:locally-equiv-isom} implies that $(\im f, \iota_f)$ and $(\im g, \iota'_g)$ are homotopy equivalent as $\iota$-complexes.
\end{comment}
\end{proof}

With this, we are ready to define the connected homology of an $\iota$-complex, and consequently, the connected Heegaard Floer homology for a spin rational homology sphere.  The following definition will be useful in defining the connected homology.

\begin{definition}
Let $C$ be a finitely generated graded chain complex over $\F[U]$. The \emph{reduced homology} of $C$ is
\[ H_\red(C) =  \ker \big(U^N \co H_*(C) \rightarrow H_*(C)\big)[-1], \]
for $N \gg 0$, i.e., $H_\red(C)$ is the $U$-torsion submodule of $H_*(C)$ with gradings shifted by one.
\end{definition}

\begin{remark}
Note that if $C=CF^-(Y, \s)$, then $H_\red(C) = \HF_\red(Y, \s)$.
\end{remark}

\begin{definition}
Let $\cC$ be an $\iota$-complex. The \emph{connected homology of $\cC$} is 
\[ H_\conn(\cC) = H_\red(\cC_\conn). \]
If $(Y,\s)$ is a spin rational homology sphere, the {\em connected Heegaard Floer homology of $(Y,\s)$} is 
\[
\HF_\conn(Y,\s) = H_\conn(CF^-(Y,\s),\iota).
\]
\end{definition}

The grading shift is included so that $\HF_\conn(Y,\s)$ is graded isomorphic to a direct summand of $\HF_\red(Y,\s)$, viewed as a quotient of $\HF^+(Y,\s)$. 

With the work of this section, we can easily deduce that $\HF_\conn(Y,\s)$ is a spin homology cobordism invariant.

\begin{proof}[Proof of Theorem~\ref{thm:HFconn}]
That the isomorphism class of $\HF_\conn(Y,\s)$ is an invariant of homology cobordism follows directly from Lemma~\ref{lem:homology-cobordism} and Proposition~\ref{prop:locally-equiv-iota}.  
\end{proof}

\section{Properties of connected homology}\label{sec:properties}
\subsection{Properties and a numerical invariant of $H_\conn$}  In this section, we study the properties of $H_\conn$ through connections with the involutive correction terms. We first give the relationship between $H_\conn(\cC)$ and $H_\conn(\cC^*)$.

\begin{proposition}\label{prop:conn-duality}
If $H_\conn(\cC)=\bigoplus_i \cT_{a_i}(n_i)$, then $H_\conn(\cC^*)=\bigoplus_i \cT_{-a_i+2n_i-3}(n_i)$.
\end{proposition}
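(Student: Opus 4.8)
The plan is to reduce the statement about $H_\conn(\cC^*)$ to a computation involving the connected complex $\cC_\conn$ and its dual. First I would observe that by Proposition~\ref{prop:locally-equiv-iota}, taking the connected complex commutes (up to homotopy equivalence of $\iota$-complexes, hence up to isomorphism of the underlying $\F[U]$-complexes) with the operations we care about; in particular, since $\cC$ is locally equivalent to $\cC_\conn$, the dual $\cC^*$ is locally equivalent to $(\cC_\conn)^* = (\cC^*)_\conn$ provided we check that the connected complex of $\cC_\conn$ is $\cC_\conn$ itself. This last point is immediate: the identity is a maximal self-local equivalence of $\cC_\conn$, because if $f$ were a self-local equivalence of $\cC_\conn$ with nonzero kernel, then pulling back along the local equivalence $\cC \to \cC_\conn$ would contradict maximality of the chosen self-local equivalence on $\cC$. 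Hence $(\cC^*)_\conn \simeq (\cC_\conn)^*$, and it suffices to prove the grading identity for $\cC_\conn$ directly.

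Next I would set $D = \cC_\conn$, so $U^{-1}H_*(D) \cong \F[U,U^{-1}]$ and $H_\red(D) = H_\conn(\cC)$. The strategy is to relate $H_*(D^*)$ to $H_*(D)$ via the universal coefficient theorem over $\F[U]$. Concretely, $D^* = \Hom_{\F[U]}(D, \F[U])$, and since $\F[U]$ is a PID with $\F$ a field there is a natural short exact sequence computing $H_*(D^*)$ in terms of $\Hom_{\F[U]}(H_*(D), \F[U])$ and $\Ext^1_{\F[U]}(H_{*-1}(D), \F[U])$. Writing $H_*(D) \cong \F[U] \oplus (\text{torsion})$, where the free part contributes the tower and the torsion part is $H_\red(D)[1]$ up to the grading shift in the definition of $H_\red$, the $\Hom$ term recovers the tower in $H_*(D^*)$ and the $\Ext^1$ term recovers the torsion in $H_*(D^*)$. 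The key computation is then the grading bookkeeping: for a single tower piece $\cT_a(n) = \F_{(a)}[U]/U^n$ sitting as a submodule of $H_*(D)$, one computes $\Ext^1_{\F[U]}(\cT_a(n), \F[U])$ and tracks how the generator-grading convention $\gr(x^*) = -\gr(x) - 4$ for dual generators interacts with the boundary-grading shifts. A direct check shows $\Ext^1_{\F[U]}(\cT_a(n), \F[U]) \cong \cT_{a'}(n)$ with $a'$ determined by the bottom of the tower: the generator of $\cT_a(n)$ in top grading $a + 2(n-1)$ in $H_*$-grading dualizes appropriately, and after accounting for the $-4$ shift and the $+1$ shifts built into $H_\red$ on both sides one lands on $a' = -a + 2n - 3$.

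The cleanest way to nail the constant is probably to use Proposition~\ref{prop:orientation} together with the known behavior of $d$, $\dl$, $\du$ under duality as a sanity check, but the honest computation is the $\Ext$ calculation above: take a free resolution $0 \to \F[U] \xrightarrow{U^n} \F[U] \to \cT_a(n) \to 0$ with the generators placed in the correct gradings (the surjection lands on the grading-$a$ generator, so the middle $\F[U]$ is $\F_{(a)}[U]$ and the left $\F[U]$ is $\F_{(a - 2n)}[U]$, since $U^n$ has grading $-2n$), apply $\Hom_{\F[U]}(-, \F[U])$, and read off the cokernel; incorporating the global dual-generator shift $\gr(x^*) = -\gr(x)-4$ and the $[-1]$ shifts in $H_\red$ gives the stated formula.

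The main obstacle I expect is purely the grading arithmetic: there are several competing conventions in play — the $[-2]$ in the connected-sum/tensor formula, the $\gr(x^*) = -\gr(x) - 4$ rule for duals, the $[-1]$ shift in the definition of $H_\red$, and the choice to view $\HF_\red$ as a submodule versus quotient of $\HF^+$ — and it is easy to be off by $2$ or $4$. So I would be very careful to fix, once and for all, the grading of the generator of each $\cT_{a_i}(n_i)$ inside $H_\red(\cC_\conn)$, translate that back to a grading in $H_*(\cC_\conn)$ (undoing the $[-1]$), run the $\Ext^1$ computation there, and only at the very end re-apply the $[-1]$ shift on the dual side; the claim $H_\conn(\cC^*) = \bigoplus_i \cT_{-a_i + 2n_i - 3}(n_i)$ should then fall out, and one can double-check the formula is an involution on the level of gradings (applying it twice returns $a_i$), which it is.
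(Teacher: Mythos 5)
Your overall strategy matches the paper's: first establish $(\cC^*)_\conn \cong (\cC_\conn)^*$, then read off the grading shift from a direct duality computation. The paper's proof of the first step is terser but more direct — it observes that if $f$ is a maximal self-local equivalence of $\cC$ then $f^*$ is a maximal self-local equivalence of $\cC^*$, with $(\im f)^* \cong \im(f^*)$ — while your argument routes through local equivalence and Proposition~\ref{prop:locally-equiv-iota}. Both are reasonable, and your explicit $\Ext^1$ bookkeeping fills in a computation that the paper leaves to the reader with the phrase ``follows immediately from duality and our grading conventions.''

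There is, however, a genuine (if easily repairable) gap in your reduction. After noting that $\cC^*$ is locally equivalent to $(\cC_\conn)^*$, you need to conclude $((\cC_\conn)^*)_\conn \cong (\cC_\conn)^*$ — that is, that every self-local equivalence of $(\cC_\conn)^*$ is surjective. But what you actually verify is that every self-local equivalence of $\cC_\conn$ is surjective, which is a statement about $\cC_\conn$, not about its dual. These are not tautologically the same. The missing step is precisely the duality observation the paper makes: if $g$ is a self-local equivalence of $(\cC_\conn)^*$, then $g^*$ (under the natural identification $((\cC_\conn)^*)^* \cong \cC_\conn$) is a self-local equivalence of $\cC_\conn$; since a grading-preserving endomorphism of a finitely-generated free graded $\F[U]$-module is injective iff surjective iff an isomorphism, non-surjectivity of $g$ would force $g^*$ to have nontrivial kernel, contradicting your surjectivity claim for $\cC_\conn$. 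Inserting this short argument closes the gap.

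A small slip worth flagging: you describe ``the generator of $\cT_a(n)$ in top grading $a + 2(n-1)$.'' With the paper's convention $\cT_a(n) = \F_{(a)}[U]/U^n$ and $\gr(U) = -2$, the top grading is $a$ and the bottom is $a - 2(n-1)$. This doesn't propagate to your final answer — the formula $a' = -a + 2n - 3$ is correct, and indeed is its own inverse as you check — but the intermediate description is off, so be careful when actually writing the $\Ext$ computation.
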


\begin{proof}
It follows from the definition of the chain complex $\cC_\conn$ that $(\cC^*)_\conn \cong (\cC_\conn)^*$. Indeed, if $f$ is a maximal self-local equivalence of $\cC$, then it is straightforward to verify that $f^*$ is a maximal self-local equivalence of $\cC^*$ and $(\im f)^* \cong \im (f^*)$.

The result now follows immediately from duality and our grading conventions.
\end{proof}

We can also generalize Definition \ref{def:omega} to arbitrary $\iota$-complexes, as follows.
  
\begin{definition}
Let $\cC$ be an $\iota$-complex. Define
\[ \omega(\cC) = \min \{ n \mid U^n H_\conn(\cC)=0 \}. \]
\end{definition}

We first relate $\omega$ to the various correction terms of an $\iota$-complex.  
%\begingroup
%\def\theproposition{\ref{prop:omegabounds}}
\begin{proposition}\label{prop:formalomegabounds}
Let $\cC$ be an $\iota$-complex. Then
\begin{enumerate}
	\item $\frac{1}{2}(d(\cC)-\dl(\cC)) \leq \omega(\cC)$,
	\item $\frac{1}{2}(\du(\cC)-d(\cC)) \leq \omega(\cC)$.
\end{enumerate}
\end{proposition}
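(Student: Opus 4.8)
The plan is to reduce to the connected complex and then read both bounds off the mapping cone computing $\HFIm$. First I would replace $\cC$ by its connected complex $\cC_\conn = (\im f, \iota_f)$; this is harmless, since $d, \dl, \du$ descend to $\mathfrak I_\Q$ and hence agree on locally equivalent $\iota$-complexes (Lemma~\ref{lem:maxlocal}), while by Proposition~\ref{prop:locally-equiv-iota} the complex $\cC_\conn$ is itself a local-equivalence invariant, so $H_\conn(\cC) = H_\red(\cC_\conn)$ and $\omega(\cC)$ is exactly the $U$-nilpotence order of the torsion submodule of $H_*(\cC_\conn)$. Thus we may assume $\cC = (C,\iota)$ satisfies $H_*(C) \cong \F[U]\langle \xi \rangle \oplus T$ with $T$ its torsion submodule, $\gr(\xi) = d(\cC) - 2$, and $U^{\omega}T = 0$ for $\omega = \omega(\cC)$.

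Two small algebraic facts underlie the argument. Since $\iota_*$ is an automorphism of $H_*(C)$ with $\iota_*^2 = \id$, it preserves $T$ and descends to a grading-preserving automorphism of $H_*(C)/T \cong \F[U]$, which over $\F_2$ must be the identity; hence $\iota_*(\xi) = \xi + t_0$ for some $t_0 \in T$, so $(1+\iota_*)(\xi) = t_0 \in T$. Next, the mapping cone $\CFIm = \Cone(Q(1+\iota))$ yields a $U$-equivariant exact sequence
\[
H_r(C) \xrightarrow{\ j\ } \HFIm_r \xrightarrow{\ p\ } H_{r-1}(C) \xrightarrow{\ 1+\iota_*\ } H_{r-1}(C),
\]
where $p$ is the projection onto the homology of the $C$-summand (whose gradings are shifted up by one, so that $p$ drops grading by one) and $j$ is induced by inclusion of the $Q\cdot C$ subcomplex; observe also that $\Im\big(Q|_{\HFIm}\big) \subseteq \ker p$.

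For part (1): since $(1+\iota_*)(U^{\omega}\xi) = U^{\omega}t_0 = 0$, exactness of the sequence gives $\alpha \in \HFIm_{d(\cC)-1-2\omega}$ with $p(\alpha) = U^{\omega}\xi$. As $p$ is $U$-equivariant and $U^{n+\omega}\xi \neq 0$ for all $n \geq 0$, we get $U^{n}\alpha \neq 0$ and, since $\Im(Q) \subseteq \ker p$, also $U^{n}\alpha \notin \Im(Q)$ for every $n$; by the definition of $\dl$ this gives $\dl(\cC) \geq (d(\cC) - 1 - 2\omega) + 1 = d(\cC) - 2\omega$, i.e.\ part (1). For part (2): suppose $x \in \HFIm_s$ satisfies $U^{n}x \neq 0$ for all $n$ and $U^{m}x \in \Im(Q)$ for some $m$. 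Then $p(U^{m}x) = 0$, so $p(x) \in H_{s-1}(C)$ is $U$-torsion, hence lies in $T$, whence $U^{\omega}p(x) = 0$; therefore $U^{\omega}x \in \ker p = \im j$, say $U^{\omega}x = j(\beta)$ with $\beta \in H_{s-2\omega}(C)$. Since $j$ is $\F[U]$-linear and $U^{\omega}x = j(\beta)$ has infinite $U$-order, $\beta$ is not $U$-torsion, so $\gr(\beta) = s - 2\omega \leq \gr(\xi) = d(\cC) - 2$, i.e.\ $s \leq d(\cC) - 2 + 2\omega$; by the definition of $\du$ this yields $\du(\cC) \leq (d(\cC) - 2 + 2\omega) + 2 = d(\cC) + 2\omega$, i.e.\ part (2). (One can instead deduce (2) from (1) applied to $\cC^*$, using $\du(\cC) = -\dl(\cC^*)$, $d(\cC) = -d(\cC^*)$, and $\omega(\cC^*) = \omega(\cC)$, the last of which follows from Proposition~\ref{prop:conn-duality}.)

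The point needing care is the reduction — one must check that $\omega(\cC)$ genuinely measures the torsion of $H_*(\cC_\conn)$, not of $H_*(C)$ itself — together with the grading bookkeeping in the cone; it is the fact that $p$ drops grading by one that produces the asymmetry between parts (1) and (2). The one substantive ingredient, that $\iota_*$ is the identity modulo torsion so that $1 + \iota_*$ vanishes modulo torsion over $\F_2$, is short, but it is exactly what forces the $U$-torsion (as measured by $\omega$) to govern how far $\dl$ and $\du$ can lie from $d$.
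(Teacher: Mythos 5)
Your proof is correct and follows the same approach as the paper: replace $\cC$ by its connected complex (so $H_\conn = H_\red$ and $\omega$ measures the $U$-torsion of $H_*(C)$ itself), and then extract the bounds from the exact triangle of the mapping cone. The paper states this step quite tersely, asserting that the triangle forces $H_*(C)$ to contain torsion summands $\cT_{d(\cC)-2}(n)$ and $\cT_{d(\cC)+2m-3}(m)$ when $n,m>0$; your argument unpacks exactly that assertion (via the factorization $Q_* = j\circ p$, the observation that $\iota_*$ is the identity on $H_*(C)/T$ over $\F_2$, and careful grading bookkeeping), but proves the contrapositive inequalities $\dl \geq d - 2\omega$ and $\du \leq d + 2\omega$ directly rather than exhibiting the summands. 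One small citation slip: the identity $H_\conn(\cC) = H_\red(\cC_\conn)$ is the definition of connected homology rather than a consequence of Proposition~\ref{prop:locally-equiv-iota}; what that proposition gives you is that you may replace $\cC$ by $\cC_\conn$ without changing any of the invariants involved.
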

%\addtocounter{theorem}{-1}
%\endgroup

\begin{proof}  We may assume that any maximal self-local equivalence $\cC \rightarrow \cC$ is surjective.  Indeed, we may begin by replacing $\cC$ with its image under some maximal self local equivalence $f \co \cC \rightarrow \cC$; any maximal self-local equivalence of $(\im f, \iota_f)$ must be surjective, as otherwise, one can derive a contradiction to the maximality of $f$.  Thus $H_\conn(\cC) = H_\red(C)$.  Let $n = \frac{1}{2}(d(\cC)-\dl(\cC))$ and $m=\frac{1}{2}(\du(\cC)-d(\cC))$, which are invariants of local equivalence. The existence of the exact triangle 
\[
\begin{tikzcd}[column sep=small]
H_*(C) \arrow{rr}{(1+\iota)_*} & & H_*(C) \arrow{dl} \\
& H_*(\Cone(1+\iota)) \arrow{ul} & 
\end{tikzcd}
\]
of $U$-equivariant maps implies that if $n>0$, the homology $H_*(C)$ must contain $\mathcal T_{d(\cC)-2}(n)$ as a direct summand, and if $m>0$, the homology $H_*(C)$ must contain $\mathcal T_{d(\cC)+2m-3}(m)$ as a direct summand. We conclude that $\HF_{\conn}(\cC)$ is not annihilated by $U^{\max\{n,m\}-1}$, and $\omega(\cC) \geq \max\{n,m\}$.
\end{proof}

\begin{proof}[Proof of Proposition \ref{prop:omegabounds}]
The result follows immediately from the preceding proposition by letting $\cC=(CF^-(Y, \s), \iota)$.
\end{proof}

One advantage of the invariant $\omega$ is that it is well-behaved under connected sums.  
\begin{proposition}\label{prop:omegasum}
For $\iota$-complexes $\cC$ and $\cC'$, we have 
\[ \omega(\cC \otimes \cC') \leq \max \{ \omega(\cC), \omega(\cC')\}. \]
\end{proposition}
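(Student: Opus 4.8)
The plan is to reduce the statement to a computation with maximal self-local equivalences on a tensor product. First I would choose maximal self-local equivalences $f \colon C \to C$ and $f' \colon C' \to C'$, so that by Lemma~\ref{lem:maxlocal} the connected complexes are $(\im f, \iota_f)$ and $(\im f', \iota_{f'})$, and moreover $\cC$ is locally equivalent to $(\im f, \iota_f)$ and $\cC'$ to $(\im f', \iota_{f'})$. Since $\omega$ depends only on the local equivalence type of an $\iota$-complex (via Proposition~\ref{prop:locally-equiv-iota}, $H_\conn$ is a local equivalence invariant), and since tensor product respects local equivalence, we may replace $\cC$ by $(\im f, \iota_f)$ and $\cC'$ by $(\im f', \iota_{f'})$ at the outset. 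As in the proof of Proposition~\ref{prop:formalomegabounds}, I would further arrange that any maximal self-local equivalence of these replacements is surjective, so that $H_\conn(\cC) = H_\red(\im f)$ and $H_\conn(\cC') = H_\red(\im f')$ directly.

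Next I would consider the tensor product map $f \otimes f' \colon C \otimes_{\F[U]} C'[-2] \to C \otimes_{\F[U]} C'[-2]$. This is grading preserving, commutes with $\iota \otimes \iota'$ up to homotopy (tensor a homotopy for $f \circ \iota \simeq \iota \circ f$ with the identity, and similarly on the other factor), and induces an isomorphism on $U^{-1}H_*$ because localization commutes with $\otimes_{\F[U]}$ and $f, f'$ are isomorphisms after localization; hence $f \otimes f'$ is a self-local equivalence of $\cC \otimes \cC'$. It need not be maximal, but if $h$ is a maximal self-local equivalence of $\cC \otimes \cC'$, then $h \circ (f \otimes f')$ is again a self-local equivalence, and by the argument in Lemma~\ref{lem:isomim} its image is isomorphic to $\im h \cong (\cC \otimes \cC')_\conn$ as a chain complex, while its image is contained in $\im(f \otimes f') = \im f \otimes_{\F[U]} \im f'[-2]$ (using that $\im f$ and $\im f'$ are summands of $C$, $C'$ by Lemma~\ref{lem:splitcomplex}, so the tensor product of images is the image of the tensor product). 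Therefore $(\cC \otimes \cC')_\conn$ is, up to chain isomorphism, a subcomplex — in fact a summand — of $\im f \otimes_{\F[U]} \im f'[-2]$.

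It then suffices to bound $\omega$ of the latter complex. Writing $\im f = \F[U] \oplus R$ and $\im f' = \F[U] \oplus R'$ as graded $\F[U]$-modules on homology, where $R = U$-torsion part with $U^{\omega(\cC)} R = 0$ and similarly for $R'$ (after the grading shift this is exactly $H_\conn$), the Künneth spectral sequence over the PID $\F[U]$ expresses $H_*(\im f \otimes \im f')$ in terms of $\Tor^{\F[U]}$ groups of $H_*(\im f)$ and $H_*(\im f')$. The torsion part of the result is built from $R \otimes \F[U]$, $\F[U] \otimes R'$, $R \otimes R'$, and $\Tor(R, R')$, each of which is annihilated by $U^{\max\{\omega(\cC), \omega(\cC')\}}$ (for $\cT_a(p) \otimes \cT_b(q)$ and $\Tor(\cT_a(p), \cT_b(q))$ one checks directly that $U^{\max\{p,q\}}$ kills them). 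Passing to the summand $(\cC \otimes \cC')_\conn$ and applying the grading shift defining $H_\conn$ preserves this annihilation, giving $\omega(\cC \otimes \cC') \le \max\{\omega(\cC), \omega(\cC')\}$.

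The main obstacle I anticipate is the bookkeeping in the middle step: verifying cleanly that $(\cC \otimes \cC')_\conn$ really does embed as a direct summand of $\im f \otimes_{\F[U]} \im f'[-2]$ — i.e., that maximality of $h$ forces $h \circ (f \otimes f')$ to have image isomorphic to $\im h$ and that this image sits inside the tensor product of the images. The splitting $C \cong \im f \oplus \ker f$ of Lemma~\ref{lem:splitcomplex} makes $\im f \otimes \im f'$ literally a summand of $C \otimes C'$, which should let the Lemma~\ref{lem:isomim}-style injectivity argument go through verbatim; once that is in place, the algebra over $\F[U]$ in the final paragraph is routine.
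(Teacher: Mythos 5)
Your overall strategy matches the paper's: replace $\cC$ and $\cC'$ by locally equivalent complexes on which any maximal self-local equivalence is surjective (so $H_\conn = H_\red$ for each factor), and then bound the $U$-torsion of the tensor product via the K\"unneth formula over $\F[U]$. The final algebraic paragraph is essentially verbatim the paper's computation.

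The middle step, however, is more convoluted than necessary and has a composition-order slip. You write $h \circ (f \otimes f')$, whose image is $h(\im(f\otimes f')) \subseteq \im h$ --- not contained in $\im(f\otimes f')$ as you assert. The argument you intend requires $(f \otimes f') \circ h$: then $\ker((f\otimes f')\circ h) \supseteq \ker h$, maximality of $h$ gives equality, and $(f\otimes f')|_{\im h}$ is injective onto a subcomplex of $\im f \otimes \im f'$. But more to the point, the whole maneuver is unnecessary. Once you have genuinely replaced $\cC$ by $(\im f, \iota_f)$ and $\cC'$ by $(\im f', \iota_{f'})$ as in your first paragraph, the underlying complexes \emph{are} $\im f$ and $\im f'$. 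A maximal self-local equivalence $h$ of their tensor product then satisfies $\im f \otimes_{\F[U]} \im f'[-2] \cong \im h \oplus \ker h$ directly by Lemma~\ref{lem:splitcomplex}, so $H_\red(\im h)$ is a summand of $H_\red(\im f \otimes \im f')$ and the K\"unneth bound applies immediately. The "main obstacle" you flag dissolves once you commit fully to the replacement and apply the splitting lemma to the tensor product itself, which is exactly what the paper does.
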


\begin{proof} As before, we may assume that any maximal self local equivalence $\cC \rightarrow \cC$ is surjective, so we may assume that $H_\conn(\cC) = H_\red(C)$. Then we have $\omega(\cC) = \min \{n \mid U^nH_{\red}(C)=0\}$. For concreteness, let $H_*(C) = \F[U] \oplus (\oplus_{i=1}^k \mathcal T(n_i))$, with grading information omitted. Then if we order the numbers $n_i$ such that $n_1 \geq n_2 \geq \cdots \geq n_k$, we have that $\omega(\cC) = n_1$. Similarly we may assume that $H_\conn(\cC') = H_\red(C')$, and thus $\omega(\cC') = \min\{n \mid U^nH_\red(\cC')=0\}$. Let $H_*(C') = \F[U]\oplus(\oplus_{j=1}^{\ell} \mathcal T(m_j))$, and order the numbers $m_j$ such that $\omega(\cC') = m_1$.

We now consider $\cC \otimes \cC'$. The connected homology of this complex is (up to grading shift) a submodule of $H_*(C \otimes_{\F[U]} C')$. By the K\"unneth formula for complexes over $\F[U]$ (see for example \cite[Corollary 6.3]{OS3manifolds2}), the $U$-torsion submodule of $H_*(C \otimes_{\F[U]} C')$ is a direct sum of terms of the following forms
\begin{align*}
&\F[U] \otimes \mathcal T(m_j) = \mathcal T(m_j) \text{ for eacn } m_j \\
&\mathcal T(n_i) \otimes \F[U] = \mathcal T(n_i) \text{ for each } n_i \\
&\mathcal T(n_i) \otimes \mathcal T(m_j) = \mathcal T(\min\{n_i,m_j\}) \text{ for each } n_i, m_j \\
&\Tor(\mathcal T(n_i), \mathcal T(m_j)) = \mathcal T(\min\{n_i,m_j\}) \text{ for each } n_i, m_j.
\end{align*}

\noindent We observe that in no case is there a $U$-torsion element in $H_*(C \otimes_{\F[U]} C')$ which is not annihilated by either $U^{n_1}$ or $U^{m_1}$. Therefore there can be no element in $H_\conn(\cC \otimes \cC')$ which is not annhilated by $U^{\max\{n_1,m_1\}}$. It follows that $\omega(\cC\otimes \cC') \leq \max\{\omega(\cC), \omega(\cC')\}$.
\end{proof}

With the above technical results, we are able to prove the following involutive analog of Stoffregen's linear asymptotics of the Manolescu invariants \cite[Theorem 1.3]{Stoffregenconnectedsum} claimed in the introduction, Theorem \ref{thm:asymptotics}.  For notation, we will use $n\cC$ to mean the tensor product of $\cC$ with itself $n$ times.

%\begingroup
%\def\thetheorem{\ref{thm:asymptotics}}
\begin{theorem}
Let $\cC$ be an $\iota$-complex. Then
\[ \lim_{n\rightarrow \infty} \frac{\dl(n\cC)}{n} =  \lim_{n\rightarrow \infty} \frac{\du(n\cC)}{n} = d(\cC). \]
In particular, if $(Y,\s)$ is a spin rational homology three-sphere, then
\[ \lim_{n\rightarrow \infty} \frac{\dl(\#_n(Y, \s))}{n} =  \lim_{n\rightarrow \infty} \frac{\du(\#_n(Y, \s))}{n} = d(Y,\s). \]
\end{theorem}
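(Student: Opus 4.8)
The plan is to prove the statement about $n\cC$ and deduce the manifold statement by taking $\cC = (CF^-(Y,\s),\iota)$ and invoking the connected sum formula of Proposition~\ref{prop:ConnSum}, which identifies $n\cC$ with the $\iota$-complex of $\#_n(Y,\s)$ up to homotopy equivalence (and the correction terms depend only on the homotopy, indeed local equivalence, type). So everything reduces to the $\iota$-complex statement. Since $\dl,\ d,\ \du$ all factor through $\mathfrak I_\Q$, and since by Lemma~\ref{lem:maxlocal} the complex $\cC$ is locally equivalent to its connected complex $\cC_\conn$, we are free to replace $\cC$ by $\cC_\conn$; this makes $H_\conn(\cC) = H_\red(C)$ and puts $\omega(\cC) = \min\{n \mid U^n H_\red(C) = 0\}$.

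First I would establish the upper bounds $\dl(n\cC) \le nd(\cC)$ and $\du(n\cC) \le nd(\cC)$ for all $n$. Since $d(n\cC) = nd(\cC)$ (from $d(S^3)=-2$ and additivity of $d$ under connected sum, i.e. $d(\cC\otimes\cC')=d(\cC)+d(\cC')+2$ with our conventions — strictly this should be checked against the normalization, but it is standard), the first inequality in Proposition~\ref{prop:inequalities} applied inductively gives $\dl(n\cC) \le d(n\cC) = nd(\cC)$, and the last inequality would give $\du(n\cC) \ge nd(\cC)$; the needed bound $\du(n\cC) \le nd(\cC)$ is not immediate from those inequalities and is exactly where $\omega$ enters. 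Next I would prove the matching lower bound $\dl(n\cC) \ge nd(\cC) - 2\omega(n\cC)$ and upper bound $\du(n\cC) \le nd(\cC) + 2\omega(n\cC)$; these are just rearrangements of Proposition~\ref{prop:formalomegabounds} applied to $n\cC$, namely $\tfrac12(d(n\cC)-\dl(n\cC)) \le \omega(n\cC)$ and $\tfrac12(\du(n\cC)-d(n\cC)) \le \omega(n\cC)$. Combining, we get
\[
nd(\cC) - 2\omega(n\cC) \ \le\ \dl(n\cC) \ \le\ d(n\cC) \ \le\ \du(n\cC) \ \le\ nd(\cC) + 2\omega(n\cC).
\]
Finally, Proposition~\ref{prop:omegasum} applied inductively gives $\omega(n\cC) \le \omega(\cC)$ for all $n$, so $\omega(n\cC)$ is bounded independently of $n$. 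Dividing the displayed chain of inequalities by $n$ and letting $n\to\infty$, the $\pm 2\omega(n\cC)/n$ terms go to zero, and both $\dl(n\cC)/n$ and $\du(n\cC)/n$ are squeezed to $d(\cC)$. The manifold statement follows by substituting $\cC = (CF^-(Y,\s),\iota)$ and using $d(n\cC) = n\,d(Y,\s)$ together with $\dl(n\cC) = \dl(\#_n(Y,\s))$ and $\du(n\cC) = \du(\#_n(Y,\s))$ from Proposition~\ref{prop:ConnSum}.

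The main obstacle, and the only place any real content is used beyond bookkeeping, is the uniform bound $\omega(n\cC)\le\omega(\cC)$ — that is, Proposition~\ref{prop:omegasum}, whose proof rests on the K\"unneth formula computation showing that tensoring two torsion towers $\mathcal T(n_i)$ and $\mathcal T(m_j)$ (and their $\Tor$) produces only towers $\mathcal T(\min\{n_i,m_j\})$, so no new, longer $U$-tails are created under connected sum. Everything else is assembling Propositions~\ref{prop:inequalities}, \ref{prop:formalomegabounds}, and \ref{prop:omegasum} and the additivity of $d$, together with a careful check of the grading normalization in the identity $d(n\cC) = n\,d(\cC)$ (equivalently $d(\#_n Y) = n\,d(Y)$ for integer homology spheres, where the normalization issue disappears).
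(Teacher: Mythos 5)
Your argument is correct and is essentially the paper's proof: reduce to the $\iota$-complex statement, use additivity of $d$, combine the two bounds of Proposition~\ref{prop:formalomegabounds} for $n\cC$ with $\omega(n\cC)\leq\omega(\cC)$ from Proposition~\ref{prop:omegasum}, and squeeze. The brief detour claiming $\du(n\cC)\leq nd(\cC)$ (which would force $\du(n\cC)=d(n\cC)$ and is false in general, e.g.\ already for $n=1$ if $\du(\cC)>d(\cC)$) and the misattribution of $\dl\leq d$ and $d\leq\du$ to the subadditivity inequalities of Proposition~\ref{prop:inequalities} are immaterial, since you abandon that route and the chain of inequalities you actually use is the right one.
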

%\addtocounter{theorem}{-1}
%\endgroup

\begin{proof}
Since $d(\cC)$ is additive under tensor products, we have that $d(n\cC)=nd(\cC)$. Proposition~\ref{prop:omegasum} implies that $\omega(n\cC) \leq \omega(\cC)$.  Since $\dl(\cC) \leq d(\cC)$ and $d(\cC) \leq \du(\cC)$, it now follows from Proposition~ \ref{prop:formalomegabounds} that
\[ d(\cC) - \frac{2\omega(\cC)}{n} \leq \frac{\dl(n\cC)}{n} \leq \frac{\du(n\cC)}{n} \leq \frac{2\omega(\cC)}{n} +d(\cC). \]
Since $\omega(\cC)$ is independent of $n$, the result follows.  For the claim about the three-manifold invariants, we use $\cC=(CF^-(Y, \s), \iota)$.

\end{proof}

\subsection{$\iota$-complexes with small connected homology}
In this section, we study $\iota$-complexes with small connected homology.  We begin by characterizing when the connected homology is trivial.  

\begin{proposition}\label{prop:connzero}
Let $\cC= (C, \iota)$  be an $\iota$-complex. Then $H_\conn(\cC) = 0$ if and only if $d(\cC)=\dl(\cC)=\du(\cC)$.
\end{proposition}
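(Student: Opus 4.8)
The plan is to reduce both directions to the setup already established: replacing $\cC$ by $(\im f, \iota_f)$ for a maximal self-local equivalence $f$, so that $\cC_\conn = \cC$ and $H_\conn(\cC) = H_\red(C)$ (this replacement is harmless for the correction terms, which are local-equivalence invariants by Lemma~\ref{lem:maxlocal}). Under this reduction, the statement becomes: $H_\red(C) = 0$ if and only if $d(\cC) = \dl(\cc) = \du(\cC)$, wait --- $d(\cC) = \dl(\cC) = \du(\cC)$. First I would handle the easy direction. If $H_\red(C) = 0$, then $H_*(C) \cong \F[U]$ with $\gr(1) = d(\cC) - 2$, and the only grading-preserving chain endomorphism of such a complex up to homotopy acts as the identity on $U^{-1}H_*$; in particular $\iota$ is homotopic to the identity. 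Then $\CFIm(\cC)$ is homotopy equivalent to $\Cone(\F[U] \xrightarrow{0} Q\cdot\F[U][-1]) = \F[U] \oplus Q\cdot \F[U][-1]$, from which one reads off directly from the definitions that $\dl(\cC) = \du(\cC) = d(\cC)$. (One must be slightly careful: $1+\iota$ need not be literally zero, only null-homotopic, but a standard change of basis in the mapping cone eliminates the homotopy, or one invokes that $\HFIo$ depends only on the homotopy class.)

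For the converse, suppose $d(\cC) = \dl(\cC) = \du(\cC)$ but $H_\red(C) \neq 0$; I want a contradiction. Write $H_*(C) = \F[U] \oplus \big(\bigoplus_{i=1}^k \cT_{b_i}(n_i)\big)$ with $k \geq 1$ and all $n_i \geq 1$, and set $\omega = \omega(\cC) = \min\{n \mid U^n H_\red(C) = 0\} \geq 1$. Applying Proposition~\ref{prop:formalomegabounds} gives $\tfrac12(d(\cC) - \dl(\cC)) \leq \omega$ and $\tfrac12(\du(\cC) - d(\cC)) \leq \omega$, but these are inequalities in the wrong direction to force a contradiction directly. Instead, I would run the argument inside the proof of Proposition~\ref{prop:formalomegabounds} in reverse: that proof shows that when $\omega \geq 1$ the homology $H_*(C)$ contains a nontrivial torsion summand, and the exact triangle $H_*(C) \xrightarrow{(1+\iota)_*} H_*(C) \to H_*(\Cone(1+\iota))$ together with the maximality of $f$ (so that $\iota_f$ genuinely ``sees'' all the torsion) produces an element of $\HFIm(\cC)$ witnessing a strict inequality $\dl(\cC) < d(\cC)$ or $d(\cC) < \du(\cC)$. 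The cleanest route is: since $f$ is maximal, $1 + \iota$ cannot kill the entire torsion submodule of $H_*(C)$ up to homotopy --- otherwise $1+\iota$ composed with an appropriate projection would be, or would lead to, a self-local equivalence with strictly larger kernel, contradicting maximality --- and the standard analysis of the mapping cone of $Q(1+\iota)$ then shows the localization map $i\co\HFIm_r(\cC) \to \HFIinf_r(\cC)$ is nonzero in a grading $r$ strictly between the $\du$ and $\dl$ thresholds, i.e. $\dl(\cC) < \du(\cC)$.

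The main obstacle I anticipate is the structural claim in the converse that maximality of $f$ forces $(1+\iota_f)$ to act nontrivially on the torsion of $H_*(\im f)$ --- equivalently, that a nontrivial torsion summand cannot be ``invisible'' to the involutive package. This is precisely the place where the connected-homology machinery must do real work rather than formal bookkeeping, since without maximality one could have $\iota = \id$ on a complex with large $H_\red$, making all three correction terms coincide. I would isolate this as a lemma: \emph{if $f$ is a maximal self-local equivalence of $\cC$ and $H_\red(\im f) \neq 0$, then $\dl(\im f, \iota_f) < \du(\im f, \iota_f)$}, and prove it by the contrapositive --- if $\dl = \du$ for $(\im f, \iota_f)$, then the mapping cone computation forces every torsion class to be hit by $Q(1+\iota_f)$ in a way that lets one peel off a summand on which $\iota_f \simeq \id$, contradicting that $\im f$ already has no proper self-local-equivalence image. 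Everything else is routine manipulation of mapping cones and the definitions of $\dl$, $\du$, $d$.
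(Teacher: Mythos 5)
Your ``only if'' direction is correct, though it takes a longer route than necessary: the paper simply observes that $H_\conn(\cC)=0$ forces $\omega(\cC)=0$, so Proposition~\ref{prop:formalomegabounds} gives $d(\cC)\leq\dl(\cC)$ and $\du(\cC)\leq d(\cC)$, which combined with the universal inequalities $\dl\leq d\leq\du$ yields equality. Your argument --- reduce to the connected model, observe $H_*\cong\F[U]$, deduce $\iota\simeq\id$, and compute the mapping cone --- also works, but needs a bit of care (it is not that ``the only grading-preserving chain endomorphism acts as the identity on $U^{-1}H_*$'' --- the zero map does not --- rather that the self-local-equivalence hypothesis forces $\iota_*=\id$ on $H_*\cong\F[U]$, and then freeness lets you lift to a chain homotopy).

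The ``if'' direction is where the real gap sits, and you correctly identify the dangerous spot but do not close it. The claim that ``if $\dl=\du$ for $(\im f,\iota_f)$, then the mapping cone computation forces every torsion class to be hit by $Q(1+\iota_f)$'' is not true: $\dl$ and $\du$ are read off from the $U$-nontorsion part of $\HFIm$, and they say nothing directly about how $(1+\iota)_*$ interacts with the $U$-torsion of $H_*(C)$. (A counterexample to the naive version of your claim: take any complex with large $H_\red$ and $\iota=\id$; then $\dl=d=\du$ but $(1+\iota)$ hits no torsion class at all. Of course such a complex is not a maximal model, but your sketch never explains how maximality repairs the implication.) The missing ingredient is the duality step. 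The paper first proves Lemma~\ref{lem:ddl}: if $d(\cC)=\dl(\cC)$ one can choose a basis $\{x,y_i,z_i\}$ and homotope $\iota$ so that $\iota(x)=x$. This alone does \emph{not} give a splitting --- $x$ may still appear in $\iota(y_i)$. The decisive move is to dualize: since $d(\cC)=\du(\cC)$ is equivalent to $d(\cC^*)=\dl(\cC^*)$, one applies the same normalization to $\cC^*$ with the dual basis $\{x^*,y_i^*,z_i^*\}$. Because the first normalization ensured $\iota(x)$ involves only $x$, the dual $\iota^*$ has the property that $x^*$ does not appear in $\iota^*(y_i^*)$ or $\iota^*(z_i^*)$, and re-running the Lemma~\ref{lem:ddl} homotopy on $\cC^*$ preserves this. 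The two facts together --- $(\iota^*)'(x^*)=x^*$ and $x^*$ absent from $(\iota^*)'$ of the other generators --- are what give the splitting $\cC^*\cong(\F[U]\langle x^*\rangle,\id)\oplus(A,\cdot)$, forcing $H_\conn(\cC^*)=0$ and hence $H_\conn(\cC)=0$ by Proposition~\ref{prop:conn-duality}. Without the duality trick, there is no way to simultaneously control $\iota$ on the generator $x$ and on its dual $x^*$, and the peeling-off you describe cannot get started.
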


First, we need a technical structural lemma about $\iota$-complexes.  
\begin{lemma}\label{lem:ddl}
Let $\cC=(C, \iota)$  be an $\iota$-complex. If $d(\cC)=\dl(\cC)$, then there exists a homotopy equivalent complex $\cC' = (C',\iota')$ such that 
\[ C' = \F[U]\langle x \rangle \oplus \bigoplus_i \Big( \F[U]\langle y_i \rangle \oplus \F[U]\langle z_i \rangle \Big), \]
where $\iota'(x) = x$ and there exist positive integers $n_i$ such that 
\begin{align*}
	\d x &=0 \\
	\d y_i&=U^{n_i}z_i \\
	\d z_i &= 0.
\end{align*}
\end{lemma}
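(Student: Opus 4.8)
The plan is to use the hypothesis $d(\cC) = \dl(\cC)$ to first extract a "nice" basis for $C$ over $\F[U]$ after a homotopy equivalence, and then to reconcile it with the involution $\iota'$. First I would invoke the standard structure theory for finitely generated free chain complexes over the PID $\F[U]$: up to chain homotopy equivalence we may assume $C$ has a basis in which the differential is in a normal form, so that $C \simeq \F[U]\langle x \rangle \oplus \bigoplus_i \big(\F[U]\langle y_i\rangle \oplus \F[U]\langle z_i\rangle\big)$ with $\d x = 0$, $\d y_i = U^{n_i} z_i$, $\d z_i = 0$, and $n_i > 0$; here $x$ is the generator of the free part of $H_*(C)$, which survives to $U^{-1}H_*(C) \cong \F[U,U^{-1}]$. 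This is a general statement with no reference to $\iota$. (One should be slightly careful: the homotopy equivalence producing this form must be realized at the chain level, but this is classical, e.g. the "reduction" lemma for complexes over $\F[U]$.)

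Next I would use the involution. After transporting $\iota$ across the homotopy equivalence we obtain a grading-preserving chain map $\iota'$ on $C'$ with $(\iota')^2 \simeq \id$. The key point is to show that, after a further change of basis, $\iota'(x) = x$. Write $\iota'(x) = \sum (\text{terms})$; since $\iota'$ is grading-preserving and $x$ generates the bottom of the tower, the coefficient of $x$ in $\iota'(x)$ must be $1$ — otherwise $\iota'$ would not induce an isomorphism on $U^{-1}H_*$, contradicting that $\iota$ is part of an $\iota$-complex (it is a self-local equivalence of $\cC$, in particular a quasi-isomorphism after inverting $U$). So $\iota'(x) = x + w$ for some $w$; one then wants to absorb $w$ by replacing $x$ with $x + (\text{correction})$ and/or adjusting by a chain homotopy, using that $\d x = 0$ forces $\d w = 0$, hence $w$ lies in the $U$-torsion part plus possibly a $U$-multiple of $x$; grading considerations and the fact that $d(\cC) = \dl(\cC)$ are what rule out the problematic contributions of $w$ that cannot be corrected away.

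The main obstacle I expect is exactly this last step: showing the hypothesis $d(\cC) = \dl(\cC)$ is precisely what is needed to arrange $\iota'(x) = x$ on the nose (not merely up to homotopy), and that the rest of the basis $\{y_i, z_i\}$ can be kept in the stated normal form simultaneously. The conceptual content is that $d(\cC) < \du(\cC)$ (equivalently $d(\cC) \neq \dl(\cC)$, by the symmetry $\dl(Y) = -\du(-Y)$ combined with $\dl \le d \le \du$) would force a nontrivial interaction of $\iota$ with the bottom element $x$ of the form $U^k \mid$ something, contributing to $\HFIm$ in a way that shifts one of the involutive correction terms; so $d(\cC) = \dl(\cC)$ lets us homotope $\iota'$ to fix $x$. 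I would carry this out by examining the mapping cone $\Cone(Q(1+\iota'))$ directly: the element $\dl(\cC)$ detects whether $(1+\iota')(x)$ is a boundary, and $d(\cC) = \dl(\cC)$ says it is, which (after a homotopy and a basis change supported on the acyclic summand) can be used to kill $w$. The bookkeeping — tracking that the correction terms stay within the $\F[U]\langle y_i\rangle \oplus \F[U]\langle z_i\rangle$ summands and that gradings match — is routine but somewhat lengthy, so I would organize it as: (1) normal form for $(C,\d)$; (2) coefficient of $x$ in $\iota'(x)$ is $1$; (3) use $d = \dl$ to homotope away the remainder $w$; (4) check the resulting $\iota'$ and basis have the asserted form.
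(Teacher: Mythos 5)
Your plan matches the paper's proof essentially step for step: reduce $(C,\partial)$ to normal form over $\F[U]$, observe that $\iota(x)=x+w$ with $w$ a cycle in grading $\gr(x)$ (hence $w=\sum_{i\in I}U^{m_i}z_i$), and use $d(\cC)=\dl(\cC)$ to conclude that $w$ is a boundary, which gives the explicit chain homotopy $H$ (with $H(x)=\sum_{i\in I}U^{m_i-n_i}y_i$) correcting $\iota$ to an $\iota'$ fixing $x$. Two small remarks: the paper accomplishes the correction purely by a chain homotopy, with no further basis change beyond the initial normal form; and your parenthetical asserting that $d(\cC)<\du(\cC)$ is equivalent to $d(\cC)\neq\dl(\cC)$ via the duality $\dl=-\du\circ(-)$ is false for a fixed $\cC$ (these are independent conditions), but this is a tangent that does not affect your main argument.
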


\begin{remark}
Note that $\iota'$ above does not necessary split since $x$ could appear in $\iota'(y_i)$ for some $i$.
\end{remark}

\begin{proof}
Since $C$ is a finitely generated free chain complex over $\F[U]$ with homology rank one (as an $\F[U]$-module), we may assume that
\[ C = \F[U]\langle x \rangle \oplus \bigoplus_{i=1}^p \Big( \F[U]\langle y_i \rangle \oplus \F[U]\langle z_i \rangle \Big), \]
where $\d x = 0$ and $\d y_i = U^{n_i}z_i$ for some $n_i \in \Z_{> 0}$.

We now show that if $d(\cC)=\dl(\cC)$, then $\iota \simeq \iota'$ where $\iota'(x) = x$. Indeed, since $\iota$ is a chain map and induces an isomorphism on $U^{-1}H_*(C)$, we have that
\[ \iota(x) = x + \sum_{i \in I} U^{m_i}z_i, \]
for some $I \subseteq \{1, \dots, p\}$ and $m_i \in \Z_{\geq 0}$. Define
\begin{align*}
	\iota'(x)&=x \\
	\iota'(y_i)&=\iota(y_i) \\
	\iota'(z_i)&=\iota(z_i).
\end{align*}
We have that $\iota \simeq \iota'$, and hence $(C,\iota')$ is a homotopy equivalent $\iota$-complex, via a homotopy $H$ defined on basis elements $x, y_i, z_i$ to be
\begin{align*}
	H(x) &= \sum_{i \in I} U^{m_i-n_i}y_i \\
	H(y_i) &=0 \\
	H(z_i) &= 0.
\end{align*}
To see that $H$ is well-defined, we must show that $m_i -n_i \geq 0$ for all $i \in I$. Indeed, $d(\cC)=\dl(\cC)$ implies that $\iota(x)$ is homologous to $x$ and thus the sum $\sum_{i \in I} U^{m_i}z_i$ is contained in $\im \d$.  Since $\d y_i = U^{n_i} z_i$, we have that $m_i \geq n_i$ for all $i \in I$.  This completes the proof of the lemma.
\end{proof}

\begin{proof}[Proof of Proposition \ref{prop:connzero}]
The ``only if'' direction follows immediately from Proposition \ref{prop:formalomegabounds}.

We now prove the ``if'' direction. By Lemma \ref{lem:ddl}, without loss of generality we may assume that there exists a basis $\{x, y_1, \dots, y_p, z_1, \dots, z_p\}$ for $C$ such that $\d y_i = U^{n_i}z_i$ and $\d x = \d z_i =0$ for all $i$, and $\iota(x)=x$ since $d(\cC)=\dl(\cC)$. 

Now consider $\cC^*=(C^*, \iota^*)$, the dual of $\cC$.  We dualize the above basis to obtain a basis 
\[ \{x^*, y^*_1, \dots, y^*_p, z^*_1, \dots, z^*_p\} \]
for $C^*$ with the property that $x^*$ does not appear in $\iota^*(y^*_i)$ or $\iota^*(z^*_i)$ for any $i$ since no $y_i$ nor $z_i$ appears in $\iota(x)$.

We have that $d(\cC)=\du(\cC)$ if and only if $d(\cC^*)=\dl(\cC^*)$ by duality. We now apply the proof of Lemma \ref{lem:ddl} to the basis $\{x^*, y^*_1, \dots, y^*_p, z^*_1, \dots, z^*_p\}$ to obtain $(\iota^*)'$ with the property that $(\iota^*)'(x^*)=x^*$. It follows that $\cC^*=(C^*, (\iota^*)')$ splits as
\[  (\F[U]\langle x^* \rangle, \id) \oplus (A,  (\iota^*)'|_A).\]
where
\[ A=\bigoplus_i \Big( \F[U]\langle y^*_i \rangle \oplus \F[U]\langle z^*_i \rangle \Big). \]
Therefore, $H_\conn(\cC^*) = 0$, so $H_\conn(\cC) = 0$ as well by Proposition~\ref{prop:conn-duality}.
\end{proof}

We are also able to characterize when $\dim_{\F} H_\conn(\cC) = 1$.  

\begin{proposition}\label{prop:HFconn1}
If $\dim_{\F} H_\conn(\cC) = 1$, then either
\begin{enumerate}
	\item the unique element in $H_\conn(\cC)$ is in grading $d(\cC)-1$ and $d(\cC)=\du(\cC)=\dl(\cC)+2$, or
	\item the unique element in $H_\conn(\cC)$ is in grading $d(\cC)$ and $d(\cC)=\dl(\cC)=\du(\cC)-2$,  
\end{enumerate}
\end{proposition}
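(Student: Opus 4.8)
The plan is to reduce to an explicit three-generator model and then compute the involutive correction terms directly. Since $d$, $\dl$, $\du$, and $H_\conn$ are all invariants of the local equivalence class (the first three because they descend to $\mathfrak I_\Q$, and $H_\conn$ by Proposition~\ref{prop:locally-equiv-iota}), and $\cC$ is locally equivalent to $\cC_\conn$ by Lemma~\ref{lem:maxlocal}, I would first replace $\cC$ by $\cC_\conn$. Then $\cC$ is connected, so $H_\conn(\cC) = H_\red(\cC)$, which by hypothesis is one-dimensional over $\F$; hence $H_*(\cC)\cong\F[U]\oplus\F$ as an $\F[U]$-module. The structure theorem for finitely generated free chain complexes over the PID $\F[U]$, together with transporting $\iota$ along a chain homotopy equivalence, then lets me assume $\cC=(C,\iota)$ with $C=\F[U]\langle x,y,z\rangle$, $\partial x=\partial z=0$, and $\partial y=Uz$. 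In this model $d(\cC)=\gr(x)+2$ and the unique generator of $H_\conn(\cC)=H_\red(C)$ lies in grading $\gr(z)+1$, so the two alternatives in the statement correspond precisely to $\gr(z)=\gr(x)$ versus $\gr(z)=\gr(x)+1$.

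The crux is to pin down $\iota$ up to chain homotopy. Using that $\iota$ is grading preserving, is a chain map, and that $\iota_*$ — an involution of $H_*(C)$ that is an isomorphism on $U^{-1}H_*(C)$ — fixes the one-dimensional $U$-torsion submodule of $H_*(C)$, and using the freedom to modify $\iota$ within its homotopy class by deleting terms that bound, a case analysis of which generators can occur in $\iota(x)$, $\iota(y)$, $\iota(z)$ should show that after homotopy $\iota$ is exactly one of: (a) $\iota=\id$; (b) $\gr(z)=\gr(x)$ with $\iota(x)=x+z$, $\iota(y)=y$, $\iota(z)=z$; or (c) $\gr(z)=\gr(x)+1$ with $\iota(y)=y+x$, $\iota(x)=x$, $\iota(z)=z$. (One also checks $\iota^2=\id$ on the nose in each case, so no further condition is imposed, and a grading count shows (b) and (c) genuinely cannot be homotoped into (a).) I expect this enumeration to be the main obstacle, since it needs careful grading bookkeeping both to see which terms are permitted and which can be homotoped away.

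Finally I would compute $\Cone(Q(1+\iota))$ — a mapping cone on $x,y,z,Qx,Qy,Qz$ — and the action of $Q$ on $\HFIm(\cC)$ in each case. In case (a), $1+\iota=0$, so $\Cone(Q(1+\iota))\cong C\oplus QC[-1]$ with vanishing connecting map, and inspecting $\HFIm(\cC)$ gives $\dl(\cC)=d(\cC)=\du(\cC)$; by Proposition~\ref{prop:connzero} this forces $H_\conn(\cC)=0$, contradicting the hypothesis, so case (a) cannot occur. In case (b) the generator of $H_\conn(\cC)$ sits in grading $\gr(x)+1=d(\cC)-1$, and the computation (the homology has two towers and a single $U$-torsion class) yields $\dl(\cC)=d(\cC)-2$ and $\du(\cC)=d(\cC)$, which is conclusion (1). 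Case (c) I would handle either by the analogous computation or, more quickly, by applying case (b) to the dual $\iota$-complex $\cC^*$: one checks $\cC^*$ is again of type (b), and then the identities $d(\cC^*)=-d(\cC)$, $\dl(\cC^*)=-\du(\cC)$, $\du(\cC^*)=-\dl(\cC)$ together with Proposition~\ref{prop:conn-duality} convert conclusion (1) for $\cC^*$ into conclusion (2) for $\cC$: the generator of $H_\conn(\cC)$ lies in grading $d(\cC)$ and $d(\cC)=\dl(\cC)=\du(\cC)-2$.
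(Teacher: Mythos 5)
Your proof is correct, and reaches the same conclusion by a more explicit chain-level route than the paper. The paper also first replaces $\cC$ by a locally equivalent $\cC'$ with $H_*(C')\cong\F[U]\oplus\F$, but then stays entirely at the level of homology: by Proposition~\ref{prop:connzero} at least one of $\dl,\du$ differs from $d$, and the exact triangle
\[
H_*(C')\xrightarrow{(1+\iota')_*}H_*(C')\to H_*(\Cone(1+\iota'))
\]
pins down what this forces. For instance, $\dl<d$ means $(1+\iota')_*$ is nonzero, so it must send the tower generator to the torsion class, forcing the $\F$-summand into grading $d-2$; then a parity (or $\omega\le 1$) argument gives $\du=d$. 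The dual case is handled symmetrically. That argument never needs to choose a chain-level model for $C'$ or to classify $\iota$ up to homotopy.

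You instead build the three-generator model $C=\F[U]\langle x,y,z\rangle$ with $\partial y=Uz$, classify $\iota$ up to homotopy, and compute the mapping cone directly. This works, but the classification step you flag as "the main obstacle" is genuinely the hard part and deserves to be written out: in addition to ruling out $\iota\simeq\id$ via Proposition~\ref{prop:connzero}, you need to show that any $\iota$ with $\iota(x)=x+U^kz$ for $k\ge1$, or $\iota(y)=y+U^px$ for $p\ge1$, is homotopic to the identity (the homotopies are $H(x)=U^{k-1}y$ and $H(z)=U^{p-1}x$ respectively, and the grading bookkeeping is delicate because $\gr(y)=\gr(z)-1$, not $\gr(z)+1$). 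Once that enumeration is complete, your case-(b) computation and the duality reduction for case (c) are both sound. In short: your route buys a completely explicit description of the chain-level involution at the cost of a homotopy classification the paper avoids by working with $(1+\iota)_*$ on homology.
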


\begin{proof}
Since $\dim_{\F} H_\conn(\cC) = 1$, we have that $\cC=(C, \iota)$ is locally equivalent to an $\iota$-complex $\cC'=(C', \iota')$ with $H_*(C')=\F[U] \oplus \F$. Moreover, if $\dim_{\F} H_\conn(\cC) = 1$, then by Proposition \ref{prop:connzero}, at least one of $\dl(\cC)$ or $\du(\cC)$ is not equal to $d(\cC)$. Since $d$, $\dl$, and $\du$ are invariants of local equivalence, it follows that at least one of $\dl(\cC')$ or $\du(\cC')$ is not equal to $d(\cC')$. Consider the exact triangle
\[
\begin{tikzcd}[column sep=small]
H_*(C') \arrow{rr}{(1+\iota')_*} & & H_*(C') \arrow{dl} \\
& H_*(\Cone(1+\iota')). \arrow{ul} & 
\end{tikzcd}
\]
If $\dl(\cC') < d(\cC')$, then it follows that the $\F$ summand must be in grading $d(\cC')-2$ and $\dl(\cC')=d(\cC)-2$. Furthermore, it follows from parity of the gradings in $\HF_{\conn}(\cC)$ that $\du(\cC) = d(\cC)$. Similarly, if $\du(\cC') > d(\cC')$, then it follows that the $\F$ summand must be in grading $d(\cC')-1$ and $\du(\cC')=d(\cC)+2$, while $\dl(\cC)=d(\cC)$. Applying the grading shift from the definition of $H_\conn$, we have the result.
\end{proof}

\begin{corollary}\label{cor:(2,3,7)}
If $\dim_{\F} H_\conn(\cC) = 1$, then $\cC$ is locally equivalent to either the involutive complex for $\Sigma(2,3,7)$ or $-\Sigma(2,3,7)$, up to an overall grading shift.
\end{corollary}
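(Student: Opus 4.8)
The plan is to combine the grading classification of Proposition~\ref{prop:HFconn1} with a direct identification of the connected complex. By Proposition~\ref{prop:HFconn1}, the hypothesis $\dim_\F H_\conn(\cC)=1$ puts us in one of two cases. I would carry out case (1), where $d(\cC)=\du(\cC)=\dl(\cC)+2$ and the generator of $H_\conn(\cC)$ lies in grading $d(\cC)-1$; case (2) then follows by applying the case~(1) argument to $\cC^*$, since $\dim_\F H_\conn(\cC^*)=1$ by Proposition~\ref{prop:conn-duality}, the duality relations between $d,\dl,\du$ of $\cC$ and $\cC^*$ place $\cC^*$ in case (1), and the dual of the involutive complex of $\Sigma(2,3,7)$ is the involutive complex of $-\Sigma(2,3,7)$.

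For case (1), I would first use Lemma~\ref{lem:maxlocal} to reduce to identifying the connected complex: $\cC$ is locally equivalent to $(\cC_\conn,\iota_\conn)$, so it suffices to show that $(\cC_\conn,\iota_\conn)$ is homotopy equivalent, after an overall grading shift, to the involutive complex of $\Sigma(2,3,7)$ from \cite{HM:involutive}. Since $\cC_\conn$ is a finitely generated free $\F[U]$-complex whose homology is a rank-one tower together with a single length-one piece of $U$-torsion (the torsion submodule has $\F$-dimension $\dim_\F H_\conn(\cC)=1$), the classification of finitely generated chain complexes over the PID $\F[U]$ yields a minimal model $C'=\F[U]\langle a,b,c\rangle$ with $\d a=\d c=0$, $\d b=Uc$, with gradings pinned down by $d(\cC)$ and case (1) to be $\gr(a)=\gr(c)=d(\cC)-2$, $\gr(b)=d(\cC)-3$. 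Transporting $\iota_\conn$ across this homotopy equivalence gives an $\iota$-complex $(C',\iota')$ homotopy equivalent to $(\cC_\conn,\iota_\conn)$.

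Next I would analyze $\iota'$: it is a grading-preserving chain map inducing an isomorphism on $U^{-1}H_*$ and squaring to the identity up to homotopy. Grading considerations, together with the chain-map relation $U\iota'(c)=\iota'(\d b)=\d\iota'(b)$, force $\iota'(b)=b$ and $\iota'(c)=c$, and leave $\iota'(a)=a+\varepsilon c$ with $\varepsilon\in\{0,1\}$. The crucial step is ruling out $\varepsilon=0$: if $\iota'=\id$, then $(C',\id)$ splits as $(\F[U]\langle a\rangle,\id)\oplus(\F[U]\langle b,c\rangle,\id)$, the projection onto the first summand is a self-local equivalence whose kernel $\F[U]\langle b,c\rangle$ is readily checked to be maximal among kernels of self-local equivalences of $(C',\id)$; hence the connected complex of $(C',\id)$ is $(\F[U],\id)$ and $H_\conn((C',\id))=0$. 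As $H_\conn$ is an invariant of the homotopy type and $(C',\iota')\simeq(\cC_\conn,\iota_\conn)$ has one-dimensional connected homology, this is a contradiction, so $\varepsilon=1$.

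Finally, the $\iota$-complex $(C',\iota')$ with $\iota'(a)=a+c$ is, up to the overall grading shift by $d(\cC)$, precisely the involutive complex of $\Sigma(2,3,7)$ computed in \cite{HM:involutive}. (Alternatively, one may note that $\Sigma(2,3,7)=S^3_{-1}(T_{2,3})$ satisfies the hypotheses: by Theorem~\ref{thm:-nLspace}, $\HF_\conn(\Sigma(2,3,7))=\cT_{-1}(V_0(T_{2,3}))=\cT_{-1}(1)$ is one-dimensional and $\Sigma(2,3,7)$ lies in case (1), so running the same argument on it yields the same connected complex; thus $\cC$ and $\Sigma(2,3,7)$ have homotopy equivalent connected complexes up to grading shift, hence are locally equivalent up to grading shift.) I expect the main obstacle to be the bookkeeping in the two middle steps — pinning down $\iota'$ from only the structural axioms, and in particular establishing that the connected complex of $(C',\id)$ is trivial, which hinges on a careful description of the maximal self-local equivalences of that split complex.
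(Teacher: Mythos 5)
Your proposal is correct, and it reaches the same conclusion as the paper but by a more self-contained route. The paper's proof, after invoking Proposition~\ref{prop:HFconn1} to place itself in case~(1), cites the general rigidity result of Dai--Manolescu (that for a free $\F[U]$-complex with homology $\F[U]\oplus\F$, the chain-level involution is determined up to homotopy by its action on homology) and identifies the result with the $\Sigma(2,3,7)$ complex. You instead pass to the connected complex via Lemma~\ref{lem:maxlocal}, use the PID classification to exhibit the explicit minimal model $\F[U]\langle a,b,c\rangle$ with $\d b=Uc$, and then directly enumerate the possible involutions $\iota'$ on it; the nonvanishing of $H_\conn$ (via Proposition~\ref{prop:connzero}, or directly by exhibiting a maximal self-local equivalence of $(C',\id)$ with image $\F[U]\langle a\rangle$) rules out $\iota'=\id$ and forces $\iota'(a)=a+c$. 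This effectively reproves the special case of Dai--Manolescu's rigidity that the paper cites, which makes the argument longer but avoids the external black box; both approaches handle case~(2) identically by duality.

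One small step you gloss over: you assert that ``grading considerations, together with the chain-map relation $U\iota'(c)=\d\iota'(b)$, force $\iota'(b)=b$ and $\iota'(c)=c$,'' but the chain-map relation alone is also consistent with $\iota'(b)=\iota'(c)=0$. To rule this out you need to use $\iota'^2\simeq\id$ (hence $\iota'_*$ is an isomorphism on $H_*(C')$, not just on $U^{-1}H_*$), which forces $\iota'_*[c]\neq 0$; since $[c]$ is the only nonzero class in its degree this pins down $\iota'(c)=c$. This is immediate to fill in, so the argument as a whole goes through.
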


Here, following \cite{OSabsgr} and \cite{HM:involutive}, our orientation convention is that $\Sigma(2,3,7) =S^3_{-1}(T_{2,3})$.

\begin{proof}
If the unique element in $H_\conn(\cC)$ is in grading $d(\cC)-1$ and $d(\cC)=\du(\cC)=\dl(\cC)+2$, then by the proof of Proposition \ref{prop:HFconn1}, the $\iota$-complex $\cC=(C, \iota)$ is locally equivalent to an $\iota$-complex $\cC'=(C', \iota')$ with $H_*(C')=\F[U]\langle x \rangle \oplus \F \langle z \rangle$ and $(1+\iota)_*(x)=z$. Then \cite[Proof of Theorem 1.1]{DaiManolescu} shows us that the action of $\iota_*$ on $H_*(C')$ determines (up to homotopy) the underlying action on the chain level on any free chain complex with homology $H_*(C')$. In particular, $C'$ is homotopy equivalent to 
\[ \F[U]\langle a \rangle \oplus \F[U]\langle b \rangle \oplus \F[U]\langle c \rangle, \]
with $\d c = U(a+b)$ and $\iota''(a)=b$, $\iota''(b)=a$, and $\iota''(c)=c$, which is the $\iota$-complex for $\Sigma(2,3,7)$ \cite{HM:involutive}.

If the unique element in $H_\conn(\cC)$ is in grading $d(\cC)$ and $d(\cC)=\dl(\cC)=\du(\cC)-2$, then we repeat the above argument to conclude that $\cC^*=(C^*, \iota^*)$ is locally equivalent to the involutive complex for $\Sigma(2,3,7)$, i.e., $\cC$ is locally equivalent to the involutive complex for $-\Sigma(2,3,7)$.
\end{proof}

\section{Applications to homology cobordism}\label{sec:applications}
In this section, we give the applications of connected Floer homology to homology cobordism promised in the introduction.  The arguments will rely on a few direct computations of the connected Floer homology of a certain class of manifolds.  These computations are done in Sections~\ref{sec:surgeries} and \ref{sec:computations}.  

%\subsection{A filtration on $\Inv_\Q$}
We now discuss a filtration on $\Inv_\Q$ which will yield the filtration on the homology cobordism group described in the introduction.  Recall that $\cP$ denotes the set of subsets of $\N$, partially ordered by inclusion. For $P \in \cP$, define
\[ \cF_P^{\Inv} = \{ [\cC] \in \Inv_\Q \mid H_\conn(\cC) \cong \bigoplus_i \F[U]/U^{n_i} \F[U], n_i \in P \}. \]
The above isomorphism need not respect gradings.  We will often be interested in $[N]=\{1, 2, \dots, N\} \in \cP$.  We now prove that $\cP$ induces a filtration on $\Inv_\Q$.

\begin{proposition}\label{prop:I-filtration}
The collection of subsets $\cF_P^{\Inv}$ induces a filtration of $\Inv_\Q$ by $\cP$.  
\end{proposition}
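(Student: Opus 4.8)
I need to verify the two filtration axioms for the collection $\{\cF_P^{\Inv}\}_{P \in \cP}$: that each $\cF_P^{\Inv}$ is a subgroup of $\Inv_\Q$, and that $P_1 \subseteq P_2$ implies $\cF_{P_1}^{\Inv} \subseteq \cF_{P_2}^{\Inv}$. The second is essentially immediate: if $H_\conn(\cC) \cong \bigoplus_i \F[U]/U^{n_i}$ with all $n_i \in P_1$, then a fortiori all $n_i \in P_2$, so $[\cC] \in \cF_{P_2}^{\Inv}$. The content is in the first axiom.

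First I would record that $\cF_P^{\Inv}$ is well-defined on local equivalence classes: by Proposition~\ref{prop:locally-equiv-iota}, $\cC_\conn$ depends only on the local equivalence type of $\cC$, so membership in $\cF_P^{\Inv}$ is well-defined. Also $\cF_P^{\Inv}$ is nonempty since the identity $\iota$-complex $(\F[U], \id)$ has $H_\conn = 0$, the empty direct sum, which lies in $\cF_P^{\Inv}$ for every $P$ (vacuously). To check closure under the group operation, suppose $[\cC], [\cC'] \in \cF_P^{\Inv}$. I want to show $[\cC \otimes \cC'] \in \cF_P^{\Inv}$, i.e., that $H_\conn(\cC \otimes \cC')$ is a direct sum of cyclic $U$-torsion modules $\F[U]/U^{m}$ with each $m \in P$. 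As in the proofs of Propositions~\ref{prop:formalomegabounds} and \ref{prop:omegasum}, I may replace $\cC$ and $\cC'$ by their connected complexes, so that (after passing to the image of a maximal self-local equivalence) $H_\conn(\cC) = H_\red(C)$ equals the $U$-torsion submodule of $H_*(C)$ up to grading shift, and similarly for $\cC'$. Thus I can assume $H_*(C) = \F[U] \oplus \bigoplus_i \F[U]/U^{n_i}$ and $H_*(C') = \F[U] \oplus \bigoplus_j \F[U]/U^{m_j}$ with all $n_i, m_j \in P$. The connected homology $H_\conn(\cC \otimes \cC')$ is a direct summand of $H_\red((\cC \otimes \cC')_\conn)$, which in turn is a submodule of the $U$-torsion submodule of $H_*(C \otimes_{\F[U]} C')$. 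By the Künneth formula over $\F[U]$ (as in \cite[Corollary 6.3]{OS3manifolds2}, cf. the proof of Proposition~\ref{prop:omegasum}), every cyclic summand of this torsion submodule is of the form $\F[U]/U^{n_i}$, $\F[U]/U^{m_j}$, or $\F[U]/U^{\min\{n_i, m_j\}}$, and in all cases the exponent lies in $P$ (using $\min\{n_i,m_j\} \in \{n_i, m_j\} \subseteq P$).

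The one genuine subtlety — and the step I expect to require the most care — is that being a summand or submodule does not automatically preserve the property "is a direct sum of cyclic modules whose exponents lie in $P$." Here I use that over the PID $\F[U]$, any finitely generated torsion module decomposes as a direct sum of cyclics $\bigoplus \F[U]/U^{k_\ell}$, and the multiset of exponents $\{k_\ell\}$ is an invariant; moreover, for any submodule $N$ of such a module, the exponents appearing in $N$ are each $\leq$ some exponent appearing in the ambient module — but I need more, namely that they lie in $P$, which is not an interval. So the right argument is: the torsion submodule $T$ of $H_*(C \otimes_{\F[U]} C')$ is itself already presented by Künneth as an explicit direct sum of cyclics with exponents in $P$; then $H_\conn(\cC\otimes\cC')$ is, by Lemma~\ref{lem:isomim} and the splitting in Lemma~\ref{lem:splitcomplex}, a direct summand (as $\F[U]$-module, up to grading shift) of $T$, and a direct summand of a finite direct sum of cyclic $U$-torsion modules is again such a direct sum whose elementary-divisor exponents form a sub-multiset of the original — this last fact is the uniqueness part of the structure theorem for modules over the PID $\F[U]$ applied to the torsion part. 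Hence the exponents of $H_\conn(\cC \otimes \cC')$ form a submultiset of a multiset contained in $P$, so they lie in $P$. Finally, closure under inverses: if $[\cC] \in \cF_P^{\Inv}$ then by Proposition~\ref{prop:conn-duality}, $H_\conn(\cC^*) = \bigoplus_i \cT_{-a_i + 2n_i - 3}(n_i)$, which as an ungraded $\F[U]$-module is $\bigoplus_i \F[U]/U^{n_i}$ with the same exponents $n_i \in P$; hence $[\cC^*] = [\cC]^{-1} \in \cF_P^{\Inv}$. This completes the verification that $\cF_P^{\Inv}$ is a subgroup, and together with the monotonicity observation, that $\{\cF_P^{\Inv}\}$ is a filtration by $\cP$.
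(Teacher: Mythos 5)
Your proof is correct and follows the same route as the paper's: pass to representatives with $H_\conn = H_\red$, apply the K\"unneth formula to see that the torsion submodule of $H_*(C \otimes_{\F[U]} C')$ has all exponents in $P$, observe that $H_\conn(\cC\otimes\cC')$ is a direct summand of that torsion submodule, and use Proposition~\ref{prop:conn-duality} for inverses. The only substantive difference is that you make explicit the structure-theorem fact that the exponents of a direct summand of $\bigoplus \F[U]/U^{k_\ell}$ form a sub-multiset of $\{k_\ell\}$ (a point worth being careful about, since it is false for mere submodules), whereas the paper leaves this implicit.
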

\begin{proof} That $\cF_P^{\Inv}$ is closed under inverses follows from Proposition~\ref{prop:conn-duality}.  The only remaining point is that $\cF_P^{\Inv}$ is closed under connected sum. Let $[\cC], [\cC'] \in \cF_P^{\Inv}$. Let $\cC = (C, \iota)$ be a representative of the local equivalence class with the property that $H_{\conn}(\cC) = H_\red(C)$. We choose an analogous representative for $\cC'=(C', \iota')$.
Then we see that $H_*(C) \cong \F[U] \oplus \bigoplus_i \F[U]/U^{n_i} \F[U]$ for some collection of $n_i \in P$, and similarly for $H_*(C')$. Then the module $\HF_\conn(\cC \otimes \cC')$ must be, up to a grading shift, a summand of the $U$-torsion submodule of $H_*(C \otimes_{\F[U]} C')$.  But the K\"unneth formula implies that the $U$-torsion submodule of $H_*(C \otimes_{\F[U]} C')$ consists of a direct sum of cyclic modules $\F[U]/U^{m_j}$, each of which appears as a summand of $H_*(C)$ or $H_*(C')$, and therefore $m_j \in P$. Consequently, $[\cC \otimes \cC']$ lies in $\cF_P^{\Inv}$.
\end{proof}

We now define the filtration on $\Theta^3_\Z$ described in the introduction.  Recall that $\mathcal{P}$ denotes the powerset of $\mathbb{N}$, and for $P \in \mathcal{P}$, we define
\[ \cF_P = \{ [Y] \mid \HF_\conn(Y) \cong \bigoplus_i \cT_{a_i}(n_i) , \ n_i \in P \}. \]

\filtered*
\begin{proof}
This follows from Proposition~\ref{prop:I-filtration} by applying the map from $\Theta^3_\Z$ to $\Inv_\Q$ which takes $Y$ to $[(CF^-(Y),\iota)]$.  
\end{proof}

The filtration $\cP$ is effective for studying the subgroup of $\Theta^3_\Z$ generated by surgery on knots in $S^3$ of bounded genus.  

\genus*
\begin{proof}
By \cite[Theorem 3]{Gainullin}, we have that for any $n \in \Z$, 
\[ U^{g(K)+\lceil g_4(K)/2 \rceil} \HF_\red(S^3_{1/n}(K)) =0. \]
In particular, suppose $g(K) < N$ and let $\HF_\conn(S^3_{1/n}(K)) \cong \bigoplus_i \F[U]/U^{n_i} \F[U]$. Then for each $i$, we have $n_i < 3N/2$. It follows that the subgroup generated by $\{ S^3_{1/n}(K) \mid g(K) <N, n \in \Z \}$ is contained in $\cF_{[\frac{3N}{2}-1]}$. There exist L-space knots with any value of $V_0$ (e.g. $T_{2,4n+1}$), so by Theorem \ref{thm:-nLspace}, we have that $\cF_{[p]}/\cF_{[p-1]}$ is non-empty for all $p \in \N$, hence $\Theta^3_N$ is a proper subgroup of $\Theta^3_\Z$.
\end{proof}

It is still an open question as to whether every homology sphere is homology cobordant to one obtained by surgery on some knot in $S^3$.  

Note that the above theorem immediately proves that $\Theta^3_\Z$ is infinitely generated.  Using the invariant $\omega$, we easily can reprove Furuta's theorem that $\Theta^3_\Z$ contains a $\Z^\infty$ subgroup.  

\infinitelygenerated*

\begin{proof}  Let $Y_n = S^3_{-1}(T_{2,4n+1})$.  By \cite[Corollary 1.5]{OSalternating}, we have that $V_0(T_{2, 4n+1})=n$. It then follows from Theorem~\ref{thm:connect-sums} that $\omega(k Y_n) = n$ for any integer $k > 0$.  By Proposition~\ref{prop:conn-duality}, the same holds for $k < 0$.  Therefore, by Proposition~\ref{prop:omegasum}, we see that 
\[
\omega(k_1 Y_1 \# \ldots \#  k_n Y_n) \leq \max _{k_i \neq 0} i.
\]
Therefore, we see that no linear combination $k_1 Y_1 \# \ldots k_N Y_N$ with $k_N$ non-zero can be trivial in homology cobordism, since otherwise we would have 
\[
N = \omega({-k_N} Y_N) = \omega (k_1 Y_1 \# \ldots \# k_{N-1} Y_{N-1}) \leq N-1,
\]
where the leftmost equality is by Theorem \ref{thm:connect-sums} (and Proposition \ref{prop:conn-duality} if $k_N>0$). Hence we have reached a contradiction.  
\end{proof}

More generally, Theorems \ref{thm:infiniteorder} and \ref{thm:infiniterankone} below give sufficient conditions for a homology sphere to be infinite order in $\Theta^3_\Z$. The following proposition will be used in the proofs of Theorems \ref{thm:infiniteorder} and \ref{thm:infiniterankone}. 

\begin{proposition} \label{prop:dlowernotd} Let $Y_1$ and $Y_2$ be integer homology spheres such that 
$\dl(Y_1)<d(Y_1)$ and at least one of the following is true of $Y_2$:
	\begin{itemize}
	\item $Y_2$ is $d$-negative, or
	\item $\HFm_r(Y_2)$ is nonzero only in gradings $r$ such that $r \equiv d \pmod{2}$.
	\end{itemize}
Then $\dl(Y_1 \# Y_2) < d(Y_1 \# Y_2)$.
\end{proposition}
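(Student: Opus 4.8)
The plan is to reduce the whole statement to the single claim that $\du(Y_2) = d(Y_2)$, and then invoke the connected-sum inequality of Proposition~\ref{prop:inequalities}. That inequality gives
\[ \dl(Y_1 \# Y_2) \le \dl(Y_1) + \du(Y_2), \]
so, granting $\du(Y_2) = d(Y_2)$, additivity of $d$ under connected sum together with the hypothesis $\dl(Y_1) < d(Y_1)$ yields
\[ \dl(Y_1 \# Y_2) \le \dl(Y_1) + d(Y_2) < d(Y_1) + d(Y_2) = d(Y_1 \# Y_2), \]
which is exactly the desired conclusion. Thus all the content lies in showing $\du(Y_2) = d(Y_2)$ under either of the two hypotheses on $Y_2$.

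To prove $\du(Y_2) = d(Y_2)$, write $d_2 = d(Y_2) \in 2\Z$. Since $d_2 \le \du(Y_2)$ always holds (from the exact triangle~\eqref{pic:exact}), it suffices to show $\du(Y_2) \le d_2$, i.e.\ by the definition of $\du$ that the natural map $i\colon \HFIm_r(Y_2) \to \HFIinf_r(Y_2)$ vanishes for every $r \ge d_2$ with $r \equiv d_2 \pmod 2$. I would first record two elementary facts. First, the generator of the free $\F[U]$-tower of $\HFm(Y_2)$ sits in grading $d_2 - 2$, so for every $r \ge d_2$ the group $\HFm_r(Y_2)$ is entirely $U$-torsion and hence maps to $0$ in $\HFinf_r(Y_2)$. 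Second, under either hypothesis on $Y_2$ one has $\HFm_{r-1}(Y_2) = 0$ whenever $r \ge d_2$ and $r \equiv d_2 \pmod 2$: in the single-parity case because $r-1$ then has the opposite parity to $d_2$; in the $d$-negative case because the $U$-torsion of $\HFm(Y_2)$ is supported in gradings $< d_2 - 1$, while the tower lies in gradings $\le d_2 - 2$.

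Now I would run the long exact sequence of~\eqref{pic:exact}, being careful that the $\CFm$-summand of $\CFIm$ is shifted up by one while the $Q$-summand is not, so that the relevant portion reads $\HFm_r(Y_2) \to Q\cdot\HFm_r(Y_2) \to \HFIm_r(Y_2) \to \HFm_{r-1}(Y_2)$ (with connecting map $1+\iota_*$). Since $\HFm_{r-1}(Y_2) = 0$, exactness shows the map $Q\cdot\HFm_r(Y_2) \to \HFIm_r(Y_2)$ is surjective. This map is the inclusion of the $Q$-summand, hence compatible with $U$-localization, so its composite with $i$ equals $Q\cdot\HFm_r(Y_2) \to Q\cdot\HFinf_r(Y_2) \hookrightarrow \HFIinf_r(Y_2)$; the first arrow here is zero because $\HFm_r(Y_2)$ is $U$-torsion. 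Therefore $i$ vanishes in grading $r$, so $\du(Y_2) \le d_2$ and $\du(Y_2) = d_2$, completing the argument. The only genuine care needed is the grading bookkeeping in~\eqref{pic:exact}; there is no serious obstacle, and the main decision is simply to package the proof through Proposition~\ref{prop:inequalities} rather than attempting to analyze $\dl(Y_1 \# Y_2)$ directly at the chain level.
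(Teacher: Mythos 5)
Your proof is correct and follows exactly the same route as the paper: reduce to the claim $\du(Y_2)=d(Y_2)$ and then apply Proposition~\ref{prop:inequalities} together with additivity of $d$. The only difference is that the paper simply asserts $\du(Y_2)=d(Y_2)$ as a consequence of either hypothesis, whereas you carefully verify it via the long exact sequence \eqref{pic:exact}; your grading bookkeeping (tower of $\HFm$ in grading $d_2-2$, $U$-torsion supported below $d_2-1$ in the $d$-negative case, vanishing of $\HFm_{r-1}$ for $r\ge d_2$, $r\equiv d_2 \pmod 2$) is all correct.
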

Note that, by Proposition \ref{prop:connzero}, this in particular implies that $\dim_{\F}(\HF_\conn(Y_1 \# Y_2)) \geq 1$.
\begin{proof} Either of the conditions on $Y_2$ is sufficient to guarantee that $\du(Y_2) = d(Y_2)$. But then by Proposition \ref{prop:inequalities},
we have
\begin{align*}
\dl(Y_1 \# Y_2) &\leq \dl(Y_1) + \du(Y_2) \\
				&= \dl(Y_1) +d(Y_2) \\
				&< d(Y_1) + d(Y_2) \\
				&= d(Y_1 \# Y_2).
\end{align*}
Here the third step uses the assumption that $\dl(Y_1) < d(Y_1)$. \end{proof}

The following theorems now follow readily.

\infiniteorder*  
\begin{proof}
This follows immediately from Proposition~\ref{prop:dlowernotd}.  
\end{proof}

\infiniterankone*
\begin{proof}
This follows from Corollary~\ref{cor:(2,3,7)} and Proposition~\ref{prop:dlowernotd}.  
\end{proof}

\begin{remark} As noted in the introduction, Proposition \ref{prop:dlowernotd} and therefore Theorems \ref{thm:infiniterankone} and \ref{thm:infiniteorder} also apply in the case that $Y$ is a $\Z_2$-homology sphere, together with its unique spin structure, and we consider either the group $\Theta^3_{\Z_2}$ or $\Theta^3_{\Z_2}/\Theta^3_L$.\end{remark}

%It is unknown whether there exist homology spheres which are not homology cobordant to surgery on a knot in $S^3$.  We are able to obstruct homology cobordisms to surgeries on L-space knots.    

%\begin{corollary} 
%Let $Y$ be homology cobordant to a connected sum $\#_{i=1}^{\ell}S^3_{-1}(K_i)$ where each $K_i$ is an L-space knot and $\ell>1$. Then $Y$ is not $\Z_2$-homology cobordant to any Seifert fibred space or any surgery on an L-space knot. In particular,  $\#_n\Sigma(2, 3, 7)$, $n \in \Z_{\geq 2}$ is not $\Z_2$ homology cobordant to any surgery on an L-space knot.
%\end{corollary}

%\begin{proof}
%Let $Y$ be either non-zero rational surgery on an L-space knot and a Seifert fibered space. Then if $x$, $y$ are graded elements in $\HF_\red(Y, \s_0)$, the difference $\gr(x)-\gr(y)$ is a multiple of $2$. But if $Z$ is homology cobordant to a connected sum $\#_{i=1}^m S_{-1}^3(K_i)$ for $K_i$ L-space knots and $m\geq 2$, then by Theorem \ref{thm:connect-sums}, $\HF_{\conn}(Z)$ has elements in both even and odd gradings. So $Z$ is not homology cobordant to $Y$. 
%\end{proof}

\section{Computations for integer surgeries} \label{sec:surgeries}
\subsection{The mapping cone formula and $\iota$ for surgeries}\label{sec:surgeries-1}
In this section, we will study the behavior of involutive Heegaard Floer homology and the connected Floer homology for certain Dehn surgeries.  This will include the claimed computations for $-1$-surgery on $T_{2,4n+1}$ used in the proof of Theorem~\ref{thm:-1surgerytorus}.

We assume that the reader is familiar with the integer surgery mapping cone formula of \cite{OSinteger}. Let $\CFKi(K)$ be the knot Floer complex of $K \subset S^3$, which is freely generated over $\F[U, U^{-1}]$ and $\Z \oplus \Z$-filtered. For $X \subset \Z \oplus \Z$, let $CX$ denote the subset of $\CFKi(K)$ generated over $\F$ by elements with filtration level $(i, j) \in X$. We will be interested in the case that $a \in X$ implies $b \in X$ for all $b < a$ with respect to the product partial order on $\Z \oplus \Z$; in this case, $CX$ will always be a subcomplex of $\CFKi(K)$. We will be particularly interested in
\begin{align*}
A^-_s &= C\{ i \leq 0 \textup{ and } j \leq s\} \\
B^- &= C\{i \leq 0\}.
\end{align*}
The complex $B^-$ is homotopy equivalent to $CF^-(S^3)$. We also have that $C\{j \leq 0\}$ is homotopy equivalent to $CF^-(S^3)$, and, up to a grading shift, $C\{ j\leq s\}$ is homotopy equivalent to $C\{ j \leq 0\}$ (via multiplication by $U^s$), and thus also homotopy equivalent to $C\{ i \leq 0\}$.  Since $H_*(C\{ i \leq 0\}) \cong \F[U]$ and each of these complexes is finitely generated and free over $\F[U]$, this homotopy equivalence is unique up to homotopy.  

We have chain maps
\[ v_s \co A^-_s \rightarrow B^- \qquad \textup{ and } \qquad h_s \co A^-_s \rightarrow B^-, \]
where $v_s$ is inclusion, and $h_s$ is inclusion into $C\{ j \leq s\}$ followed by the chain homotopy equivalence from $C\{ j \leq s\}$ to $C\{ i \leq 0\}$. Let 
\[ V_s = \dim_\F ( \coker v_{s,*}) \qquad \textup{ and } \qquad H_s = \dim_\F ( \coker h_{s,*}).\]
Recall from \cite[Lemma 2.4]{NiWu} that $V_{s+1} \leq V_s$, and that $V_s=0$ for $s \geq g(K)$, where $g(K)$ denotes the Seifert genus of $K$ \cite[Theorem 1.2]{OSgenus}. We also have that $V_s=H_{-s}$ and $H_s=V_s+s$ (see \cite[Lemmas 2.3 and 2.5]{HLZ} combined with \cite[Lemma 2.6]{HomLidmansymplectic}).  We write $\vec{V}$ for the sequence $\{V_s\}^\infty_{s =0}$, which encodes the values of $V_s$ and $H_s$ for all $s \in \Z$.  

We now define an $\F[U]$-module $M(\vec{V})$ which will be used to describe the Heegaard Floer homology of $-1$-surgery on an L-space knot. For each $s \geq 0$ with $V_s \neq 0$, we have two generators $x_s$ and $x'_s$ in grading $-s(s+1)-2$. We have the relations 
\begin{align*}
	U^{V_s}x_s &=U^{V_s}x'_s &\textup{ for } &s\geq 0 \\
	U^{V_s}x_s &=U^{V_s+\frac{s(s+1)}{2}}x_0 &\textup{ for } &s>0.
\end{align*}
See Figure \ref{fig:gradedrootVs} for a depiction of the module $M(\vec{V})$ as a graded root. Let $J_0$ be the $\F[U]$-equivariant involution on $M(\vec{V})$ that interchanges $x_s$ and $x'_s$. It is clear from the definition of $M(\vec{V})$ that this involution is indeed well-defined.

\begin{figure}[htb!]
\begin{tikzpicture}
	%\begin{scope}[thin, black!20!white]
	%	\draw [<->] (-5, 0.5) -- (6, 0.5);
	%	\draw [<->] (0.5, -5) -- (0.5, 6);
	%\end{scope}
	%\draw[step=1, black!30!white, very thin] (0, -3) grid (1, 4);

	%\filldraw (-1, 0) circle (2pt) node[] (a){};

	\filldraw (-0.75, 0) circle (2pt) node[left] (){$x_0$};
	\filldraw (0.75, 0) circle (2pt) node[right] (){$x'_0$};
	\filldraw (-0.375, -1) circle (2pt);
	\filldraw (0.375, -1) circle (2pt);
	\filldraw (-1, -1) circle (2pt) node[left] (){$x_1$};
	\filldraw (1, -1) circle (2pt) node[right] (){$x'_1$};
	\filldraw (-1, -3) circle (2pt) node[left] (){$x_2$};
	\filldraw (1, -3) circle (2pt) node[right] (){$x'_2$};
	\filldraw (0, -2) circle (2pt);	
	\filldraw (0, -3) circle (2pt);	
	\filldraw (0, -4) circle (2pt);	
	\filldraw (0, -5) circle (2pt);	
	\filldraw (0, -6) circle (2pt);	
	
	\node[] at (0, -6.5) {$\vdots$};

	\draw [very thick] (-0.75, 0) -- (0, -2);
	\draw [very thick] (0.75, 0) -- (0, -2);
	\draw [very thick] (-1, -1) -- (0, -2);
	\draw [very thick] (1, -1) -- (0, -2);	
	\draw [very thick] (-1, -3) -- (0, -4);
	\draw [very thick] (1, -3) -- (0, -4);	
	\draw [very thick] (0, -2) -- (0, -6);	

	%\node [right] at (a) {$(2)$};

\end{tikzpicture}
\caption{A graded root for $M(\vec{V})$ where $V_0=2, V_1=V_2=1,$ and $V_s=0$ for $s \geq 3$.}
\label{fig:gradedrootVs}
\end{figure}
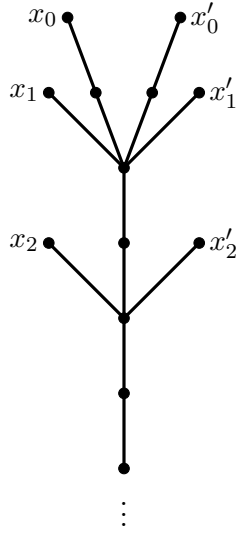

More generally, we define an $\F[U]$-module $M(\vec{V}, n)$ for $n \in \Z_{> 0}$. For each $s \geq 0$ such that $V_{ns}\neq 0$, we have two generators $x_{s}$ and $x'_{s}$ in grading $-ns(s+1)-2$. We have the relations
\begin{align*}
	U^{V_{ns}}x_s &=U^{V_{ns}}x'_s &\textup{ for } &s\geq 0 \\
	U^{V_{ns}}x_s &=U^{V_{ns}+\frac{ns(s+1)}{2}}x_0 &\textup{ for } &s>0.
\end{align*}
As before, let $J_0$ be the $\F[U]$-equivariant involution that interchanges $x_s$ and $x'_s$. Note that $M(\vec{V})=M(\vec{V}, 1)$.  On the other hand, $M(\vec{V},n)$ only depends on the values of $V_k$ for $k \equiv 0 \pmod{n}$.  

\begin{proposition}\label{prop:surgeryinvolution}
Let $K$ be an L-space knot and $n$ a positive integer. Then
\[ \HFm(S^3_{-n}(K), [0]) \cong M(\vec{V}, n)[d(L(n, 1), [0])], \]
and the induced action $\iota_*$ on $\HFm(S^3_{-n}(K))$ coincides with $J_0$ on $M(\vec{V}, n)$.
\end{proposition}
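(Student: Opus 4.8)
The plan is to compute $\HFm(S^3_{-n}(K),[0])$ via the integer surgery mapping cone formula of \cite{OSinteger} and then to track the conjugation involution through that formula using the surgery formula for $\iota$. Recall that for $-n$-surgery the mapping cone $\mathbb{X}^-(-n)$ in the spin${}^c$ structure $[0]$ is built from the complexes $A^-_s$ for $s \equiv 0 \pmod n$ and copies of $B^-$, with edge maps $v_s$ and $h_s$. For an L-space knot, $H_*(A^-_s) \cong \F[U]$ with top degree controlled by $V_{|s|}$ (using $V_{-s}=H_s=V_s+s$), and the maps $v_{s,*}$, $h_{s,*}$ are multiplication by $U^{V_s}$ and $U^{H_s}$ respectively. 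First I would take homology of the mapping cone: since everything is a sum of $\F[U]$'s with monomial maps, the homology is a routine computation and yields a graded $\F[U]$-module whose generators come in pairs $x_s, x'_s$ (from the two ``sides'' $A^-_{ns}$ and $A^-_{-ns}$ reached via $v$ and $h$ respectively) with precisely the relations defining $M(\vec V, n)$; the overall grading shift is pinned down by comparison with the unknot, giving $d(L(n,1),[0])$. This identifies the underlying $\F[U]$-module.

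Next I would handle the involution. The key input is the description of $\iota$ on the surgery mapping cone: under $\spinc$-conjugation, the complex $A^-_s$ is sent to $A^-_{-s}$ (conjugation reflects the $j$-filtration), and on the zero-surgery level $B^-$ the map is, up to homotopy, the identity. For an L-space knot the relevant small complexes have one-dimensional homology in each grading, so by the rigidity argument of Dai--Manolescu \cite{DaiManolescu} (used already in Corollary~\ref{cor:(2,3,7)}) the chain-level involution on $\mathbb{X}^-(-n)$ is determined up to homotopy by its action on homology. On homology, $\iota_*$ must send the generator $x_s$ coming from the $A^-_{ns}$-side to the generator $x'_s$ coming from the $A^-_{-ns}$-side, since conjugation swaps these two legs of the mapping cone; hence $\iota_* = J_0$. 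One must also check $\iota_*$ is $\F[U]$-equivariant and grading preserving, which is automatic, and that $J_0$ is consistent with the relations in $M(\vec V,n)$, which is exactly the well-definedness already noted before the proposition.

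The main obstacle I expect is the bookkeeping needed to show that $\iota_*$ genuinely swaps the two families $\{x_s\}$ and $\{x'_s\}$ rather than, say, fixing $x_0$ and acting trivially; this requires knowing that conjugation interchanges the $s$ and $-s$ vertices of the truncated mapping cone and identifying which homology classes survive from which $A^-$. Concretely I would truncate the mapping cone to a finite model (legitimate since $V_s=0$ for $s\geq g(K)$), use that the conjugation symmetry of $\CFKi(K)$ for an L-space knot is the standard ``flip'' interchanging $C\{i\le 0, j\le s\}$ with $C\{i\le -s, j\le 0\}$, and then read off that the induced map on the homology of the mapping cone exchanges the generator routed through $v$ on $A^-_{ns}$ with the generator routed through $h$ on $A^-_{-ns}$ — i.e.\ exactly $x_s \leftrightarrow x'_s$. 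The remaining verifications (grading of $x_s$, the two displayed relations, the grading shift by $d(L(n,1),[0])$) are straightforward once the module structure on $H_*(\mathbb{X}^-(-n))$ is written out.
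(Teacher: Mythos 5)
Your computation of the underlying $\F[U]$-module $M(\vec V,n)$ from the mapping cone is in the same spirit as the paper's Lemmas~\ref{lem:homologyfirst} and \ref{lem:cokerD}, though the paper is careful to work with the $\HF^-$ mapping cone of Manolescu--Ozsv\'ath over completed coefficients and then pass back to $\F[U]$ via Lemma~\ref{lem:completions}, and it pins down the absolute grading by identifying inclusions of $B^-_s$ with cobordism maps (Proposition~\ref{prop:mappingconecobordism}); you cannot simply quote the $\HF^+$ version of \cite{OSinteger}.

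The substantial gap is in how you propose to determine $\iota_*$. Your ``key input'' --- that the conjugation involution on the surgery mapping cone is, up to homotopy, a flip sending $A^-_s$ to $A^-_{-s}$ --- is precisely the content of an involutive surgery formula. At the time of this paper no such formula had been established beyond the large-surgery case treated in \cite{HM:involutive}, so you would be assuming a theorem rather than proving one. The rigidity argument of Dai--Manolescu does not repair this: it promotes a homology-level description of $\iota_*$ to a chain-level one, but it cannot tell you what $\iota_*$ does on homology to begin with, which is exactly the missing step. (A smaller point: in the paper's computation the generators $x_s$ and $x'_s$ live in the $B^-$ legs of the cone, not the $A^-$ legs, which matters for reading off the symmetry correctly.)

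The paper avoids the involutive surgery formula entirely. Its actual mechanism is the $\spinc$-conjugation symmetry of cobordism maps: combining \cite[Theorem 3.6]{OS:four} with Zemke's functoriality result \cite[Theorem A]{Zemkegraphcobord} gives
\[
F^-_{W_{-n}(K),\t} \;=\; \iota_* \circ F^-_{W_{-n}(K),\overline{\t}},
\]
since $\iota_*$ is trivial on $\HF^-(S^3)$. Proposition~\ref{prop:mappingconecobordism} identifies $x'_s$ with $F^-_{W_{-n}(K),\t_s}(1)$ and $x_s$ with $F^-_{W_{-n}(K),\overline{\t_s}}(1)$ (the images of $1\in H_*(B^-_s)$ and $1\in H_*(B^-_{-s-n})$ in $\coker D^N_{n,*}$), so the displayed identity immediately forces $\iota_*(x_s)=x'_s$, i.e.\ $\iota_* = J_0$. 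If you want to salvage your route, you would need to actually prove (not assume) the statement about the flip on the mapping cone; replacing it with the cobordism-map symmetry, as the paper does, is cleaner and uses only tools that were available.
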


Using Proposition \ref{prop:surgeryinvolution}, we prove the following stronger version of Theorem \ref{thm:-nLspace}.

%\begingroup
%\def\thetheorem{\ref{thm:-nLspace}}
\begin{theorem}\label{thm:-nLspacefull}
Let $K$ be an L-space knot and $n$ a positive integer. Let $M(\vec{V}, n)[d(L(n, 1), [0])]$ be the $\F[U]$-module described above endowed with the involution $J_0$. Then
\[ \HFI^-(S^3_{-n}(K), [0]) \cong \ker(1+J_0)[-1] \oplus \coker(1+J_0).  \]
Under this isomorphism, the action of $Q$ on $\HFI^-(S^3_{-n}(K), [0])$ is given by the quotient map
\[ \ker(1+J_0) \rightarrow \ker(1+J_0)/\im(1+J_0) \subseteq \coker(1+J_0). \]
The involutive correction terms are
\[ \du(S^3_{-n}(K), [0]) = d(S^3_{-n}(K), [0]) =  -d(L(n,1), [0]) \]
and
\[ \dl(S^3_{-n}(K), [0]) = -2V_0(K) - d(L(n,1), [0]). \]
Finally, $\HF_\conn(S^3_{-n}(K), [0]) = \mathcal{T}_{(-d(L(n, 1), [0])-1)}(V_0(K))$.
\end{theorem}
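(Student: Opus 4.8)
The plan is to promote the homology-level identification of Proposition~\ref{prop:surgeryinvolution} to a concrete chain-level $\iota$-complex and then to extract $\HFIm$, the correction terms, and $\HF_\conn$ directly from the mapping cone defining $\CFIm$. \emph{First,} observe that Proposition~\ref{prop:surgeryinvolution} identifies $(\HFm(S^3_{-n}(K),[0]),\iota_*)$, up to the grading shift by $d(L(n,1),[0])$, with the $\F[U]$-module $M(\vec V,n)$ equipped with the flip $J_0$ interchanging $x_s$ and $x'_s$. Since $M(\vec V,n)$ is the module of a graded root, I would invoke the rigidity of graded-root complexes proved by Dai--Manolescu \cite{DaiManolescu}---the same input used in the proof of Corollary~\ref{cor:(2,3,7)}---which guarantees that on a free $\F[U]$-complex whose homology is the module of a graded root, the chain-homotopy class of a grading-preserving involution is determined by its induced map on homology. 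Therefore $\cC := (\CFm(S^3_{-n}(K),[0]),\iota)$ is homotopy equivalent, as an $\iota$-complex, to the standard graded-root model $(\mathbb H^-(M(\vec V,n))[d(L(n,1),[0])],\,J_0)$, on which $J_0$ exchanges the two legs of every $\mathbb Y$. (Alternatively one can build this model by hand from the surgery mapping cone of \cite{OSinteger}, using that $\CFKi(K)$ is a staircase because $K$ is an L-space knot \cite{OSlens}, together with the $\iota$-equivariant surgery formula underlying Proposition~\ref{prop:surgeryinvolution}.) The identity $d(S^3_{-n}(K),[0]) = -d(L(n,1),[0])$ is then immediate from Proposition~\ref{prop:surgeryinvolution} and our grading conventions; write $J$ for $J_0$ in what follows.

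\emph{Next,} I would compute $\HFIm$. Since $\CFIm = \Cone(\CFm \xrightarrow{Q(1+\iota)} Q\cdot\CFm[-1])$, the long exact sequence of the mapping cone (equivalently, the exact triangle \eqref{pic:exact}) yields a short exact sequence of $\F[U]$-modules
\[ 0 \longrightarrow \coker(1+J) \longrightarrow \HFIm(S^3_{-n}(K),[0]) \longrightarrow \ker(1+J)[-1] \longrightarrow 0, \]
after identifying $Q\HFm$ with $\HFm$ and accounting for the grading shifts in the definition of $\CFI$. Working with the graded-root model (on which $J$ is the honest leg-interchanging involution), I would exhibit an explicit splitting, giving $\HFIm \cong \ker(1+J)[-1] \oplus \coker(1+J)$. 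The $Q$-action then follows from the structure of the cone: $Q$ annihilates the $Q\cdot\CFm[-1]$ summand since $Q^2 = 0$, while a cycle representing $\bar x \in \ker(1+J)$ is sent by $Q$ into the $\coker(1+J)$ summand, where it equals the image of $\bar x$ under the natural map $\ker(1+J) \to \ker(1+J)/\im(1+J) \hookrightarrow \coker(1+J)$; the factorization uses $\im(1+J) \subseteq \ker(1+J)$, which holds because $(1+J)^2 = 0$ over $\F$.

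\emph{Then,} a direct inspection of $M(\vec V,n)$ with the flip gives $\ker(1+J) = \im(1+J) \oplus \F[U]\langle U^{V_0(K)}x_0\rangle$, whose free summand is the spine of the graded root and begins $2V_0(K)$ gradings below $x_0$, and $\coker(1+J) = M(\vec V,n)/\im(1+J)$, whose free summand is the image of $\F[U]\langle x_0\rangle$. The free tower of $\coker(1+J)$ detects both $d$ and $\du$ (its sufficiently high $U$-powers lie in $\im Q$), giving $\du(S^3_{-n}(K),[0]) = d(S^3_{-n}(K),[0]) = -d(L(n,1),[0])$, while the free tower of $\ker(1+J)[-1]$ is disjoint from $\im Q$ and lies $2V_0(K)$ lower, giving $\dl(S^3_{-n}(K),[0]) = -2V_0(K) - d(L(n,1),[0])$. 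Finally, for the connected homology I would identify $\cC_\conn$ with the bottom $\mathbb Y$ of the root---the subcomplex with homology $\F[U]\langle x_0\rangle \oplus (\F[U]/U^{V_0(K)})\langle x_0 + x'_0\rangle$ and $J$ still the flip, which is Dai--Manolescu's preferred monotone subroot of $M(\vec V,n)$---by verifying that a maximal self-local equivalence of the model may be chosen with exactly this image; this realizes the computation as a special case of Theorem~\ref{thm:gradedroots}. It follows that $\HF_\conn(S^3_{-n}(K),[0]) = H_\red(\cC_\conn) = (\F[U]/U^{V_0(K)})[-1]$, and tracking gradings places its top element in degree $-d(L(n,1),[0]) - 1$, i.e.\ $\HF_\conn(S^3_{-n}(K),[0]) = \cT_{-d(L(n,1),[0])-1}(V_0(K))$.

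The main obstacle is the passage from Proposition~\ref{prop:surgeryinvolution} to a concrete chain model: that proposition records only $\iota_*$, whereas $\HFIm$ and the splitting above depend on the homotopy class of $\iota$ on the chain level, and the short exact sequence need not split for a general $\iota$-complex. Everything rests on the fact that $\HFm(S^3_{-n}(K),[0])$ is genuinely the module of a graded root---this is precisely where the L-space knot hypothesis is used---so that the Dai--Manolescu rigidity applies and pins the $\iota$-complex down. Once the model is in hand, the remaining steps are routine bookkeeping with the mapping cone.
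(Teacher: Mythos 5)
Your argument is correct, but it takes a somewhat different and more hands-on route than the paper's.  For the splitting of the short exact sequence $0 \to \coker(1+\iota_*) \to \HFIm \to \ker(1+\iota_*)[-1] \to 0$, you propose to pass to a concrete chain model (via Dai--Manolescu rigidity) and exhibit an explicit splitting; the paper avoids the chain model entirely at this stage, observing that $M(\vec V,n)$, and hence $\HFm(S^3_{-n}(K),[0])$, is supported in a single parity of gradings, so that $\ker(1+\iota_*)[-1]$ and $\coker(1+\iota_*)$ live in opposite parities and the extension splits automatically.  This means the first three claims (the module isomorphism, the $Q$-action, and $\du = d$) follow in the paper purely from Proposition~\ref{prop:surgeryinvolution} plus parity, with no appeal to a specific chain representative.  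For the connected homology, your proposal is to verify directly that a maximal self-local equivalence of the graded-root model has image the bottom $\mathbb{Y}$, i.e.\ to run the argument that later appears in the proof of Theorem~\ref{thm:gradedroots}.  The paper instead uses a squeeze: Dai--Manolescu's local-equivalence results give the upper bound (a submodule of $\mathcal T_{-d(L(n,1),[0])-1}(V_0)$), and the lower bound comes for free from $d - \dl = 2V_0$ together with Proposition~\ref{prop:formalomegabounds}, which forces $\omega \geq V_0$ and hence equality.  Your route is essentially a direct computation and would need the extra combinatorial verification from Section~\ref{subsec:gradedroots} (note also that $S^3_{-n}(K)$ is not an integer homology sphere for $n>1$, so you would be invoking the underlying proposition on monotone graded roots rather than Theorem~\ref{thm:gradedroots} as stated); the paper's route is shorter and self-contained at this point in the text.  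Both strategies are valid; the paper's buys efficiency by trading chain-level work for the parity observation and the $\omega$-bound, while yours is closer to a from-first-principles verification.
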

%\addtocounter{theorem}{-1}
%\endgroup

\begin{proof}

Consider the long exact sequence from \eqref{pic:exact}, which yields a short exact sequence:
\[
0 \to \coker(1 + \iota_*) \to \HFI^-(S^3_{-n}(K),[0]) \to \ker(1+\iota_*)[-1] \to 0,
\] 
where $\iota_*$ is the induced action of $\iota$ on $\HF^-(S^3_{-n}(K),[0])$.  By Proposition~\ref{prop:surgeryinvolution}, $\HF^-(S^3_{-n}(K),[0])$ is all supported in the same grading mod 2, so $\ker(1+\iota_*)[-1]$ and $\coker(1+\iota_*)$ are in different parities of gradings, and the short exact sequence splits.  From this, it follows that $Q$ acts on $\HFI^-$ as claimed.  By the splitting of $\HFI^-$ established, $\dl(S^3_{-n}(K),[0])$ is two more than the maximal degree of a $U$-nontorsion element in $\ker(1 + \iota_*)$, while $\du(S^3_{-n}(K),[0])$ is two more than the maximal degree of a $U$-nontorsion element of $\coker(1 + \iota_*)$.   The involutive correction terms are then easily computed using Proposition~\ref{prop:surgeryinvolution}.  For an alternate and more powerful proof of the involutive Floer homology and correction terms, see the proof of Theorems 1.1 and 1.2 of \cite{DaiManolescu} (combined with Proposition~\ref{prop:surgeryinvolution}).

By Proposition~\ref{prop:surgeryinvolution}, $\HFm(S^3_{-n}(K),[0]) \cong M(\vec{V},n)$ up to a grading shift, where $\iota_*$ on $\HFm(S^3_{-n}(K),[0])$ coincides with the involution $J_0$ on $M(\vec{V},n)$.    Note that the module $M(\vec{V}, n)$ together with $J_0$ has the structure of a symmetric graded root together with its reflection involution, as in \cite[Section 4]{DaiManolescu}.  There exists a $J_0$-equivariant change of basis so that $M(\vec{V},n)$ contains a summand equivariantly isomorphic to $(M(\vec{V}',n),J_0)$, where $\vec{V}' = (V_0,0,0,\ldots)$.  By \cite[Lemma 4.1, Lemma 4.4, and Theorem 6.1]{DaiManolescu}, we see that $\CF^-(S^3_{-n}(K), [0])$ is locally equivalent to a complex with homology isomorphic to $M(\vec{V}',n)$, and thus $\HF_\conn(S^3_{-n}(K),[0])$ is isomorphic to a submodule of $\mathcal{T}_{(-d(L(n, 1), [0])-1)}(V_0(K))$. (The term $-d(L(n,1),[0])$ is from the grading shift between $\HFm(S^3_{-n}(K),[0])$ and $M(\vec{V},n)$, and the $-1$ is from the shift in the definition of $\HF_\conn$.)  Since $d(S^3_{-n}(K), [0])=\dl(S^3_{-n}(K), [0]) + 2V_0$, Proposition~\ref{prop:formalomegabounds} implies that $\HF_\conn(S^3_{-n}(K), [0])$ is exactly $\mathcal{T}_{(-d(L(n, 1), [0])-1)}(V_0(K))$.
\end{proof}

%In the case where $K$ is not an L-space knot, we obtain a bound on $\dl(S^3_{-n}(K), \s_0)$.

%\surgerygeneral*

The proof of Proposition \ref{prop:surgeryinvolution} will rely on the integer surgery mapping cone formula of \cite{OSinteger}. In particular, we will use the fact that the maps induced by the 2-handle cobordism from $S^3$ to $S^3_{-n}(K)$ can be computed from the mapping cone, as follows. Throughout, we assume that $n$ is a positive integer.

We will define a map
\[ D_n \co \bigoplus_{s \in \Z} A^-_s \rightarrow \bigoplus_{s \in \Z} B^-_s, \]
where each $B^-_s$ is a copy of $B^- = CF^-(S^3)$.  The map $D_n$ sends $a_s \in A^-_s$ to
\[ D_n(a_s) = v_s(a_s) + h_s(a_s) \]
where
\[ v_s \co A^-_s \rightarrow B^-_s \qquad \textup{ and } \qquad h_s \co A^-_s \rightarrow B^-_{s-n}.\] 
Let $\mathfrak{C} = \operatorname{Cone}(D_n)$, the mapping cone of $D_n$. The absolute grading on the complex $\mathfrak{C}$ is determined by setting the grading of $1 \in H_*(B^-_{-s})$ to be $-2-d(L(n,1),[n-s])$ for $0 \leq s \leq n-1$.

Let $W_{-n}(K)$ be the four-manifold obtained by attaching a 2-handle to $S^3$ along $K \subset S^3 = \d B^4$ with framing $-n$.  Choose a Seifert surface $F$ for $K$ and let $\hat{F}$ denote the capped off surface in $W_{-n}(K)$.    

Fix $N$ a positive integer. Let 
\[ \A^N=\bigoplus_{-N \leq s \leq N} A^-_s  \qquad \textup{ and } \qquad \B^N=\bigoplus_{-N - n \leq s \leq N} B^-_s.\]
Write $\mathfrak{C}^N$ for the subcomplex of the mapping cone $\mathfrak{C}$ given by 
$$
\mathfrak{C}^N = \A^N \oplus \B^N.   
$$
For notational purposes, we denote the differential by $D^N_n$.  Unlike $\mathfrak{C}$, $\mathfrak{C}^N$ is finitely generated over $\F[U]$.  The following allows us to compute the cobordism map associated to $W_{-n}(K)$ in terms of the subcomplex $\mathfrak{C}^N$.   
 
\begin{proposition}\label{prop:mappingconecobordism}
Fix a knot $K \subset S^3$, a positive integer $n > 0$, and a $\spinc$ structure $\mathfrak{t}$ on $W_{-n}(K)$.  There exists $N \gg 0$ such that the following holds.  
\begin{enumerate}
\item \label{it:truncation} $H_*(\mathfrak{C}^N) \cong \HF^-(S^3_{-n}(K))$ as absolutely-graded $\F[U]$-modules.

\item\label{it:cobordisminclusion} If $|\langle c_1(\mathfrak{t}), \hat{F} \rangle|  \leq 2N - n$, then $F^-_{W_{-n}(K), \mathfrak{t}}(1)$ can be computed via the inclusion of $B^-_s$ into $\mathfrak{C}^N$, where $\langle c_1(\mathfrak{t}), \hat{F} \rangle + n = 2s$.   

\item \label{it:cobordismtruncation} If $ |\langle c_1(\mathfrak{t}), \hat{F} \rangle| > 2N - n$, then $F^-_{W_{-n}(K),\mathfrak{t}}(1)$ is the unique non-zero element of $\HF^-(S^3_{-n}(K), \mathfrak{t}|_{S^3_{-n}(K)})$ in degree $\frac{c_1(\mathfrak{t})^2-7}{4}$.  

\end{enumerate}
\end{proposition}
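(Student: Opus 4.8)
The plan is to deduce all three parts from the integer surgery mapping cone formula of \cite{OSinteger}, applied to $-n$-surgery, together with a standard truncation argument. First I would recall that Ozsv\'ath--Szab\'o's theorem identifies $\HFm(S^3_{-n}(K), \mathfrak{t})$ with the homology of $\mathfrak{C}$ restricted to the $\spinc$ structures that extend $\mathfrak{t}$; for a fixed $\mathfrak{t}$ on $W_{-n}(K)$ this amounts to a single strand of the mapping cone indexed by $s \equiv [i] \pmod n$ for the appropriate $[i]$. The key structural input is that for $|s|$ sufficiently large, the map $v_s$ (for $s \gg 0$) and $h_s$ (for $s \ll 0$) are chain homotopy equivalences: this follows because $A^-_s \simeq B^-$ once $s \geq g(K)$, since then $A^-_s = C\{i \leq 0 \text{ and } j \leq s\} = C\{i\leq 0\}$, and symmetrically for $h_s$ with $s \leq -g(K)$. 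Consequently, choosing $N \geq g(K)$ large enough (depending also on $n$ and on the bound $2N - n$ we want in parts (2) and (3)), the quotient complex $\mathfrak{C}/\mathfrak{C}^N$ is acyclic: the pieces $A^-_s$ with $|s| > N$ cancel in homology against adjacent $B^-$ pieces via these equivalences. This gives part (\ref{it:truncation}), that $H_*(\mathfrak{C}^N) \cong \HFm(S^3_{-n}(K))$ as absolutely graded $\F[U]$-modules, the grading normalization being the one built into the definition of $\mathfrak{C}$ via $d(L(n,1),[n-s])$.

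For part (\ref{it:cobordisminclusion}), I would invoke the fact, also from \cite{OSinteger} (and its refinement in the surgery formula literature), that the cobordism map $F^-_{W_{-n}(K),\mathfrak{t}}(1)$ is realized, under the identification of part (1), by the inclusion of the summand $B^-_s$ into the mapping cone, where $s$ is determined by $\langle c_1(\mathfrak{t}),\hat F\rangle + n = 2s$. The only thing to check is that when $|\langle c_1(\mathfrak t),\hat F\rangle| \leq 2N - n$, the relevant index $s$ satisfies $-N - n \leq s \leq N$, so that $B^-_s$ is genuinely a summand of $\B^N \subset \mathfrak{C}^N$; this is exactly the arithmetic translation of the hypothesis, so the inclusion into $\mathfrak{C}$ and the inclusion into $\mathfrak{C}^N$ agree in homology.

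For part (\ref{it:cobordismtruncation}), when $|\langle c_1(\mathfrak{t}),\hat F\rangle| > 2N - n$ the corresponding $B^-_s$ lies outside the truncation, so the inclusion-into-the-cone description is not directly available in $\mathfrak{C}^N$; instead I would argue that in this regime the cobordism map is ``large'' in the sense that its image lies in the image of every power of $U$ from the tower $\HFinf$, hence it is the canonical lowest-grading nontorsion-compatible generator. Concretely, for $|s|$ large the composite $v_s$ or $h_s$ being an equivalence forces $F^-_{W_{-n}(K),\mathfrak t}$ to factor through an isomorphism onto the bottom of the $U$-tower, and the degree of this element is pinned down by the general grading formula $\gr = (c_1(\mathfrak t)^2 - 2\chi(W) - 3\sigma(W))/4$, which for the two-handle cobordism $W_{-n}(K)$ evaluates to $(c_1(\mathfrak t)^2 - 7)/4$ after plugging in $\chi = 1$, $\sigma = -1$. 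Uniqueness of a nonzero element in that degree follows from the parity/structure of $\HFm(S^3_{-n}(K))$.

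The main obstacle I anticipate is part (\ref{it:cobordisminclusion}): making precise the claim that the closed-four-manifold/cobordism map is \emph{exactly} the inclusion $B^-_s \hookrightarrow \mathfrak{C}$ (rather than merely chain homotopic to something with the same image) requires carefully citing the version of the surgery formula that tracks cobordism maps---this is where one must be careful to use the formulation in \cite{OSinteger} that identifies the inclusion of $\mathfrak B$-summands with $F^-$ of the two-handle cobordism, and to match the $\spinc$-to-$[i]$ dictionary with the genus bound on $\langle c_1(\mathfrak t),\hat F\rangle$ that governs which strand of the cone is relevant. Once that bookkeeping is set up, parts (1) and (3) are routine truncation and grading computations.
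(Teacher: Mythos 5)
The central gap in your proposal is that you treat the Ozsv\'ath--Szab\'o integer surgery mapping cone formula of \cite{OSinteger} as if it applies directly to the minus flavor $\HF^-$ over $\F[U]$. The paper explicitly flags that this does \emph{not} work: the $\HF^+$ version of the mapping cone formula does not have a naive $\HF^-$-analogue over $\F[U]$, because (among other things) the cobordism maps appearing in the cone can be nonzero for infinitely many $\spinc$ structures in the minus flavor, so the direct sum $\bigoplus_s B^-_s$ is the wrong target. The correct framework is Manolescu--Ozsv\'ath's surgery formula with completed $\F[[U]]$ coefficients and \emph{direct products}, i.e.\ the cone $\mathfrak{C}^-(p,K)$ of $\psi_p \co \prod_s \Ab_s \to \prod_s \Bb_s$ computes $\HFb(S^3_p(K))$, and their Theorem 14.3 identifies the inclusion of $\Bb_s$ with the completed cobordism map $\Fb_{W_p(K),\t_s}$. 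To get statements about $\HF^-$ over $\F[U]$ out of this, the paper then needs a further (short but genuinely necessary) lemma comparing $\F[[U]]$-coefficient and $\F[U]$-coefficient homology and maps (Lemma~\ref{lem:completions}, using flatness of $\F[[U]]$ over $\F[U]$ and finite generation). Your proposal skips this entire layer, and simply asserts that the $B^-_s$-inclusion-equals-cobordism-map identity holds in the uncompleted $\F[U]$-setting, pointing toward \cite{OSinteger}. You do flag the need to ``carefully cite the version of the surgery formula that tracks cobordism maps,'' but the issue is not just a citation issue: the statement you want is literally false for the uncompleted cone unless one first truncates to $\mathfrak{C}^N$ and proves the comparison to the completed cone, which is what makes the truncation step load-bearing rather than ``routine.''

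A secondary, smaller point: your argument for part (\ref{it:cobordismtruncation}) tries to deduce the conclusion from the mapping cone via the equivalences $v_s, h_s$ at large $|s|$, but the paper's argument is independent of the surgery formula altogether: $W_{-n}(K)$ is a negative-definite two-handle cobordism, so $F^-_{W_{-n}(K),\t}$ induces an isomorphism on $\HFinf$, hence $F^-_{W_{-n}(K),\t}(1) \neq 0$; uniqueness in the claimed degree then follows because $\HFinf(S^3_{-n}(K),\s) \cong \F[U,U^{-1}]$ forces $\HF^-(S^3_{-n}(K),\s)$ to have at most one nonzero element in each sufficiently negative degree, and $|\langle c_1(\t), \hat{F}\rangle| \gg 0$ pushes the degree $\frac{c_1(\t)^2-7}{4}$ into that range. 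This is cleaner and avoids any appeal to the surgery formula for $\spinc$ structures outside the truncation window. Your grading computation $(c_1^2 - 2\chi - 3\sigma)/4$ with the cobordism convention $\chi=1$, $\sigma=-1$ does correctly recover $(c_1^2 - 7)/4$ after accounting for $\gr(1) = -2$, so that part is fine.

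For part (\ref{it:truncation}), your outline of the truncation (cancelling $A^-_s$ for $|s| \geq g(K)$ against adjacent $B^-$'s via $v_s$ resp.\ $h_s$ being equivalences) is the correct mechanism in spirit and is what underlies \cite[Lemma 4.4, Lemma 10.1]{MOlink}, but again the statement of quasi-isomorphism is established for the completed cone $\mathfrak{C}^-(-n,K)$; the passage to the $\F[U]$-statement $H_*(\mathfrak{C}^N) \cong \HF^-(S^3_{-n}(K))$ needs the completion-comparison lemma and the observation that $\mathfrak{C}^N$ is finitely generated over $\F[U]$. Your proposal would need to be supplemented by this comparison to be a complete proof.
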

While statements of the form found in Proposition~\ref{prop:mappingconecobordism} are more standard for $\HF^+$, we find that for our use here, the version for $\HF^-$ is more suitable for our computations. We postpone the proof of Proposition \ref{prop:mappingconecobordism} to Section \ref{sec:surgeries-technical}.  Assuming this proposition, we now proceed towards the proof of Proposition~\ref{prop:surgeryinvolution}.

When $K$ is an L-space knot, it follows from \cite[Theorem 4.4]{OSknots} and \cite[Theorem 9.6]{OS3manifolds2} that $H_*(A^-_s) \cong \F[U]$ for all $s$. Furthermore, we will show that if $K$ is an L-space knot, we have the following identification.

\begin{lemma}\label{lem:homologyfirst}
If $K$ is an L-space knot, then
\[ \HF^-(S^3_{-n}(K)) \cong \coker D^N_{n,*}, \]
where 
\[ D^N_{n,*} \co  \bigoplus_{-N \leq s \leq N} H_*(A^-_s) \rightarrow \bigoplus_{-N - n \leq s \leq N} H_*(B^-_s). \]
\end{lemma}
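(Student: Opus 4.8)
The plan is to deduce the lemma from Proposition \ref{prop:mappingconecobordism}\eqref{it:truncation}, which identifies $\HF^-(S^3_{-n}(K))$ with $H_*(\mathfrak{C}^N)$ for $N \gg 0$, by showing that for an L-space knot the homology of the mapping cone $\mathfrak{C}^N = \Cone(D^N_n)$ can be computed directly from the induced map $D^N_{n,*}$ on homology. First I would recall that for an L-space knot, $H_*(A^-_s) \cong \F[U]$ for all $s$ and $H_*(B^-_s) \cong \F[U]$ for all $s$, so all the homology groups in sight are free over $\F[U]$ with no torsion. The key point is that the long exact sequence of the mapping cone,
\[
\cdots \to H_*(\A^N) \xrightarrow{D^N_{n,*}} H_*(\B^N) \to H_*(\mathfrak{C}^N) \to H_{*-1}(\A^N) \xrightarrow{D^N_{n,*}} \cdots,
\]
breaks into short exact sequences $0 \to \coker D^N_{n,*} \to H_*(\mathfrak{C}^N) \to \ker D^N_{n,*} \to 0$, and I want to argue that $\ker D^N_{n,*} = 0$, so that $H_*(\mathfrak{C}^N) \cong \coker D^N_{n,*}$.

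To see that $D^N_{n,*}$ is injective, I would use the structure of the maps $v_{s,*}, h_{s,*} \co \F[U] \to \F[U]$: each is multiplication by $U^{V_s}$ (resp.\ $U^{H_s}$) up to units, and in particular each $v_{s,*}$ and $h_{s,*}$ is nonzero, hence injective since $\F[U]$ is a domain. The map $D^N_{n,*}$ sends a generator of $H_*(A^-_s)$ to $v_{s,*} + h_{s,*}$ landing in $H_*(B^-_s) \oplus H_*(B^-_{s-n})$. The standard ``staircase''/zig-zag argument for truncated surgery mapping cones then applies: an element of $\ker D^N_{n,*}$ would be a finite $\F[U]$-linear combination $\sum_s c_s a_s$ of generators whose image vanishes; examining the top index $s$ appearing with $c_s \neq 0$, the component of $D^N_{n,*}(\sum c_s a_s)$ in $H_*(B^-_s)$ (for $s$ maximal, and using that $B^-_{s+n}$ is not hit from above in the truncated complex, or the analogous boundary bookkeeping at the truncation edges) is $v_{s,*}(c_s a_s) \neq 0$, a contradiction. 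One has to be a little careful at the two ends of the truncation — the indices $s = N$ and $s = -N-n$ — but there the relevant $v$ or $h$ map is still injective, so the same extremal-index argument closes the gap; the truncation is precisely designed (via the choice of $N$ large relative to the genus, so $V_s = 0 = V_{-s}$ for $|s|$ large) so that no information is lost.

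The main obstacle I anticipate is the bookkeeping at the truncation boundary: verifying that after cutting down from the infinite mapping cone $\mathfrak{C}$ to $\mathfrak{C}^N$ the map $D^N_{n,*}$ remains injective, i.e.\ that truncation does not create a spurious kernel class supported near $s = \pm N$. This is handled by the same mechanism that makes Proposition \ref{prop:mappingconecobordism}\eqref{it:truncation} true — for $N$ large enough all the $A^-_s$ with $|s| \geq N$ have $v_{s,*}$ or $h_{s,*}$ an isomorphism, so those summands are ``canceled'' against their $B^-$ targets and contribute nothing to homology — so I would either invoke that directly or re-run the elementary cancellation argument. Once injectivity of $D^N_{n,*}$ is established, combining with Proposition \ref{prop:mappingconecobordism}\eqref{it:truncation} gives $\HF^-(S^3_{-n}(K)) \cong H_*(\mathfrak{C}^N) \cong \coker D^N_{n,*}$, which is the claim.
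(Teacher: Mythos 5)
Your proposal is correct and follows the same route as the paper: identify $\HF^-(S^3_{-n}(K))$ with $H_*(\mathfrak{C}^N)$ via Proposition \ref{prop:mappingconecobordism}\eqref{it:truncation}, pass to the short exact sequence from the mapping cone, and show $D^N_{n,*}$ is injective using that each $v_{s,*}$ and $h_{s,*}$ is multiplication by a power of $U$ for an L-space knot. The paper leaves the injectivity step as ``straightforward to verify''; your extremal-index argument (look at the largest $s$ with $c_s\neq 0$, observe the component in $H_*(B^-_s)$ is $v_{s,*}(c_s a_s)\neq 0$) is precisely the right way to fill that in, and it handles the truncation edges automatically since the only potential second contributor $h_{s+n,*}(c_{s+n}a_{s+n})$ vanishes either because $c_{s+n}=0$ by maximality or because $A^-_{s+n}$ falls outside the truncated range.
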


\begin{proof}
We have an exact triangle
\[
\begin{tikzcd}[column sep=small]
H_*(\A^N) \arrow{rr}{D^N_{n,*}} & & H_*(\B^N) \arrow{dl} \\
& H_*(\mathfrak{C}^N) \arrow{ul} & 
\end{tikzcd}
\]
which yields the short exact sequence
\[ 0 \rightarrow \coker D^N_{n,*} \rightarrow H_*(\mathfrak{C}^N) \rightarrow \ker D^N_{n,*} \rightarrow 0 . \]
Since $K$ is an L-space knot, $H_*(A^-_s) \cong \F[U]$ for all $s$. It is straightforward to verify in this case that $v_s$ and $h_s$ are injective on homology for each $s$, which can then be used to prove that $D^N_{n,*}$ is injective. Hence $H_*(\mathfrak{C}^N) \cong \coker D^N_{n,*}$. The result now follows from Proposition \ref{prop:mappingconecobordism}.
\end{proof}

\begin{lemma}\label{lem:cokerD}
Let $n$ be a positive integer. Then
\[ \coker D^N_{n,*} \cong M(\vec{V}, n)[d(L(n, 1), [0])].\]
\end{lemma}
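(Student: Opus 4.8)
The plan is to compute the cokernel of $D^N_{n,*}$ directly from the structure of the maps $v_{s,*}$ and $h_{s,*}$ on homology, for $N \gg 0$. Since $K$ is an L-space knot, $H_*(A^-_s) \cong \F[U]$ and $H_*(B^-_s) \cong \F[U]$ for every $s$, so each $v_{s,*}\co H_*(A^-_s) \to H_*(B^-_s)$ and $h_{s,*} \co H_*(A^-_s) \to H_*(B^-_{s-n})$ is, after fixing generators, multiplication by $U^{V_s}$ and $U^{H_s}$ respectively (up to grading shifts, which I will track carefully using the convention that $\gr(1 \in H_*(B^-_{-s})) = -2 - d(L(n,1),[n-s])$). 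Recall $H_s = V_s + s$ and $V_s = H_{-s}$, and $V_s = 0$ once $s \geq g(K)$; choosing $N$ large compared to $g(K)$ makes the truncated complex behave like the full mapping cone in the relevant range.

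First I would write $D^N_{n,*}$ as a matrix over $\F[U]$ with respect to the obvious bases: the generator of $H_*(A^-_s)$ maps to $U^{V_s}$ times the generator of $H_*(B^-_s)$ plus $U^{H_s}$ times the generator of $H_*(B^-_{s-n})$. Because the index shift in the $h$-component is by $n$, this matrix decomposes along residue classes of $s$ modulo $n$ only in the "$h$"-pattern, but the two staircases (from $v$ and from $h$) interlock into a single connected zig-zag on each coset; the key point is that for $s \gg 0$ we have $V_s = 0$ so $v_{s,*}$ is an isomorphism, and for $s \ll 0$ we have $H_s = V_{-s}+ s = 0$... wait, rather $H_s = 0$ for $s$ sufficiently negative since $H_s = V_s + s$ and $V_s$ stabilizes — so $h_{s,*}$ becomes an isomorphism there. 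Thus at the two ends of the truncated range the map $D^N_{n,*}$ is surjective onto the corresponding $B^-_s$ summands, and one can inductively eliminate generators from both ends, leaving a cokernel generated by the "turning points." Carrying out this Gaussian elimination identifies the surviving generators with the $x_s, x_s'$ of $M(\vec V, n)$: the pair at index $s$ survives precisely when $V_{ns} \neq 0$, and the relations $U^{V_{ns}}x_s = U^{V_{ns}}x_s'$ and $U^{V_{ns}}x_s = U^{V_{ns} + \frac{ns(s+1)}{2}}x_0$ fall out of the composite relations obtained by pushing through the eliminated $B^-$ summands.

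The grading bookkeeping is where I would be most careful: I need to check that the generator labelled $x_s$ indeed lands in grading $-ns(s+1) - 2$ after applying the shift by $d(L(n,1),[0])$. This follows from the grading of $1 \in H_*(A^-_{ns})$ relative to $1 \in H_*(B^-_0)$ together with the convention fixing the gradings of the $B^-_{-s}$ summands via the lens space correction terms; one uses the standard computation of $d(L(n,1),[i])$ and the fact that $v_{ns}$ drops grading by $2V_{ns}$ while the internal grading on $A^-_{ns}$ for an L-space knot is pinned down by $H_*(A^-_{ns}) \cong \F[U]$ with top degree determined by $V_{ns}$ (see \cite[Section 2.2]{NiWu}). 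The main obstacle is organizing the simultaneous elimination from both ends so that the interlocking $v$/$h$ staircase collapses cleanly onto the graded root $M(\vec V, n)$ without double-counting or missing relations; once the combinatorics of the zig-zag is set up correctly (it is essentially the standard identification of a mapping cone of staircases with a graded root, as in \cite{OSknots, NiWu}), the rest is routine. I would conclude by noting that the involution $J_0$ on $M(\vec V, n)$ corresponds, under this identification, to the symmetry $s \leftrightarrow s$, $v \leftrightarrow h$ of the mapping cone coming from the conjugation symmetry of $\CFKi(K)$ — but that compatibility is the content of Proposition \ref{prop:surgeryinvolution} rather than of this lemma, so here I only need the $\F[U]$-module isomorphism.
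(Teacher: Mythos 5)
Your proposal follows essentially the same approach as the paper's proof: compute $D^N_{n,*}$ explicitly using that every $H_*(A^-_s)$ and $H_*(B^-_s)$ is $\F[U]$ for an L-space knot, express the components as multiplication by $U^{V_s}$ and $U^{H_s}$, invoke $V_{-s}=H_s=V_s+s$, and then eliminate generators to read off the cokernel as $M(\vec V,n)$. The only substantive difference is that you describe the elimination at a high level (``Gaussian elimination from both ends, routine once the zig-zag combinatorics is set up''), whereas the paper carries it out concretely via the recursion $D_*(y_s)=U^{V_s}x'_s+U^{V_s+s}x'_{s-1}$ (and the analogue for $y_{-s}$), which when applied repeatedly gives $U^{V_s}x'_s+U^{V_s+\frac{s(s+1)}{2}}x'_0\in\operatorname{Im}D_*$ and its mirror, and then observes these together with $D_*(y_0)$ span $\operatorname{Im}D_*$; to turn your sketch into a proof you would just need to write down that recursion.
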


\begin{proof}
We consider the case $n=1$; the argument readily generalizes to arbitrary $n$. Write $D_*$ for $D^N_{n,*}$. For $0 \leq s \leq N$, identify $x_s$ with $1 \in H_*(B^-_{-s-1}) \cong \F[U]$ and $x'_s$ with $1 \in H_*(B^-_{s}) \cong \F[U]$. For $-N \leq s \leq N$, let $y_s$ denote $1 \in H_*(A^-_{s}) \cong \F[U]$.  Note that $U^{V_0}x_0$ and $U^{V_0}x'_0$ are identified in $\coker D_*$ via
\begin{equation}\label{eq:Dy0}
D_*(y_0) = U^{V_0}x_0 + U^{V_0}x'_0. 
\end{equation}
For $1 \leq s \leq N$, using that $V_{-s} = H_s = V_s + s$, we have 
\begin{align}
\label{eq:Dys} &D_*(y_s) = U^{V_s} x'_s + U^{H_s} x'_{s-1} = U^{V_s} x'_s + U^{V_s + s} x'_{s-1} \\
\label{eq:Dy-s} &D_*(y_{-s}) = U^{V_{-s}} x_{s-1} + U^{H_{-s}} x_{s} = U^{V_s+s} x_{s-1} + U^{V_s} x_{s}.
\end{align}
Because the $y_i$'s form a basis for $\bigoplus_{-N\leq s \leq N} H_*(A^-_s)$,
\eqref{eq:Dy0}, \eqref{eq:Dys}, and \eqref{eq:Dy-s} span the image of $D_*$.  

Note that applying \eqref{eq:Dys} recursively yields that for $1 \leq s \leq N$, 
\[U^{V_s + \frac{s(s+1)}{2}} x'_0 + U^{V_s} x'_s \in \im D_*,\] 
and similarly for $x_s$.  From this, it is straightforward to see that the image of $D_*$ is spanned by 
\[
U^{V_0}x_0 + U^{V_0}x'_0, \ \ \ U^{V_s} x_s + U^{V_s + \frac{s(s+1)}{2}} x_0, \ \ \ U^{V_s} x'_s + U^{V_s + \frac{s(s+1)}{2}} x'_0,
\]
It follows that 
\begin{equation}\label{eqn:HFMV} 
\coker D^N_{n,*}  \cong M(\vec{V}, n)[d(L(n, 1), [0])],
\end{equation}
as desired.
\begin{comment}
In general, $U^{V_{s}}x'_s$ and $U^{V_{s}+\frac{s(s+1)}{2}}x'_0$ are identified via 
\begin{align*}
D_*&(y_{s}+U^{V_{s}-V_{s-1}+s}y_{s-1}+U^{V_{s}-V_{s-2}+s+s-1}y_{s-2}+ \dots + U^{V_s-V_1+\frac{s(s+1)}{2}}y_1) \\
&= (U^{V_s}x'_s+U^{V_s+s}x'_{s-1}) +(U^{V_s+s}x'_{s-1}+U^{V_s+s+s-1}x'_{s-2})+(U^{V_s+s+s-1}x'_{s-2}+U^{V_s+s+s-1+s-2}x'_{s-3}) \\
& \quad  +\dots +(U^{V_s+\frac{s(s+1)}{2}-1}x'_1+U^{V_s+\frac{s(s+1)}{2}}x'_0) \\
&= U^{V_s}x'_s + U^{V_s+\frac{s(s+1)}{2}}x'_0.
\end{align*}
% Suggestions for better formatting?
Similarly, $U^{V_{s}}x_s$ and $U^{V_{s}+\frac{s(s+1)}{2}}x_0$ are also identified in $\coker D_*$ via  
\[ D_*(y_{-s}+U^{V_{s}-V_{s-1}+s}y_{1-s}+U^{V_{s}-V_{s-2}+s+s-1}y_{2-s}+ \dots + U^{V_s-V_1+\frac{s(s+1)}{2}}y_1) = U^{V_{s}}x_s+U^{V_s+\frac{s(s+1)}{2}} x_0. \]
It is straightforward to verify that these relations generate all of the relations in $\coker D_*$. Indeed, the elements whose images under $D_*$ we are considering form a basis for the free $\F[U]$-module $\bigoplus_{-N\leq s \leq N} H_*(A^-_s)$. Thus, 
\[ \coker D_* \cong M(\vec{V}, 1), \]
as desired. The case $n>1$ is follows similarly, where we identify $x_s$ with $1 \in H_*(B^-_{-s-n})$ and $x'_s$ with $1 \in H_*(B^-_{s})$. An analogous argument yields
\begin{equation}\label{eqn:HFMV} 
\coker D^N_{n,*}  \cong M(\vec{V}, n)[d(L(n, 1), [0])],
\end{equation}
as desired.
\end{comment} 
\end{proof}

\begin{proof}[Proof of Proposition \ref{prop:surgeryinvolution}]
Let $K$ be an L-space knot. By Lemmas \ref{lem:homologyfirst} and \ref{lem:cokerD}, we have
\[ \HF^-(S^3_{-n}(K), [0]) \cong \coker D^N_{n,*} \cong M(\vec{V}, n)[d(L(n, 1), [0])].\]

We now prove that under this isomorphism, the induced action $\iota_*$ on $\HF^-(S^3_{-n}(K), [0])$ is identified with the involution $J_0$ on $M(\vec{V}, n)$. By \cite[Theorem 3.6]{OS:four} and \cite[Theorem A]{Zemkegraphcobord}, we have that
\begin{equation} \label{eqn:conjugationinv}
F^-_{W_{-n}(K), \t} = \iota_* \circ F^-_{W_{-n}(K), \overline{\t}},
\end{equation}
where we have used the fact that the involution $\iota_*$ on $\HF^-(S^3)$ is the identity. 

Let $N \gg 0$ be as in Proposition~\ref{prop:mappingconecobordism}.  For $0 \leq s \leq N$, let $x'_s$ denote the image of $F^-_{W_{-n}(K), \t}(1)$, where $\langle c_1(\mathfrak{t}), \hat{F} \rangle = 2s - n$.  By Proposition \ref{prop:mappingconecobordism}~\eqref{it:cobordisminclusion}, this is identified with the image of $1 \in H_*(B_s)$ under the inclusion of $B_s$ into $\mathfrak{C}^N$.  Likewise, let $x_s = F^-_{W_{-n}(K), \overline{\t}}(1)$, which is the image of $1 \in H_*(B_{-s-n})$ under the inclusion map to $H_*(\mathfrak{C}^N)$, where $\langle c_1(\overline{\mathfrak{t}}), \hat{F} \rangle = -2s - n$.  By Equation \eqref{eqn:conjugationinv} it follows that $\iota_*$ interchanges $x_s$ and $x'_s$ for $0 \leq s \leq N$.  Since these elements generate $\coker D^N_{n,*}$ over $\F[U]$, we see that $\iota_*$ agrees with $J_0$ under the isomorphism in Equation \eqref{eqn:HFMV}.    
\end{proof}

\begin{proof}[Proof of Theorem~\ref{thm:-ngeneral}]
The proof is nearly identical to the proof of Proposition \ref{prop:surgeryinvolution} and Theorem \ref{thm:-nLspace}. The only difference is that now $\coker D^N_{n,*}$ is a submodule of, rather than isomorphic to, $H_*(\mathfrak{C}^N)$. The generator $U^{V_0}x_0$ is still fixed by $\iota$. The grading of $U^{V_0}x_0$ is $-2V_0-d(L(n, 1), [0])-2$, and the result follows.
\end{proof}

\subsection{The proof of Proposition~\ref{prop:mappingconecobordism}}\label{sec:surgeries-technical}
In this subsection we prove Proposition~\ref{prop:mappingconecobordism}.    

First, we must recall a bit more about the mapping cone formula for $\HF^-$.  It turns out that a direct analogue of the usual mapping cone formula from \cite{OSinteger} does not work for the minus flavor of Heegaard Floer homology.  In \cite{MOlink}, Manolescu and Ozsv\'ath instead establish an analogue with $\F[[U]]$ coefficients. One needs to work with $\F[[U]]$ coefficients becaues the cobordism maps appearing in the mapping cone formula can be non-zero for infinitely many $\spinc$ structures in the minus flavor. We summarize the relevant construction and facts below, before using this to prove Proposition~\ref{prop:mappingconecobordism}.  Throughout, we will write a term in bold to indicate tensoring a finitely-generated module over $\F[U]$ with the power series ring $\F[[U]]$, e.g. $\Ab_s = A^-_s \otimes \F[[U]]$.  Technically, the relevant objects are no longer chain complexes, because they are not a direct sum of their grading homogeneous pieces, but the homological constructions we will use (e.g., grading homogeneous elements, mapping cones, etc.) still make sense.

Fix a non-zero integer $p$.  We will be interested in the cone of 
\[
\psi_p \co \prod_{s \in \Z} \Ab_s \to \prod_{s \in \Z} \Bb_s, \quad (s,a) \mapsto (s,\vb(a)) + (s+p, \hb(a)).
\]    
Note that in this cone, which we denote by $\mathfrak{C}^-(p,K)$, we use direct products instead of direct sums.  (We do not use the bold notation, because this does not arise as the $U$-completion of a finitely-generated module over $\F[U]$.)  As in the uncompleted case, $\Bb_s = \mathbf{CF}^-(S^3)$.  We have the following:
\begin{theorem}{\cite[Theorem 1.1]{MOlink}}\label{thm:MOsurgery}
Fix a non-zero integer $p$ and a knot $K$ in $S^3$.  There is an isomorphism of relatively-graded $\F[U]$-modules 
\begin{equation}\label{eq:completed-cone}
H_*(\mathfrak{C}^-(p,K)) \cong \HFb(S^3_p(K)).
\end{equation}
\end{theorem}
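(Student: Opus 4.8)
The statement is \cite[Theorem 1.1]{MOlink}; the plan is to reconstruct it from two ingredients --- a large-surgery computation and a mapping-cone (surgery exact triangle) argument --- packaged so that the resulting infinite cone converges over $\F[[U]]$.

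First I would set up the large-surgery model: for $N \gg 0$ and each $s$ with $|s|$ small relative to $N$, there is a quasi-isomorphism $\mathbf{CF}^-(S^3_N(K), [s]) \simeq \Ab_s$ compatible with the $U$-action, which is the $\F[[U]]$-coefficient version of Ozsv\'ath--Szab\'o's large surgery theorem \cite[Theorem 4.4]{OSknots}; one also checks, as in \cite{OSinteger}, that under this identification the relevant $2$-handle cobordism maps are homotopic to the completed maps $\vb$ and $\hb$ built from $\CFKi(K)$. Passing from $\F[U]$ to $\F[[U]]$ costs nothing in this step, since the complexes involved are finitely generated over $\F[U]$.

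Next I would obtain the mapping-cone description by the standard surgery-exact-triangle argument, exactly as Ozsv\'ath--Szab\'o do for $\HFhat$ and $\HFp$ in \cite{OSinteger}: this expresses $\mathbf{CF}^-(S^3_p(K))$ at the chain level as a mapping cone assembled from copies of $\Ab_s$ and $\Bb_s = \mathbf{CF}^-(S^3)$ with connecting maps $\vb$ and $\hb$, i.e.\ exactly $\mathfrak{C}^-(p,K) = \Cone(\psi_p)$ with $\psi_p(s,a) = (s,\vb(a)) + (s+p,\hb(a))$. The new feature in the minus flavor --- precisely the point that forces direct products and $\F[[U]]$-coefficients in the definition of $\psi_p$ --- is that the cobordism maps $F^-_{W,\t}$ need not vanish for all but finitely many $\spinc$ structures $\t$, so the naive direct-sum cone is not even well-defined and one must complete at $U$ to guarantee convergence of the infinite sum. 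Taking homology then gives the asserted isomorphism $H_*(\mathfrak{C}^-(p,K)) \cong \HFb(S^3_p(K))$ of relatively graded $\F[U]$-modules; only a relative grading is claimed here, since pinning down the absolute grading needs the separate input of the correction terms of $L(p,1)$, supplied later when required.

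The genuinely hard part is this second ingredient: chain-level naturality of the surgery exact triangle over $\F[[U]]$, the identification of its connecting maps with the knot-complex maps $\vb$ and $\hb$, and the bookkeeping needed to make the infinite cone converge. In \cite{MOlink} this is carried out not by iterating exact triangles directly but by first proving a general surgery formula for links via a hypercube of chain complexes and then specializing to a one-component link; reproving that machinery is where essentially all the difficulty lies, and it is why in practice one simply quotes the theorem.
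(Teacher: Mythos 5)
The paper does not prove this statement but simply cites it as \cite[Theorem~1.1]{MOlink}, which is exactly what you conclude; your sketch of the strategy in that reference (large-surgery identification of $\mathbf{CF}^-(S^3_N(K),[s])$ with $\Ab_s$, assembly into a mapping cone of $\vb$ and $\hb$ via the link-surgery hypercube formalism, and the necessity of direct products over $\F[[U]]$ because the $F^-_{W,\t}$ need not vanish for all but finitely many $\t$) is an accurate summary of the ideas there. Your remark that the theorem gives only a relative grading, with the absolute grading pinned down later via the $d$-invariants of $L(p,1)$, also matches how the paper handles this in the proof of Proposition~\ref{prop:mappingconecobordism}.
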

This is a direct analogue of the mapping cone formula for computing $\HF^+$ of surgery, given by Ozsv\'ath-Szab\'o \cite{OSinteger}.  While the statement in Theorem~\ref{thm:MOsurgery} is relatively-graded, we will upgrade this to respect the absolute gradings for negative surgeries in the proof of Proposition~\ref{prop:mappingconecobordism}.    

Recall that we would also like to compute the cobordism map from $\HF^-(S^3)$ to $\HF^-(S^3_p(K))$ associated to the trace of the surgery, $W_p(K)$.  We index the $\spinc$ structures on $W_p(K)$ by
\begin{equation}\label{eq:cobordism-spinc}
\langle c_1(\t_s), [\hat{F}] \rangle + p = 2s, \quad s \in \Z.
\end{equation}
Here, $\hat{F}$ denotes a capped off Seifert surface for $K$ in $W_p(K)$.  Consequently, the above equation implicitly depends on a choice of orientation of $\hat{F}$, hence $K$.  This will not matter for us, since we are only interested in comparing the different $F^-_{W_p(K),\t_s}$ to each other, and determining the isomorphism type of $(\HF^-(S^3_{p}(K)),\iota_*)$, not identifying any of these as canonical elements. 

% \textcolor{red}{I think this is the best we could do without more work anyway, since we are not asserting that the isomorphism from the mapping cone formula to the surgery is canonical/natural.}

With completed coefficients, the cobordism map $\Fb_{W_p(K),\t_s}$ can be computed as follows.
\begin{theorem}{\cite[Theorem 14.3]{MOlink}}\label{thm:MOcobordism}
Under an isomorphism between $H_*(\mathfrak{C}^-(p, K))$ and $\HFb(S^3_p(K))$ as in \eqref{eq:completed-cone}, the inclusion from $\Bb_s$ into $\mathfrak{C}^-(p,K)$ induces $\Fb_{W_p(K),\t_s}$ on homology for each $s$.
\end{theorem}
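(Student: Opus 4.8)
The plan is to track the inclusion $\Bb_s \hookrightarrow \mathfrak{C}^-(p,K)$ through the chain-level identification that underlies Theorem~\ref{thm:MOsurgery} and to recognize its effect on homology as a $2$-handle cobordism map. The key point is that the isomorphism \eqref{eq:completed-cone} is not abstract: following \cite{OSinteger} in the $\HF^+$ setting and \cite{MOlink} in the completed $\HF^-$ setting, it is produced by a zig-zag of homotopy equivalences built by iterating the surgery exact triangle and then truncating, and every map occurring in this zig-zag is, after the standard identifications, a sum of holomorphic-triangle-counting maps, i.e.\ a sum of $2$-handle cobordism maps $\sum_{\t} \Fb_{W,\t}$. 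Restricting the zig-zag to the sub-object $\prod_{t} \Bb_t$ and then to the single factor $\Bb_s$ therefore extracts a cobordism map on homology, and the whole argument consists of showing that this extracted map is exactly $\Fb_{W_p(K),\t_s}$.

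Concretely, I would argue by induction on $|p|$. For $|p| \gg 0$ all but finitely many of the maps $\vb_s$, $\hb_s$ are quasi-isomorphisms, the truncated cone computes $\HFb(S^3_p(K))$ via the large-surgery formula of \cite{OSknots} (completed as in \cite{MOlink}), and there $\Fb_{W_p(K),\t_s}(1)$ is computed directly: in a doubly-pointed Heegaard triple subordinate to $K$ the relevant triangle count lands, under the large-surgery identification, in the $\Bb_s$ summand and realizes the claimed inclusion. This base case is also where the normalization of the index $s$ in \eqref{eq:cobordism-spinc} is pinned down, by a first-Chern-class and relative-grading computation for the triangle map. For general $p$ I would pass from $p$ to $p + (\text{large})$ using the surgery exact triangle for $K$ together with a meridian $\mu$: the connecting map of that triangle is, after the standard identifications, a sum of $2$-handle cobordism maps, and the triangle is realized at the chain level by a quasi-isomorphism onto the mapping cone of this triangle-counting map. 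Hence each equivalence in the zig-zag intertwines a $\Bb_t$-inclusion with a cobordism map, and the composition law for cobordism maps (\cite{OS:four}) identifies the composite with $\Fb_{W_p(K),\t_s}$; the $\spinc$ labels match, and any extra $2$-handle appearing in the composite cobordism acts as the identity on the relevant summands by the blow-up formula, which is what forces the label to be exactly $\t_s$.

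The substance here is bookkeeping rather than new geometry, and I expect three places to be delicate. First, matching $\spinc$ structures: one must verify that the $s$-th factor corresponds to precisely $\t_s$ as normalized in \eqref{eq:cobordism-spinc} and not to a shifted index, which is a Chern-class computation that has to be propagated through the induction. Second, the completed, infinitely-generated setting: the triangle-counting maps are nonzero for infinitely many $\spinc$ structures, so one must work over $\F[[U]]$ with direct products rather than sums and check that all the homotopy equivalences and chain homotopies are $U$-adically continuous, so that the zig-zag survives completion; this is exactly why \cite{MOlink} is needed in place of \cite{OSinteger}. Third, absolute gradings: the statement is wanted with its absolute $\Q$-grading for the application in Proposition~\ref{prop:mappingconecobordism}, so the grading normalization on $\mathfrak{C}$ recorded just above that proposition must be shown compatible with the degree shift of the cobordism maps, again carried along the induction. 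With these checks in hand the proof is a long but formal diagram chase; for the complete details we follow \cite[Section 14]{MOlink}.
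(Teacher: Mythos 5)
The paper does not prove this statement at all: it is imported verbatim as \cite[Theorem 14.3]{MOlink}, and the citation in the theorem header is the entirety of the justification given. There is therefore no internal argument to compare your write-up against; for the purposes of this paper the ``proof'' is the reference.

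Judged on its own terms, what you have written is a proof plan rather than a proof, and it ends by deferring the complete details back to \cite[Section 14]{MOlink} --- which is circular if the aim was to supply an argument independent of the citation. That said, the plan is a faithful reconstruction of how the result is actually established: the quasi-isomorphism of Theorem~\ref{thm:MOsurgery} is assembled from maps that are themselves sums of holomorphic-triangle (i.e.\ $2$-handle cobordism) maps arising in the surgery exact sequence for $K$ together with a meridian, so tracking the factor $\Bb_s$ through the identification and invoking the composition law of \cite{OS:four} exhibits the inclusion as $\Fb_{W_p(K),\t_s}$; and you correctly isolate the three genuinely delicate points (pinning down the $\spinc$ label $\t_s$ against the normalization \eqref{eq:cobordism-spinc} via a Chern-class computation, working with direct products over $\F[[U]]$ so the zig-zag survives completion, and the absolute-grading normalization, though note the gradings are only needed later, in the proof of Proposition~\ref{prop:mappingconecobordism}, not in the statement above). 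None of the steps you outline would fail, but each of them is precisely where the work lives, and none is carried out. If you intend this as a standalone proof you would need to execute the base case (the large-surgery triangle count and the $\spinc$ bookkeeping) and at least one inductive step explicitly; if you intend it as a justification for using the theorem in this paper, the citation alone is the appropriate and complete answer.
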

   
In order to establish Proposition~\ref{prop:mappingconecobordism}, we must first translate the above results from $\HFb$ back to $\HF^-$.  We will need the following technical lemma about completions with respect to $U$.  This is well-known, but we include it for completeness.  (See \cite[Section 2]{MOlink} for a similar discussion.)

\begin{lemma}\label{lem:completions}
Let $C$ and $D$ be finitely-generated, free, relatively $\Z$-graded complexes over $\F[U]$, where $U$ has degree $-2$.  Suppose that $\eta \co \mathbf{C} \to \mathbf{D}$ is a grading homogeneous chain map.  Then, there exists a chain map $f \co C \to D$ such that $\mathbf{f}_* = \eta_*$.  Further, $f_*$ is unique.  Finally, $\eta_*$ is an isomorphism of $\F[[U]]$-modules if and only if $f_*$ is an isomorphism of $\F[U]$-modules.     
\end{lemma}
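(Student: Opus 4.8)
The plan is to exploit the fact that for finitely-generated free $\F[U]$-complexes, the completion functor $-\otimes_{\F[U]}\F[[U]]$ loses very little information in each grading. First I would recall that since $C$ and $D$ are finitely generated and free over $\F[U]$, in each fixed grading $C$ (resp.\ $D$) has a finite $\F$-basis, and $\mathbf{C}$ (resp.\ $\mathbf{D}$) is the $U$-adic completion, which is again free over $\F[[U]]$ on the same generators. Writing $\eta$ in terms of such generators, each matrix entry of $\eta$ is an element of $\F[[U]]$, i.e.\ a (possibly infinite) power series $\sum_{k\geq 0} U^k$; but since $\eta$ is grading homogeneous of some fixed degree $d$, the entry from a generator of degree $a$ to a generator of degree $b$ is homogeneous of degree $b-a-d$ under $U$, hence is either $0$ or a single monomial $U^{(a-b+d)/2}$ (with the convention that it is $0$ unless this exponent is a non-negative integer). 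In particular every entry of $\eta$ already lies in $\F[U]$, so $\eta$ restricts to a chain map $f\co C\to D$ with $\mathbf{f}=\eta$; the chain-map condition $\eta\partial=\partial\eta$ descends verbatim since all the differentials are defined over $\F[U]$. This immediately gives $\mathbf{f}_*=\eta_*$.

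For uniqueness of $f_*$ on homology, I would argue that if $f,f'\co C\to D$ are two chain maps with $\mathbf{f}\simeq\mathbf{f}'$ over $\F[[U]]$, then in fact $f_*=f'_*$ on $H_*(C)$. The cleanest route: a chain homotopy $\mathbf{H}\co\mathbf{C}\to\mathbf{D}$ with $\mathbf{f}-\mathbf{f}'=\partial\mathbf{H}+\mathbf{H}\partial$ can, by the same homogeneity-of-entries argument applied degree by degree (noting $\mathbf{H}$ need not be taken homogeneous, but $f-f'$ is, so one may split $\mathbf{H}$ into its homogeneous components and keep only the one producing the correct degree), be replaced by a genuine homotopy $H\co C\to D$ defined over $\F[U]$; hence $f\simeq f'$ and $f_*=f'_*$. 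So the induced map on homology depends only on $\eta_*$.

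Finally, for the statement about isomorphisms: if $f_*\co H_*(C)\to H_*(D)$ is an isomorphism of $\F[U]$-modules, then since $H_*(C)$ and $H_*(D)$ are finitely generated over $\F[U]$ (as $C,D$ are finitely generated free, hence their homology is finitely generated), and completion is exact on finitely generated modules over the Noetherian ring $\F[U]$ (indeed $\F[[U]]$ is flat over $\F[U]$), we get $H_*(\mathbf{C})\cong H_*(C)\otimes\F[[U]]$ and likewise for $D$, so $\eta_*=f_*\otimes\id$ is an isomorphism. Conversely, if $\eta_*$ is an isomorphism over $\F[[U]]$, then $f_*\otimes\id_{\F[[U]]}$ is an isomorphism; by faithful flatness of $\F[[U]]$ over $\F[U]$ (both are local with the same residue field $\F$, and $\F[[U]]$ is $U$-adically complete), a map of finitely generated $\F[U]$-modules that becomes an isomorphism after $\otimes\F[[U]]$ is already an isomorphism, so $f_*$ is an isomorphism. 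The main obstacle is simply being careful that the maps and homotopies really can be taken over $\F[U]$ rather than $\F[[U]]$; once the grading-homogeneity forces each matrix entry to be a monomial, everything else is routine homological algebra and the standard flatness/faithful-flatness properties of the completion $\F[U]\hookrightarrow\F[[U]]$.
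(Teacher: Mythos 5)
Your existence argument is correct and takes a slightly more hands-on route than the paper's: you argue at the chain level that grading-homogeneity forces every matrix entry of $\eta$ to be a single monomial (so lies in $\F[U]$), whereas the paper works at the homology level, invoking flatness of $\F[[U]]$ over $\F[U]$ together with the structure theorem to write $H_*(\mathbf{C}) \cong H_*(C) \otimes \F[[U]] \cong (\F[[U]])^b \oplus \bigoplus_i \F[U]/(U^{m_i})$ and observing the $U$-torsion part is unchanged by completion. Both work; your version makes the existence of $f$ more transparent (you literally get $\mathbf{f} = \eta$, not merely $\mathbf{f}_* = \eta_*$), at the cost of a small bookkeeping slip (the exponent should be $(b-a-d)/2$, not $(a-b+d)/2$, though this does not affect the argument). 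Your final paragraph on the isomorphism equivalence via faithful flatness of $\F[[U]]$ over $\F[U]$ is the standard argument and matches the paper.

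There is, however, a genuine gap in your uniqueness step. You begin from a chain homotopy $\mathbf{H}$ with $\mathbf{f} - \mathbf{f}' = \partial \mathbf{H} + \mathbf{H} \partial$ and show how to descend it to an honest homotopy over $\F[U]$. The descent argument is fine, but you never establish that such an $\mathbf{H}$ exists: the hypothesis is only $\mathbf{f}_* = \mathbf{f}'_*$, and equality of induced maps on homology does not in general give a chain homotopy (the passage from the derived category to maps on homology loses information). Fortunately the fix is both shorter and already in your toolkit: by flatness, $\mathbf{f}_* = f_* \otimes \mathrm{id}_{\F[[U]]}$ under the canonical identification $H_*(\mathbf{C}) \cong H_*(C) \otimes \F[[U]]$, so $\mathbf{f}_* = \mathbf{f}'_*$ gives $(f_* - f'_*) \otimes \mathrm{id}_{\F[[U]]} = 0$; since $\F[[U]]$ is faithfully flat over $\F[U]$ and $H_*(C)$ is finitely generated, this forces $f_* = f'_*$. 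Equivalently, the natural map $H_*(C) \to H_*(C) \otimes \F[[U]]$ is injective, and $f_*$ is its restriction of $\eta_*$. You deploy faithful flatness in the third part of your argument; it is exactly the tool needed here too, and using it directly avoids any appeal to chain homotopies.
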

\begin{proof}
First, we recall that $\F[[U]]$ is flat over $\F[U]$, and hence there is a canonical isomorphism $H_*(\mathbf{C}) \cong H_*(C) \otimes \F[[U]]$ and similarly for $D$.  Consequently, by the assumptions on $C$,  
$$
H_*(C) \cong (\F[U])^b \oplus \bigoplus^k_{i=1} \F[U]/(U^{m_i}),
$$
and similarly for $D$.  We then see that 
$$
H_*(C) \otimes \F[[U]] \cong (\F[[U]])^b \oplus \bigoplus^k_{i=1} \F[U]/(U^{m_i})$$
and the result is easily deduced.  
\end{proof}

With this, we are ready to prove the claimed technical proposition.  
\begin{proof}[Proof of Proposition~\ref{prop:mappingconecobordism}]
Fix a knot $K \subset S^3$, a positive integer $n$, and let $N \gg 0$.
\begin{enumerate}
\item \label{it:truncation} By \cite[Lemma 4.4 and Lemma 10.1]{MOlink}, we see that when $n > 0$, the inclusion of $\Cb^N$ into $\mathfrak{C}^-(-n,K)$ is a quasi-isomorphism.  (This is the analogue of the standard truncation for the mapping cone for $\HF^+$ found in \cite[Lemma 4.3]{OSinteger}.)  Combining this with Theorem~\ref{thm:MOsurgery}, we see that $H_*(\Cb^N) \cong \HFb(S^3_{-n}(K))$.  Lemma~\ref{lem:completions} now implies that $H_*(\mathfrak{C}^N) \cong \HF^-(S^3_{-n}(K))$ as relatively-graded $\F[U]$-modules.  We will return to the absolute grading at the end of the proof.    

\item \label{it:cobordisminclusion} Note that if $|s| \leq N$, then the inclusion of $\Bb_s$ into $\mathfrak{C}^-(-n,K)$ factors through $\Cb^N$.  Therefore, if $|\langle c_1(\t_s), [\hat{F}] \rangle| \leq 2N - n$, then $\Fb_{W_{-n}(K),\t_s}$ is computed from the inclusion of $\Bb_s$ into $\Cb^N$.  The result again follows from Lemma~\ref{lem:completions}.  
 
\item \label{it:cobordismtruncation} The last part of the proposition does not need the mapping cone formula.  Since $W_{-n}(K)$ is a negative-definite cobordism, the induced cobordism map must localize to be an isomorphism on $\HF^\infty$, and thus $F^-_{W_{-n}(K),\t}(1)$ is non-zero.  It thus remains to show that this element is unique in its grading.  Since $\HF^\infty(S^3_{-n}(K),\s) \cong \F[U,U^{-1}]$ for any $\s$, there is at most one non-zero element in each sufficiently negative degree of $\HF^-(S^3_{-n}(K),\s)$.  The result now follows, since $|\langle c_1(\t) , [\hat{F}] \rangle| \gg 0$ implies that 
\[
\gr(F^-_{W_{-n}(K),\t}(1)) = \frac{c_1(\t)^2 - 7}{4} \ll 0.
\]  
\end{enumerate}
To complete the proof, it suffices to show that the isomorphism $H_*(\mathfrak{C}^N) \cong \HF^-(S^3_{-n}(K))$ respects the absolute grading.  Note that the absolute grading on $\HF^-(S^3_{-n}(K))$ is determined by that of one element in $\HF^-(S^3_{-n}(K),\s)$ for each $\s$.  By choosing $N > n$, for each $\spinc$ structure $\s$ on $S^3_{-n}(K)$, there exists $s$ so that $|\langle c_1(\t_s), [\hat{F}] \rangle| \leq 2N - n$ and $\t_s \mid_{S^3_{-n}(K)} = \s$.  The definition of the absolute grading on $\mathfrak{C}^N$ is chosen such that the generator of $H_*(B_s) \cong \F[U]$ is precisely in degree $\gr(F^-_{W_{-n}(K),\t_s}(1))$.  The absolute gradings must therefore agree since $F^-_{W_{-n}(K),\t_s}(1) \neq 0$ by the negative-definiteness of $W_{-n}(K)$.
\end{proof}

\section{Computations of $\HF_\conn$}\label{sec:computations}

\subsection{Connected homology of connected sums of surgeries on $L$-space knots} \label{subsec:connectsums} In this section we prove Theorem \ref{thm:connect-sums}, computing the connected Heegaard Floer homology of a connected sum of $-1$-surgeries on L-space knots. We begin by introducing some notation. For $n \in \Z_{>0}$, let $\cC_n$ denote the chain complex appearing in Figure \ref{fig:yncomplex} with involution given by reflection across the centerline of the page. More precisely, $\cC_n$ is generated over $\F[U]$ by $x_1, x_2, y$ such that $\gr(x_1)=\gr(x_2)=-2$, $\gr(y) = -2n-1$, $\del(y) = U^n(x_1+x_2)$, and the involution $\iota$ interchanges $x_1$ and $x_2$ and fixes $y$.

Recall that in Proposition \ref{prop:surgeryinvolution}, we showed that if $K$ is an L-space knot, $\HFm(S^3_{-1}(K)) \cong M(\vec{V}, 1)$, where $\vec{V}=(V_0(K), V_1(K), \ldots)$. We then used the results of \cite{DaiManolescu} to compute $\HFIm(S^3_{-1}(K))$. Going further, Dai and Manolescu showed that if an $\iota$-complex $\cC$ has homology $M(\vec{V},1)$ with involution given by the reflection $J_0$ as described in Section~\ref{sec:surgeries}, then $\cC$ is locally equivalent to $\cC_{V_0}$ \cite[Theorem 6.1]{DaiManolescu}. Therefore, if $K_1, \cdots, K_m$ are L-space knots, in order to compute $\HF_{\conn}(\#_{i=1}^{m} S^3_{-1}(K_i))$, it suffices to compute the connected homology of $\cC_{V_0(K_1)} \otimes \cdots \otimes \cC_{V_0(K_m)}$.  Recall that if $K_i$ is the unknot, then $S^3_{-1}(K_i)$ is trivial in homology cobordism, so it suffices to assume that $K_i$ is a non-trivial L-space knot, or equivalently, $V_i > 0$ for all $i$.  Theorem \ref{thm:connect-sums} is thus a corollary of the following.

\begin{figure}
\begin{center}
\includegraphics{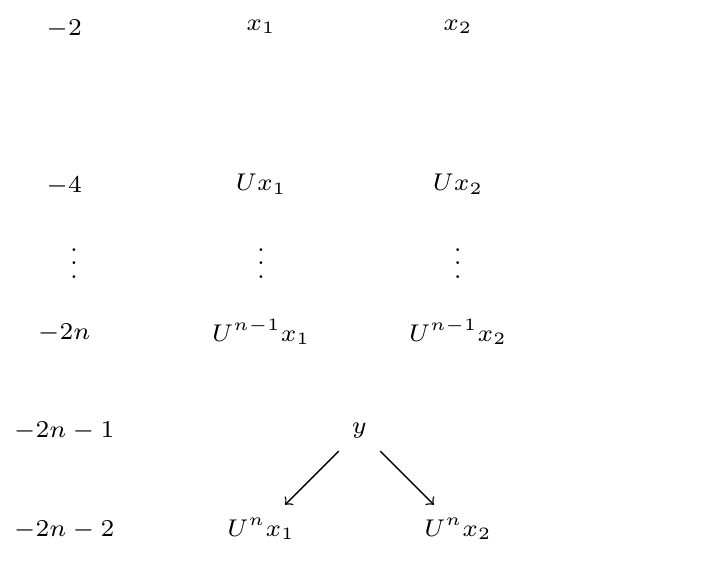}
\end{center}
\caption{The highest gradings in the complex $\cC_n$. The complex continues with further $U$-powers of the three generators. The involution is reflection across the centerline of the page.}
\label{fig:yncomplex}
\end{figure}

\begin{proposition} \label{prop:connected-comp} Let $n_1\geq n_2 \geq \cdots \geq n_m > 0$. Let $a_i = \sum_{j=1}^{i} n_j$, with $a_0=0$. Then $H_{\conn}(\cC_{n_1} \otimes \cC_{n_2} \otimes \cdots \otimes \cC_{n_m})$ is given by
\[
\bigoplus_{i=1}^{m} \mathcal{T}_{i-2-2a_{i-1}}(n_i).
\]
\end{proposition}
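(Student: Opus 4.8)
The plan is to compute a maximal self-local equivalence of the tensor product $\cD := \cC_{n_1} \otimes \cdots \otimes \cC_{n_m}$ explicitly, show its image has the stated homology, and then take the $U$-torsion submodule with the grading shift from the definition of $H_\conn$. The key structural fact I would use is that each $\cC_{n_i}$ has $\iota$ acting by the graded-root reflection $J_0$ on $M(\vec{V},1)$ with $V_0 = n_i$, so that by the Dai--Manolescu analysis each $\cC_{n_i}$ is locally equivalent to the ``standard'' piece with homology $\F[U] \oplus \F[U]/U^{n_i}$ concentrated in the appropriate gradings; but for the connected homology I cannot just pass to local equivalence representatives of the factors, since $H_\conn$ is not multiplicative. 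Instead I would work with the actual chain complexes $\cC_{n_i}$ and build a self-local equivalence of the tensor product by hand.

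First I would set up notation for a basis of $\cD$: it is generated over $\F[U]$ by the $2^m$ products $w_{\epsilon} = z_1^{\epsilon_1} \otimes \cdots \otimes z_m^{\epsilon_m}$ where $z_i^{0} = x_1^{(i)}$, $z_i^{1} = x_2^{(i)}$ (these span the cycles in degree $-2m$ after the grading shift from the Künneth formula), together with the ``$y$'' generators $y_i \otimes (\text{products of } x\text{'s})$ and higher mixed terms $y_i \otimes y_j \otimes \cdots$. The differential sends $y_i \otimes (\cdots)$ to $U^{n_i}(x_1^{(i)} + x_2^{(i)}) \otimes (\cdots)$. So on homology, after killing boundaries, one gets $M := H_*(\cD)$ with a free part and a $U$-torsion part; the torsion part is exactly the iterated Künneth/Tor computation carried out in the proof of Proposition~\ref{prop:omegasum}, with torsion orders among the $\min$'s of the $n_i$'s, i.e., since $n_1 \geq \cdots \geq n_m$, the torsion summands have orders $n_1, n_2, \ldots$ in a pattern one can read off combinatorially. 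Second, I would write down an explicit self-local equivalence $f \colon \cD \to \cD$: the natural candidate is built from the maps $\cC_{n_i} \to \cC_{n_i}$ that collapse onto the ``standard subroot'' (send $x_1^{(i)}, x_2^{(i)}$ to a single class and kill the excess), tensored together, and I would argue this $f$ is maximal by a kernel-dimension count — any self-local equivalence must act as an isomorphism on $U^{-1}H_*$, hence must be injective on the free part, and the combinatorics of the $\min$-relations forces the kernel to be no larger than $\ker f$ on the torsion. Third, once $f$ is maximal, $\im f$ is computed directly, its homology is identified with $\bigoplus_{i=1}^m \F[U]/U^{n_i}$ plus a free tower, and the grading bookkeeping — tracking the degree $-2$ generators, the $-2n_i-1$ generators, the Künneth shift $[-2]$ per tensor factor, and finally the $[-1]$ shift in $H_\conn$ — produces the bottom gradings $i - 2 - 2a_{i-1}$.

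The main obstacle I expect is \emph{maximality of $f$}: it is easy to write down a plausible self-local equivalence whose image has the right homology, but proving no self-local equivalence has a strictly larger kernel requires understanding all grading-preserving chain maps $\cD \to \cD$ commuting with $\iota$ up to homotopy, which is a genuine (if finite) computation. I would handle this either (a) by the kernel-counting argument sketched above — showing that the rank of $\im f$ over $\F[U]$ already equals the minimum possible rank $m+1$ (one free tower plus $m$ torsion pieces), so no larger kernel is possible since $U^{-1}H_*$ must survive — or, more robustly, (b) by invoking the graded-root technology: $\cD$ together with $\iota$ is (up to homotopy) the complex associated to a symmetric graded root, its Dai--Manolescu monotone subroot $M'$ is computed explicitly from the $n_i$, and then Theorem~\ref{thm:gradedroots} identifies $H_\conn$ with the $U$-torsion submodule of $\mathbb H^-(M')$ shifted up by $1$; computing $M'$ for this particular iterated reflection root and reading off its graded pieces gives exactly $\bigoplus_{i=1}^m \mathcal T_{i-2-2a_{i-1}}(n_i)$. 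Route (b) reduces the proposition to a graded-root bookkeeping exercise and is probably the cleanest; route (a) is more self-contained. In either case the final grading computation — verifying that the $i$-th torsion summand sits with bottom degree $i - 2 - 2a_{i-1}$ where $a_{i-1} = \sum_{j<i} n_j$ — is the routine but essential last step, done by induction on $m$ using that adding a factor $\cC_{n_m}$ with the smallest $n_m$ appends one new torsion summand of order $n_m$ and shifts the relevant generator down by $2n_m + 1 - 2$.
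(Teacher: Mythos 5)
Your two proposed routes both have genuine gaps, and in neither case would the sketch as written close them.

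Route (b) is based on a false premise. You assert that $\cD = \cC_{n_1} \otimes \cdots \otimes \cC_{n_m}$ is, up to homotopy, the complex of a symmetric graded root, so that Theorem~\ref{thm:gradedroots} applies. But a graded root $\mathbb{H}^-(M)$ is supported in a single parity of gradings (once one doubles the relative grading, as the paper does). The K\"unneth/Tor computation for $m \geq 2$ produces $U$-torsion in \emph{both} parities: indeed, the stated answer already exhibits this, since $\cT_{-1}(n_1)$ sits in odd degree and $\cT_{-2n_1}(n_2)$ in even degree. So $H_*(\cD)$ is not of the form $\mathbb{H}^-(M)$, Theorem~\ref{thm:gradedroots} is not applicable, and there is no Dai--Manolescu monotone subroot to compute. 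Route (b) cannot be repaired by computing a subroot more carefully; the hypothesis simply fails.

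Route (a) has the right shape but the maximality step is circular as you have sketched it. You want to show that for your candidate $f$, the rank of $\im f$ over $\F[U]$ equals the ``minimum possible rank $m+1$'' and that ``the combinatorics of the min-relations forces the kernel to be no larger.'' But establishing that the minimum possible rank of $\im g$ over all self-local equivalences $g$ is $m+1$ is essentially the content of the proposition; you cannot invoke it to prove itself. The argument that a self-local equivalence is injective on the free part is fine, but it says nothing about how much torsion can be killed. What the paper actually does is more structured: it first replaces $\cD$ by an explicit, much smaller, locally equivalent $\iota$-complex $\cC_{n_1,\ldots,n_m}$ with $2m+1$ generators (Lemma~\ref{lemma:reduced}, proved by an inductive $\iota$-equivariant change of basis on the tensor product -- this is where the K\"unneth bookkeeping you gesture at is carried out concretely), and then proves directly that \emph{every} self-local equivalence of $\cC_{n_1,\ldots,n_m}$ is surjective, by a grading-by-grading contradiction argument that uses the specific form of the differential and the homotopy $G$ witnessing $f\iota \simeq \iota f$. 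That surjectivity argument is the real work; without something like it your route (a) does not establish maximality, and your appeal to ``kernel-dimension count'' does not substitute for it. Also note that the paper does not search for a maximal $f$ and then compute $\im f$; rather it shows every self-local equivalence of the reduced representative is surjective, hence that $H_\conn = H_\red(\cC_{n_1,\ldots,n_m})$, which is then read off.

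In short: drop route (b) entirely; and to make route (a) rigorous, first pass to a small local-equivalence representative (local equivalence invariance of $H_\conn$, Proposition~\ref{prop:locally-equiv-iota}, licenses this) and then prove surjectivity of all self-local equivalences there, rather than trying to certify a single candidate $f$ as maximal by a rank argument.
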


As a warm-up, we present the argument for $m=1$. (The argument given in the proof of Theorem \ref{thm:-nLspacefull} could also be used to establish this first lemma more succinctly, but would not help us as much in the general case.)

\begin{lemma} \label{lemma:case-one} $H_{\conn}(\cC_n) = \mathcal{T}_{-1}(n)$.
\end{lemma}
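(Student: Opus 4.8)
The plan is to apply the general machinery of Section~\ref{sec:connected} directly to the concrete complex $\cC_n$. First I would exhibit an explicit self-local equivalence $f\co \cC_n \to \cC_n$ whose image is the subcomplex $C' = \F[U]\langle x_1 + x_2\rangle \oplus \F[U]\langle y\rangle$ (with $\partial y = U^n(x_1+x_2)$), defined by $f(x_1) = f(x_2) = x_1 + x_2$ and $f(y) = y$; one checks this is a grading-preserving chain map, commutes with $\iota$ on the nose (since $\iota$ swaps $x_1,x_2$ and fixes $y$, so $\iota f = f$ and $f\iota = f$), and induces an isomorphism on $U^{-1}H_*$ because $U^{-1}H_*(\cC_n) \cong \F[U,U^{-1}]$ is generated by $x_1$, which maps to $x_1+x_2$, another generator of the localized homology. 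The kernel of $f$ is $\F[U]\langle x_1 + x_2\rangle$... wait, more carefully $\ker f = \F[U]\langle x_1 - x_2\rangle = \F[U]\langle x_1+x_2\rangle$ over $\F$ — I would instead note $\ker f$ is the rank-one free submodule generated by $x_1+x_2$ sitting in the other copy, and since $\partial$ on that piece is zero while the boundary of $y$ lands there, $\ker f$ is acyclic after localization; this is what makes $f$ a self-local equivalence in the first place.

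Next I would argue that $f$ is \emph{maximal}. By the finiteness argument in the lemma on existence of maximal self-local equivalences, maximal ones exist; if $g \gtrsim f$ strictly then $\ker g \supsetneq \ker f$, forcing $\im g$ to have $\F$-rank strictly smaller than $\im f$ in some grading, but $\im g$ must still satisfy $U^{-1}H_*(\im g)\cong \F[U,U^{-1}]$, and a short dimension count on the graded pieces of $\cC_n$ shows the image of any self-local equivalence has $\F$-dimension at least that of $C'$ in each grading where $C'$ is supported. (Alternatively, and perhaps more cleanly, one observes $C'$ already has homology rank one over $\F[U]$ with no further room to collapse, so any self-local equivalence of $C'$ is an isomorphism onto its image, hence $f$ composed into $C'$ is maximal, and then pull back via Lemma~\ref{lem:isomim}/\ref{lem:maxhomotopy}.) Having established maximality, $(\cC_n)_\conn = \im f = C'$ by definition.

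Finally I would compute $H_\red(C')$. The homology $H_*(C') = \F[U]\langle x_1+x_2 \rangle \oplus \F[U]/U^n\langle y\rangle$, where the torsion summand $\F[U]/U^n$ sits in grading $\gr(y) = -2n-1$ — wait, I must track gradings: $U^n(x_1+x_2)$ has grading $-2-2n$, and $\partial y$ has grading $\gr(y)-1$, so $\gr(y) = -2n-1$; the torsion part of $H_*(C')$ is generated by $y$ in grading $-2n-1$ and killed by $U^n$, i.e.\ it is $\cT_{-2n-1}(n)$ as a submodule of $H_*(C')$. Applying the grading shift $[-1]$ in the definition of $H_\red$ (shift \emph{upward} by $1$) gives $\cT_{-2n}(n)$. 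Hmm — the claimed answer is $\cT_{-1}(n)$, so I have a grading discrepancy and would need to recheck the grading convention on $y$ against Figure~\ref{fig:yncomplex}; the resolution is that in $\cC_n$ the generator $y$ actually has $\gr(y) = -2n-1$ only after the convention fixing $\gr(x_i) = -2$, and the torsion submodule of $H_*$ that survives into $H_\red$ gets shifted so that its top grading is $\gr(\text{bottom of }\im\partial)+1 = (-2n-2)+1 = -2n-1$ before the defining $[-1]$ shift, then $-2n$ after; to land on $\cT_{-1}(n)$ the intended reading must be that the relevant generator of the reduced homology sits just below $d = 0$, and I would reconcile this by carefully re-deriving gradings from $d(S^3_{-1}(K),[0]) = 0$ via Theorem~\ref{thm:-nLspacefull} (with $L(1,1) = S^3$, so $d(L(1,1),[0]) = 0$), which gives $\HF_\conn = \cT_{-1}(V_0)$ directly. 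The main obstacle is thus not conceptual — the structure of the proof is forced by the definitions — but purely bookkeeping: getting the single grading index right, which I would pin down by cross-checking against the $n=1$ case of Theorem~\ref{thm:-nLspacefull} rather than trusting the figure.
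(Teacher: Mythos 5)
The proposal has a fatal error at the very first step: the map $f$ you propose, with $f(x_1)=f(x_2)=x_1+x_2$ and $f(y)=y$, is not a chain map over $\F=\Z/2\Z$, and even if it were it would not be a self-local equivalence. Concretely, $f(\partial y)=U^n\bigl(f(x_1)+f(x_2)\bigr)=U^n\cdot 2(x_1+x_2)=0$, whereas $\partial f(y)=\partial y = U^n(x_1+x_2)\neq 0$, so $f\partial \neq \partial f$. Worse, since $\partial y = U^n(x_1+x_2)$, the class $[x_1+x_2]$ is $U$-torsion in $H_*(C_n)$ and hence vanishes in $U^{-1}H_*(C_n)$; so $f_*$ would send the generator $[x_1]$ to zero in $U^{-1}H_*$, violating the defining condition of a self-local equivalence. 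For the same reason your proposed $C'=\F[U]\langle x_1+x_2, y\rangle$ has $U^{-1}H_*(C')=0$, so it cannot be $(\cC_n)_\conn$, which is required to be an $\iota$-complex (with $U^{-1}H_* \cong \F[U,U^{-1}]$). The grading bookkeeping at the end is also off for a related reason: the $U$-torsion in $H_*(C_n)$ is generated by $[x_1+x_2]$ in grading $-2$ (not by $y$, which is not a cycle), and after the $[-1]$ shift of $H_\red$ one lands in grading $-1$ as claimed.

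The actual argument runs in the opposite direction: rather than exhibiting a nontrivial maximal self-local equivalence, one shows that \emph{every} self-local equivalence $f$ of $\cC_n$ is surjective, so $(\cC_n)_\conn \cong \cC_n$ and $H_\conn(\cC_n)=H_\red(C_n)=\cT_{-1}(n)$. The key points are: by grading, $f(x_1)$ is a linear combination of $x_1,x_2$, and the isomorphism condition on $U^{-1}H_*$ rules out $f(x_1)\in\{0,\,x_1+x_2\}$; the chain homotopy $G$ between $f\iota$ and $\iota f$ vanishes on $x_1$ for grading reasons, forcing $f(x_2)=\iota f(x_1)$ and hence $f(x_1+x_2)=x_1+x_2$; and then $\partial f(y)=f(\partial y)=U^n(x_1+x_2)\neq 0$ forces $f(y)=y$. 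You would need to replace the construction of an explicit splitting map with this surjectivity argument.
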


\begin{proof}
As above, let $\cC_n = (C_n, \iota)$ be generated by $x_1$, $x_2$, and $y$ such that $\del(y) = U^n(x_1+x_2)$, $\gr(x_1)=\gr(x_2)=-2$, $\gr(y) = -2n-1$, and $\iota$ fixes $y$ and interchanges $x_1$ and $x_2$. Observe that $x_1$ and $x_2$ are both generators of $U^{-1}H^*(C_n)$, in which $[x_1]=[x_2]$.  Since $H_{\red}(\cC_n) = \mathcal{T}_{(-1)}(n)$, it suffices to prove that any self-local equivalence is surjective.   

Suppose that $f \co C_n \rightarrow C_n $ is a self-local equivalence. Let $G \co C_n \rightarrow C_n$ be a chain homotopy such that $\del G + G \del = f \circ \iota + \iota \circ f$.  

Now, consider $x_1$. Because $f$ preserves the homological grading, $f(x_1) = \lambda_1 x_1 + \lambda_2 x_2$ for $\lambda_1, \lambda_2 \in \{0,1\}$. However, notice that if $f(x_1)=0$, then in the map $f_* \co U^{-1} H_*(C_n) \rightarrow U^{-1} H_*(C_n)$, we have $f_*([x_1])=0$, so $f_*$ is not an isomorphism. Similarly if $f(x_1) = x_1+x_2$, then $f_*([x_1]) = [x_1]+[x_2]=0$ as a map $U^{-1} H_*(C_n) \rightarrow U^{-1} H_*(C_n)$. So in fact either $f(x_1)=x_1$ or $f(x_1)=x_2$. 

Moreover, $G(x_1)$ must have grading $-1$, implying that $G(x_1)=0$. Hence 
\[
f(\iota(x_1))+ \iota(f(x_1)) = \del G(x_1) + G(\del(x_1)) = 0.
\]
This implies that $f(x_2)=\iota(f(x_1))$. So $f(x_1 + x_2) = x_1 + x_2$. Now, consider $f(y)$. Again for grading reasons, either $f(y) = y$ or $f(y) = 0$. But $\del(f(y)) = f(\del y) = f(U^n(x_1+x_2)) = U^nf(x_1+x_2)=U^n(x_1+x_2)$. So $f(y)$ cannot be zero, and we have that $f(y)= y$. We see $f$ is a surjection and $H_{\conn}(\cC_n) = H_\red(C_n) = \mathcal T_{-1}(n)$.
\end{proof}

\begin{figure}
\begin{center}
\includegraphics{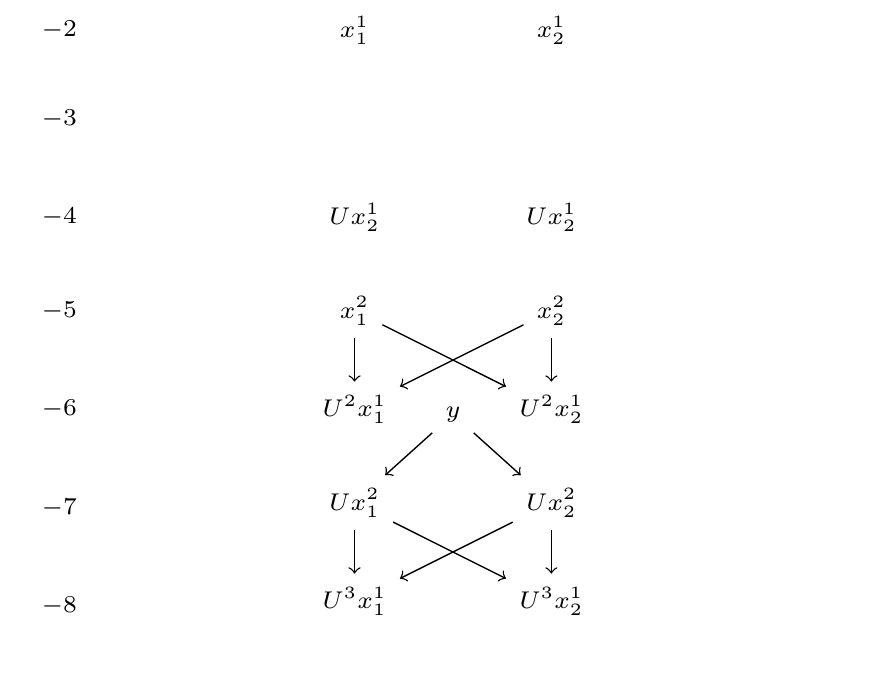}
\end{center}
\caption{The highest gradings of the complex $\cC_{2,1}$. The complex continues with further $U$-powers of the five generators. The involution is given by reflection across the centerline of the page.}
\label{fig:example}
\end{figure}

\begin{lemma} \label{lemma:reduced} Let $n_1\geq n_2\geq \cdots \geq n_m > 0$. Then $\cC_{n_1} \otimes \cdots \otimes \cC_{n_m}$ is locally equivalent to the complex $\cC_{n_1, \cdots, n_m}$ generated by elements $x_i^j$ such that $i \in \{1,2\}, 1\leq j \leq m$ and an element $y$ such that
\begin{align*}
\gr(x_i^1)&=-2 \\
\gr(x_i^j)&= -2a_{j-1} + (j-3) \textup{ for } 2 \leq j \leq m \\
\gr(y) &= -2a_m + m-2
\end{align*}
with differentials
\begin{align*}
\del(x_i^1) &= 0 \\
\del (x_i^j) &= U^{n_{j - 1}}(x_1^{j-1} + x_2^{j-1}) \textup{ for } 2 \leq j \leq m \\
\del (y) &= U^{n_m}( x_1^m + x_2^m),
\end{align*}%
and involution $\iota$ interchanging $x_1^j$ and $x_2^j$ and fixing $y$.
\end{lemma}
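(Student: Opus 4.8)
The plan is to prove Lemma~\ref{lemma:reduced} by induction on $m$. The case $m=1$ is immediate from the definitions, since the complex described in the statement is then $\cC_{n_1}$ itself (matching generators, gradings, differential, and involution). For the inductive step, assume the lemma for $m-1$. Since $n_1 \geq \cdots \geq n_{m-1} > 0$, the inductive hypothesis says $\cC_{n_1} \otimes \cdots \otimes \cC_{n_{m-1}}$ is locally equivalent to $\cC_{n_1, \ldots, n_{m-1}}$; because tensor product descends to a well-defined operation on local equivalence classes (part of the statement that $\Inv_\Q$ is a group), it follows that $\cC_{n_1} \otimes \cdots \otimes \cC_{n_m}$ is locally equivalent to $\cC_{n_1, \ldots, n_{m-1}} \otimes \cC_{n_m}$. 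So it suffices to produce a local equivalence between $A := \cC_{n_1, \ldots, n_{m-1}}$ tensored with $B := \cC_{n_m}$ and the complex $\cC_{n_1, \ldots, n_m}$ of the statement, which I will do by writing down explicit chain maps in both directions.

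Write the $\F[U]$-generators of $A$ as $x_i^j$ ($i \in \{1,2\}$, $1 \leq j \leq m-1$) together with the distinguished generator $y'$, so $\del y' = U^{n_{m-1}}(x_1^{m-1} + x_2^{m-1})$, and those of $B = \cC_{n_m}$ as $u_1,u_2,v$ with $\del v = U^{n_m}(u_1+u_2)$, so $A \otimes B$ is free on the pairwise products. In one direction take $f \colon A \otimes B \to \cC_{n_1, \ldots, n_m}$ given by $x_i^j \otimes u_k \mapsto x_i^j$ (for $1 \leq j \leq m-1$), $y' \otimes u_k \mapsto x_k^m$, $x_i^j \otimes v \mapsto 0$, and $y' \otimes v \mapsto y$; a direct check using the shift $[-2]$ in the definition of $\otimes$ shows $f$ is grading preserving, a chain map, satisfies $f \circ \iota = \iota \circ f$ on the nose, and sends the tower generator $x_1^1 \otimes u_1$ to the tower generator $x_1^1$, so it is an isomorphism on $U^{-1}H_*$. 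The reverse map $g \colon \cC_{n_1,\ldots,n_m} \to A \otimes B$ is the delicate "lift": set $g(x_i^1) = x_i^1 \otimes u_i$ and then, proceeding one level at a time, let $g(x_i^j)$ be a main term $x_i^j \otimes u_\bullet$ (or $y' \otimes u_\bullet$ when $j=m$) plus a correction of the form $U^{\,n_{j-1} - n_m}\bigl(x_\bullet^{\,j-1} \otimes v\bigr)$, and $g(y) = y' \otimes v$, always taking $g(x_2^j) = \iota\bigl(g(x_1^j)\bigr)$ so that $g \circ \iota = \iota \circ g$ exactly. The correction terms are exactly what make $g$ a chain map: modulo boundaries one has $U^{n_{j-1}}\bigl(x_1^{j-1}\otimes u_1 + x_2^{j-1}\otimes u_2\bigr) = \del\bigl(U^{\,n_{j-1}-n_m}(x_1^{j-1}\otimes v) + x_1^j \otimes u_2\bigr)$, and an analogous identity one level down, so the required equalities $g(\del x_i^j) = \del(g(x_i^j))$ and $g(\del y) = \del(g(y))$ propagate. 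This is where the hypothesis $n_1 \geq \cdots \geq n_m$ is used in an essential way: it guarantees that the exponents $n_{j-1}-n_m$ (and the larger partial-sum exponents arising in the verification) are nonnegative, so that $g$ is genuinely defined over $\F[U]$.

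It then remains to check that $g$ is grading preserving — again a computation matching the grading formulas in the statement against the shift in $\otimes$, which works out precisely because of the chosen gradings on $\cC_{n_1,\ldots,n_m}$ — and that $g$ is an isomorphism on $U^{-1}H_*$, which holds since $g(x_1^1) = x_1^1 \otimes u_1$ is a tower generator. (Note that for a local equivalence one does not need $f\circ g$ and $g\circ f$ to be homotopic to the identity, which keeps this step light.) The hard part will be purely organizational: setting up the level-by-level definition of $g$ so that all of the chain-map identities, the $\iota$-equivariance, and the grading constraints hold simultaneously — there is no conceptual obstacle once the ordering hypothesis legalizes all the correction-term exponents, which is also exactly why $n_1 \geq \cdots \geq n_m$ cannot be dropped. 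It may be helpful to keep Figures~\ref{fig:yncomplex} and~\ref{fig:example} in mind, which display the cases $m=1$ and $(n_1,n_2)=(2,1)$.
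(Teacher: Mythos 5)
Your proof is correct, but it takes a genuinely different route from the paper's. The paper peels off the \emph{first} factor: by the inductive hypothesis applied to $n_2 \geq \cdots \geq n_m$, it reduces to comparing $\cC_{n_1} \otimes \cC_{n_2,\ldots,n_m}$ with $\cC_{n_1,\ldots,n_m}$, and then does a single $U$- and $\iota$-equivariant change of basis that exhibits $\cC_{n_1} \otimes \cC_{n_2,\ldots,n_m}$ as a \emph{direct sum} of $\iota$-complexes, one summand of which is exactly $\cC_{n_1,\ldots,n_m}$; the local equivalence is then the obvious projection/inclusion. You instead peel off the \emph{last} factor and compare $\cC_{n_1,\ldots,n_{m-1}} \otimes \cC_{n_m}$ with $\cC_{n_1,\ldots,n_m}$ by writing down the two local-equivalence maps $f$ and $g$ directly. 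Your $f$ is correct and strictly $\iota$-equivariant as you claim, and your $g$, defined with correction terms of the form $U^{\,n_{j-1}-n_m}x_\bullet^{\,j-1}\otimes v$, is indeed a strictly $\iota$-equivariant chain map (I checked the cancellation $\del g(x_i^j) = g(\del x_i^j)$ in general; it works because $\del(x_\bullet^{j-1}\otimes v)$ produces both the ``next'' correction and the $U^{n_m}x_\bullet^{j-1}\otimes(u_1+u_2)$ term that repairs the mismatch between $u_1$ and $u_2$). Both arguments use the monotonicity $n_1 \geq \cdots \geq n_m$ in the same essential way, to keep the correction-term exponents nonnegative: the paper needs $n_1 - n_j \geq 0$ and $n_1 - n_m \geq 0$, you need $n_{j-1} - n_m \geq 0$.

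The tradeoff is mild. The paper's presentation shows slightly more: after the change of basis one sees that $\cC_{n_1} \otimes \cC_{n_2,\ldots,n_m}$ actually splits as $\cC_{n_1,\ldots,n_m}$ plus an explicit $\iota$-complementary summand (which one could, if desired, analyze further). Your argument is more elementary in spirit in that it just constructs the two maps required by the definition of local equivalence and verifies the axioms, without producing a splitting. Since the lemma is used only as a reduction step (the hard work is Proposition~\ref{prop:connected-comp}, which studies self-local equivalences of $\cC_{n_1,\ldots,n_m}$), the extra structure in the paper's version is not needed downstream, and your version is a perfectly acceptable replacement. One small presentational gap: you never write the full general formula for $g(x_1^j)$ or carry out the general chain-map check; in a final write-up you should state $g(x_1^j) = x_1^j\otimes u_1 + U^{\,n_{j-1}-n_m}\,x_2^{j-1}\otimes v$ (with $y'\otimes u_1$ in place of $x_1^m\otimes u_1$ when $j=m$), set $g(x_2^j) = \iota\bigl(g(x_1^j)\bigr)$, $g(y)=y'\otimes v$, and verify the general identity $\del g(x_1^j) = g(\del x_1^j)$ once in closed form rather than only sketching it ``level by level.''
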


As an example, Figure \ref{fig:example} shows the complex $\cC_{2,1}$.

\begin{proof} We proceed by induction on $m$. The base case $m=1$ is true by definition. Suppose that we know the theorem up to $m-1$ inputs. Then given integers $n_1 \geq n_2 \geq \cdots \geq n_m$, we know that $\cC_{n_2} \otimes \cdots \otimes \cC_{n_m}$ is locally equivalent to $\dD = \cC_{n_2, \cdots, n_m}$. So it suffices to show that $\cC_{n_1} \otimes \dD$ is locally equivalent to $\cC_{n_1, \cdots, n_m}$.

For this argument, let $\cC_{n_1}$ consist of the chain complex generated by $s_1, s_2, t$ such that $\gr(s_1)=\gr(s_2)=-2$, $\gr(t) = -2n_1-1$, and $\del(t) = U^{n_1}(s_1 + s_2)$, with involution $\iota$ exchanging $s_1$ and $s_2$ and fixing $t$. (That is, we change notation to avoid repeating $x$ and $y$.) The complex $\dD$ has elements $c$, $b_i^j$ such that $\gr(b_i^2) = -2$, $\gr(b_i^j) = -2(n_2+\cdots+n_{j-1}) + (j-4)$ for $3 \leq j \leq m$, and $\gr(c) = -2(n_2 + \cdots + n_m) + (m-3)$, with differentials
\begin{align*}
&\del(c) = U^{n_m}(b_1^m + b_2^m) \\
&\del(b_i^j) = U^{n_{j-1}}(b_1^{j-1} + b_2^{j-1}) \text{ for } 3 \leq j \leq m \\
& \del (b_i^2) = 0.
\end{align*}
It is important to note that $b_i^j$ corresponds to $x_i^{j-1}$ in the notation of the lemma when applied to $\cC_{n_2,\ldots,n_m}$.  We have chosen this to simplify the notation in the following computations.  The underlying chain complex of $\cC_{n_1} \otimes \dD$ has generators $s_k \otimes b_i^j$, $t \otimes b_i^j$, $s_k \otimes c$, and $t\otimes c$, where throughout $i,k \in \{1,2\}$ and $2 \leq j \leq m$. The gradings of these generators are
\begin{align*}
&\gr(s_k \otimes b_i^j) = -2(n_2+ \cdots + n_{j-1}) + (j-4) \\
&\gr(t \otimes b_i^j) = -2(n_1 + \cdots + n_{j-1}) + j-3 = -2a_{j-1} + (j-3) \\
&\gr(s_k \otimes c) = -2(n_2 + \cdots + n_m) + m-3 \\
&\gr(t \otimes c) = -2(n_1 + \cdots + n_m) + m -2 = -2a_m + m-2,
\end{align*}
where we are using that the tensor product of $\iota$-complexes has a grading shift by 2 in the definition.  The differentials are
\begin{align*}
&\del(s_k \otimes b_i^2) = 0 \\
&\del(t \otimes b_i^2) = U^{n_1}(s_1 \otimes b_i^2 + s_2 \otimes b_i^2)\\
&\del(s_k \otimes b_i^j) = U^{n_{j-1}}(s_k \otimes b_1^{j-1} + s_k \otimes b_2^{j-1}) \text{ for }3 \leq j \leq m\\
&\del (t \otimes b_i^j) = U^{n_{j-1}}(t \otimes b_1^{j-1} + t\otimes b_2^{j-1})+U^{n_1}(s_1 \otimes b_i^j + s_2 \otimes b_i^j) \text{ for } 3 \leq j \leq m  \\
&\del (s_k \otimes c) = U^{n_m}(s_k \otimes b_1^m + s_k \otimes b_2^m) \\
&\del(t \otimes c) = U^{n_m}(t \otimes b_1^m + t \otimes b_2^m) + U^{n_1}(s_1 \otimes c + s_2 \otimes c).
\end{align*}

We now perform a $U$-equivariant and $\iota$-equivariant change of basis. Let 
\begin{align*}
&\wt{y} = t \otimes c \\
&\wt{x}^m_i = t \otimes b_i^m + U^{n_1-n_m}(s_i \otimes c) \\
&\wt{x}^j_i = t \otimes b_i^j + U^{n_1-n_j} (s_i \otimes b^{j+1}_{i+1}) \text{ for } 2\leq j \leq m-1 \\
&\wt{x}^1_i = s_i \otimes b_{i+1}^2 \\
&\wt{s}_k = s_k \otimes b_1^2 + s_k \otimes b_2^2.
\end{align*}
Here $i+1$ is to be taken modulo two; that is, this expression denotes a change of index between $1$ and $2$. Then the elements $\wt{x}^j_i$ for $1 \leq j \leq m$, $\wt{y}$, $\wt{s}_i$, $s_i \otimes c$, and $s_k \otimes b_i^j$ for $3 \leq j \leq m$ generate $\cC_{n_1} \otimes \dD$, with differentials given by
\begin{align*}
&\del(\wt{y}) = U^{n_m}(\wt{x}^m_1 + \wt{x}^m_2) \\
&\del(\wt{x}^j_i) = U^{n_{j-1}}(\wt{x}^{j-1}_1 + \wt{x}^{j-1}_2)  \text{ for } 2 \leq j \leq m \\
&\del(\wt{x}^1_i) = 0 \\
&\del(s_k \otimes c) = U^{n_m}(s_k \otimes b_1^m + s_k \otimes b_2^m) \\
&\del(s_k \otimes b_i^j) = U^{n_{j-1}}(s_k \otimes b_1^{j-1} + s_k \otimes b_2^{j-1}) \text{ for } 3 \leq j \leq m \\
&\del(\wt{s}_k) = 0.
\end{align*}

This complex has a self-local equivalence onto the subcomplex generated by $\wt{y}$ and the elements $\wt{x}^j_i$ for $1 \leq j \leq m$, via sending the remaining generators to zero. Indeed, this subcomplex is exactly $\cC_{n_1, \cdots, n_m}$.
\end{proof}

\begin{proof}[Proof of Proposition \ref{prop:connected-comp}] By Lemma \ref{lemma:reduced}, it suffices to compute the connected homology of the $\iota$-complex $\cC = \cC_{n_1, \cdots, n_m}$. Let $f \co C \rightarrow C$ be a self-local equivalence, and let $G \co C \rightarrow C$ be a chain homotopy such that $f \circ \iota + \iota \circ f = G\del + \del G$.  We will show that $f$ is surjective.  

First we consider $x_1^1$ and $x_2^1$. By the same logic as in Lemma \ref{lemma:case-one}, we see that $f(x_1^1)$ and $f(x_2^1)$ are $x_1^1$ and $x_2^1$ in some order. 

Now, suppose that $f$ is not surjective. First, suppose that some $x_i^j$ is not contained in the image of $f$, and pick a minimal $j>1$ such that $x_i^j$ is not in $\im(f)$ for some $i$. Indeed, since we can postcompose with $\iota$ to get a new self-local equivalence, without loss of generality $x_1^j$ is not in the image of $f$. This implies that either $x_2^j$ is not in $\im(f)$ or $x_1^j + x_2^j$ is not in $\im(f)$, since if they both were then $x_1^j$ would be as well. 

First consider the case that $x_2^j \notin \im(f)$. Then we claim there is no element $z \in \im(f)$ such that $\del(z) = U^{n_{j-1}}(x_1^{j-1}+x_2^{j-1})$. For if such a $z$ existed, then $z$ would lie in grading $-2a_{j-1} + (j-3)$, and therefore could be written uniquely as a linear combination of $x_1^j$, $x_2^j$, and elements $U^{\ell} x_i^{k}$ such that $k<j$ and $-2a_{k-1} + (k-3) - 2\ell = -2a_{j-1} + (j-3)$. Indeed, since $\del(z)= U^{n_{j-1}}(x_1^{j-1}+x_2^{j-1})$, this linear combination would contain exactly one of $x_1^j$ and $x_2^j$. But since $k<j$, each element $U^{\ell}x_i^k$ appearing in this linear combination is in $\im(f)$, so this implies that either $x_1^j$ or $x_2^j$ is in $\im(f)$. Hence there is no such $z$. This implies that $[U^{n_{j-1}}(x_1^{j-1}+x_2^{j-1})]$ is a nontrivial element in $H_*(\im(f))$ in grading $-2a_{j-1} + (j-4)$. If $j$ is odd, this implies $H_*(\im(f))$ in grading $-2a_{j-1} + (j-4)$ is at least one-dimensional, and if $j$ is even, this implies that $H_*(\im(f))$ is at least two-dimensional, since it contains both $[U^{n_{j-1}}(x_1^{j-1}+x_2^{j-1})]$ and $[U^{a_{j-1}-\frac{j}{2}+1}x_1^1]$, and there is no element in $C$ with boundary 
$U^{n_{j-1}}(x_1^{j-1}+x_2^{j-1})+ U^{{a_{j-1}-\frac{j}{2}+1}}x_1^1$. But if $j$ is odd, then $H_*(C)$ has no nontrivial element in grading $-2a_{j-1} + (j-4)$, and if $j$ is even, $H_*(C)$ is one-dimensional generated by $[U^{{a_{j-1}-\frac{j}{2}+1}}x_1^1]$. Since in both cases the map $f_* \co H_*(C) \rightarrow H_*(\im(f))$ is a surjection, in either case we have a contradiction.

Now consider the case that $x_2^j \in \im(f)$, but $x_1^j + x_2^j \notin \im(f)$. Choose any $w$ such that $f(w) = x_2^j$. Then we note that $f(\iota(w)) + \iota (f(w)) = \del G(w) + G\del(w)$, implying that $f(\iota(w)) = x^j_1 + \del G(w) + G\del(w)$. Now each of $\del G(w)$ and $G\del(w)$ is an element in grading $-2a_{j-1} + (j-3)$, and therefore can be written uniquely as a linear combination of $x_1^j$, $x_2^j$, and elements $U^{\ell} x_i^{k}$ such that $k<j$ and $-2a_{k-1} + (k-3) - 2\ell = -2a_{j-1} + (j-3)$. Note that $x_i^j$ cannot appear in $\del G(w)$, because the image of $\del$ is contained in $U \cdot C_{n_1,\ldots,n_m}$, because we have assumed that $n_m > 0$. Similarly, $x_i^j$ cannot appear in $G(\del w)$ because of the $U$-equivariance of $G$. Therefore $f(\iota(w)) = x^j_1 + \del G(w) + G\del(w)$, where $\del G(w)$ and $G\del(w)$ can be written as linear combinations of elements $U^{\ell}x_i^k$ for $k<j$. But $x_i^k \in \im(f)$ for $k<j$, so we see that $x^j_1$ is also in $\im(f)$.

Now suppose every $x_i^j$ is in the image of $f$, but $y$ is not. Then by a similar argument to the first case, there is no element $z \in \im(f)$ such that $\del(z) = U^{n_m}(x_1^m + x_2^m)$. Thus $[U^{n_m}(x_1^m + x_2^m)]$ is a nontrivial  element in $H_*(\im(f))$ in grading $-2a_m +m-3$, which as before implies a contradiction. So $y \in \im(f)$, and in fact $f$ is surjective. 

Since $f$ is surjective, we conclude that $H_{\conn}(\cC_{n_1,\cdots,n_m}) = H_\red(C_{n_1,\cdots,n_m})$. The result follows. 
\end{proof}

\begin{proof} [Proof of Theorem \ref{thm:connect-sums}] Let $K_1, \cdots, K_m$ be L-space knots. By Proposition \ref{prop:surgeryinvolution}, $\HFm(S^3_{-1}(K_j)) \cong M(\vec{V_j}, 1)$, where $\vec{V_j}=(V_0(K_j), V_1(K_j), \cdots)$, and $\iota_*$ is given by reflection on the symmetric graded root. But by \cite[Theorem 6.1]{DaiManolescu}, this implies that the local equivalence class of $(\CFm(S^3_{-1}(K_j)),\iota)$ is represented by $\cC_{V_0(K_j)}$. The theorem then follows directly from Proposition \ref{prop:connected-comp}. \end{proof}

\subsection{Connected homology of graded roots}\label{subsec:gradedroots}

In this section, we give a computation of the connected homology of any $\iota$-complex $(\cC, \iota)$ whose homology consists of a symmetric graded root with induced involution given by its natural symmetry. (This includes $(\CFm(Y), \iota)$ for $Y$ a Seifert fibred space.) Our computation follows fairly quickly from Dai and Manolescu's computation of the local equivalence classes of such $\iota$-complexes \cite{DaiManolescu}. This will prove Theorem \ref{thm:gradedroots}.

First, let us to recall some notation for graded roots. With the conventions appropriate to working with the minus variant of Heegaard Floer theory, recall that a graded root $M$ consists of an infinite tree together with a grading function $\chi \co\mathrm{Vert}(M) \rightarrow \Q$ such that

\begin{itemize}
\item $\chi(u)-\chi(v) = \pm 1$ for any edge $(u,v)$,
\item $\chi(u) \leq \max\{\chi(v),\chi(w)\}$ for any edges $(u,v)$ and $(u,w)$ with $v\neq w$,
\item $\chi$ is bounded above,
\item $\chi^{-1}(k)$ is finite for any $k \in \mathbb Q$,
\item $\#\chi^{-1}(k)=1$ for $k \ll 0$.

\end{itemize}

To every graded root one can associate an $\F[U]$-module $\mathbb H^-(M)$ with one generator for every vertex $v$ in $M$, and we let $U\cdot v = w$ if $(v,w)$ is an edge and $\chi(v)-\chi(w)=1$. (Notice that this means it is simple to reconstruct the graded root from $\mathbb H^-(M)$ together with a preferred set of generators.) Because $\deg(U)=-2$, one typically doubles the relative grading when focusing on this module; from here on we shall do this without comment.

A \emph{symmetric} graded root is a graded root $M$ together with a grading-preserving involution $J_0\co \mathrm{Vert}(M) \rightarrow \mathrm{Vert}(M)$ such that
\begin{enumerate}
	\item for each $k \in \Q$, the involution $J_0$ fixes at most one vertex in $\chi^{-1}(k)$,
	\item $(v,w)$ is an edge if and only if $(J_0v,J_0w)$ is.
\end{enumerate}

A \emph{monotone} graded root is a graded root constructed as follows. Let $h_1, \cdots, h_n$ and $r_1, \cdots, r_n$ be two sequences of rational numbers, all differing from each other by even integers, such that 
\begin{enumerate}
\item $h_1 > h_2 > \cdots > h_n$
\item $r_1< r_2 < \cdots < r_n$
\item $h_n \geq r_n.$
\end{enumerate}
It is most convenient to describe the associated graded root $M=M(h_1,r_1;h_2,r_2;\dots;h_n,r_n)$ by describing the associated module $\mathbb H^-(M)$. If $h_n>r_n$, the monotone graded root is the  tree underlying the $\F[U]$-module with generators $v_i$ and $J_0v_i$ in grading $h_i$ for each $1\leq i\leq n$, with the relationship $U^{\frac{h_i-r_i}{2}}v_i = U^{\frac{h_i-r_i}{2}}J_0v_i = U^{\frac{h_n-r_i}{2}}v_n$. If $h_n=r_n$ then the montone graded root is the  tree underlying the $\F[U]$-module with generators $v_i$ and $J_0v_i$ in grading $h_i$ for each $1\leq i < n$ and a generator $v_n$ in grading $h_n$, with the relationship $U^{\frac{h_i-r_i}{2}}v_i = U^{\frac{h_i-r_i}{2}}J_0v_i = U^{\frac{h_n-r_i}{2}}v_n$. See Figure \ref{fig:monotone} for two examples of monotone graded roots.

\begin{figure}
\begin{center}
\includegraphics{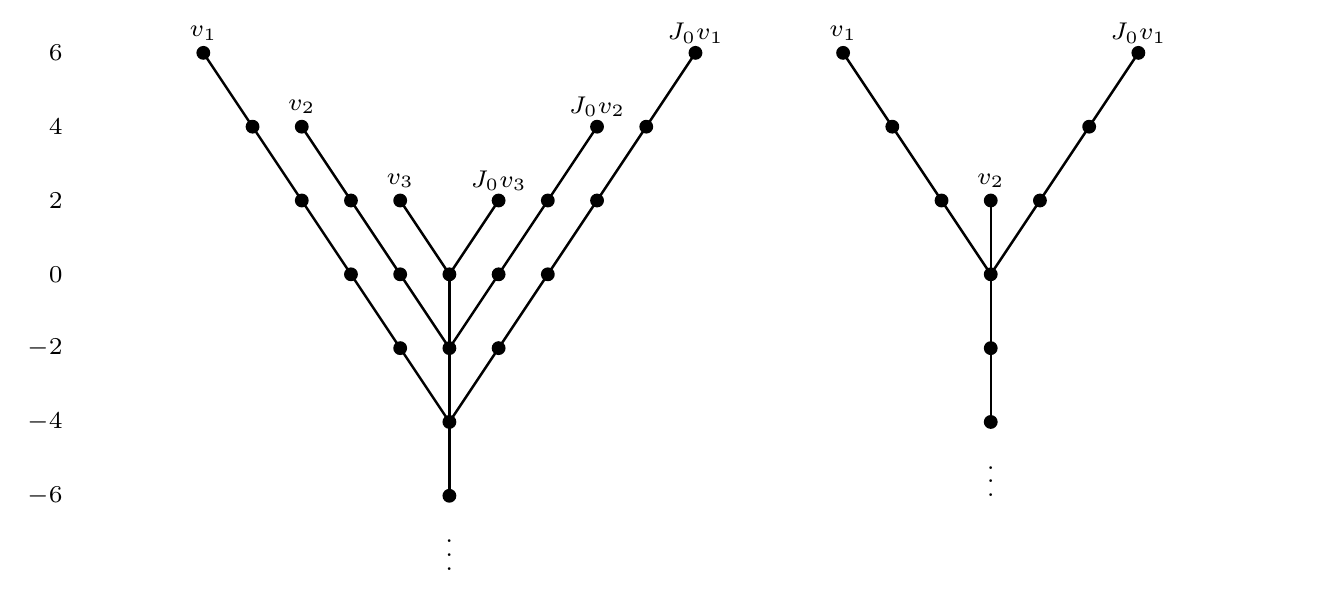}
\end{center}
\caption{The monotone graded roots $M(6,-4;4,-2;2,0)$ (left) and $M(6,0;2,2)$ (right). The numbers to the left denote degree of the elements at the corresponding height.}
\label{fig:monotone}
\end{figure}

Dai and Manolescu associate to any symmetric graded root $M$ a monotone graded subroot as follows \cite[Page 22]{DaiManolescu}. Any symmetric graded root has an infinite downward stem fixed by the involution $J_0$. Given a vertex $v$ of the graded root, one lets $\gamma(v)$ be the infinite path downward through the stem from $v$. The \emph{base} $b(v)$ of this path is the degree of the first place where this path intersects the stem. The collection of all vertices $v$ with the same base is a \emph{cluster} $C_b$. Within every cluster with more than one vertex (called nontrivial), we select a pair of vertices interchanged by $J_0$ with maximal grading, called the \emph{tips} of the cluster.

The algorithm proceeds by constructing a special subset $S$ of the vertices of $M$ as follows: One lets $r$ be the maximal degree of a $J_0$-invariant vertex $v$ in $M$. If the cluster $C_r$ is trivial, we add $v$ to $S$; otherwise we add the tips of $C_r$ to $S$. Now we let $b$ be the greatest number strictly less than $r$ for which $C_b$ is nontrivial. If the tips of $C_b$ have grading greater than the degree of all vertices in $S$, we add them to $S$; otherwise we do not. We iterate this process until there are no further numbers $b$ for which $C_b$ is nontrivial. The monotone graded subroot $M'$ of $M$ is the smallest subroot containing all the vertices in $S$; identifying $M$ with $\mathbb H^-(M)$, it is the span of the generators associated to the elements of $S$ in $\mathbb H^-(M)$. See Figure \ref{fig:subroot} for an example.

\begin{figure}
\begin{center}
\includegraphics{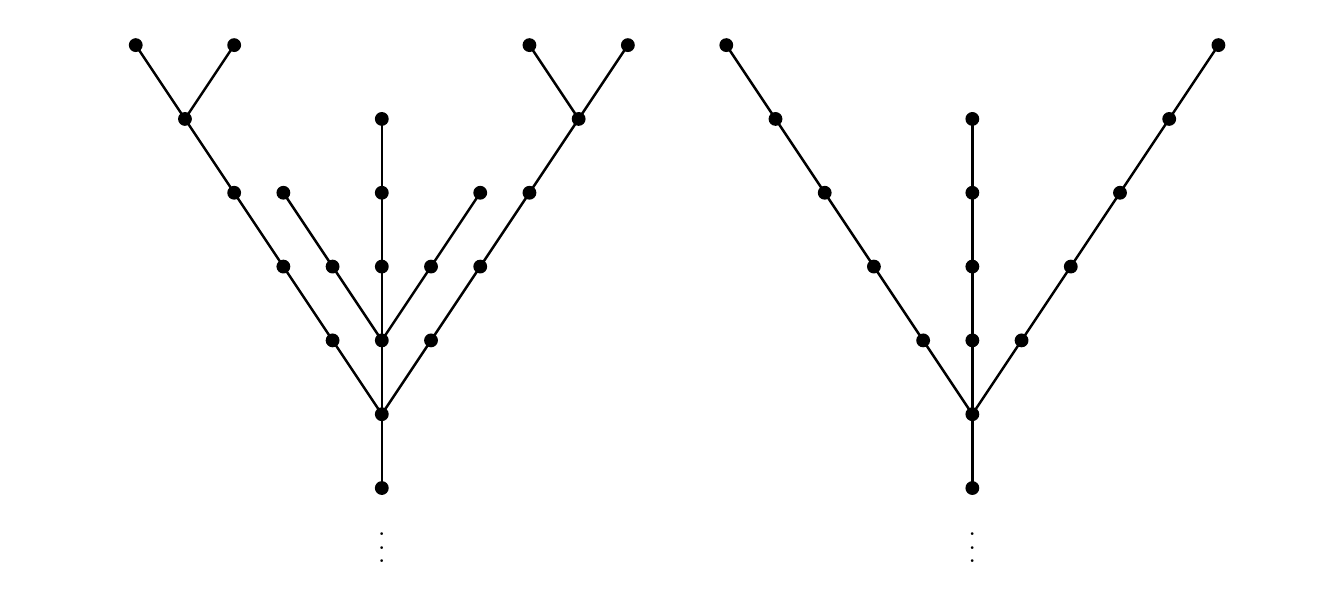}
\end{center}
\caption{A graded root and its monotone subroot.}
\label{fig:subroot}
\end{figure}

In \cite{DaiManolescu}, the authors associate to a symmetric graded root $M$ together with its natural symmetry an $\iota$-complex $(C_*(M), \iota)$ whose homology $H_*(M)$ is the module $\mathbb H^-(M)$ with its involution $J_0$. They show the following.

\begin{theorem} \cite[Corollary 4.6, Theorem 6.1]{DaiManolescu} Let $\cC$ be any $\iota$-complex whose homology is the $\F[U]$-module $H_*(M)$ determined by the graded root $M$ with induced involution given by the natural symmetry $J_0$. Then $\cC$ is locally equivalent to the chain complex $(C_*(M), \iota)$. Moreover, let $M'$ be the monotone graded subroot of $M$ constructed as above. Then $(C_*(M), \iota)$ is locally equivalent to $(C_*(M'), \iota)$.
\end{theorem}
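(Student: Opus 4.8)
The statement divides into two essentially independent assertions: (i) the local equivalence class of any $\iota$-complex with homology $(\mathbb{H}^-(M), J_0)$ is already determined by this homology-level data, so it coincides with that of the model complex $(C_*(M), \iota)$; and (ii) this model is locally equivalent to the model $(C_*(M'), \iota)$ built from the monotone subroot. I would treat them in turn.

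For (i), write $\cC = (C, \iota)$. Since $C$ is a finitely generated free chain complex over the PID $\F[U]$ whose homology is the fixed module $\mathbb{H}^-(M) \cong \F[U] \oplus \bigoplus_i \F[U]/U^{n_i}$, the structure theorem over $\F[U]$ --- exactly the reduction performed at the start of the proof of Lemma \ref{lem:ddl} --- lets us replace $C$, up to homotopy equivalence, by a complex with a cycle $x$ generating the tower together with pairs $y_i, z_i$ satisfying $\d x = \d z_i = 0$ and $\d y_i = U^{n_i} z_i$, where the $U^{n_i}$ are the torsion orders read off from the graded root; this underlying complex is precisely $C_*(M)$. It then remains to pin down $\iota$, up to $\F[U]$-equivariant chain homotopy, from $\iota_* = J_0$. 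This is the graded-root rigidity already invoked in the proof of Corollary \ref{cor:(2,3,7)} (there for the module $\F[U]\oplus \F$): one shows that any two grading-preserving chain self-maps of $C_*(M)$ agreeing with $J_0$ on homology are chain homotopic, by an induction over the vertices of $M$ that processes each cluster from its base upward, at every step exhibiting the difference of the two maps on a generator as $\d$ of a strictly higher-graded generator --- which is possible precisely because $\mathbb{H}^-(M)$ is ``tree-shaped'', with all torsion hanging off one infinite tower, so that the only possible discrepancies are $U$-multiples of cycles that bound. This yields $(C, \iota) \simeq (C_*(M), \iota)$, hence in particular local equivalence.

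For (ii), I would write down explicit comparison maps. By construction $M'$ retains exactly the top $J_0$-invariant vertex and the tips of a strictly decreasing sequence of nontrivial clusters, together with the common downward stem of $M$; in particular $\mathbb{H}^-(M')$ is an $\F[U]$-submodule of $\mathbb{H}^-(M)$, realized by an $\iota$-equivariant inclusion chain map $g \co C_*(M') \to C_*(M)$ which induces an isomorphism on $U^{-1}H_*$. In the other direction one defines $f \co C_*(M) \to C_*(M')$ by sending the generator of a vertex $v$ to the appropriate $U$-power of the generator of the closest retained tip lying above the base $b(v)$, and to $0$ when the cluster containing $v$ was discarded. The point of the algorithm defining $M'$ is exactly that a nontrivial cluster is dropped only when its tips fail to exceed the gradings already present among the chosen vertices, so $f$ never needs to raise a grading and is a well-defined chain map; because the distinguished set $S$ is $J_0$-invariant one checks $f \circ \iota \simeq \iota \circ f$, and $f$ is manifestly an isomorphism on $U^{-1}H_*$. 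Then $f$ and $g$ exhibit the desired local equivalence, and carrying this out is just a matter of unwinding the cluster/tip bookkeeping.

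The main obstacle is the rigidity statement in (i): that the chain-level involution on a graded-root complex is determined up to homotopy by its action on homology. This is false for a general chain complex and genuinely uses the shape of $\mathbb{H}^-(M)$; making the bottom-up cancellation precise --- in particular controlling the interaction of $\iota$ with the $U$-powers that link different clusters --- is where the real content lies. By comparison, the standard-form reduction in (i) and the explicit comparison maps in (ii) are routine once the combinatorics of clusters and tips has been set up.
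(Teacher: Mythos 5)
The present paper does not prove this theorem: it is stated with the citation \cite[Corollary 4.6, Theorem 6.1]{DaiManolescu} and used as a black box, so there is no proof in the paper to compare your proposal against. What you have produced is a reconstruction of Dai and Manolescu's argument, and it should be read as such.

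As a reconstruction it has the right two-part shape, but neither part is actually established. In (i), you reduce the underlying complex to standard form and then assert that the chain-level involution on $C_*(M)$ is determined up to homotopy by its action on homology; you concede in your final paragraph that you have not proved this, and it is exactly that rigidity which carries the theorem --- without it nothing follows. In (ii), the maps $f$ and $g$ are described on \emph{vertices} of $M$ and $M'$, i.e.\ on generators of the homology modules $\mathbb H^-(M)$ and $\mathbb H^-(M')$, rather than on generators of the free chain complexes $C_*(M)$ and $C_*(M')$, which include non-cycle generators encoding the torsion summands; a chain map must be specified on the latter, and checking that the resulting map is a chain map commuting with $\iota$ up to homotopy is precisely where the cluster/tip combinatorics has to be used. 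The phrase ``the closest retained tip lying above the base $b(v)$'' is also ambiguous --- there are two such tips, interchanged by $J_0$, and the choice must be made coherently for $f$ to intertwine the involutions --- and the assertion that such a tip always sits at a grading at least that of $v$ is the real content of the subroot algorithm, which you invoke but do not verify. The outline is a plausible map of the argument, but none of the serious steps have been carried out.
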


From this, we can compute the following.

\begin{proposition} Let $M = M(h_1,r_1;h_2,r_2;\cdots;h_n,r_n)$ be a monotone graded root, with associated complex $(C_*(M), J_0)$ and homology $H_*(M)$. Then the involutive connected homology of $C_*(M)$ is the $U$-torsion submodule of $M$, shifted upward in degree by 1. \end{proposition}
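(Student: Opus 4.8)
The plan is to use two facts already available: that the connected homology depends only on the local equivalence class of the $\iota$-complex (Proposition~\ref{prop:locally-equiv-iota}), and the rigidity theorem of Dai--Manolescu quoted just above, by which any free $\iota$-complex with homology $\mathbb H^-(M)$ and involution $J_0$ is locally equivalent to $(C_*(M),\iota)$. Together these reduce the problem to computing $H_\conn$ for one convenient model, and I would take the minimal free model. In the case $h_n>r_n$ this is the complex $C$ with $\F[U]$-basis $v_1,v_1',\dots,v_n,v_n'$, where $v_i,v_i'$ lie in grading $h_i$ and are closed, together with basis elements $z_i,z_i'$ in grading $r_i+1$ for $1\le i\le n-1$ and $z_n$ in grading $r_n+1$, with
\[
\partial z_i = U^{\alpha_i}v_i + U^{\beta_i}v_n,\qquad \partial z_i' = U^{\alpha_i}v_i' + U^{\beta_i}v_n\ \ (1\le i\le n-1),\qquad \partial z_n = U^{\alpha_n}(v_n+v_n'),
\]
where $\alpha_i=(h_i-r_i)/2$ and $\beta_i=(h_n-r_i)/2$; one takes $\iota$ to interchange primed and unprimed generators, with a correction term $U^{(r_n-r_i)/2}z_n$ added to $\iota(z_i)$ and $\iota(z_i')$ so that $\iota$ is an honest chain map squaring to the identity. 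A direct check shows $H_*(C)\cong\mathbb H^-(M)$ with involution $J_0$, and $H_\red(C)$ is the $U$-torsion submodule $T$ of $\mathbb H^-(M)$ shifted upward by $1$. The case $h_n=r_n$ is identical, with $v_n=v_n'$ fixed throughout and the generator $z_n$ omitted.

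With this reduction in place the proposition is equivalent to the assertion that this model is \emph{rigid}: every self-local equivalence $f\colon C\to C$ is surjective, hence an isomorphism (a surjective grading-preserving $\F[U]$-endomorphism of a finitely generated complex is bijective on each finite-dimensional grading-homogeneous piece). Granting this, a maximal self-local equivalence has image all of $C$, so $\cC_\conn\simeq(C,\iota)$ and $H_\conn(C_*(M))=H_\red(C)=T[-1]$, as claimed. Some such rigidity is genuinely needed: a nonzero finitely generated free complex over $\F[U]$ with differential of degree $-1$ always has nonzero homology, so if $\ker f\neq 0$ then, since $\im f$ already carries the free tower, $\ker f$ would contribute torsion to $H_*$ and $H_\conn$ would be strictly smaller than $T[-1]$.

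To prove rigidity I would run the argument of Lemma~\ref{lemma:case-one} and Proposition~\ref{prop:connected-comp}, pinning down $f$ on the closed generators from grading constraints and the requirement that $f_*$ be an isomorphism on $U^{-1}H_*$, and then using the chain-map equations on the $z$'s. Since $v_1,v_1'$ are the only generators in the top grading $h_1$ and $[v_1+v_1']$ is $U$-torsion, the condition on $U^{-1}H_*$ forces $f(v_1)\in\{v_1,v_1'\}$, and then $f(v_1')=\iota f(v_1)$ for grading reasons; next one treats $f(v_n)$ using $\partial z_n$ and $f(v_i)$ for $i<n$ using $\partial z_i$. The decisive point is that the monotonicity hypotheses $h_1>\dots>h_n$ and $r_1<\dots<r_n$ make the $U$-power needed to realize any ``cross-branch'' term as a boundary negative: a term $U^{\alpha_n+(h_j-h_n)/2}(v_j+v_j')$ with $j<n$ appearing in $\partial f(z_n)$ could only be a boundary if $z_j+z_j'$ occurred with $U$-exponent $\alpha_n+(h_j-h_n)/2-\alpha_j=(r_j-r_n)/2<0$, which is impossible. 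This prevents $f$ from collapsing one branch of the graded root onto a lower one, and leaves as the only admissible values of $f(v_i)$ the elements $v_i$ or $v_i'$ plus $\iota$-symmetric $U$-torsion corrections built from the $v_j$ with $j<i$; feeding these back into the $z$-equations then shows that $v_i,v_i',z_i,z_i'$ all lie in $\im f$, so $f$ is surjective.

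I expect the main obstacle to be organizational rather than conceptual: one must choose the induction order with care and keep precise track of which ``lower-order corrections'' are permitted to appear in $f(v_i)$ (the analyses for $i=n$ and for $i<n$ use $\partial z_n$ and $\partial z_i$ in slightly different ways), so that the negativity observation above can be invoked uniformly and so that the correction terms are always seen to already lie in $\im f$. The $h_n=r_n$ case requires no new ideas, since there $v_n$ is simply fixed by $\iota$ and serves as the tower generator.
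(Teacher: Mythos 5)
Your strategy---reduce to a concrete model via the quoted Dai--Manolescu rigidity theorem and Proposition~\ref{prop:locally-equiv-iota}, then show every self-local equivalence of that model is surjective by inducting along the branches of the root and invoking the monotonicity of the $r_i$---is structurally the same as the paper's, and your explicit model with $\iota(z_i)=z_i'+U^{(r_n-r_i)/2}z_n$ is a valid representative of the local equivalence class. The genuine difference is the level at which the surjectivity argument is run. You propose to prove surjectivity on the chain level (in the style of Lemma~\ref{lemma:case-one} and Proposition~\ref{prop:connected-comp}), which requires carrying along the homotopy $G$ with $f\iota + \iota f = \partial G + G\partial$ and ruling out homotopy-correction terms at each step. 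The paper works on homology only: it shows that $f_* \co H_*(M)\to H_*(M)$ is surjective, which already suffices, since for a maximal $f$ the splitting $C\cong \im f\oplus\ker f$ forces $H_*(\ker f)=0$ and hence $H_\red(\im f)=H_\red(C)$. On homology, $f_*$ strictly commutes with $J_0$ and is strictly $\F[U]$-linear, so the inductive step collapses to one observation: if a $J_0$-asymmetric term $U^{(h_k-h_i)/2}v_k$ with $k<i$ appeared in $f_*(v_i)$, then multiplying by $U^{(h_i-r_i)/2}$ would produce a $J_0$-fixed class, forcing $r_k\geq r_i$ and contradicting monotonicity. Your ``cross-branch term can only be a boundary with negative $U$-exponent'' computation is the chain-level shadow of exactly this. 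So your route should close, but you could save a fair amount of bookkeeping by targeting $f_*$ rather than $f$.

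One side remark in your writeup is false as stated: a nonzero finitely generated free complex over $\F[U]$ with degree $-1$ differential need not have nonzero homology (take $\F[U]\langle a\rangle\oplus\F[U]\langle b\rangle$ with $\partial a=b$). What is true, and what you actually want, is that the differential in your model is divisible by $U$, so any nonzero free subcomplex such as $\ker f$ has nonzero homology. In any event this aside is not needed: surjectivity of $f$ (or just of $f_*$) gives the conclusion directly without appealing to it.
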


\begin{proof} It suffices to show that any self-local equivalence $f \co C_*(M) \rightarrow C_*(M)$ necessarily induces a surjection $f_* \co H_*(M) \rightarrow H_*(M)$. In particular, since $f_*$ is $U$-equivariant it suffices to show that each of the generators $v_1,J_0v_1,\dots, v_n, J_0v_n$ are in the image of $f_*$ (here $v_n$ and $J_0v_n$ may be equal). We proceed by induction on $n$, essentially mimicking the proof of \cite[Theorem 6.2]{DaiManolescu}. 

First, observe that if $v_1 = J_0v_1$, then $H_*(M) \cong \F[U]$, and the statement is trivially true. So we assume that $v_1 \neq J_0v_1$. Consider $f_*(v_1)$. This must be a linear combination of $v_1$ and $J_0v_1$; furthermore, since $U^mv_1 \neq 0$ for all $m \geq 0$ and $f_*$ is an $\F[U]$-module homomorphism inducing an isomorphism on $U^{-1}H_*(M)$, we must have $U^mf_*(v_1) \neq 0$. This implies that $f_*(v_1)$ is either $v_1$ or $J_0v_1$. Since $f_*(J_0v_1)= J_0f_*(v_1)$, we see that both $v_1$ and $J_0v_1$ are in the image of $f_*$, and $f_* \co H_{h_1}(M) \rightarrow H_{h_1}(M)$ is a surjective map. 

Now for $1 < i < n$, let us inductively suppose that $f_*\co H_{h_j}(M) \rightarrow H_{h_j}(M)$ is known to be surjective for all $j<i$. In particular, we assume that all of the generators $v_1,J_0v_1, \cdots, v_{i-1}, J_0v_{i-1}$ are in the image of $f_*$. Now consider the element $f_*(v_i)$, which we can write as a linear combination of the $2i$ elements of the form $U^{\frac{h_j-h_i}{2}}v_j$ and $U^{\frac{h_j-h_i}{2}}J_0 v_j$ for $1\leq j \leq i$ (this includes $v_i$ and $J_0v_i$). Since $U^mv_i \neq 0$ for all $m\geq 0$ and $f_*$ is an $\F[U]$-module map inducing an isomorphism on $U^{-1}H_*(M)$, we see that $f_*(v_i)$ cannot only be a sum of terms of the form $U^{\frac{h_j-h_i}{2}}(v_j+J_0v_j)$, but must include at least one element which is not preserved by $J_0$. Choose the maximal number $j=k$ for which such an element appears in $f_*(v_i)$. Up to post-composing with $J_0$ we may assume it is $U^{\frac{h_k-h_i}{2}}v_k$. Now observe that since $U^{\frac{h_i-r_i}{2}}v_i$ is $J_0$-invariant, and $f_*$ commutes with $J_0$, we must have that $U^{\frac{h_i-r_i}{2}} \left( U^{\frac{h_k-h_i}{2}}v_k \right) =U^{\frac{h_k-r_i}{2}}v_k$ is $J_0$-invariant. This implies that $r_k\geq r_i$. However, since by monotonicity $r_k<r_i$ if $k<i$, this implies that $i=k$. So, $f_*(v_i)$ is equal to a sum of $v_i$ and $U$-powers of the elements $v_j$ and $J_0v_j$ for $1\leq j<i$. In particular, since we already know that $v_j$ and $J_0v_j$ are in the image of $f_*$ for $1\leq j<i$, we see that $v_i$ is also in the image of $f_*$. Since $f_*$ is $J_0$-equivariant, $J_0v_i$ is also in the image of $f_*$. We conclude that $f_* \co H_{h_i}(M) \rightarrow H_{h_i}(M)$ is a surjection.

Finally, consider the map $f_*$ in degree $h_n$. If $v_n \neq J_0v_n$, the argument above applies and we are done; otherwise, let $v_n = J_0v_n$. Then the element $f_*(v_n)$ must be fixed by $J_0$ and $U$-nontorsion. By similar logic as above, this implies that if $f_*(v_n)$ is written as a linear combination of $v_n$ and elements $U^{\frac{h_j-h_n}{2}}v_j$ and $U^{\frac{h_j-h_n}{2}}J_0v_j$, this linear combination must contain $v_n$. Since we know that $v_j$ and $J_0v_j$ are in the image of $f_*$ for $1\leq j <n$, this implies that $v_n$ is in the image of $f_*$, and $f_*$ is therefore a surjection.
\end{proof}

\begin{proof}[Proof of Corollary \ref{cor:gradedroot}]
The corollary follows from the definition of the monotone graded root $M'$ associated to a symmetric graded root $M$.
\end{proof}

\bibliographystyle{amsalpha}
\bibliography{bib}

\end{document}